\DeclareSymbolFont{largesymbol}{OMX}{yhex}{m}{n}
\DeclareMathAccent{\Widehat}{\mathord}{largesymbol}{"62} % 自定义\widehat长度, 可以独立于yhmath宏包使用
 \tikzset{
  % style to apply some styles to each segment of a path
  on each segment/.style={
    decorate,
    decoration={
      show path construction,
      moveto code={},
      lineto code={
        \path [#1]
        (\tikzinputsegmentfirst) -- (\tikzinputsegmentlast);
      },
      curveto code={
        \path [#1] (\tikzinputsegmentfirst)
        .. controls
        (\tikzinputsegmentsupporta) and (\tikzinputsegmentsupportb)
        ..
        (\tikzinputsegmentlast);
      },
      closepath code={
        \path [#1]
        (\tikzinputsegmentfirst) -- (\tikzinputsegmentlast);
      },
    },
  },
  % style to add an arrow in the middle of a path
  mid arrow/.style={postaction={decorate,decoration={
        markings,
        mark=at position .5 with {\arrow[#1]{stealth}}
      }}},
}
\numberwithin{figure}{section}
\newtheorem{theorem}{Theorem}[section]
\newtheorem{lemma}[theorem]{Lemma}
\newtheorem{corollary}[theorem]{Corollary}
\newtheorem{main theorem}[theorem]{Main Theorem}
\newtheorem{proposition}[theorem]{Proposition}
\newtheorem{definition}[theorem]{Definition}
\newtheorem{remark}[theorem]{Remark}
\newtheorem{example}[theorem]{Example}
\newtheorem{notation}[theorem]{Notation}
\newtheorem{question}[theorem]{Question}
\numberwithin{equation}{section}
\begin{document}
%=========================================================

\def\headertitle{Homological dimensions over almost gentle algebras}

\title[\headertitle]{Homological dimensions over almost gentle algebras}

% ======================================== 页眉页脚设置
\def\fstpage{1} % 起始页码设置
\def\page{$\begin{matrix} {\color{white}0} \\ \thepage \end{matrix}$} % 页脚页码代码转换

\pagestyle{fancy}
% 页眉设置
\fancyhead[LO]{ }
\fancyhead[RO]{ }
\fancyhead[CO]{\ifthenelse{\value{page}=\fstpage}{\ }{\scriptsize{\headertitle}}}
% 页眉设置
\fancyhead[LE]{ }
\fancyhead[RE]{ }
\fancyhead[CE]{\scriptsize{Demin WANG, Zhaoyong HUANG, Yu-Zhe LIU}}
% 页脚设置
\fancyfoot[L]{ }
\fancyfoot[C]{\page} % 页码
\fancyfoot[R]{ }
\renewcommand{\headrulewidth}{0.5pt} % 分隔线宽度/磅
% \renewcommand{\footrulewidth}{0.5pt}
% ==================================================

\thanks{$^{\ast}$Corresponding author.}
\thanks{{\bf MSC2020:}
16G10; % Representations of associative Artinian rings
16E10; % Homological dimension in associative algebras
16E65; % Homological conditions on associative rings (generalizations of regular, Gorenstein, Cohen-Macaulay rings, etc.)
18G20; % Homological dimension (category-theoretic aspects)
%46B99; % In 46Bxx (normed linear spaces and Banach spaces; Banach lattices): none of the above, but in this section
%46M40 % Inductive and projective limits in functional analysis
}

\thanks{{\bf Keywords:} Almost gentle algebras, global dimension; self-injective dimension; projective dimension, Auslander--Reiten conjecture.}

\author{Demin Wang}
\address{School of Mathematics, Nanjing University, Nanjing 210093, China}
\email{602023210012@smail.nju.edu.cn}

\author{Zhaoyong Huang}
\address{School of Mathematics, Nanjing University, Nanjing 210093, China}
\email{huangzy@nju.edu.cn}

\author{Yu-Zhe Liu$^{*}$}
\address{School of Mathematics and Statistics, Guizhou University, Guiyang 550025, Guizhou, P. R. China}
\email{liuyz@gzu.edu.cn / yzliu3@163.com (Y.-Z. Liu)}

%=========================================================
% 正文
%=========================================================

%\marginpar{\tiny\listofchanges}

\maketitle

\begin{abstract}
  We provide a method for computing the global dimension and self-injective dimension of almost gentle algebras,
  and prove that an almost gentle algebra is Gorenstein if it satisfies the Auslander condition.
\end{abstract}

\tableofcontents % 目录

%=========================================================

\section{Introduction}

In \cite{AS1987}, Assem and Skowro\'{n}ski introduced gentle algebras and use them to study tilted algebras and derived equivalence.
Afterwards, (skew-)gentle algebras play an important role in representation theory;
and theirs module categories and derived categories were described by using string algebras, Auslander--Reiten theory, combinatorial methods
and geometric models, see \cite{BR1987, BCS2021,ALP2016,BD2017,OPS2018,Z2019,QZZ2022} and so on.
In particular, the global dimension and self-injective dimension of (locally) gentle algebras have been studied,
and (locally) gentle algebras were shown to be Gorenstein by using different methods \cite{GR2005,LGH2024,FOZ2024}.

As a generalization of gentle algebras, almost gentle algebras are important finite-dimensional algebras introduced
by Green and Schroll \cite{GS2018AG}, which are monomial special multiserial algebras,
and are closely related to some hypergraphs and Brauer configuration algebras. In \cite{FGR2022} and \cite{Jaw2024},
complexes on almost gentle algebras and the trivial extension of almost gentle algebras were studied, respectively.
Based on the above, we have the following questions.

\begin{question} \
\begin{itemize}
\item[\rm(1)] How to characterize the global dimension and self-injective dimension of almost gentle algebras?
\item[\rm(2)] Whether or when are almost gentle algebras Gorenstein?
% \item[\rm(3)] {\rm(Auslander-Gorenstein Conjecture for almost gentle algebra)} If an almost gentle algebra
%satisfies Auslander condition, then is it Gorenstein?
\end{itemize}
\end{question}

%=========================================================

%\newcommand{\defines}{\it\color{blue}}
\newcommand{\defines}{\it}

%\definecolor{section}{rgb}{0,0,0.5}

\def\BLUE{blue} % To switch back to the colour version, replace 'black' by 'blue'.
\def\RED{red} % To switch back to the color version, replace 'black' with 'red'.

%%%%%%%%%%%%%%%%%%%  定义记号 1 （预设置）  %%%%%%%%%%%%%%%%%

%=================
\def\<{\langle} % 左尖括号指令省略为"\<"
\def\>{\rangle} % 右尖括号指令省略为"\>"
%=================
\def\NN{\mathbb{N}} % mathkk双写字体的"N", 自然数集
\def\ZZ{\mathbb{Z}} % mathkk双写字体的"Z", 整数集
\def\QQ{\mathbb{Q}} % mathkk双写字体的"Q", 有理数集
\def\RR{\mathbb{R}} % mathkk双写字体的"R", 有理数集
\def\II{\mathbb{I}} % mathkk双写字体的"R",
%%%%%%%%%%%%%%%%%%%  定义记号 2 （Y.-Z. Liu 自设置） %%%%%%%%%

\newcommand{\Pic}{F{\tiny{IGURE}}\ }
\newcommand{\modcat}{\mathsf{mod}}
\newcommand{\repcat}{\mathsf{rep}}
\newcommand{\proj}{\mathsf{proj}}
\newcommand{\rmproj}{\mathrm{proj}}
\newcommand{\Dcat}{\mathcal{D}}
\newcommand{\ind}{\mathsf{ind}}
\newcommand{\per}{\mathsf{per}} % perfect cat.
\newcommand{\Hom}{\mathrm{Hom}} %
\newcommand{\End}{\mathrm{End}} %
\newcommand{\Ext}{\mathrm{Ext}} %
\newcommand{\op}{\mathrm{op}}
\newcommand{\newarrow}{{{\Leftarrow \mkern-8.5mu\raisebox{0.36em}[]{$\shortmid$}}{\mkern-10.5mu\raisebox{-0.098em}[]{$-$}}
{\mkern-4.1mu\raisebox{0.49em}[]{$_{|}$}}}} % 特殊合成记号 (test)
\newcommand{\tildearrow}{\sim\mkern-8.5mu\raisebox{-0.36em}{$^{\succ}$}}
\newcommand{\shadow}[1]{{\color{#1}$\blacksquare\!\!\blacksquare$}}
\newcommand{\sectcolor}{\color{blue}}
\newcommand{\To}[2]{\mathop{-\!\!\!-\!\!\!\longrightarrow}\limits^{#1}_{#2}}
\newcommand{\oT}[2]{\mathop{\longleftarrow\!\!\!-\!\!\!-}\limits^{#1}_{#2}}

%=========================================================

\def\kk{\mathds{k}} % mathds双写字体的"k", 用于描述域
\def\Q{\mathcal{Q}} % mathcal花体的"Q", 用于描述箭图
\def\I{\mathcal{I}} % mathcal花体的"I", 用于描述admissible 理想   % WinEdt 7.0 下不能调用ctex 宏包
\def\source{\mathfrak{s}}
\def\target{\mathfrak{t}}
\def\Str{\mathrm{Str}}
\def\Band{\mathrm{Ban}}
\def\M{\mathds{M}}
\def\ind{\mathrm{ind}}
\def\fdim{\mathrm{fl.dim}}
\def\fidim{\mathrm{fi.dim}}
\def\pdim{\mathrm{proj.dim}}
\def\idim{\mathrm{inj.dim}}
\def\gldim{\mathrm{gl.dim}}
\def\Ker{\mathrm{Ker}}
\def\Image{\mathrm{Im}}
\def\top{\mathrm{top}}
\def\op{\mathrm{op}}
\def\claw{{\ \shortmid\mkern-13mu\wedge\ }}
\def\aclaw{{\ \shortmid\mkern-13mu\vee\ }}
\def\clawnota{\kappa}
\def\aclawnota{\xi}
\def\C{\mathscr{C}}
\def\da{\mathscr{\downarrow}}
\def\bfda{\pmb{\downarrow}}
\def\bfua{\pmb{\uparrow}}
\def\ds{\delta} % directed string
\def\forb{\mathcal{F}}
\newcommand{\rel}[2]{({#1},{#2})} % relational vertex
\def\Left{\mathrm{L}}
\def\Right{\mathrm{R}}
\def\inner{\mathrm{in}}
\def\out{\mathrm{out}}
\def\spds{\mathit{\Psi}}
\def\rad{\mathrm{rad}}
\def\soc{\mathrm{soc}}
\def\noname{invalid\ }

%=========================================================
%===================== our works =========================
%=========================================================

Let $\kk$ be an algebraically closed field.
In this paper, we provide a method for computing the global dimension and self-injective dimension of almost gentle algebras
by using forbidden paths introduced by Avella-Alaminos and Gei\ss \cite{AG2008} (see Section \ref{Sect:Syzygy(ds)}).
We define claw, written as $\clawnota$, and anti-claw, written as $\aclawnota$ (Definition \ref{def:claw}),
to describe indecomposable projective modules and injective modules,
and introduce tow sets $\forb(v)$ and $\forb(\aclawnota)$ whose elements are some special forbidden paths
on the bound quiver $(\Q,\I)$ of an almost gentle algebra $A=\kk\Q/\I$, which are decided by some claws,
where $v$ is any vertex of the quiver $\Q$, see Notations \ref{nota:F(v)}, \ref{nota:F(ds)}, and Definition \ref{def:F(aclawnota)}.
By \cite[Corollary 2.4]{GKK91}, we have that the finitistic dimension conjecture holds true for monomial algebras.
Observe that an algebra is monomial if and only if so is its opposite algebra. It then follows from
\cite[Proposition 6.10]{AR1991Nakayama} that the left and right self-injective dimensions
of a monomial algebra, and hence, of an almost gentle algebra $A$, are identical. For simplicity, we call this common quantity
the {\it self-injective dimension} of $A$. The global dimension $\gldim A$ and self-injective dimension $\idim A$
of an almost gentle algebra $A$ can be described by the following results.

\begin{theorem} \label{thm-1.1}\
\begin{itemize}
  \item[\rm(1)] {\rm(Theorem \ref{thm:gldim})}
    \[\gldim A = \sup_{F \in \forb} \ell(F), \]
    where $\forb:=\bigcup\limits_{v\in\Q_0}\forb(v)$ and $\ell(F)$ is the length of $F$.
  \item[\rm(2)] {\rm(Theorem \ref{thm:selfdim})}
    \[ \idim A = \sup_{F\in \forb_{\mathrm{a}}} \ell(F), \]
    where $\forb_{\mathrm{a}} := \bigcup_{v\in\Q_0} \forb(\aclawnota_v)$ with
    $\aclawnota_v$ the anti-claw corresponding to the indecomposable injective module $E(v)$.
\end{itemize}
\end{theorem}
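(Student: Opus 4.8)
The plan is to reduce both statements to a careful analysis of minimal projective resolutions of simple modules, exploiting the monomial structure of $A=\kk\Q/\I$ and the combinatorial description of projectives and injectives via claws and anti-claws. For part (1), recall that $\gldim A = \sup_{v\in\Q_0}\pdim S(v)$ where $S(v)$ is the simple at $v$. The key observation for monomial algebras (going back to the work of Green--Kirkman--Kuzmanovich and reformulated here via forbidden paths) is that the terms of a minimal projective resolution of $S(v)$ are governed by iterated syzygies, and each syzygy decomposes as a direct sum of "string-like" submodules of projectives indexed by forbidden paths starting appropriately at $v$. Concretely, I would first establish that $\Omega^{i}(S(v))$ is a direct sum of modules of the form $P/\langle\text{path}\rangle$ — this is where the claw $\clawnota$ enters, since each indecomposable projective $P(v)$ is built from the claw at $v$ and its radical decomposes along the arrows out of $v$. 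Then I would show by induction on $i$ that the summands of $\Omega^{i}(S(v))$ are in bijection with forbidden paths $F\in\forb(v)$ of length $i$ (with the understanding that a path "runs out" exactly when $S(v)$ attains finite projective dimension along that branch), so that $\pdim S(v) = \sup_{F\in\forb(v)}\ell(F)$, and taking the supremum over $v$ gives the formula with $\forb=\bigcup_{v}\forb(v)$.

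For part (2), I would use the duality $D=\Hom_\kk(-,\kk)$, which sends $A$-modules to $A^{\op}$-modules and interchanges projectives with injectives and projective dimension with injective dimension. Since $A$ is an almost gentle algebra, so is $A^{\op}$ (the opposite of a monomial special multiserial algebra is again of the same type, with the quiver arrows reversed), and as noted in the introduction the left and right self-injective dimensions agree by \cite[Proposition 6.10]{AR1991Nakayama}. Thus $\idim A = \idim {}_A A = \pdim_{A^{\op}} D(A_A)$ computed via resolutions of injectives, equivalently $\idim A = \sup_{v\in\Q_0}\idim E(v)$ where $E(v)$ is the indecomposable injective at $v$. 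Applying $D$ turns a minimal injective coresolution of $E(v)$ into a minimal projective resolution over $A^{\op}$ of the simple at $v$, and the claw at $v$ over $A^{\op}$ is precisely the anti-claw $\aclawnota_v$ over $A$ by construction (Definition \ref{def:claw} and Definition \ref{def:F(aclawnota)}). Hence the argument of part (1) applied to $A^{\op}$ yields $\idim E(v) = \sup_{F\in\forb(\aclawnota_v)}\ell(F)$, and taking the supremum over $v$ gives $\idim A = \sup_{F\in\forb_{\mathrm a}}\ell(F)$ with $\forb_{\mathrm a}=\bigcup_{v}\forb(\aclawnota_v)$.

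The main obstacle will be the inductive step in part (1): showing precisely that the indecomposable summands of the $i$-th syzygy $\Omega^{i}(S(v))$ correspond bijectively to forbidden paths of length $i$ emanating from $v$, with the right multiplicities and with the correct termination condition. This requires a delicate bookkeeping argument — when one peels off the top of a summand $P/\langle p\rangle$ of $\Omega^{i}(S(v))$, the kernel decomposes according to the arrows $\beta$ with $p\beta\notin\I$, and one must verify that "$p\beta$ is a forbidden path extending $p$" is exactly the condition that produces a nonzero summand at the next stage, while the cases where the syzygy vanishes (finite projective dimension along that branch) correspond exactly to forbidden paths that cannot be extended. Matching this up cleanly with the definitions in Notations \ref{nota:F(v)} and \ref{nota:F(ds)} — in particular handling the first step, where the structure of the claw $\clawnota$ at $v$ (rather than a single arrow) determines the initial summands — is the technical heart of the proof. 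Once part (1) is in place, part (2) is essentially formal via the duality, provided one checks carefully that forbidden paths over $A^{\op}$ correspond to anti-claw-indexed forbidden paths over $A$.
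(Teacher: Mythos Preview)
Your approach to part (1) is essentially the paper's: analyze the syzygies of $S(v)$, show inductively that they are direct sums of directed string modules (Lemmas \ref{lemm:1st-syzygy}, \ref{lemm:nth-syzygy}, Proposition \ref{prop:nth-syzygy}), and match the non-projectivity of $\Omega_{n-1}(S(v))$ with the existence of a forbidden path of length $n$ starting at $v$ (Proposition \ref{prop:forb-Omega}, Theorem \ref{thm:pdim simp}). That part is fine.

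Part (2), however, has a genuine gap. Your duality argument does not reduce the problem to part (1). First, the line ``equivalently $\idim A=\sup_v\idim E(v)$'' is false: each $E(v)$ is injective, so its injective dimension is zero. What one needs is $\idim A=\sup_v\pdim_A E(v)$. Second, applying $D$ to a minimal projective resolution of $E_A(v)$ yields a minimal injective coresolution of $D(E_A(v))\cong P_{A^{\op}}(v)$ over $A^{\op}$, so $\pdim_A E(v)=\idim_{A^{\op}}P_{A^{\op}}(v)$. This is the \emph{same} type of quantity over $A^{\op}$, not a projective dimension of a simple; duality sends part (2) for $A$ to part (2) for $A^{\op}$, not to part (1).

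More seriously, your identification of $\forb(\aclawnota_v)$ with ``forbidden paths over $A^{\op}$ starting at $v$'' is wrong on the nose. Look at Definition \ref{def:F(aclawnota)}: $\forb(\aclawnota_v)$ consists of forbidden paths in $\forb(\source(s_i))$ for each branch $s_i$ of the anti-claw, \emph{together with} forbidden paths starting at $v$ \emph{only when $v$ is not an \noname vertex}. The \noname condition (Definition \ref{def:noname}) is a substantial case analysis, and it is exactly what governs whether the non-string piece $\spds_0\le_{\oplus}\Omega_1(E(v))$ is projective (Proposition \ref{prop:projectivity}). The paper's route to part (2) is to compute $\Omega_1(E(v))$ explicitly (Lemma \ref{lemm:1st-syzygy of E}), isolate the summand $\spds_0$ whose top is $S(v)^{\oplus(c-1)}$, determine by Lemmas \ref{lemm:spds I}--\ref{lemm:spds III} precisely when $\spds_0$ is projective, and then show (Lemma \ref{lemm:n-th syzygy of E}, Proposition \ref{prop:forb-Omega:ds case}) that all higher syzygies are again sums of directed string modules, so that the forbidden-path bookkeeping of part (1) applies from that stage on. This extra analysis of $\spds_0$ is the missing idea in your proposal, and it cannot be bypassed by duality.
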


Theorem \ref{thm-1.1}(2) provides a method for judging the Gorensteiness of almost gentle algebras as follows.

\begin{theorem} For an almost gentle algebra $A=\kk\Q/\I$, the following statements are equivalent.
\begin{itemize}
  \item[\rm(1)] $\idim A = \infty$.
  \item[\rm(2)] {\rm(A direct consequence of Theorem \ref{thm:selfdim})} There exists an anti-claw $\aclawnota$
  corresponding to some indecomposable injective module such that $\forb(\aclawnota)$ contains a forbidden cycle whose length is infinite.
  \item[\rm(3)] {\rm(Theorem \ref{thm:selfdim II})}
    $(\Q,\I)$ contains an oriented cycle $\C=a_0a_1\cdots a_{\ell-1}=$
\begin{center}
\begin{tikzpicture}[scale=1.2]
\draw[rotate=   0][->] (2,0) arc(0:-50:2) [line width=1pt];
\draw[rotate= -60][->] (2,0) arc(0:-50:2) [line width=1pt];
\draw[rotate=-120][->] (2,0) arc(0:-50:2) [line width=1pt][dotted];
\draw[rotate=-180][->] (2,0) arc(0:-50:2) [line width=1pt][dotted];
\draw[rotate=-237][->] (2,0) arc(0:-50:2) [line width=1pt];
\draw[rotate=-300][->] (2,0) arc(0:-50:2) [line width=1pt];
\draw[rotate= 120]     (2,0.2) node{\tiny$\ell-2$};
\draw[rotate=  60]     (2,0.2) node{\tiny$\ell-1$};
\draw[rotate=   0]     (2,0.2) node{$1$};
\draw[rotate= -60]     (2,0.2) node{$2$};
\draw[rotate=-120]     (2,0.2) node{$3$};
\draw[rotate=  60] (1.73,1) node[above]{$a_{\ell-2}$};
\draw[rotate=   0] (1.73,1) node[right]{$a_{\ell-1}$};
\draw[rotate= -60] (1.73,1) node[right]{$a_1$};
\draw[rotate=-120] (1.73,1) node[below]{$a_2$};
% relations
\draw[red][rotate=6-  0] (1.95,-0.5 ) arc(-90:-270:0.5) [line width=1pt][dotted];
\draw[red][rotate=6- 60] (1.95,-0.5 ) arc(-90:-270:0.5) [line width=1pt][dotted];
\draw[red][rotate=6-120] (1.95,-0.5 ) arc(-90:-270:0.5) [line width=1pt][dotted];
\draw[red][rotate=6-180] (1.95,-0.5 ) arc(-90:-270:0.5) [line width=1pt][dotted];
\draw[red][rotate=6-240] (1.95,-0.5 ) arc(-90:-270:0.5) [line width=1pt][dotted];
\draw[red][rotate=6-300] (1.95,-0.5 ) arc(-90:-270:0.5) [line width=1pt][dotted];
\end{tikzpicture}
\end{center}
such that $a_{\overline{i}}a_{\overline{i+1}} \in \I$ {\rm(}for any $x\in\NN$, $\overline{x}$
is defined as $x$ modulo $\ell${\rm)}, and there is a vertex $v$ on $\C$ such that one of the following condition holds.
    \begin{itemize}
      \item[\rm(A)] there is an arrow $\alpha$ {\rm(}$\ne a_{\overline{v-1}}${\rm)} ending at $v$ satisfying $\alpha a_v \in \I$;
      \item[\rm(B)] there is an arrow $\beta$ {\rm(}$\ne a_{v}${\rm)} starting at $v$ satisfying $a_{\overline{v-1}}\beta \in \I$.
    \end{itemize}
    The oriented cycle as above is said to be a forbidden cycle.

  \item[\rm(4)] {\rm(Theorem \ref{thm:forb cycle}(1))} There is a vertex $v$ on some forbidden cycle $\C$ such that $v$ is not
  \noname {\rm(}see Definition \ref{def:noname} for the definition of \noname vertices{\rm)}.
\end{itemize}
\end{theorem}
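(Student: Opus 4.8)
The plan is to derive all four equivalences from the formula $\idim A=\sup_{F\in\forb_{\mathrm{a}}}\ell(F)$ of Theorem~\ref{thm:selfdim} ($=$ Theorem~\ref{thm-1.1}(2)), where $\forb_{\mathrm{a}}=\bigcup_{v\in\Q_0}\forb(\aclawnota_v)$, together with two purely combinatorial translations on $(\Q,\I)$: first, recasting ``$\forb(\aclawnota_v)$ contains arbitrarily long forbidden paths'' as the presence of a forbidden cycle ``linked'' to the anti-claw at some vertex of it; and second, matching that link condition with the notion of a \noname vertex.

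\medskip\noindent\textbf{Step 1: $(1)\Leftrightarrow(2)$.} By Theorem~\ref{thm:selfdim}, $\idim A=\infty$ if and only if some $\forb(\aclawnota_v)$ contains forbidden paths of unbounded length. Since $\Q$ is finite, a standard pigeonhole/K\"onig argument shows that this forces an infinite forbidden path in $\forb(\aclawnota_v)$; being eventually periodic, such a path traverses an oriented cycle all of whose cyclically consecutive pairs of arrows lie in $\I$, and this is exactly what (2) calls a forbidden cycle of infinite length inside $\forb(\aclawnota_v)$. Conversely, a forbidden cycle of infinite length occurring in some $\forb(\aclawnota_v)$ evidently makes the supremum infinite. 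This is the ``direct consequence'' noted in the statement.

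\medskip\noindent\textbf{Step 2: $(2)\Leftrightarrow(3)$ (equivalently, $(1)\Leftrightarrow(3)$).} Here I would unwind Definition~\ref{def:F(aclawnota)}: a forbidden path in $\forb(\aclawnota_v)$ begins with an arrow occurring in the anti-claw $\aclawnota_v$ of the indecomposable injective $E(v)$ and is extended one arrow at a time so that each consecutive pair lies in $\I$, the admissible extensions at a vertex being the forbidden paths read off from the claws there (Notations~\ref{nota:F(v)}, \ref{nota:F(ds)}). Hence $\forb(\aclawnota_v)$ contains an infinite forbidden path precisely when $(\Q,\I)$ carries an oriented cycle $\C=a_0a_1\cdots a_{\ell-1}$ with $a_{\overline i}a_{\overline{i+1}}\in\I$ for all $i$ together with an ``entry'' arrow at some vertex $v$ of $\C$ joining the anti-claw data into $\C$. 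Spelling out the possibilities for such an entry arrow at $v$ — an arrow $\alpha\neq a_{\overline{v-1}}$ ending at $v$ with $\alpha a_v\in\I$, or an arrow $\beta\neq a_v$ starting at $v$ with $a_{\overline{v-1}}\beta\in\I$ — produces exactly conditions (A) and (B); both implications are then proved by tracing these constructions through Definition~\ref{def:claw} and Definition~\ref{def:F(aclawnota)}. This step is Theorem~\ref{thm:selfdim II}, and it is where I expect the main difficulty to lie: it demands careful bookkeeping of how anti-claws, claws and forbidden paths interlock around a cycle, and in particular a proof that a forbidden cycle possessing \emph{no} such entry arrow can never appear inside any $\forb(\aclawnota_v)$.

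\medskip\noindent\textbf{Step 3: $(3)\Leftrightarrow(4)$.} Finally I would compare the hypotheses on the vertex $v\in\C$ in (3) with Definition~\ref{def:noname}, which should say that a vertex $v$ on a forbidden cycle $\C$ is \noname exactly when neither (A) nor (B) holds at it; granting this, the existence of a non-\noname vertex on some forbidden cycle is literally condition (3), so $(3)\Leftrightarrow(4)$ is essentially definitional once Theorem~\ref{thm:forb cycle}(1) is in hand, the only point to verify being that the notion of forbidden cycle used there agrees with the one in (3). Chaining Steps~1--3 gives $(1)\Leftrightarrow(2)\Leftrightarrow(3)\Leftrightarrow(4)$.
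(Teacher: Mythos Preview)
Your Step 1 is fine, but Steps 2 and 3 rest on a misreading of Definition~\ref{def:F(aclawnota)} and a false claim about Definition~\ref{def:noname}.

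First, a $\aclawnota_v$-forbidden path does \emph{not} ``begin with an arrow occurring in the anti-claw''. By Definition~\ref{def:F(aclawnota)}, $F\in\forb(\aclawnota_v)$ means either (i) $F$ is \emph{any} forbidden path starting at some $\source(s_i)$, with no constraint tying its first arrow to the anti-claw, or (ii) $F$ starts at $v$ \emph{and $v$ is not \noname}. Clause (ii) already invokes the \noname notion, so your plan to isolate a clean combinatorial equivalence $(2)\Leftrightarrow(3)$ that is independent of Definition~\ref{def:noname}, and then bolt on $(3)\Leftrightarrow(4)$ as a definitional afterthought, cannot work: the very set $\forb(\aclawnota_v)$ appearing in (2) is defined through the \noname condition.

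Second, Step 3 is not ``essentially definitional''. Definition~\ref{def:noname} is a five-clause condition on $(c,d)$, gentleness of $v$, and properties of the right maximal paths $s_j'$; it is not phrased in terms of (A)/(B). In fact the pointwise claim ``$v$ on $\C$ is not \noname $\Leftrightarrow$ (A) or (B) holds at $v$'' is false: take $v$ on a forbidden cycle with $c=1$ and (B) holding (so $a_{\overline{v-1}}\beta\in\I$ for some $\beta\neq a_v$). Since also $a_{\overline{v-1}}a_v\in\I$, we have $s_1 s_j'\in\I$ for every $j$, so $v$ is \noname by clause~(5) of Definition~\ref{def:noname}. This is exactly why, in the paper's Lemma~\ref{lemm:selfdim II} Case (B) with $c=1$, the argument does \emph{not} show that $v$ is not \noname but instead passes to $E(\target(\beta))$.

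The paper's route is different from yours: it first proves $(1)\Leftrightarrow(4)$ (Theorem~\ref{thm:forb cycle}), and this is the substantive step, relying on the decomposition $\Omega_1(E(v))\cong\spds_0\oplus\bigoplus M_\imath$ from Lemma~\ref{lemm:1st-syzygy of E} and the characterisation of when $\spds_0$ is projective in Proposition~\ref{prop:projectivity} (which is precisely the content of ``\noname''). Only after $(1)\Leftrightarrow(4)$ is in hand does the paper extract $(1)\Rightarrow(3)$ by a case analysis on a not-\noname vertex (Corollary~\ref{coro:selfdim II}) and prove $(3)\Rightarrow(1)$ directly (Lemma~\ref{lemm:selfdim II}). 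Thus $(3)\Leftrightarrow(4)$ is obtained via $(1)$, not by matching definitions. Your outline reverses the dependency and, as it stands, lacks the key module-theoretic input from Proposition~\ref{prop:projectivity}.
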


Recall that a left and right Noetherian ring $R$ is called {\it Gorenstein} if its left and right self-injective dimensions
are finite, and $R$ is said to satisfy the {\it Auslander condition} if the flat dimension of the $i$-th term in a
minimal injective coresolution of $R$ as a left $R$-module is at most
$i-1$ for any $i \geqslant 1$. The Auslander condition is left and right symmetric \cite[Theorem 3.7]{FGR75}.
Bass \cite{Bass1963} proved that a commutative Noetherian ring $R$
is Gorenstein if and only if it satisfied the Auslander condition.
Based on it, Auslander and Reiten \cite{Aus1994} conjectured that an Artin algebra is Gorenstein if it
satisfies the Auslander condition. We call this conjecture {\bf ARC} for short.
It is situated between the well known Nakayama conjecture and the generalized Nakayama conjecture
\cite[p.2]{Aus1994}. All these conjectures remain still open. As an application of the above results,
we obtain the following result.

%Furthermore, we consider the Auslander$-$Gorenstein Conjecture for almost gentle algebra.
%This conjecture is introduced in \cite{Aus1994}, which is widely researched,
%such as \cite[etc]{Bass1963, AR1975Nakayama, AR1991Nakayama, Aus1994, BMS2015, Huang2024AuslandertypeCA}.
%We prove the following result by using the description of the projective dimensions of injective modules
%over any almost gentle algebra $A$ given in Theorem \ref{thm:pdim inj}.

\begin{theorem} \
\begin{itemize}
\item[\rm(1)] {\rm(Corollary \ref{coro:injenv(P)})}
For an almost gentle algebra $A=\kk\Q/\I$,
the projective dimension of the injective envelope of $A_A$ is infinite.

\item[\rm(2)] {\rm(Theorem \ref{thm:AGC})} {\bf ARC} holds true for almost gentle algebras.
\end{itemize}
\end{theorem}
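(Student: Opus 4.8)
The plan is to deduce both parts from the structural results already collected in the paper, so the heart of the argument is Theorem~\ref{thm:selfdim II} (the combinatorial characterization of $\idim A = \infty$ via forbidden cycles) together with the description of injective envelopes via anti-claws. For part~(1), I would argue that the injective envelope $E$ of $A_A$ decomposes as $E = \bigoplus_{v \in \Q_0} E(v)$ (the indecomposable injective corresponding to each vertex appears, since $A_A = \bigoplus_{v} P(v)$ and each $P(v)$ has simple top $S(v)$, hence embeds in $E(v)$), so $\pdim E = \sup_{v} \pdim E(v)$. I would then show $\pdim E(v) = \infty$ for \emph{at least one} $v$ by exhibiting, for an arbitrary almost gentle algebra, an anti-claw $\aclawnota$ with $\forb(\aclawnota)$ containing a forbidden cycle: concretely, pick any oriented cycle $\C = a_0 a_1 \cdots a_{\ell-1}$ in $\Q$ with all consecutive products $a_{\overline{i}} a_{\overline{i+1}} \in \I$ — such a cycle always exists in a finite-dimensional algebra that is not of finite global dimension via a purely gentle-type sink/source obstruction — and verify it satisfies condition (A) or (B) at some vertex. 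Actually, the cleanest route for part~(1) is: $A$ is not semisimple (assuming $\Q$ has at least one arrow; the semisimple case is trivial), so some $E(v)$ is non-projective, and by tracing through the anti-claw $\aclawnota_v$ one checks $\forb(\aclawnota_v)$ contains an infinite forbidden cycle, whence $\ell(F) = \infty$ for some $F \in \forb(\aclawnota_v)$ and Theorem~\ref{thm:selfdim} gives $\idim A = \infty \geqslant \pdim E(v)$; a sharper syzygy computation then upgrades this to $\pdim E(v) = \infty$ directly.

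For part~(2), the strategy is: by part~(1), the injective envelope $I^0$ of $A_A$ in its minimal injective coresolution has infinite projective dimension, hence (over an Artin algebra, where flat $=$ projective for finitely generated modules) infinite flat dimension, so the Auslander condition is violated at $i = 1$ already — the requirement that the flat dimension of $I^0$ be at most $0$, i.e.\ that $I^0$ be projective, fails. Wait: the Auslander condition concerns the $i$-th term for $i \geqslant 1$, not $I^0$; so I would instead observe that if $A$ were \emph{not} Gorenstein then $\idim A_A = \infty$, and I must show the Auslander condition then fails. So the real content is the contrapositive: I would assume $A$ satisfies the Auslander condition and prove $\idim A < \infty$. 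Here I invoke Theorem~\ref{thm:selfdim II}: if $\idim A = \infty$, there is a forbidden cycle $\C$ with a vertex $v$ satisfying (A) or (B); I then track the minimal injective coresolution of $A$ and show that one of its terms $I^i$ ($i \geqslant 1$) contains a summand $E(w)$ with $w$ on $\C$ whose flat (= projective) dimension is at least $i$ — using the periodicity forced by the forbidden cycle to see the projective resolution of $E(w)$ never terminates, in fact $\pdim E(w) \geqslant i$ fails to improve, contradicting the Auslander bound $\operatorname{fl.dim} I^i \leqslant i-1$. Thus $\idim A < \infty$, and combined with the left-right symmetry of the self-injective dimension noted in the introduction, $A$ is Gorenstein.

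The main obstacle I anticipate is the bookkeeping in part~(2): translating ``$A$ satisfies the Auslander condition'' — a statement about flat dimensions of the terms $I^i$ of the minimal injective coresolution of $A_A$ — into the anti-claw/forbidden-cycle language of Theorem~\ref{thm:selfdim II}, and in particular pinning down \emph{which} indecomposable injective $E(w)$ sits in $I^i$ and controlling its projective dimension from below by $i$. This requires understanding the minimal injective coresolution of $A_A$ well enough to locate, at each step $i$, an injective summand corresponding to a vertex on the offending forbidden cycle, and then using the forbidden-cycle periodicity to force its projective dimension to exceed the Auslander threshold $i-1$. The comparison between the ``vertical'' data (injective coresolution of $A$) and the ``horizontal'' data (projective resolutions of the injectives $E(w)$) is where the argument will need the most care; everything else reduces to citing Theorems~\ref{thm:selfdim}, \ref{thm:selfdim II} and the finitistic-dimension/opposite-algebra symmetry already set up in the introduction.
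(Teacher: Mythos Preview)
Your proposal contains a genuine error in part~(1) that undermines the whole argument. You write that the injective envelope of $A_A$ decomposes as $\bigoplus_{v\in\Q_0} E(v)$ because ``each $P(v)$ has simple top $S(v)$, hence embeds in $E(v)$''. This is false: the injective envelope of a module is governed by its \emph{socle}, not its top. The injective envelope of $P(v)$ is $\bigoplus_w E(w)^{m_w}$ where the $S(w)$ are the simple summands of $\soc P(v)$ --- i.e.\ the vertices at the \emph{ends} of the right maximal directed strings in the claw of $P(v)$, not $v$ itself. (Already for $\Q = 1\to 2$ with $\I=0$, one has $P(1)=\binom{1}{2}$ with injective envelope $E(2)$, while $E(1)=S(1)$ is one-dimensional and cannot contain $P(1)$.) So in general $E(v)$ need not be a summand of the injective envelope of $A_A$ at all, and your reduction $\pdim E^0 = \sup_v \pdim E(v)$ collapses.

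The paper's route is exactly to navigate this subtlety: given $\idim A=\infty$, one locates via Theorem~\ref{thm:selfdim II} a forbidden cycle $\C$ and a vertex $v$ on it satisfying~(A) or~(B), and then (Lemma~\ref{lemm:injenv(P)}) carefully identifies an indecomposable projective $P(w)$ whose \emph{socle} hits a vertex $u$ such that $E(u)$ --- which \emph{is} then a summand of $E^0$ --- has $\forb(\aclawnota_u)$ containing an infinite forbidden path. This requires a short case analysis (whether or not the arrow $\alpha$ in condition~(A) extends to a nonzero path through $v$), precisely because one must land on a socle vertex.

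For part~(2) your first instinct was right and your retraction wrong. In the indexing used here the Auslander condition reads $\fdim I^i \le i-1$ for $i\ge 1$, where $I^1$ \emph{is} the injective envelope $E^0$; so part~(1) immediately gives $\fdim I^1 = \pdim E^0 = \infty \not\le 0$, violating the condition at the very first step. The elaborate plan to track $I^i$ for general $i$ and bound $\pdim E(w)$ from below by $i$ is unnecessary --- the paper's proof of Theorem~\ref{thm:AGC} is two lines once Corollary~\ref{coro:injenv(P)} is in hand.
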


\section{Preliminaries}

Throughout this paper, assume that $\Q = (\Q_0,\Q_1,\source,\target)$ is a finite connected quiver.
Here, $\Q_0$ and $\Q_1$ respectively are the vertex and arrow sets of $\Q$,
and $\source$ and $\target$ are functions from $\Q_1$ to $\Q_0$ respectively sending each arrow to its source and sink.
For arbitrary two arrows $\alpha$ and $\beta$ of the quiver $\Q$,
if $\target(\alpha)=\source(\beta)$, then the composition of $\alpha$ and $\beta$ is denoted by $\alpha\beta$.
For any $v\in\Q_0$, we use $e_v$ to denote the path of length zero corresponding to $v$.
For a path on $\Q$, we use $\ell(p)=n$ to denote the length of $p$.
Let $A$ be a finite-dimensional algebra. We use $\modcat(A)$ to denote the category of finitely generated right $A$-modules.
For arbitrary two modules $M$ and $N$, if $M$ is a direct summand of $N$, then we write $M\leq_{\oplus}N$.
Moreover, we use $S(v_0), P(v_0)$ and $E(v_0)$ to denote the simple, indecomposable projective and indecomposable injective modules
corresponding to the vertex $v_0\in\Q_0$, respectively.

\subsection{Almost gentle algebras}

Recall from \cite{GS2018AG} that a bound quiver $(\Q,\I)$ is called an {\defines almost gentle pair}
if it satisfies the following conditions:
\begin{itemize}
  \item[(1)] $I$, the admissible ideal of the path algebra $\kk\Q := \mathrm{span}_{\kk}(\Q_l \mid l\in\NN )$,
    is a subspace of the $\kk$-linear space $\kk\Q$ which is generated by some paths on $\Q$ of length two;
  \item[(2)] for any arrow $a\in\Q_1$, there is at most one arrow $b\in\Q_1$ such that $ab\notin \I$
    and at most one arrow $c\in\Q_1$ such that $ca\notin \I$.
\end{itemize}

\begin{definition}  \rm \cite{GS2018AG}
A finite-dimensional algebra $A$ is called an {\defines almost gentle algebra} if its bound quiver is an almost gentle pair.
\end{definition}

\begin{example} \label{exp:almost gent} \rm
Let $\Q$ be the quiver shown in \Pic \ref{fig:almost gent}, and let $\I$ be the admissible ideal of $\kk\Q$ given by the paths
$a_{1,2_{\Left}}a_{2_{\Left},3_{\Left}}$, $a_{2,3}a_{3,4}$, $a_{1,2}a_{2,3}$, $a_{3,4}a_{4,5}$,
$a_{2,3'}a_{3',4}$, $a_{2_{\Right}, 3_{\Right}}a_{3_{\Right}, 5}$, $a_{1,2}a_{2,3'}$, $a_{3',4}a_{4,5}$,
$a_{1,2}a_{2,4}$, $a_{3_{\Left},4_{\Left}}b_{4_{\Left},5}$, $b_{1,2_{\Right}}a_{2_{\Right},3_{\Right}}$.
\begin{figure}[htbp]
\centering
\begin{tikzpicture}[scale=1.3]
% vertices
\draw ( 0  , 3  ) node{$1$};
\draw ( 0  , 1.5) node{$2$};
\draw (-1.2, 0  ) node{$3$};
\draw ( 1.2, 0  ) node{$3'$};
\draw ( 0  ,-1.5) node{$4$};
\draw ( 0  ,-3  ) node{$5$};
\draw (-2.1, 2.1) node{$2_{\Left}$};
\draw (-3.0, 0  ) node{$3_{\Left}$};
\draw (-2.1,-2.1) node{$4_{\Left}$};
\draw ( 2.1, 2.1) node{$2_{\Right}$};
\draw ( 3.0, 0  ) node{$3_{\Right}$};
\draw ( 2.1,-2.1) node{$4_{\Right}$};
% arrows
\draw[line width = 1pt][->] ( 0  ,-1.7) -- (   0,-2.8); \draw ( 0  ,-2.2) node[right]{$a_{4,5}$};
\draw[line width = 1pt][->] ( 0  , 1.2) -- (   0,-1.2); \draw ( 0  , 0  ) node[ left]{$a_{2,4}$};
\draw[line width = 1pt][->] ( 0  , 2.8) -- (   0, 1.7); \draw ( 0  , 2.2) node[ left]{$a_{1,2}$};
\draw[line width = 1pt][->] (-0.2, 1.5) to[out= 180, in=  90] (-1.2, 0.2);
\draw[line width = 1pt][->] (-1.2,-0.2) to[out= -90, in= 180] (-0.2,-1.5);
\draw[line width = 1pt][->] ( 0.2, 1.5) to[out=   0, in=  90] ( 1.2, 0.2);
\draw[line width = 1pt][->] ( 1.2,-0.2) to[out= -90, in=   0] ( 0.2,-1.5);
\draw (-1. , 1.0) node[ left]{$a_{2,3}$};
\draw ( 1. , 1.0) node[right]{$a_{2,3'}$};
\draw (-1. ,-1.0) node[ left]{$a_{3,4}$};
\draw ( 1. ,-1.0) node[right]{$a_{3',4}$};
\draw[line width = 1pt][->][rotate= 5+  0] (0,3) arc(90:125:3);
\draw[line width = 1pt][->][rotate= 5+ 45] (0,3) arc(90:125:3);
\draw[line width = 1pt][->][rotate= 5+ 90] (0,3) arc(90:125:3);
\draw[line width = 1pt][->][rotate= 5+135] (0,3) arc(90:125:3);
\draw[line width = 1pt][->][rotate=-5-  0] (0,3) arc(90:55:3);
\draw[line width = 1pt][->][rotate=-5- 45] (0,3) arc(90:55:3);
\draw[line width = 1pt][->][rotate=-5- 90] (0,3) arc(90:55:3);
\draw[line width = 1pt][->][rotate=-5-135] (0,3) arc(90:55:3);
\draw[line width = 1pt][->][rotate=-5+  0] (0,2.8) arc(90:55:2.8);
\draw[line width = 1pt][<-][rotate=-5+180] (0,2.8) arc(90:55:2.8);
\draw[rotate=   0] ( 1.06, 3.21) node{$a_{1,2_{\Right}}$};
\draw[rotate=   0] ( 0.96, 2.35) node{$b_{1,2_{\Right}}$};
\draw[rotate=  45] ( 1.06, 3.21) node{$a_{1,2_{\Left}}$};
\draw[rotate=  90] ( 1.06, 3.21) node{$a_{2_{\Left},3_{\Left}}$};
\draw[rotate= 135] ( 1.06, 3.21) node{$a_{3_{\Left},4_{\Left}}$};
\draw[rotate= 180] ( 1.06, 3.21) node{$a_{4_{\Left},5}$};
\draw[rotate= -45] ( 1.06, 3.21) node{$a_{2_{\Right},3_{\Right}}$};
\draw[rotate= -90] ( 1.06, 3.21) node{$a_{3_{\Right},4_{\Right}}$};
\draw[rotate=-135] ( 1.06, 3.21) node{$a_{4_{\Right},5}$};
\draw[rotate= 180] ( 0.96, 2.35) node{$b_{4_{\Left},5}$};
% relations
\draw[ red][rotate=   0] (-1.72, 2.45) arc(  45:-135:0.5) [line width = 1pt][dotted];
\draw[ red][rotate=   0] (-1.21, 0.50) arc(  90: 270:0.5) [line width = 1pt][dotted];
\draw[ red][rotate=   0] ( 0.00, 2.00) arc(  90: 185:0.5) [line width = 1pt][dotted];
\draw[blue][rotate=   0] ( 0.00, 2.20) arc(  90:  -5:0.7) [line width = 1pt][dotted];
\draw[blue][rotate= 180] (-1.72, 2.45) arc(  45:-135:0.5) [line width = 1pt][dotted];
\draw[blue][rotate= 180] (-1.21, 0.50) arc(  90: 270:0.5) [line width = 1pt][dotted];
\draw[blue][rotate= 180] ( 0.00, 2.00) arc(  90: 185:0.5) [line width = 1pt][dotted];
\draw[ red][rotate= 180] ( 0.00, 2.20) arc(  90:  -5:0.7) [line width = 1pt][dotted];
\draw[green][rotate=   0] (-1.62,-2.25) arc(-25: 135:0.5) [line width = 1pt][dotted];
\draw[green][rotate= 180] (-1.62,-2.25) arc(-25: 135:0.5) [line width = 1pt][dotted];
\draw[green] ( 0.00, 2.10) to[out=0,in=0] ( 0.00, 0.90) [line width = 1pt][dotted];
\end{tikzpicture}
\caption{An almost gentle pair}
\label{fig:almost gent}
\end{figure}
Then $(\Q,\I)$ is an almost gentle pair and $A=\kk\Q/\I$ is an almost gentle algebra.
\end{example}

\subsection{Strings and string modules}

For any arrow $a\in \Q_1$, Butler and Ringel \cite{BR1987} introduced the {\defines formal inverse} $a$, written as $a^{-1}$,
and, naturally, define $\target(a^{-1})=\source(a)$ and $\source(a^{-1})=\target(a)$.
We denote by $\Q_1^{-1}=\{a^{-1}\mid a\in \Q_1\}$ the set of all formal inverses of arrows.
Then any path $p=a_1a_2\cdots a_n$ in $(\Q,I)$ naturally provides a formal inverse path
$p^{-1} = a_{n}^{-1}a_{n-1}^{-1}\cdots a_1^{-1}$ of $p$.
For any path $p$, we define $(p^{-1})^{-1}=p$, and for any path $e_v$ of length zero corresponding to $v\in \Q_0$, we define $e_v^{-1}=e_v$.

In a bound quiver $(\Q,\I)$ of a finite-dimensional algebra $A$, a {\defines string} (of length $m$) on $(\Q,\I)$
is a sequence $s=(a_1,\ldots,a_m)$ such that
\begin{itemize}
  \item each $a_i$ ($1\leqslant i\leqslant m$) is either an arrow or a formal inverse of the arrow;
  \item if $a_i\in\Q_1$ and $a_{i+1}\in\Q_1^{-1}$, then $a_i\notin a_{i+1}^{-1}$;
  \item $\target(a_i)=\source(a_{i+1})$ holds for all $1\leqslant i\leqslant n-1$.
\end{itemize}
In particular, a string of length zero is said to be a {\defines simple string}; and
a string which is one of the forms
$\bullet \longrightarrow \bullet \longrightarrow \cdots \longrightarrow \bullet$
and $\bullet \longleftarrow \bullet \longleftarrow \cdots \longleftarrow \bullet$ (length $\geqslant 0$)
is said to be a {\defines directed string};
and we define $0$, the zero vector in $\kk\Q$, is also a string which is said to be {\defines a trivial string}.

Two strings $s_1$ and $s_2$ are called {\defines equivalent} if $s_1=s_2^{-1}$ or $s_1=s_2$.

We denote by $\Str(A)$ the set of all equivalent classes of strings.
Now, similar to \cite{BR1987}, we define a string module with respect to a given string.

\begin{definition}[{String modules}] \rm
Suppose that $s=a_1a_2\cdots a_n$ is a string on $\Q$, where $a_1,\ldots, a_n$ are arrows such that
$\source(a_i)=v_i$ ($1\leqslant i\leqslant n$) and $\target(a_n)=v_{n+1}$.
It induces a {\defines string module}, denote it by $\M(s)$, which satisfies the following conditions:
\begin{itemize}
  \item for $v\in \Q_0$, we have that $\dim_k(\M(s)e_v)$ equals to the multiplicities of $s$ transverses $v$;
  \item any $a_i\in \Q_1$ on $s$ provides an identity from $\kk_i$ to $\kk_{i+1}$, where $\kk_i$ and $\kk_{i+1}$
  are copies of $\kk$ which, as two $\kk$-vector spaces, are direct summands of $\M(s)e_{v_i}$ and $\M(s)e_{v_{i+1}}$, respectively;
  \item any arrow $a\in \Q_1$ which does not on $s$ provides a zero action $\M(s)a=0$.
\end{itemize}
For a directed string $s=a_1a_2\cdots a_n$ with $a_1,a_2,\ldots, a_n\in \Q_1$,
if either $\target (s)$ is a sink or for each arrow $\alpha$ with $\source(\alpha)=\target(s)$,
and the concatenation $s\alpha=0$, then $s$ is said to be a {\defines right maximal directed string}.
Dually, a {\defines left maximal directed string} is defined.
Furthermore, the string module induced by a (right maximal) directed string $s$
is said to be a {\defines {\rm(}right maximal{\rm)} directed string module}.
\end{definition}

\begin{remark} \rm
(1) A {\defines band} (of length $m$) is a string $b=(b_1,\ldots,b_m)$ with $\target(b_m)=\source(b_1)$ such that
\begin{itemize}
  \item $b^2$ is a string of length $2m$;
  \item $b$ is not a non-trivial power of some string,
    that is, there is no string $s$ such that $b=s^n$ for some $n\geqslant 2$.
\end{itemize}
Two bands $b=b_1\cdots b_m$ and $b'= b_1'\cdots b_m'$ are called {\defines equivalent} if either
$b[t] = b'$ or $b[t]^{-1}=b'$, where $b[t] = b_{1+t}b_{2+t}\cdots b_{n-1}b_{n}b_1 \cdots b_t$.
We denote by $\Band(A)$ the set of all equivalent classes of bands on the bound quiver of $A$.

Suppose that $b=b_1b_2\cdots b_n$ is a band on $\Q$, where $\source(a_i)=v_i$ ($1\leqslant i\leqslant n$).
For any $u\in\NN^+$ and $0\ne \lambda\in \kk$, it induces a {\defines band module},
denote it by $\M(s,J_n(\lambda))$, which satisfies the following conditions:
\begin{itemize}
  \item for $v\in \Q_0$, we have $\dim_k(\M(s)e_v)=tu$, where $u$ is the multiplicities of $s$ transverses $v$;
  \item any $b_i\in \Q_1$ with $1<i\leqslant n$ provides an identity from $\kk_i^{\oplus u}$ to $\kk_{i+1}^{\oplus u}$,
  where $\kk_i$ and $\kk_{i+1}$ are copies of $\kk$ which, as two $\kk$-vector spaces, are direct summands of
  $\M(s,J_n(\lambda))e_{v_i}$ and $\M(s,\pmb{J}_n(\lambda))e_{v_{i+1}}$, respectively;
  \item the arrow $b_1$ provides a $\kk$-linear map
    $\pmb{J}_n(\lambda): = \left(\begin{smallmatrix}
    1 & & & \\
    \lambda &1 & & \\
     & \ddots & \ddots &\\
     & & \lambda & 1
    \end{smallmatrix}\right)$
    from $\kk_1^{\oplus u}$ to $\kk_{2}^{\oplus u}$;
  \item any right $A$-action $\M(s)a$ given by the arrow $a\in \Q_1$ does not on $s$ is a zero action.
\end{itemize}
All indecomposable modules defined on a string algebra have been described  in \cite{BR1987} by the following bijection:
\[ \M: \Str(A) \cup (\Band(A)\times\mathscr{J}) \to \ind(\modcat(A)). \]
Here, $\mathscr{J}$ is the set of all Jordan blocks $J_n(\lambda)$ with $\lambda\ne 0$ and $\ind(\modcat(A))$
is the set of all isoclasses of indecomposable modules over a string algebra $A$.

(2)  Strings and bands originated from V-sequences and primitive V-sequences introduced by Wald and Waschb\"{u}sch \cite{WW1985}.
(Primitive) V-sequences are used to describe the module categories of biserial algebras.
\end{remark}

\begin{example}\rm
Let $A=\kk\Q/\I$ be the almost gentle algebra given in Example \ref{exp:almost gent}.
Then $s_1 = a_{1,2_{\Right}} b_{1,2_{\Right}}^{-1}$ and $s_2 = a_{2,4} a_{4,5} a_{4_{\Left},5}^{-1} b_{4_{\Left},5} a_{4,5}^{-1}$
are strings, but they are not directed strings;
and $s_3 = a_{2,4}a_{4,5}$ is a direct string.
The string modules $\M(s_1)$, $\M(s_2)$ and $\M(s_3)$ are shown in \Pic \ref{fig:str mods}.
\begin{figure}[htbp]
\centering
\begin{tikzpicture}[scale=0.8]
% vertices
\draw ( 0  , 3  ) node{$\kk^{2}$};
\draw ( 0  , 1.5) node{$0$};
\draw (-1.2, 0  ) node{$0$};
\draw ( 1.2, 0  ) node{$0$};
\draw ( 0  ,-1.5) node{$0$};
\draw ( 0  ,-3  ) node{$0$};
\draw (-2.1, 2.1) node{$0$};
\draw (-3.0, 0  ) node{$0$};
\draw (-2.1,-2.1) node{$0$};
\draw ( 2.1, 2.1) node{$\kk$};
\draw ( 3.0, 0  ) node{$0$};
\draw ( 2.1,-2.1) node{$0$};
% arrows
\draw[line width = 1pt][->] ( 0  ,-1.7) -- (   0,-2.8);
%\draw ( 0  ,-2.2) node[right]{$a_{4,5}$};
\draw[line width = 1pt][->] ( 0  , 1.2) -- (   0,-1.2);
%\draw ( 0  , 0  ) node[ left]{$a_{2,4}$};
\draw[line width = 1pt][->] ( 0  , 2.8) -- (   0, 1.7);
%\draw ( 0  , 2.2) node[ left]{$a_{1,2}$};
\draw[line width = 1pt][->] (-0.2, 1.5) to[out= 180, in=  90] (-1.2, 0.2);
\draw[line width = 1pt][->] (-1.2,-0.2) to[out= -90, in= 180] (-0.2,-1.5);
\draw[line width = 1pt][->] ( 0.2, 1.5) to[out=   0, in=  90] ( 1.2, 0.2);
\draw[line width = 1pt][->] ( 1.2,-0.2) to[out= -90, in=   0] ( 0.2,-1.5);
%\draw (-1. , 1.0) node[ left]{$a_{2,3}$};
%\draw ( 1. , 1.0) node[right]{$a_{2,3'}$};
%\draw (-1. ,-1.0) node[ left]{$a_{3,4}$};
%\draw ( 1. ,-1.0) node[right]{$a_{3',4}$};
\draw[line width = 1pt][->][rotate= 5+  0] (0,3) arc(90:125:3);
\draw[line width = 1pt][->][rotate= 5+ 45] (0,3) arc(90:125:3);
\draw[line width = 1pt][->][rotate= 5+ 90] (0,3) arc(90:125:3);
\draw[line width = 1pt][->][rotate= 5+135] (0,3) arc(90:125:3);
\draw[line width = 1pt][->][rotate=-5-  0] (0,3) arc(90:55:3);
\draw[line width = 1pt][->][rotate=-5- 45] (0,3) arc(90:55:3);
\draw[line width = 1pt][->][rotate=-5- 90] (0,3) arc(90:55:3);
\draw[line width = 1pt][->][rotate=-5-135] (0,3) arc(90:55:3);
\draw[line width = 1pt][->][rotate=-5+  0] (0,2.8) arc(90:55:2.8);
\draw[line width = 1pt][<-][rotate=-5+180] (0,2.8) arc(90:55:2.8);
\draw[rotate=   0] ( 1.06, 3.21) node{\tiny$[0\ 1]$};
\draw[rotate=   0] ( 0.96, 2.35) node{\tiny$[1\ 0]$};
%\draw[rotate=  45] ( 1.06, 3.21) node{$a_{1,2_{\Left}}$};
%\draw[rotate=  90] ( 1.06, 3.21) node{$a_{2_{\Left},3_{\Left}}$};
%\draw[rotate= 135] ( 1.06, 3.21) node{$a_{3_{\Left},4_{\Left}}$};
%\draw[rotate= 180] ( 1.06, 3.21) node{$a_{4_{\Left},5}$};
%\draw[rotate= -45] ( 1.06, 3.21) node{$a_{2_{\Right},3_{\Right}}$};
%\draw[rotate= -90] ( 1.06, 3.21) node{$a_{3_{\Right},4_{\Right}}$};
%\draw[rotate=-135] ( 1.06, 3.21) node{$a_{4_{\Right},5}$};
%\draw[rotate= 180] ( 0.96, 2.35) node{$b_{4_{\Left},5}$};
% relations
\draw[ red][rotate=   0] (-1.72, 2.45) arc(  45:-135:0.5) [line width = 1pt][dotted];
\draw[ red][rotate=   0] (-1.21, 0.50) arc(  90: 270:0.5) [line width = 1pt][dotted];
\draw[ red][rotate=   0] ( 0.00, 2.00) arc(  90: 185:0.5) [line width = 1pt][dotted];
\draw[blue][rotate=   0] ( 0.00, 2.20) arc(  90:  -5:0.7) [line width = 1pt][dotted];
\draw[blue][rotate= 180] (-1.72, 2.45) arc(  45:-135:0.5) [line width = 1pt][dotted];
\draw[blue][rotate= 180] (-1.21, 0.50) arc(  90: 270:0.5) [line width = 1pt][dotted];
\draw[blue][rotate= 180] ( 0.00, 2.00) arc(  90: 185:0.5) [line width = 1pt][dotted];
\draw[ red][rotate= 180] ( 0.00, 2.20) arc(  90:  -5:0.7) [line width = 1pt][dotted];
\draw[green][rotate=   0] (-1.62,-2.25) arc(-25: 135:0.5) [line width = 1pt][dotted];
\draw[green][rotate= 180] (-1.62,-2.25) arc(-25: 135:0.5) [line width = 1pt][dotted];
\draw[green] ( 0.00, 2.10) to[out=0,in=0] ( 0.00, 0.90) [line width = 1pt][dotted];
\end{tikzpicture}
\\
\begin{tikzpicture}[scale=0.8]
% vertices
\draw ( 0  , 3  ) node{$0$};
\draw ( 0  , 1.5) node{$\kk$};
\draw (-1.2, 0  ) node{$0$};
\draw ( 1.2, 0  ) node{$0$};
\draw ( 0  ,-1.5) node{$\kk^2$};
\draw ( 0  ,-3  ) node{$\kk^2$};
\draw (-2.1, 2.1) node{$0$};
\draw (-3.0, 0  ) node{$0$};
\draw (-2.1,-2.1) node{$\kk$};
\draw ( 2.1, 2.1) node{$0$};
\draw ( 3.0, 0  ) node{$0$};
\draw ( 2.1,-2.1) node{$0$};
% arrows
\draw[line width = 1pt][->] ( 0  ,-1.7) -- (   0,-2.8);
\draw ( 0  ,-2.2) node[right]{$[{_0^1}\ {_1^0}]$};
\draw[line width = 1pt][->] ( 0  , 1.2) -- (   0,-1.2);
\draw ( 0  , 0  ) node[ left]{$[{_1^0}]$};
\draw[line width = 1pt][->] ( 0  , 2.8) -- (   0, 1.7);
%\draw ( 0  , 2.2) node[ left]{$a_{1,2}$};
\draw[line width = 1pt][->] (-0.2, 1.5) to[out= 180, in=  90] (-1.2, 0.2);
\draw[line width = 1pt][->] (-1.2,-0.2) to[out= -90, in= 180] (-0.2,-1.5);
\draw[line width = 1pt][->] ( 0.2, 1.5) to[out=   0, in=  90] ( 1.2, 0.2);
\draw[line width = 1pt][->] ( 1.2,-0.2) to[out= -90, in=   0] ( 0.2,-1.5);
%\draw (-1. , 1.0) node[ left]{$a_{2,3}$};
%\draw ( 1. , 1.0) node[right]{$a_{2,3'}$};
%\draw (-1. ,-1.0) node[ left]{$a_{3,4}$};
%\draw ( 1. ,-1.0) node[right]{$a_{3',4}$};
\draw[line width = 1pt][->][rotate= 5+  0] (0,3) arc(90:125:3);
\draw[line width = 1pt][->][rotate= 5+ 45] (0,3) arc(90:125:3);
\draw[line width = 1pt][->][rotate= 5+ 90] (0,3) arc(90:125:3);
\draw[line width = 1pt][->][rotate= 5+135] (0,3) arc(90:125:3);
\draw[line width = 1pt][->][rotate=-5-  0] (0,3) arc(90:55:3);
\draw[line width = 1pt][->][rotate=-5- 45] (0,3) arc(90:55:3);
\draw[line width = 1pt][->][rotate=-5- 90] (0,3) arc(90:55:3);
\draw[line width = 1pt][->][rotate=-5-135] (0,3) arc(90:55:3);
\draw[line width = 1pt][->][rotate=-5+  0] (0,2.8) arc(90:55:2.8);
\draw[line width = 1pt][<-][rotate=-5+180] (0,2.8) arc(90:55:2.8);
%
%\draw[rotate=   0] ( 1.06, 3.21) node{$0$};
%\draw[rotate=   0] ( 0.96, 2.35) node{$0$};
%\draw[rotate=  45] ( 1.06, 3.21) node{$a_{1,2_{\Left}}$};
%\draw[rotate=  90] ( 1.06, 3.21) node{$a_{2_{\Left},3_{\Left}}$};
%\draw[rotate= 135] ( 1.06, 3.21) node{$a_{3_{\Left},4_{\Left}}$};
\draw[rotate= 180] ( 1.06, 3.21) node{$[^0_1]$};
%\draw[rotate= -45] ( 1.06, 3.21) node{$a_{2_{\Right},3_{\Right}}$};
%\draw[rotate= -90] ( 1.06, 3.21) node{$a_{3_{\Right},4_{\Right}}$};
%\draw[rotate=-135] ( 1.06, 3.21) node{$a_{4_{\Right},5}$};
\draw[rotate= 180] ( 0.96, 2.35) node{$[_0^1]$};
% relations
\draw[ red][rotate=   0] (-1.72, 2.45) arc(  45:-135:0.5) [line width = 1pt][dotted];
\draw[ red][rotate=   0] (-1.21, 0.50) arc(  90: 270:0.5) [line width = 1pt][dotted];
\draw[ red][rotate=   0] ( 0.00, 2.00) arc(  90: 185:0.5) [line width = 1pt][dotted];
\draw[blue][rotate=   0] ( 0.00, 2.20) arc(  90:  -5:0.7) [line width = 1pt][dotted];
\draw[blue][rotate= 180] (-1.72, 2.45) arc(  45:-135:0.5) [line width = 1pt][dotted];
\draw[blue][rotate= 180] (-1.21, 0.50) arc(  90: 270:0.5) [line width = 1pt][dotted];
\draw[blue][rotate= 180] ( 0.00, 2.00) arc(  90: 185:0.5) [line width = 1pt][dotted];
\draw[ red][rotate= 180] ( 0.00, 2.20) arc(  90:  -5:0.7) [line width = 1pt][dotted];
\draw[green][rotate=   0] (-1.62,-2.25) arc(-25: 135:0.5) [line width = 1pt][dotted];
\draw[green][rotate= 180] (-1.62,-2.25) arc(-25: 135:0.5) [line width = 1pt][dotted];
\draw[green] ( 0.00, 2.10) to[out=0,in=0] ( 0.00, 0.90) [line width = 1pt][dotted];
\end{tikzpicture}
\ \
\begin{tikzpicture}[scale=0.8]
% vertices
\draw ( 0  , 3  ) node{$0$};
\draw ( 0  , 1.5) node{$\kk$};
\draw (-1.2, 0  ) node{$0$};
\draw ( 1.2, 0  ) node{$0$};
\draw ( 0  ,-1.5) node{$\kk$};
\draw ( 0  ,-3  ) node{$\kk$};
\draw (-2.1, 2.1) node{$0$};
\draw (-3.0, 0  ) node{$0$};
\draw (-2.1,-2.1) node{$0$};
\draw ( 2.1, 2.1) node{$0$};
\draw ( 3.0, 0  ) node{$0$};
\draw ( 2.1,-2.1) node{$0$};
% arrows
\draw[line width = 1pt][->] ( 0  ,-1.7) -- (   0,-2.8);
\draw ( 0  ,-2.2) node[right]{$1$};
\draw[line width = 1pt][->] ( 0  , 1.2) -- (   0,-1.2);
\draw ( 0  , 0  ) node[ left]{$1$};
\draw[line width = 1pt][->] ( 0  , 2.8) -- (   0, 1.7);
%\draw ( 0  , 2.2) node[ left]{$a_{1,2}$};
\draw[line width = 1pt][->] (-0.2, 1.5) to[out= 180, in=  90] (-1.2, 0.2);
\draw[line width = 1pt][->] (-1.2,-0.2) to[out= -90, in= 180] (-0.2,-1.5);
\draw[line width = 1pt][->] ( 0.2, 1.5) to[out=   0, in=  90] ( 1.2, 0.2);
\draw[line width = 1pt][->] ( 1.2,-0.2) to[out= -90, in=   0] ( 0.2,-1.5);
%\draw (-1. , 1.0) node[ left]{$a_{2,3}$};
%\draw ( 1. , 1.0) node[right]{$a_{2,3'}$};
%\draw (-1. ,-1.0) node[ left]{$a_{3,4}$};
%\draw ( 1. ,-1.0) node[right]{$a_{3',4}$};
\draw[line width = 1pt][->][rotate= 5+  0] (0,3) arc(90:125:3);
\draw[line width = 1pt][->][rotate= 5+ 45] (0,3) arc(90:125:3);
\draw[line width = 1pt][->][rotate= 5+ 90] (0,3) arc(90:125:3);
\draw[line width = 1pt][->][rotate= 5+135] (0,3) arc(90:125:3);
\draw[line width = 1pt][->][rotate=-5-  0] (0,3) arc(90:55:3);
\draw[line width = 1pt][->][rotate=-5- 45] (0,3) arc(90:55:3);
\draw[line width = 1pt][->][rotate=-5- 90] (0,3) arc(90:55:3);
\draw[line width = 1pt][->][rotate=-5-135] (0,3) arc(90:55:3);
\draw[line width = 1pt][->][rotate=-5+  0] (0,2.8) arc(90:55:2.8);
\draw[line width = 1pt][<-][rotate=-5+180] (0,2.8) arc(90:55:2.8);
%
%\draw[rotate=   0] ( 1.06, 3.21) node{$0$};
%\draw[rotate=   0] ( 0.96, 2.35) node{$0$};
%\draw[rotate=  45] ( 1.06, 3.21) node{$a_{1,2_{\Left}}$};
%\draw[rotate=  90] ( 1.06, 3.21) node{$a_{2_{\Left},3_{\Left}}$};
%\draw[rotate= 135] ( 1.06, 3.21) node{$a_{3_{\Left},4_{\Left}}$};
%\draw[rotate= 180] ( 1.06, 3.21) node{$a_{4_{\Left},5}$};
%\draw[rotate= -45] ( 1.06, 3.21) node{$a_{2_{\Right},3_{\Right}}$};
%\draw[rotate= -90] ( 1.06, 3.21) node{$a_{3_{\Right},4_{\Right}}$};
%\draw[rotate=-135] ( 1.06, 3.21) node{$a_{4_{\Right},5}$};
%\draw[rotate= 180] ( 0.96, 2.35) node{$b_{4_{\Left},5}$};
% relations
\draw[ red][rotate=   0] (-1.72, 2.45) arc(  45:-135:0.5) [line width = 1pt][dotted];
\draw[ red][rotate=   0] (-1.21, 0.50) arc(  90: 270:0.5) [line width = 1pt][dotted];
\draw[ red][rotate=   0] ( 0.00, 2.00) arc(  90: 185:0.5) [line width = 1pt][dotted];
\draw[blue][rotate=   0] ( 0.00, 2.20) arc(  90:  -5:0.7) [line width = 1pt][dotted];
\draw[blue][rotate= 180] (-1.72, 2.45) arc(  45:-135:0.5) [line width = 1pt][dotted];
\draw[blue][rotate= 180] (-1.21, 0.50) arc(  90: 270:0.5) [line width = 1pt][dotted];
\draw[blue][rotate= 180] ( 0.00, 2.00) arc(  90: 185:0.5) [line width = 1pt][dotted];
\draw[ red][rotate= 180] ( 0.00, 2.20) arc(  90:  -5:0.7) [line width = 1pt][dotted];
\draw[green][rotate=   0] (-1.62,-2.25) arc(-25: 135:0.5) [line width = 1pt][dotted];
\draw[green][rotate= 180] (-1.62,-2.25) arc(-25: 135:0.5) [line width = 1pt][dotted];
\draw[green] ( 0.00, 2.10) to[out=0,in=0] ( 0.00, 0.90) [line width = 1pt][dotted];
\end{tikzpicture}
\caption{String modules}
\label{fig:str mods}
\end{figure}
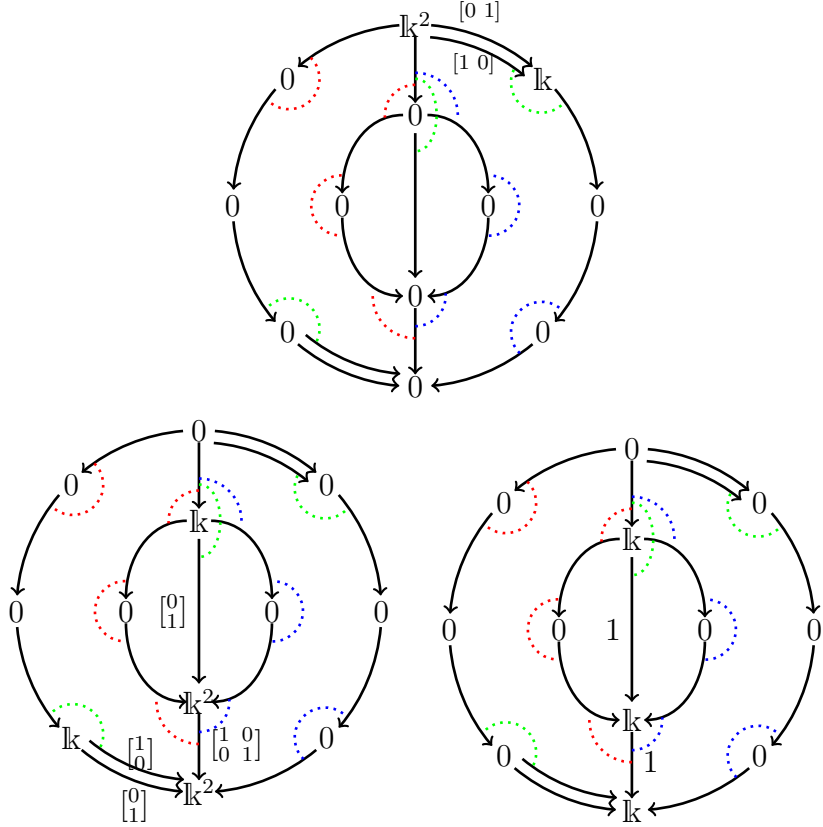

The string $s_1$ with a Jordan block $\pmb{J}_n(\lambda)$ ($\lambda \ne 0$), i.e., the pair $(s_1, \pmb{J}_n(\lambda)$, is a band,
which describe the band module $\M(s_1, \pmb{J}_n(\lambda))$ shown in \Pic \ref{fig:band mod}.
\begin{figure}[htbp]
\begin{tikzpicture}[scale=0.8]
% vertices
\draw ( 0  , 3  ) node{$\kk^n$};
\draw ( 0  , 1.5) node{$0$};
\draw (-1.2, 0  ) node{$0$};
\draw ( 1.2, 0  ) node{$0$};
\draw ( 0  ,-1.5) node{$0$};
\draw ( 0  ,-3  ) node{$0$};
\draw (-2.1, 2.1) node{$0$};
\draw (-3.0, 0  ) node{$0$};
\draw (-2.1,-2.1) node{$0$};
\draw ( 2.1, 2.1) node{$\kk^n$};
\draw ( 3.0, 0  ) node{$0$};
\draw ( 2.1,-2.1) node{$0$};
% arrows
\draw[line width = 1pt][->] ( 0  ,-1.7) -- (   0,-2.8);
%\draw ( 0  ,-2.2) node[right]{$0$};
\draw[line width = 1pt][->] ( 0  , 1.2) -- (   0,-1.2);
%\draw ( 0  , 0  ) node[ left]{$0$};
\draw[line width = 1pt][->] ( 0  , 2.8) -- (   0, 1.7);
%\draw ( 0  , 2.2) node[ left]{$a_{1,2}$};
\draw[line width = 1pt][->] (-0.2, 1.5) to[out= 180, in=  90] (-1.2, 0.2);
\draw[line width = 1pt][->] (-1.2,-0.2) to[out= -90, in= 180] (-0.2,-1.5);
\draw[line width = 1pt][->] ( 0.2, 1.5) to[out=   0, in=  90] ( 1.2, 0.2);
\draw[line width = 1pt][->] ( 1.2,-0.2) to[out= -90, in=   0] ( 0.2,-1.5);
%\draw (-1. , 1.0) node[ left]{$a_{2,3}$};
%\draw ( 1. , 1.0) node[right]{$a_{2,3'}$};
%\draw (-1. ,-1.0) node[ left]{$a_{3,4}$};
%\draw ( 1. ,-1.0) node[right]{$a_{3',4}$};
\draw[line width = 1pt][->][rotate= 5+  0] (0,3) arc(90:125:3);
\draw[line width = 1pt][->][rotate= 5+ 45] (0,3) arc(90:125:3);
\draw[line width = 1pt][->][rotate= 5+ 90] (0,3) arc(90:125:3);
\draw[line width = 1pt][->][rotate= 5+135] (0,3) arc(90:125:3);
\draw[line width = 1pt][->][rotate=-5-  0] (0,3) arc(90:55:3);
\draw[line width = 1pt][->][rotate=-5- 45] (0,3) arc(90:55:3);
\draw[line width = 1pt][->][rotate=-5- 90] (0,3) arc(90:55:3);
\draw[line width = 1pt][->][rotate=-5-135] (0,3) arc(90:55:3);
\draw[line width = 1pt][->][rotate=-5+  0] (0,2.8) arc(90:55:2.8);
\draw[line width = 1pt][<-][rotate=-5+180] (0,2.8) arc(90:55:2.8);
\draw[rotate=   0] ( 1.06, 3.21) node{$\mathrm{id}$};
\draw[rotate=   0] ( 0.96, 2.35) node{\tiny$\pmb{J}_n(\lambda)$};
%\draw[rotate=  45] ( 1.06, 3.21) node{$a_{1,2_{\Left}}$};
%\draw[rotate=  90] ( 1.06, 3.21) node{$a_{2_{\Left},3_{\Left}}$};
%\draw[rotate= 135] ( 1.06, 3.21) node{$a_{3_{\Left},4_{\Left}}$};
%\draw[rotate= 180] ( 1.06, 3.21) node{$a_{4_{\Left},5}$};
%\draw[rotate= -45] ( 1.06, 3.21) node{$a_{2_{\Right},3_{\Right}}$};
%\draw[rotate= -90] ( 1.06, 3.21) node{$a_{3_{\Right},4_{\Right}}$};
%\draw[rotate=-135] ( 1.06, 3.21) node{$a_{4_{\Right},5}$};
%\draw[rotate= 180] ( 0.96, 2.35) node{$b_{4_{\Left},5}$};
% relations
\draw[ red][rotate=   0] (-1.72, 2.45) arc(  45:-135:0.5) [line width = 1pt][dotted];
\draw[ red][rotate=   0] (-1.21, 0.50) arc(  90: 270:0.5) [line width = 1pt][dotted];
\draw[ red][rotate=   0] ( 0.00, 2.00) arc(  90: 185:0.5) [line width = 1pt][dotted];
\draw[blue][rotate=   0] ( 0.00, 2.20) arc(  90:  -5:0.7) [line width = 1pt][dotted];
\draw[blue][rotate= 180] (-1.72, 2.45) arc(  45:-135:0.5) [line width = 1pt][dotted];
\draw[blue][rotate= 180] (-1.21, 0.50) arc(  90: 270:0.5) [line width = 1pt][dotted];
\draw[blue][rotate= 180] ( 0.00, 2.00) arc(  90: 185:0.5) [line width = 1pt][dotted];
\draw[ red][rotate= 180] ( 0.00, 2.20) arc(  90:  -5:0.7) [line width = 1pt][dotted];
\draw[green][rotate=   0] (-1.62,-2.25) arc(-25: 135:0.5) [line width = 1pt][dotted];
\draw[green][rotate= 180] (-1.62,-2.25) arc(-25: 135:0.5) [line width = 1pt][dotted];
\draw[green] ( 0.00, 2.10) to[out=0,in=0] ( 0.00, 0.90) [line width = 1pt][dotted];
\end{tikzpicture}
\caption{A band module}
\label{fig:band mod}
\end{figure}
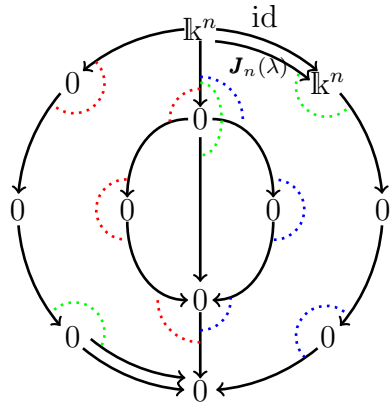

All indecomposable modules over a string algebra can be divided into two classes, one is a collection of
all string modules and the other one is a collection of all band modules. But, it does not hold true
over almost gentle algebras in general, see the indecomposable module shown in \Pic \ref{fig:ind mod}.
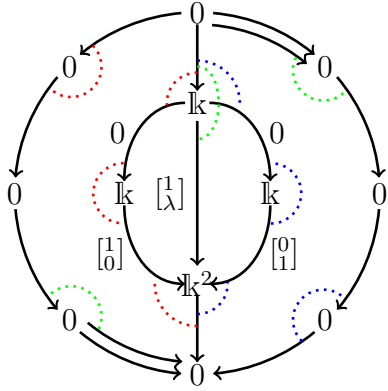
\begin{figure}[H]
\centering
\begin{tikzpicture}[scale=0.8]
% vertices
\draw ( 0  , 3  ) node{$0$};
\draw ( 0  , 1.5) node{$\kk$};
\draw (-1.2, 0  ) node{$\kk$};
\draw ( 1.2, 0  ) node{$\kk$};
\draw ( 0  ,-1.5) node{$\kk^2$};
\draw ( 0  ,-3  ) node{$0$};
\draw (-2.1, 2.1) node{$0$};
\draw (-3.0, 0  ) node{$0$};
\draw (-2.1,-2.1) node{$0$};
\draw ( 2.1, 2.1) node{$0$};
\draw ( 3.0, 0  ) node{$0$};
\draw ( 2.1,-2.1) node{$0$};
% arrows
\draw[line width = 1pt][->] ( 0  ,-1.7) -- (   0,-2.8);
%\draw ( 0  ,-2.2) node[right]{$a_{4,5}$};
\draw[line width = 1pt][->] ( 0  , 1.2) -- (   0,-1.2);
\draw ( 0  , 0  ) node[ left]{$[^1_{\lambda}]$};
\draw[line width = 1pt][->] ( 0  , 2.8) -- (   0, 1.7);
%\draw ( 0  , 2.2) node[ left]{$a_{1,2}$};
\draw[line width = 1pt][->] (-0.2, 1.5) to[out= 180, in=  90] (-1.2, 0.2);
\draw[line width = 1pt][->] (-1.2,-0.2) to[out= -90, in= 180] (-0.2,-1.5);
\draw[line width = 1pt][->] ( 0.2, 1.5) to[out=   0, in=  90] ( 1.2, 0.2);
\draw[line width = 1pt][->] ( 1.2,-0.2) to[out= -90, in=   0] ( 0.2,-1.5);
\draw (-1. , 1.0) node[ left]{$0$};
\draw ( 1. , 1.0) node[right]{$0$};
\draw (-1. ,-1.0) node[ left]{$[^1_0]$};
\draw ( 1. ,-1.0) node[right]{$[^0_1]$};
\draw[line width = 1pt][->][rotate= 5+  0] (0,3) arc(90:125:3);
\draw[line width = 1pt][->][rotate= 5+ 45] (0,3) arc(90:125:3);
\draw[line width = 1pt][->][rotate= 5+ 90] (0,3) arc(90:125:3);
\draw[line width = 1pt][->][rotate= 5+135] (0,3) arc(90:125:3);
\draw[line width = 1pt][->][rotate=-5-  0] (0,3) arc(90:55:3);
\draw[line width = 1pt][->][rotate=-5- 45] (0,3) arc(90:55:3);
\draw[line width = 1pt][->][rotate=-5- 90] (0,3) arc(90:55:3);
\draw[line width = 1pt][->][rotate=-5-135] (0,3) arc(90:55:3);
\draw[line width = 1pt][->][rotate=-5+  0] (0,2.8) arc(90:55:2.8);
\draw[line width = 1pt][<-][rotate=-5+180] (0,2.8) arc(90:55:2.8);
%
%\draw[rotate=   0] ( 1.06, 3.21) node{$a_{1,2_{\Right}}$};
%\draw[rotate=   0] ( 0.96, 2.35) node{$b_{1,2_{\Right}}$};
%\draw[rotate=  45] ( 1.06, 3.21) node{$a_{1,2_{\Left}}$};
%\draw[rotate=  90] ( 1.06, 3.21) node{$a_{2_{\Left},3_{\Left}}$};
%\draw[rotate= 135] ( 1.06, 3.21) node{$a_{3_{\Left},4_{\Left}}$};
%\draw[rotate= 180] ( 1.06, 3.21) node{$a_{4_{\Left},5}$};
%\draw[rotate= -45] ( 1.06, 3.21) node{$a_{2_{\Right},3_{\Right}}$};
%\draw[rotate= -90] ( 1.06, 3.21) node{$a_{3_{\Right},4_{\Right}}$};
%\draw[rotate=-135] ( 1.06, 3.21) node{$a_{4_{\Right},5}$};
%\draw[rotate= 180] ( 0.96, 2.35) node{$b_{4_{\Left},5}$};
% relations
\draw[ red][rotate=   0] (-1.72, 2.45) arc(  45:-135:0.5) [line width = 1pt][dotted];
\draw[ red][rotate=   0] (-1.21, 0.50) arc(  90: 270:0.5) [line width = 1pt][dotted];
\draw[ red][rotate=   0] ( 0.00, 2.00) arc(  90: 185:0.5) [line width = 1pt][dotted];
\draw[blue][rotate=   0] ( 0.00, 2.20) arc(  90:  -5:0.7) [line width = 1pt][dotted];
\draw[blue][rotate= 180] (-1.72, 2.45) arc(  45:-135:0.5) [line width = 1pt][dotted];
\draw[blue][rotate= 180] (-1.21, 0.50) arc(  90: 270:0.5) [line width = 1pt][dotted];
\draw[blue][rotate= 180] ( 0.00, 2.00) arc(  90: 185:0.5) [line width = 1pt][dotted];
\draw[ red][rotate= 180] ( 0.00, 2.20) arc(  90:  -5:0.7) [line width = 1pt][dotted];
\draw[green][rotate=   0] (-1.62,-2.25) arc(-25: 135:0.5) [line width = 1pt][dotted];
\draw[green][rotate= 180] (-1.62,-2.25) arc(-25: 135:0.5) [line width = 1pt][dotted];
\draw[green] ( 0.00, 2.10) to[out=0,in=0] ( 0.00, 0.90) [line width = 1pt][dotted];
\end{tikzpicture}
\caption{An indecomposable module which is neither string nor band ($\lambda \ne 0$)}
\label{fig:ind mod}
\end{figure}
\end{example}

\section{Global dimension of almost gentle algebras}
%Recall that a {\defines projective resolution} of a module $M$ is an exact sequence
%\begin{align}\label{projres}
%  \cdots \To{}{} P_2 \To{p_2}{} P_1 \To{p_1}{} P_0 \To{p_0}{} M \To{}{} 0
%\end{align}
%such that each $P_i$ ($i\in\NN$) is projective.
%If there is a projective resolution of $M$ such that $P_{n}=0$ holds for all $n>d$,
%then we define the {\defines projective dimension} of $M$, say $\pdim M$, is less than or equal to $d$.
%We can define {\defines injective coresolution}
%\[ 0 \To{}{} M \To{i^0}{} E^0 \To{i^1}{} E^1 \To{i^2}{} E^2 \To{}{} \cdots \]
%($E^i$, $i\in\NN$, are injective) and the {\defines injective resolution} $\idim M$ of $M$ by dual way, cf. \cite[Chapter 7]{R1979}.

In this section, we provide a description of the global dimensions of almost gentle algebras.
To do this, we need to compute the projective resolution of any simple module.

\subsection{Syzygies of directed string modules} \label{Sect:Syzygy(ds)}

%For a module $M$ defined over an algebra $A$, its {\defines projective cover} is an epimorphism
%$P \to M$ with $P$ to be projective such that the number of indecomposable direct summands of $P$ is minimal,
%see \cite[Chapter I, Section 5, Theorem 5.8]{ASS2006},
%usually, the definition of projective cover needs the concept of superfluous submodule in ring theory,
%cf. \cite[Chapter I, Section 5, Definition 5.5]{ASS2006} or \cite[Chapter 4, Section 4.6, page 185]{R1979}
%In the case of $A$ to be an algebra, the {\defines minimal projective resolution} of $M$ is
%the exact sequence which is of the form shown (\ref{projres}) such that the homomorphism
%$\tilde{p}_{i+1}: P_{i+1} \to \Ker(p_{i})$ induced by $p_i$ is always projective cover of $\Ker(p_{i})$ ($i \ge 0$).
%We call $\Ker(p_{i})$ the {\defines $(i+1)$-th syzygy} of $M$ and denote it by $\Omega_i(M)$ for simplification.
%In particular, we say $\Omega_0(M) = M$ and call it the $0$-th syzygy of $M$.

\begin{definition}\label{def:claw} \rm \
\begin{itemize}
\item[(1)]
A {\defines claw} is a sequence of some directed strings $s_1,\cdots,s_n$, written as
\[s_1\claw s_2\claw\cdots\claw s_n,\]
whose sources coincide (cf. \Pic \ref{fig:claw}).
For simplicity, we use the symbol $s_1\claw s_1 \claw \cdots \claw s_n$ to emphasize that $s_1, \ldots, s_n$ have the same sources.

\item[(2)]
Dually, we can define that an {\defines anti-claw} is a sequence of some directed strings $s_1,\cdots,s_n$, written as
\[s_1\aclaw s_2\aclaw\cdots\aclaw s_n, \]
whose sinks coincide (cf. \Pic \ref{fig:aclaw}).
For simplicity, we use the symbol $s_1\aclaw s_2 \aclaw \cdots \aclaw s_n$ to emphasize that $s_1, \ldots, s_n$ have the same sink.
\end{itemize}
\end{definition}

\begin{figure}[htbp]
\centering
\begin{tikzpicture}
\draw (0,0) node{
\xymatrix@C=1.5cm{
& & v_0
 \ar@{~>}[lld]_{s_1}
 \ar@{~>}[ld]^{s_2}
 \ar@{~>}[rd]_{s_{n-1}}
 \ar@{~>}[rrd]^{s_n}
& & \\
v_1 & v_2 & \cdots & v_{n-1} & v_n
 }
};
\end{tikzpicture}
\caption{The claw $s_1\claw s_2\claw\cdots\claw s_n$ given by  directed strings $s_1, s_2, \ldots, s_n$}
\label{fig:claw}
\end{figure}

\begin{figure}[htbp]
\centering
\begin{tikzpicture}
\draw (0,0) node{
\xymatrix@C=1.5cm{
v_1 & v_2 & \cdots & v_{n-1} & v_n \\
& & v_0
 \ar@{<~}[llu]^{s_1}
 \ar@{<~}[lu]_{s_2}
 \ar@{<~}[ru]^{s_{n-1}}
 \ar@{<~}[rru]_{s_n}
& &
 }
};
\end{tikzpicture}
\caption{The anti-claw $s_1\aclaw s_2\aclaw\cdots\aclaw s_n$ given by directed strings $s_1, s_2, \ldots, s_n$}
\label{fig:aclaw}
\end{figure}

From the definition of the almost gentle algebras,
it is obvious that claws and anti-claws can be used to describe the indecomposable projective and injective modules, respectively.
For any module $M$ and $n\geqslant 1$, we use $\Omega_n(M)$ to denote the $n$-th syzygy of $M$, in particular, we write $\Omega_0(M)=M$.

\begin{lemma} \label{lemm:1st-syzygy}
Let $A$ be an almost gentle algebra. Take $M$ to be a directed string module given by some directed string $\ds$.
Then $\Omega_1(M)$ is a direct sum of some directed string modules, that is,
\begin{align}\label{formula:1st-syzygy}
  \Omega_1(M)= \Omega_1(\M(\ds)) \cong \bigoplus_{i=1}^r \M(s_i)
\end{align}
for some directed strings $s_1, \ldots, s_r$.
\end{lemma}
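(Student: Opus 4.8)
The statement says: the first syzygy of a directed string module over an almost gentle algebra is again a direct sum of directed string modules. Let me think about how to prove it.

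First, a directed string $\ds = a_1 a_2 \cdots a_n$ with all $a_i \in \Q_1$ gives a string module $M = \M(\ds)$ whose top is $S(v_1)$ where $v_1 = \source(a_1)$... wait, actually for a directed string $\bullet \to \bullet \to \cdots \to \bullet$, the module $\M(\ds)$ is uniserial with top $S(v_1)$ and the projective cover is $P(v_1)$. Actually wait - I need to be careful. A directed string module of the form $\bullet \to \bullet \to \cdots \to \bullet$ (arrows all forward) — its top. Hmm, let me reconsider: in the string module, a vertex $v_i$ with an arrow $a_i$ starting at it and $a_{i-1}$ ending at it — the radical is generated by the "later" parts. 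For the directed string all pointing right, $v_1$ is a source in the string, so $S(v_1)$ is in the top. Actually $M(\ds)$ for a directed string is uniserial, with top $S(v_1)$ and socle $S(v_{n+1})$. So its projective cover is $P(v_1)$.

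**Key steps.**

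\textbf{Step 1: Describe the projective cover.} The projective $P(v_1)$ is described by a claw $t_1 \claw t_2 \claw \cdots \claw t_k$ where the $t_j$ are the maximal directed strings starting at $v_1$ — by the almost gentle condition, at each vertex there is at most one way to continue a directed string "non-zero" along each arrow, but at $v_1$ there may be several arrows out of $v_1$, each giving a maximal directed path. So $P(v_1) = \M$ of a claw, i.e. $\rad P(v_1)$ is a direct sum of directed string modules $\M(t_j')$ where $t_j'$ is $t_j$ with its first arrow removed... I should state this precisely using the claw description established right before the lemma ("claws ... can be used to describe the indecomposable projective modules").

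\textbf{Step 2: Identify the kernel of $P(v_1) \twoheadrightarrow M$.} Since $M = \M(\ds)$ is uniserial with top $S(v_1)$, the surjection $P(v_1) \to M$ exists because $\ds$ (as a directed path) is a sub-path of one of the maximal directed strings $t_j$ out of $v_1$ — say $\ds$ is an initial segment of $t_1$ (here I use the almost gentle axiom: the continuation $a_1 a_2 \cdots$ is forced up to reaching a relation, so $\ds$ embeds into exactly one $t_j$ unless $\ds$ ends where it does because $a_n \beta \in \I$ for all $\beta$; in that case $\ds = t_1$ is already maximal and $M$ might be projective). The kernel $\Omega_1(M)$ then decomposes as

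\begin{equation*}
\Omega_1(M) \;\cong\; \Bigl(\bigoplus_{j=2}^{k} \M(t_j')\Bigr) \;\oplus\; \M(\ds''),
\end{equation*}

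where $\ds''$ is the "tail" of $t_1$ beyond $\ds$ (the part of $t_1$ after the first $n$ arrows), and $t_j'$ is $t_j$ with its first arrow deleted. Each of these is again a \emph{directed} string module (a subpath of a directed path is directed), which is exactly the claimed form. If $\ds$ is already maximal and $v_1$ has a single arrow out, then $M = P(v_1)$ is projective and $\Omega_1(M) = 0$, the empty direct sum.

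\textbf{Step 3: Verify that the displayed map is well-defined and the sum is direct.} One must check that the obvious map $P(v_1) = \bigoplus_j \M(t_j) \to M$ (identity on the copy indexed by $t_1$ up to length $n$, zero beyond, and zero on the other $t_j$) is a well-defined $A$-module homomorphism — this uses that $t_j$ is a directed path and $\ds$ is an initial subpath of $t_1$, plus the almost gentle relations ensure there is no "overlap" forcing extra identifications. Then the kernel is visibly the internal direct sum of submodules supported on disjoint parts of the basis, and each piece is a (directed) string submodule.

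\textbf{Main obstacle.} The delicate point is Step 2/Step 3: verifying that the first $n$ arrows of $\ds$ really do form an initial segment of a \emph{single} maximal directed string $t_1$ in the claw of $P(v_1)$, and that no relation causes the kernel to be something other than the "naive" complement. This is where the defining axiom of an almost gentle pair (at most one arrow $b$ with $ab \notin \I$, at most one $c$ with $ca \notin \I$) does the real work: it guarantees uniqueness of the continuation of a directed string and hence that $\rad P(v_1)$ and the kernel split cleanly into directed pieces, with no bands appearing. I would also need the easy observation that a connected sub-quiver of a directed path is a directed path, so each summand $\M(t_j')$, $\M(\ds'')$ is indeed a directed string module; and I should handle the degenerate cases ($M$ projective, $v_1$ a source with one outgoing arrow, $\ds$ trivial) separately but briefly.
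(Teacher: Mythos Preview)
Your proposal is correct and follows essentially the same approach as the paper: identify the top of $\M(\ds)$ as $S(v_0)$ with $v_0=\source(\ds)$, take the projective cover $P(v_0)$ described by its claw $s_1'\claw\cdots\claw s_n'$, observe that $\ds$ is an initial segment of one branch $s_j'$, and read off the kernel as the tail of $s_j'$ together with the other branches with their first arrows deleted. One small slip: in your Step~3 you write $P(v_1)=\bigoplus_j \M(t_j)$, but $P(v_1)$ is not a direct sum of the $\M(t_j)$ since all branches share the simple top; it is $\rad P(v_1)$ that splits as $\bigoplus_j \M(t_j')$, which you stated correctly earlier and which is what the kernel computation actually uses.
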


\begin{proof}
First of all, assume that the source of $\ds$ is $v_0$. Then the top of $\M(\ds)$ is isomorphic to the simple module $S(v_0)$.
It follows that the projective cover of $\M(\ds)$ is isomorphic to the indecomposable projective module $P(v_0)$.
Now, let $s_1'\claw s_2' \claw\cdots \claw s_n'$ be the claw corresponding to $P(v_0)$,
and, for each $1\leqslant t\leqslant n$, suppose $s_t' = a_{t,1}a_{t,2}\cdots a_{t,m_t}$ ($a_{t,1}$, $a_{t,2}$, $\ldots$, $a_{t,m_t}$ are arrows).
Then there are two integers $1\leqslant j\leqslant n$ and $1\leqslant l\leqslant m_j$ such that $\ds = a_{j,1}a_{j,2}\cdots a_{j,l}$.
One can check that
\[\Omega_1(M) \cong \M(a_{j,l+1}\cdots a_{j,m_j}) \oplus
  \bigoplus_{
  \begin{smallmatrix}
  1\leqslant t\leqslant n\\
  t\ne j
  \end{smallmatrix}
  } \M(a_{t,2}\cdots a_{t,m_t}). \]
Cf. \Pic \ref{fig:projcover}.
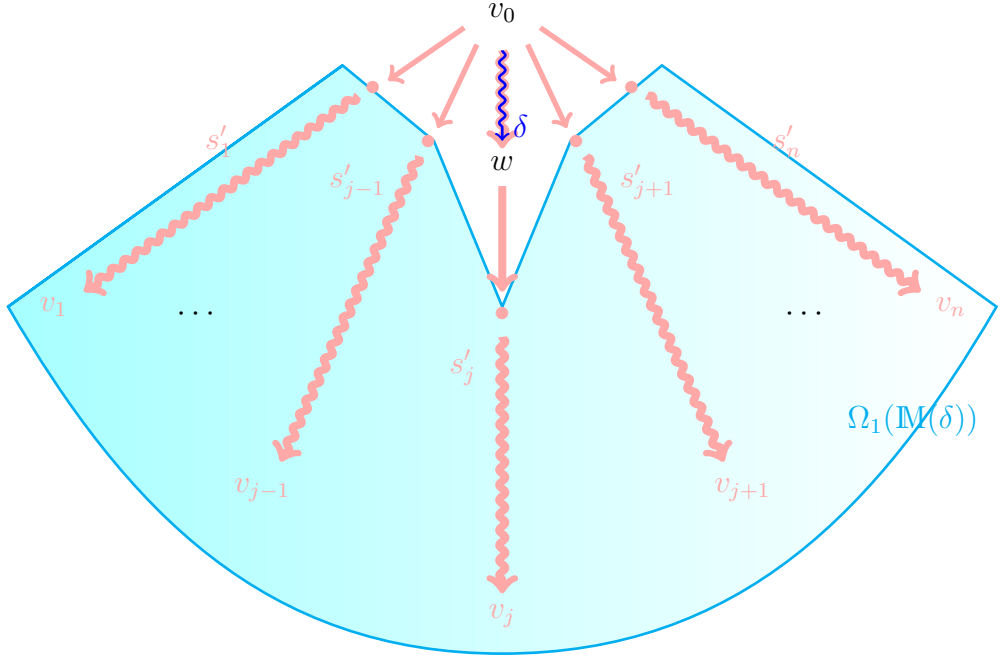
\begin{figure}[htbp]
\centering
\begin{tikzpicture}[scale=2]
% 1st-syzygy
\fill[left color=cyan!35, right color=white]
  (-3.25,-1.95) -- (-1.05,-0.35) -- (-0.45,-0.85) -- (0,-1.95)
  -- ( 0.45,-0.85) -- ( 1.05,-0.35) -- ( 3.25,-1.95)
  to[out=-120,in=0] (0,-4.25) to[out=180,in=-60] (-3.25,-1.95);
\draw[cyan][line width=1pt]
  (-3.25,-1.95) -- (-1.05,-0.35) -- (-0.45,-0.85) -- (0,-1.95)
  -- ( 0.45,-0.85) -- ( 1.05,-0.35) -- ( 3.25,-1.95)
  to[out=-120,in=0] (0,-4.25) to[out=180,in=-60] (-3.25,-1.95)
  -- (-1.05,-0.35);
\draw[cyan] (2.7,-2.7) node{$\Omega_1(\M(\ds))$};
% indecomp proj P(v0)
\draw[->][line width=3pt][red!34][decorate, decoration
      ={snake,amplitude=.4mm,segment length=2mm,post length=1mm}] (0,-0.25) -- (0,-0.90);
\draw[->][line width=3pt][red!34] (0,-1.15) -- (0,-1.85);
\draw[red!34] (0,-2) node{$\bullet$};
\draw[->][line width=3pt][red!34][decorate, decoration
      ={snake,amplitude=.4mm,segment length=2mm,post length=1mm}] (0,-2.15) -- (0,-3.85);
\draw[red!34] (0,-4) node{$v_j$};
\draw[->][line width=2pt][red!34] (-0.25,-0.1) -- (-0.75,-0.45);
\draw[red!34] (-0.85,-0.50) node{$\bullet$};
\draw[->][line width=3pt][red!34][decorate, decoration
      ={snake,amplitude=.4mm,segment length=2mm,post length=1mm}] (-0.95,-0.55) -- (-2.75,-1.85);
\draw[red!34] (-2.95,-1.95) node{$v_1$};
\draw[rotate=30][->][line width=2pt][red!34] (-0.25,-0.1) -- (-0.75,-0.45);
\draw[rotate=30][red!34] (-0.85,-0.50) node{$\bullet$};
\draw[rotate=30][->][line width=3pt][red!34][decorate, decoration
      ={snake,amplitude=.4mm,segment length=2mm,post length=1mm}] (-0.95,-0.55) -- (-2.75,-1.85);
\draw[rotate=30][red!34] (-2.95,-1.95) node{$v_{j-1}$};
\draw[rotate=-30][->][line width=2pt][red!34] ( 0.25,-0.1) -- ( 0.75,-0.45);
\draw[rotate=-30][red!34] ( 0.85,-0.50) node{$\bullet$};
\draw[rotate=-30][->][line width=3pt][red!34][decorate, decoration
      ={snake,amplitude=.4mm,segment length=2mm,post length=1mm}] ( 0.95,-0.55) -- ( 2.75,-1.85);
\draw[rotate=-30][red!34] ( 2.95,-1.95) node{$v_{j+1}$};
\draw[->][line width=2pt][red!34] ( 0.25,-0.1) -- ( 0.75,-0.45);
\draw[red!34] ( 0.85,-0.50) node{$\bullet$};
\draw[->][line width=3pt][red!34][decorate, decoration
      ={snake,amplitude=.4mm,segment length=2mm,post length=1mm}] ( 0.95,-0.55) -- ( 2.75,-1.85);
\draw[red!34] ( 2.95,-1.95) node{$v_{n}$};
% directed string d
\draw[->][line width=1pt][blue][decorate, decoration
      ={snake,amplitude=.4mm,segment length=2mm,post length=1mm}] (0,-0.25) -- (0,-0.85);
\draw[blue] (0,-0.75) node[right]{$\ds$};
% vertices
\draw ( 0, 0) node{$v_0$};
\draw ( 0,-1) node{$w$};
\draw (-2,-2) node{$\cdots$};
\draw ( 2,-2) node{$\cdots$};
\draw[red!34] (-1.7, -0.85) node[ left]{$s_1'$};
\draw[red!34] (-0.7, -1.12) node[ left]{$s_{j-1}'$};
\draw[red!34] (-0.1, -2.35) node[ left]{$s_{j}'$};
\draw[red!34] ( 0.7, -1.12) node[right]{$s_{j+1}'$};
\draw[red!34] ( 1.7, -0.85) node[right]{$s_n'$};
\end{tikzpicture}
\caption{The projective cover $P(v_0)$ and $1$-st syzygy $\Omega_1(\M(\ds))$ of $\M(\ds)$}
\label{fig:projcover}
\end{figure}
In this case, it is easy to see that each indecomposable direct summand of $\Omega_1(\M(\ds))$ is a directed string module.
\end{proof}

\begin{lemma}
\label{lemm:nth-syzygy}
Keep the notations in Lemma \ref{lemm:1st-syzygy}.
Then, for each $n\geqslant 1$, the $n$-th syzygy $\Omega_n(M)$ of $M$ is a direct sum of some directed string modules.
\end{lemma}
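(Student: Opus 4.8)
The plan is to argue by induction on $n$, using Lemma \ref{lemm:1st-syzygy} as both the base case and the engine of the inductive step. For the base case $n=1$, Lemma \ref{lemm:1st-syzygy} already tells us that $\Omega_1(M) \cong \bigoplus_{i=1}^r \M(s_i)$ with each $s_i$ a directed string; so $\Omega_1(M)$ is a direct sum of directed string modules.

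For the inductive step, suppose $n \geqslant 1$ and that $\Omega_n(M)$ is a direct sum of directed string modules, say $\Omega_n(M) \cong \bigoplus_{j=1}^r \M(t_j)$ with each $t_j$ a directed string. The key observation is that syzygy commutes with finite direct sums: $\Omega_{n+1}(M) = \Omega_1(\Omega_n(M)) \cong \bigoplus_{j=1}^r \Omega_1(\M(t_j))$ (up to projective summands, which we may discard when computing syzygies, or which vanish since we take minimal projective covers). Now apply Lemma \ref{lemm:1st-syzygy} to each directed string module $\M(t_j)$ separately: this yields $\Omega_1(\M(t_j)) \cong \bigoplus_{i} \M(s_{j,i})$ for suitable directed strings $s_{j,i}$. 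Assembling these gives $\Omega_{n+1}(M)$ as a direct sum of directed string modules, completing the induction.

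First I would make explicit the two small facts used above: that the syzygy functor is additive on finite direct sums (clear from taking the direct sum of minimal projective covers and the horseshoe-type splitting, or simply since a projective cover of $\bigoplus M_j$ is $\bigoplus$ of the projective covers of the $M_j$), and that Lemma \ref{lemm:1st-syzygy} applies to \emph{any} directed string module, not just to the original $M$ — which it does, since its statement is phrased for an arbitrary directed string $\ds$. With these in hand the induction is essentially formal.

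The main (and really the only) obstacle is conceptual rather than technical: one must be confident that the hypothesis of Lemma \ref{lemm:1st-syzygy} propagates, i.e.\ that the indecomposable summands produced at each stage are again directed string modules in the precise sense of the definition (directed strings $a_1 a_2 \cdots a_m$ with all $a_i \in \Q_1$ pointing the same way), so that the lemma can be re-applied. This is exactly what the last sentence of the proof of Lemma \ref{lemm:1st-syzygy} records — each summand $\M(a_{t,2}\cdots a_{t,m_t})$ or $\M(a_{j,l+1}\cdots a_{j,m_j})$ is a truncation of a directed string and hence a directed string module — so the inductive machinery runs without further complication. No new estimate or case analysis is needed beyond quoting Lemma \ref{lemm:1st-syzygy}.
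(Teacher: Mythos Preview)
Your proposal is correct and follows essentially the same approach as the paper: induction on $n$, with Lemma \ref{lemm:1st-syzygy} supplying both the base case and the inductive step via $\Omega_{n+1}(M)=\Omega_1(\Omega_n(M))$. If anything, your version is more explicit than the paper's, which dispatches the inductive step in a single clause without spelling out the additivity of $\Omega_1$ or the re-applicability of Lemma \ref{lemm:1st-syzygy}.
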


\begin{proof}
By Lemma \ref{lemm:1st-syzygy}, we have that $\Omega_1(M)$ is a direct sum of some directed string modules.
It is clear that if $\Omega_{n}(M)$ is a direct sum of some directed string modules, then so is
$\Omega_{n+1}(M)=\Omega_{1}(\Omega_{n}(M))$, thus the assertion follows.
\end{proof}

%Recall that a {\defines forbidden path} (of length $n$ ($\in\NN$)) on a gentle pair $(\Q,\I)$ is one of the following cases:
%\begin{itemize}
%\item[(1)] a path $a_1\cdots a_n$ such that $a_{i}a_{i+1} \in \I$ holds for all $1\leqslant i \leqslant n-1$, here $n\geqslant 1$;
%\item[(2)] a path $e_v$ of length zero (i.e., $n=0$) corresponding to $v\in\Q_0$ such that:
%\begin{itemize}
%  \item there is a unique arrow $a$ with $\target(a)=v$ and there is a unique arrow $b$ with $\source(a)=v$;
%  \item $ab\in\I$;
%\end{itemize}
%\item[(3)] a path $e_v$ of length zero (i.e., $n=0$) corresponding to a source $v\in\Q_0$
%  such that $\{a\in\Q_1 \mid \source(a)=v\}$ contains only one element;
%\item[(4)] a path $e_v$ of length zero (i.e., $n=0$) corresponding to a sink $v\in\Q_0$
%  such that $\{a\in\Q_1 \mid \target(a)=v\}$ contains only one element.
%\end{itemize}
%See for example \cite[Section 2.2]{AG2008}. Sometimes a forbidden path of length zero is called a {\defines forbidden vertex} of $(\Q,\I)$,
%and an oriented cycle $\C=a_1a_2\cdots a_n$ with $a_1a_2, \ldots, a_{n-1}a_n, a_na_1 \in \I$ is called a {\defines forbidden cycle} or a {\defines full relational orientated cycle}.

Recall that a {\defines forbidden path} (of length $n\geqslant 1$) on a gentle pair $(\Q,\I)$ is path $a_1\cdots a_n$
such that $a_{i}a_{i+1} \in \I$ holds for all $1\leqslant i \leqslant n-1$.
Sometimes an oriented cycle $\C=a_1a_2\cdots a_n$ with $a_1a_2, \ldots, a_{n-1}a_n, a_na_1 \in \I$ is called
a {\defines forbidden cycle} or a {\defines full relational orientated cycle} \cite[Section 2.2]{AG2008}.
Naturally, we can define the forbidden path on any almost gentle pair in the same way.
A {\defines forbidden path} of length zero is a path $e_v$ corresponding to a vertex $v\in\Q_0$ such that one of the following
conditions is satisfied:
\begin{itemize}
\item[(1)] a path $e_v$ of length zero corresponding to $v\in\Q_0$ such that
\begin{itemize}
  \item there is a unique arrow $a$ with $\target(a)=v$ and there is a unique arrow $b$ with $\source(a)=v$;
  \item $ab\in\I$;
\end{itemize}
\item[(2)] a path $e_v$ of length zero corresponding to a source $v\in\Q_0$
  such that $\{a\in\Q_1 \mid \source(a)=v\}$ contains only one element;
\item[(3)] a path $e_v$ of length zero corresponding to a sink $v\in\Q_0$
  such that $\{a\in\Q_1 \mid \target(a)=v\}$ contains only one element.
\end{itemize}

In \cite{AG2008}, forbidden paths were used to compute the so-called AG-invariants of gentle algebras
which describe the derived equivalence of gentle one-cycle algebras.
In \cite{LZ2019, LZ2021}, the standard forms of gentle one-cycle algebras and their geometric models by using forbidden paths and AG-invariants
were provided.

Let $\alpha$ be an arrow and $v$ the vertex $\target(\alpha)$ (resp. $\source(\alpha)$).
A vertex $v\in\Q_0$ is called an {\defines $(\alpha, \bfda)$-relational vertex} (resp. {\defines $(\alpha, \bfua)$-relational vertex}) if there is an arrow $\beta$ with $\source(\beta)=v$ (resp. $\target(\beta)=v$) such that $\alpha\beta\in\I$ (resp. $\beta\alpha\in\I$).
A vertex $v\in\Q_0$ is called a {\defines relational vertex} if it is either $(\alpha, \bfda)$-relational or $(\alpha', \bfua)$-relational for some arrows $\alpha$ and $\alpha'$,
that is, there are two arrows $a$ and $b$ with $\target(a)=v=\source(b)$ such that $ab\in\I$.

\begin{example} \rm
Consider the bound quiver $(\Q,\I)$ given in Example \ref{exp:almost gent}.
The vertex $2$ is an $(a_{1,2},\bfda)$-relational vertex because $a_{2,3}$, $a_{2,4}$, and $a_{2,3'}$ are arrows starting at $2$ such that $a_{1,2}a_{2,3}$, $a_{1,2}a_{2,4}$, and $a_{1,2}a_{2,3'}$ are belong to $\I$.
Moreover, $2$ is also an $(a_{2,4},\bfua)$-relational vertex by $a_{1,2}a_{2,4} \in \I$.
Similarly, $2_{\Right}$ is both a $(b_{1,2_{\Right}}, \bfda)$-relational vertex and an $(a_{2_{\Right},3_{\Right}},\bfua)$-relational vertex,
but it is not an $(a_{1,2_{\Right}},\bfda)$-relational vertex since $a_{1,2_{\Right}}a_{2_{\Right},3{\Right}}$ does not lie in $\I$.
\end{example}

Clearly, in the case for $(\Q,\I)$ to be an almost gentle pair and $\Q$ to be a quiver of type $\mathbb{A}$,
the path $e_v$ corresponding to a vertex $v\in\Q_0$ is a forbidden path if and only if $v$ is a relational vertex.
However, in the case for $(\Q,\I)$ to be an almost gentle pair, $v$ to be relational does not admit that $e_v$ is forbidden.

The following result can be shown by using Proposition \ref{lemm:nth-syzygy}.

\begin{proposition}\label{prop:nth-syzygy}
Keep the notations in Lemma \ref{lemm:1st-syzygy}.
Then, for each $n\geqslant 1$, every indecomposable direct summand of $\Omega_n(M)$ is one of the following
\begin{itemize}
\item[\rm(1)] a right maximal directed string module;
\item[\rm(2)] a simple module corresponding to some relational vertex;
\item[\rm(3)] a simple module corresponding to some sink.
\end{itemize}
\end{proposition}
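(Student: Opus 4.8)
The plan is to refine the description of the indecomposable summands of $\Omega_n(M)$ obtained from Lemma \ref{lemm:nth-syzygy}, which already tells us that each such summand is a directed string module $\M(s)$. The point is to show that a directed string module appearing in a syzygy cannot be an arbitrary one: it must be \emph{right maximal}, unless it has degenerated into a simple module, in which case that simple must sit at a relational vertex or at a sink. I would argue by induction on $n$, tracking carefully where a non-right-maximal directed string summand could come from.

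First I would recall the mechanism of Lemma \ref{lemm:1st-syzygy}: given a directed string module $\M(\ds)$ with source $v_0$, its projective cover is $P(v_0)$, described by the claw $s_1'\claw\cdots\claw s_n'$, and the summands of $\Omega_1(\M(\ds))$ are the ``tails'' $\M(a_{j,l+1}\cdots a_{j,m_j})$ and $\M(a_{t,2}\cdots a_{t,m_t})$ for $t\ne j$. The key observation is that each $s_t'=a_{t,1}\cdots a_{t,m_t}$ is a \emph{maximal} directed string starting at $v_0$ (it is a branch of the indecomposable projective, so it runs as far as it can before hitting a relation or a sink), and hence each tail $a_{t,2}\cdots a_{t,m_t}$ is a right maximal directed string, and likewise $a_{j,l+1}\cdots a_{j,m_j}$ is right maximal. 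So already at the first step every nonzero summand of $\Omega_1(M)$ is a right maximal directed string module; if such a tail has length zero it is the simple $S(w)$ where $w=\target(s_t')$, and that $w$ is either a sink or a vertex $w$ admitting an arrow $a_{t,m_t}$ into it with $a_{t,m_t}\gamma\in\I$ for the next arrow $\gamma$ of the branch that was cut — i.e.\ $w$ is a relational vertex (or a sink). This handles the base case $n=1$.

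For the inductive step, suppose the claim holds for $\Omega_n(M)$; then $\Omega_{n+1}(M)=\Omega_1(\Omega_n(M))$ is the direct sum of the first syzygies of the summands of $\Omega_n(M)$. A right maximal directed string module $\M(s)$ with $s$ of positive length has the same top $S(\source(s))$ as the directed string module it is, so applying $\Omega_1$ to it again produces, by the first-syzygy analysis above, only right maximal directed string modules (and their degenerate simples at relational vertices or sinks). The only subtlety is the summands of $\Omega_n(M)$ that are \emph{already simple} — a simple $S(w)$ with $w$ relational or a sink. If $w$ is a sink, $S(w)=P(w)$ is projective, so it contributes nothing to the next syzygy. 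If $w$ is a relational but non-sink vertex, then $\Omega_1(S(w))$ is computed from $P(w)$ via the claw at $w$, and the same argument as in Lemma \ref{lemm:1st-syzygy} (with $\ds=e_w$, so $l=0$ on the relevant branch) shows every summand of $\Omega_1(S(w))$ is again a right maximal directed string module or one of the admissible simples. Assembling these cases gives the claim for $\Omega_{n+1}(M)$.

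The step I expect to be the main obstacle is verifying cleanly that the branches $s_t'$ of an indecomposable projective over an almost gentle algebra are genuinely \emph{maximal} directed strings — i.e.\ that the structure of $P(v_0)$ really is a claw of maximal directed strings, with each branch terminating exactly at a sink or at a vertex where the continuing arrow would close a relation — and, relatedly, pinning down precisely why a length-zero tail forces its endpoint to be relational or a sink (one must rule out the endpoint being an ordinary vertex with a unique outgoing arrow lying outside $\I$, which would contradict maximality of the branch). This requires using condition (2) of the almost gentle pair (at most one arrow $b$ with $ab\notin\I$, at most one $c$ with $ca\notin\I$) to control how the branches of the projective grow, together with the monomiality of $\I$ so that the radical filtration of $P(v_0)$ decomposes as a sum over these branches. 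Once that structural description of $P(v_0)$ is in hand, the inductive bookkeeping in the paragraphs above is routine.
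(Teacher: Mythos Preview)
Your proposal is correct and rests on the same key observation as the paper: every summand of a syzygy arises as a tail of a branch of some indecomposable projective, and since those branches are right maximal directed strings, so are their tails (with the length-zero case forcing the endpoint to be a sink or relational). The paper organizes this as a one-step look-back contradiction (using Lemma \ref{lemm:nth-syzygy} for the previous syzygy and arguing that a non-right-maximal summand would be strictly extended by the branch it came from), whereas you run it as a clean forward induction; the content is the same, and if anything your version is more transparent---just be careful that Lemma \ref{lemm:1st-syzygy}'s formula literally assumes $\ell(\ds)\ge 1$, so for the simple case you should compute $\Omega_1(S(w))=\rad P(w)=\bigoplus_t \M(a_{t,2}\cdots a_{t,m_t})$ directly rather than plugging $l=0$ into that formula.
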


\begin{proof}
Assume that $D \le_{\oplus} \Omega_n(M)$ is an indecomposable direct summand.
Then, by Proposition \ref{lemm:nth-syzygy}, we have $D\cong \M(s)$ for some directed string.
If $s = a_1a_2\cdots a_l$ is not right maximal ($a_1, \ldots, a_l$ are arrows, $l\geqslant 0$,
and we have $s=e_v$ for some $v\in\Q_0$ and $D\cong\M(s)\cong S(v)$ in the case for $l=0$),
then there is an arrow $\alpha$ such that $s\alpha = a_1a_2\cdots a_l\alpha$, as a path on $\Q$, does not belong to $\I$.
Now, assume that $s\alpha_1\cdots\alpha_m =  a_1a_2\cdots a_l\alpha_1\alpha_2\cdots\alpha_m$ is right maximal. Then we can choose these arrows
$\alpha_1\ldots\alpha_m\in \Q_1$ such that $s\alpha_1\ldots \alpha_m$ is a right maximal directed string
since $A$ is an almost gentle algebra, there is no oriented cycle $\alpha_1\alpha_2\ldots\alpha_m$ over $\Q$.
Then the projective cover
\[\tilde{p}_n: P(\Omega_{n-1}(M)) \to \Omega_{n-1}(M)\]
of $\Omega_{n-1}(M)$ satisfies the following conditions:
\begin{itemize}
\item $P(\Omega_{n-1}(M))$ has an indecomposable projective direct summand $P(v)$ such that $v$
is a source of some right maximal directed string $s'$ which is of the form
\[s' = b_1b_2\cdots b_k s  \alpha_1\alpha_2\cdots\alpha_m
     = b_1b_2\cdots b_ka_1a_2\cdots a_l\alpha_1\alpha_2\cdots\alpha_m; \]

\item $P(v)$ can be described as the claw $s_1'\claw s_2'\claw \cdots\claw s_r'$ such that
$s'=s_j'$ for some $1\leqslant j\leqslant r$.
\end{itemize}
In this case, we have $D \cong \M(a_1a_2\cdots a_l\alpha_1\alpha_2\cdots\alpha_m) \le_{\oplus} \Ker(\tilde{p}_n)$ ($=\Omega_n(M)$),
which contradicts that $D\cong \M(s)$ in the case of $l\geqslant 1$.

Therefore, we have $l=0$, i.e., $s=e_v$. In this case, $l=m=0$ holds, and then $s' = b_1b_2\cdots b_k$ is right maximal.
By the above argument, we have that $\target(s') = \target(b_k)$ and $v$ coincide.
Thus $v$ is either a relational vertex or a sink.
\end{proof}

\begin{example} \label{exp:proj resol} \rm
Consider the almost gentle algebra $A=\kk\Q/\I$ given in Example \ref{exp:almost gent},
the projective cover of the directed string module $\M(a_{1,2})$ is
\[p_0 : P_0 = P(1) \To{}{} \M(a_{1,2}) = ({_2^1}) \]
whose kernel is given by
\[ \tilde{p}_0: \Ker(p_0) = (2_{\Left}) \oplus (2_{\Right}) \oplus
\left(\begin{smallmatrix}
2_{\Right} \\ 3_{\Right} \\ 4_{\Right}
\end{smallmatrix}\right) \To{}{} P(1), \]
where $ P(1) = \left(\begin{smallmatrix}
  & & 1 & & \\
 2_{\Left} & 2 &  & 2_{\Right} & 2_{\Right} \\
  & & & & 3_{\Right} \\
  & & & & 4_{\Right} \\
\end{smallmatrix}\right) $
is described by the claw
$$\clawnota = a_{1,2_{\Left}} \claw a_{1,2} \claw b_{1,2_{\Right}}
\claw a_{1,2_{\Right}}a_{2_{\Right},3_{\Right}}a_{3_{\Right},4_{\Right}}, $$
and $\Omega_1(\M(a_{1,2})) = \left(\begin{smallmatrix}
2_{\Right} \\ 3_{\Right} \\ 4_{\Right}
\end{smallmatrix}\right)
\cong \M(e_{2_{\Right}}) \oplus \M(e_2) \oplus \M(a_{2_{\Right},3_{\Right}} a_{3_{\Right},4_{\Right}})$.
Each indecomposable direct summand of $\Omega_1(\M(a_{1,2}))$ is a directed string module.
It is clear that $\M(e_{2_{\Right}}) \cong S(2_{\Right})$ and $\M(e_2) \cong S(2)$ are simple and $\M(a_{2_{\Right},3_{\Right}} a_{3_{\Right},4_{\Right}})$ is a right maximal directed string module.

Furthermore, we get the minimal projective resolution of $\M(a_{1,2})$ as follows:
\begin{align*}
 0 & \To{}{} P_2 = P(3_{\Left}) \oplus P(3_{\Right})
\To{p_2}{} P_1 = P(2_{\Left})\oplus P(2_{\Right}) \oplus P(2_{\Right}) \\
   & \To{p_1}{} P_0=P(1) \To{p_0}{} \M(a_{1,2}) = (_2^1) \To{}{} 0,
\end{align*}
where $\Omega_2(\M(a_{1,2})) \cong
(^{3_{\Left}}_{4_{\Left}}) \oplus (^{3_{\Right}}_{4_{\Right}}) = P(3_{\Left}) \oplus P(4_{\Right}) \cong \M(a_{3_{\Left},4_{\Left}}) \oplus \M(a_{3_{\Right},4_{\Right}})$ is a direct sum of two right maximal directed string modules.
\end{example}

\subsection{Projective dimension of right maximal directed string modules}

For any vertex $v$ of an almost gentle pair $(\Q,\I)$, we define
\[ v^{\da} := \{ w \in \Q_0 \mid \text{there is an arrow } a\in \Q_1 \text{ such that } \source(a) = v \text{ and } \target(a) = w \}. \]
Let $\clawnota = s_1\claw s_2\claw \cdots \claw s_n$ be a claw,
where $s_i = a_{i,1}a_{i,2}\cdots a_{i,l_i}$ ($l_1 \geqslant 1$ and $a_{i,1}$, $\ldots$, $a_{i,l_i}$ are arrows).
A vertex $\target(a_{i,j})$ ($1\leqslant j\leqslant l_i$) is said to be a {\defines $\rel{\clawnota}{i}$-relational vertex}
if there is an arrow $\alpha$ with $\source(\alpha)=\target(a_{i,j})$ such that $a_{i,j}\alpha\in\I$.
All $\rel{\clawnota}{i}$-relational vertices are called {\defines $\clawnota$-relational vertices} for simplicity.

\begin{lemma} \label{lemm:pdim dire str mod}
Let $M$ be an indecomposable module over an almost gentle algebra $A$
corresponding to a right maximal directed string $\ds =a_1a_2\cdots a_l$
{\rm(}$l\geqslant 1$, and $a_1$, $a_2$, $\ldots$, $a_l$ are arrows{\rm)}.
Then $\Omega_1(M)$ is projective if and only if all vertices in $\source(\ds)^{\da} \backslash \{\target(a_1)\}$ are not $\clawnota$-relational,
where $\clawnota$ is the claw corresponding to $P(\source(\ds))$.
\end{lemma}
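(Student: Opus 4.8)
The plan is to analyze the first syzygy $\Omega_1(M)$ explicitly via the projective cover of $M=\M(\ds)$, using Lemma~\ref{lemm:1st-syzygy}. Since $\ds=a_1a_2\cdots a_l$ is a directed string with source $v_0:=\source(\ds)$, the top of $M$ is $S(v_0)$, so the projective cover is $P(v_0)$, described by the claw $\clawnota=s_1'\claw s_2'\claw\cdots\claw s_n'$. Write $s_t'=a_{t,1}a_{t,2}\cdots a_{t,m_t}$. The string $\ds$ matches an initial segment of exactly one branch, say $\ds=a_{j,1}\cdots a_{j,l}$ (with $a_{j,1}=a_1$, etc.), and by the computation in Lemma~\ref{lemm:1st-syzygy},
\[
\Omega_1(M)\;\cong\;\M(a_{j,l+1}\cdots a_{j,m_j})\;\oplus\;\bigoplus_{\substack{1\leqslant t\leqslant n\\ t\ne j}}\M(a_{t,2}\cdots a_{t,m_t}).
\]
So $\Omega_1(M)$ is a direct sum of (possibly trivial) directed string modules, one for each branch of the claw $\clawnota$, except truncated: the $j$-th branch loses its first $l$ arrows, every other branch loses its first arrow.

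Next I would identify when each such directed string module $\M(s)$ (for $s=a_{t,2}\cdots a_{t,m_t}$ with $t\ne j$, or $s=a_{j,l+1}\cdots a_{j,m_j}$) is projective. A right maximal directed string module $\M(s)$ with $s=b_1\cdots b_k$ ($k\geqslant 1$) is projective precisely when the claw attached to $P(\source(s))$ has a single branch equal to $s$, i.e. $S(\source(s))$ has a uniserial projective cover of the required shape; more to the point, each of these truncated strings $\M(a_{t,2}\cdots a_{t,m_t})$ is itself projective if and only if the vertex $\target(a_{t,1})$ fails to be $\clawnota$-relational — because $\target(a_{t,1})$ being $\rel{\clawnota}{t}$-relational means there is an arrow $\alpha$ with $\source(\alpha)=\target(a_{t,1})$ and $a_{t,1}\alpha\in\I$, which is exactly the obstruction to $a_{t,2}\cdots a_{t,m_t}$ being the full unique branch of the projective at $\target(a_{t,1})$; when no such $\alpha$ exists, the almost-gentle condition forces $a_{t,2}\cdots a_{t,m_t}$ to extend uniquely and maximally, and the top of $\M(a_{t,2}\cdots a_{t,m_t})$ has projective cover with just that one branch, so the module is already projective (equivalently $\Omega_1=0$). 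The same analysis applies to the $j$-th summand, where the relevant vertex is $\target(a_{j,l})=\target(a_l)$, which by right maximality of $\ds$ satisfies: either $\target(a_l)$ is a sink (then $a_{j,l+1}\cdots a_{j,m_j}$ is empty, $\M=S(\target(a_l))$, which need not be projective unless $\target(a_l)$ is a source — but it is a sink that receives $a_l$, so generically $S(\target(a_l))$ is not projective) or $a_l\alpha\in\I$ for every arrow $\alpha$ out of $\target(a_l)$; in either case I need to check this summand of $\Omega_1(M)$ carefully. The cleanest route is: right maximality of $\ds$ forces the $j$-th truncated branch $a_{j,l+1}\cdots a_{j,m_j}$ to be empty, so this summand is $S(\target(a_l))$, and since $\target(a_l)$ is the sink of a right maximal string, $S(\target(a_l))$ is projective iff $\target(a_l)$ is simultaneously a source — but $a_l$ ends there, contradiction unless $l=0$; hence when $l\geqslant 1$ this summand is projective iff $\target(a_l)\notin s_j'$ extends, i.e. iff $\target(a_l)=\target(a_1)$ is already handled by the displayed set. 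I will phrase this so the condition collapses to: $\Omega_1(M)$ is projective iff every $\M(a_{t,2}\cdots a_{t,m_t})$ ($t\ne j$) is projective, i.e. iff no $\target(a_{t,1})$ with $t\ne j$ is $\clawnota$-relational. Since $\{\target(a_{t,1})\mid 1\leqslant t\leqslant n\}=\source(\ds)^{\da}$ and $\target(a_{j,1})=\target(a_1)$, the indices $t\ne j$ correspond exactly to $\source(\ds)^{\da}\setminus\{\target(a_1)\}$.

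Assembling: $\Omega_1(M)$ is projective $\iff$ every non-$j$ summand $\M(a_{t,2}\cdots a_{t,m_t})$ is projective $\iff$ for every $t\ne j$ the vertex $\target(a_{t,1})$ is not $\rel{\clawnota}{t}$-relational $\iff$ no vertex of $\source(\ds)^{\da}\setminus\{\target(a_1)\}$ is $\clawnota$-relational, which is the claim. I expect the main obstacle to be the careful treatment of the $j$-th summand $\M(a_{j,l+1}\cdots a_{j,m_j})$: one must use right maximality of $\ds$ to argue this summand contributes nothing new (either it is zero, or it is a right maximal directed string whose top vertex, being $\target(a_l)$, is forced to be non-$\clawnota$-relational automatically, or it is subsumed into the set $\source(\ds)^{\da}$ only via $\target(a_1)$, which is excluded). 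The second subtle point is the equivalence "$\M(a_{t,2}\cdots a_{t,m_t})$ is projective $\iff$ $\target(a_{t,1})$ is not $\clawnota$-relational": here I must invoke the almost-gentle axiom (at most one arrow $b$ with $a_{t,1}b\notin\I$, and at most one arrow $c$ with $c\,a_{t,1}\notin\I$) to see that the branch $a_{t,2}\cdots a_{t,m_t}$ is the \emph{unique} maximal path continuing $a_{t,1}$, so that $P(\target(a_{t,1}))$ is uniserial with radical layers exactly given by that path, hence $\M(a_{t,2}\cdots a_{t,m_t})\cong P(\target(a_{t,1}))/(\text{something})$ is projective precisely when nothing is killed, i.e. when no relation $a_{t,1}\alpha\in\I$ with $\alpha$ out of $\target(a_{t,1})$ would have forced the branch $s_t'$ of $P(v_0)$ to terminate earlier than $P(\target(a_{t,1}))$ itself does — which is exactly the failure of $\clawnota$-relationality at $\target(a_{t,1})$.
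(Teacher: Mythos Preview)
Your approach is the same as the paper's: compute $\Omega_1(M)$ via the claw $\clawnota$ for $P(\source(\ds))$, and then show that each summand $\M(a_{t,2}\cdots a_{t,m_t})$ with $t\ne j$ is projective exactly when $\target(a_{t,1})$ fails to be $\clawnota$-relational. The key equivalence you sketch (using the almost-gentle axiom to see that non-relationality forces a unique arrow out of $\target(a_{t,1})$, hence a uniserial projective there) is precisely what the paper uses.

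The one point you should clean up is the $j$-th summand. Since $\ds$ is right maximal and is an initial segment of the branch $s_j'$ of $\clawnota$, while $s_j'$ is itself right maximal (it is a branch of the indecomposable projective $P(\source(\ds))$), in fact $\ds = s_j'$ and $l=m_j$. The summand $\M(a_{j,l+1}\cdots a_{j,m_j})$ is therefore the zero module, not $S(\target(a_l))$. The paper handles this in one line (``Notice that $s_j$ is right maximal, then we have $\Omega_1(M)\cong\bigoplus_{i\ne j}X_i$''), so your extended discussion of whether $S(\target(a_l))$ is projective, and your worry that this summand might contribute something extra, are unnecessary. Once you observe that this summand vanishes, the proof reduces cleanly to the $t\ne j$ analysis, exactly as you conclude.
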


\begin{proof}
By the definition of almost gentle algebras, the indecomposable projective module
$P_0$ ($\cong P(\source(\ds))$) given by the projective cover $p_0: P_0 \to M$ of $M$ can be described by the claw
\[ \clawnota = s_1 \claw s_2 \claw \cdots \claw s_m, \]
where $s_j = a_1a_2\cdots a_l = \ds$ for some $1\leqslant j\leqslant m$, and $s_1$, $\ldots$,
$s_{j-1}$, $s_{j+1}$, $\ldots$, $s_m$ are paths of length $\geqslant 1$.
Notice that $s_j$ is right maximal, then we have
\[\Omega_1(M) \cong \bigoplus_{\begin{smallmatrix}
1\le i\le m \\
i\ne j
\end{smallmatrix}} X_i \]
where each $X_i$ is a directed string module, see Lemma \ref{lemm:1st-syzygy}.
For simplicity, assume $s_i = a_{i,1}\cdots a_{i, t_i}$ for all $1\leqslant i\leqslant m$, $i\ne j$,
here $t_i \geqslant 1$. Then we obtain
\begin{center}
$ X_i \cong \M(a_{i,2}\cdots a_{i,t_i})$

(note that $X_i$ is isomorphic to $S(\target(a_{1,i}))$ in the case for $t_i=1$).
\end{center}

Assume that $\Omega_1(M)$ is projective. If there exists a vertex $\target(a_{i,1})$ in $\source(\ds)^{\da}$
which is $\clawnota$-relational, then we have
$t_i = 1$, $X_i \cong S(\target(a_{1,i}))$,
and there is a right maximal directed string $s'$ of length $\geqslant 1$ with $\source(s') = \target(s_i) = \target(a_{i,1})$.
It is easy to see that $S(\target(a_{1,i}))$ $(\le_{\oplus} \Omega_1(M))$  is not projective by the existence of the directed string $s'$ of length $\geqslant 1$.
Thus, $\Omega_1(M)$ is not a projective module, a contradiction.

On the other hand, if all vertices in $\source(\clawnota)^{\da} \backslash \{\target(a_1)\}$ are not $\clawnota$-relational,
then for any $1\leqslant i \leqslant m$, $i\ne j$, one of the following conditions is satisfied:
\begin{itemize}
  \item[(1)] $t_i = 1$, and $\target(s_i)$ ($= \target(a_{i,1})$) is a sink of $(\Q,\I)$;
  \item[(2)] $t_i \geqslant 2$.
\end{itemize}
Thus, $X_i$ is isomorphic to either the simple module $S(\target(s_i))$ corresponding to $\target(s_i)\in\Q_0$
or the right maximal directed string module $\M(s_i')$  with $\source(s_i') = \target(s_i)$.

In the case (1), since $\target(s_i)$ is a sink of $(\Q,\I)$, we obtain that $S(\target(s_i))$ is a simple projective module.

In the case (2), assume $s_i'=b_{i,1}\cdots b_{i,\ell_i}$. Then there is no arrow $b\ne b_{i,1}$ with $\source(b_{i,1})=\source(b) = \target(s_i)$.
Otherwise, by the definition of almost gentle algebras, we have $a_{i,1}b \in \I$,
it follows that $\target(s_i) = \target(a_{i,1})$ is a $\rel{\clawnota}{i}$-relational vertex, a contradiction.

Then, by the right maximality of $s_i'$, we have that $\M(s_i')$ is projective.
Therefore, $X_i$ is a projective module, and so is $\Omega_1(M)$, as required.
\end{proof}

Similarly, one gets the following result.

\begin{lemma}\label{lemm:1st-syzygy of simp}
Let $S(v)$ be a simple module over an almost gentle algebra $A$ corresponding to the vertex $v\in\Q_0$.
Then $\Omega_1(S(v))$ is projective if and only if all vertices in $v^\da$ are not $\clawnota$-relational,
where $\clawnota$ is the claw corresponding to $P(v)$.
\end{lemma}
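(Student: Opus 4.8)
The plan is to mirror the proof of Lemma~\ref{lemm:pdim dire str mod}, viewing $S(v)=\M(e_v)$ as the (degenerate) directed string module attached to the length-zero string $e_v$ at $v$. The only structural difference is that $S(v)$ does not ``consume'' any directed string of the claw describing $P(v)$, so its first syzygy involves \emph{all} of $v^{\da}$ rather than $v^{\da}\setminus\{\target(a_1)\}$.

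First I would set up the projective cover. Since $S(v)$ is simple, its projective cover is $p_0\colon P(v)\to S(v)$, so $\Omega_1(S(v))=\Ker(p_0)=\rad P(v)$. If $v$ is a sink then $P(v)=S(v)$ is simple projective, $\Omega_1(S(v))=0$, and $v^{\da}=\varnothing$, so the assertion is vacuous; assume from now on that $v$ is not a sink. Write the claw describing $P(v)$ as $\clawnota=s_1\claw s_2\claw\cdots\claw s_m$ with $s_i=a_{i,1}a_{i,2}\cdots a_{i,t_i}$ ($t_i\geqslant 1$, the $a_{i,k}$ arrows). Then $a_{1,1},\ldots,a_{m,1}$ are exactly the arrows with source $v$, and $v^{\da}=\{\target(a_{i,1})\mid 1\leqslant i\leqslant m\}$. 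Inspecting $\rad P(v)$ as in the proof of Lemma~\ref{lemm:1st-syzygy} (each $a_{i,1}$ generates the submodule $a_{i,1}A\cong\M(a_{i,2}\cdots a_{i,t_i})$ of $P(v)$, and these submodules are independent and together span $\rad P(v)$) gives
\[ \Omega_1(S(v))\;\cong\;\bigoplus_{i=1}^{m}X_i,\qquad X_i:=\M(a_{i,2}\cdots a_{i,t_i}), \]
where, for each $i$, the module $X_i$ is a right maximal directed string module with source $\target(a_{i,1})$ when $t_i\geqslant 2$, and $X_i\cong S(\target(a_{i,1}))$ when $t_i=1$. Hence $\Omega_1(S(v))$ is projective if and only if every $X_i$ is projective, and the task reduces to deciding, for each $i$, when $X_i$ is projective.

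I would then treat the two implications summand by summand, exactly as in Lemma~\ref{lemm:pdim dire str mod}. For ``$\Leftarrow$'', assume no vertex of $v^{\da}$ is $\clawnota$-relational and fix $i$, writing $w=\target(a_{i,1})$. If $t_i=1$, then maximality of $s_i$ gives that no arrow $b$ with $\source(b)=w$ satisfies $a_{i,1}b\notin\I$, while $w$ being non-$\clawnota$-relational gives that no arrow $\alpha$ with $\source(\alpha)=w$ satisfies $a_{i,1}\alpha\in\I$; so $w$ has no outgoing arrow, i.e.\ $w$ is a sink and $X_i\cong S(w)$ is simple projective. If $t_i\geqslant 2$, then $a_{i,1}a_{i,2}\notin\I$, and the almost gentle condition forces $a_{i,2}$ to be the unique arrow out of $w$ (any other arrow $\alpha$ out of $w$ would satisfy $a_{i,1}\alpha\in\I$, making $w$ $\clawnota$-relational); together with the maximality of $s_i$ this makes $a_{i,2}\cdots a_{i,t_i}$ the maximal directed string out of $w$, so $X_i\cong P(w)$. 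In either case $X_i$ is projective, hence so is $\Omega_1(S(v))$. For ``$\Rightarrow$'' I argue contrapositively: if $w\in v^{\da}$ is $\clawnota$-relational, pick $i$ with $\target(a_{i,1})=w$ and, using maximality of the claw-strings and the almost gentle condition, arrange that the relation is realised by $a_{i,1}$ itself, i.e.\ there is an arrow $\alpha$ with $\source(\alpha)=w$ and $a_{i,1}\alpha\in\I$. If $t_i=1$, then $X_i\cong S(w)$ and $w$ is not a sink (it admits the outgoing arrow $\alpha$), so $S(w)$ is not projective. If $t_i\geqslant 2$, then $X_i=\M(a_{i,2}\cdots a_{i,t_i})$ is a right maximal directed string module with source $w$; were $X_i$ projective it would equal $P(w)$, forcing $a_{i,2}$ to be the only arrow out of $w$, hence $\alpha=a_{i,2}$ and $a_{i,1}a_{i,2}\in\I$, contradicting that $s_i$ is a path. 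So $X_i$, and therefore $\Omega_1(S(v))$, is not projective.

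The delicate point is the ``$\Rightarrow$'' direction: one must extract a genuinely non-projective direct summand of $\Omega_1(S(v))$ from the mere existence of a $\clawnota$-relational vertex in $v^{\da}$, and this hinges on checking that for such a vertex $w=\target(a_{i,1})$ the relation defining $\clawnota$-relationality can be taken to involve the ``first'' arrow $a_{i,1}$ (equivalently, that the summand $X_i$ indexed by an arrow from $v$ landing at $w$ is the one that fails to be projective); the almost gentle condition and the maximality of the claw-strings $s_i$ are what make this bookkeeping work. Everything else is the routine translation, in the spirit of Lemmas~\ref{lemm:1st-syzygy} and \ref{lemm:pdim dire str mod}, of ``$X_i$ is projective'' into ``$\target(a_{i,1})$ is a sink, or has a unique outgoing arrow which continues $s_i$''.
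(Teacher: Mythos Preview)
Your proposal is correct and follows exactly the approach the paper intends: the paper's own ``proof'' is just the one line ``Similarly, one gets the following result,'' deferring entirely to the argument of Lemma~\ref{lemm:pdim dire str mod}, and you have spelled out that analogy carefully (including the correct observation that now all of $v^{\da}$, rather than $v^{\da}\setminus\{\target(a_1)\}$, must be tested). Your identification of the ``delicate point''---that $\clawnota$-relationality of $w\in v^{\da}$ should be witnessed by a relation $a_{i,1}\alpha\in\I$ with $\target(a_{i,1})=w$---is the right thing to flag; this is how the paper (implicitly) reads the condition as well, and with that reading your case analysis goes through.
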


\begin{example} \rm
Let $A=\kk\Q/\I$ be the almost gentle algebra given in Example \ref{exp:almost gent}.
Keep the notations in Example \ref{exp:proj resol}. We claim that the $2$-nd syzygy
\[\Omega_2(\M(a_{1,2}))
\cong \Omega_1(S(2_{\Left})) \oplus
\Omega_1(S(2_{\Right})) \oplus
\Omega_1\left(
\left(\begin{smallmatrix}
2_{\Right} \\ 3_{\Right} \\ 4_{\Right}
\end{smallmatrix}\right)
\right)\]
of $\M(a_{1,2})$ is projective. %Now we show the above by using Lemmata \ref{lemm:pdim dire str mod} and \ref{lemm:1st-syzygy of simp}.
Let $\clawnota_{11}$, $\clawnota_{12}$, and $\clawnota_{13}$ be the claws corresponding to
the indecomposable projective module $P(2_{\Left})$, $P(2_{\Right})$, and $P(2_{\Right})$, respectively.
For the vertex $2_{\Left}$, we have $2_{\Left}^{\da} = \{3_{\Left}\}$ which is a set containing only one element $3_{\Left}$.
Since $3_{\Left}$ is not a $\clawnota_{11}$-relational vertex, we have that
$\Omega_1(S_{2_{\Left}})$ is projective by Lemma \ref{lemm:1st-syzygy of simp}.
Similarly, $\Omega_1(S(2_{\Right}))$ is projective.
On the other hand, we have $1^{\da} = \{ 2_{\Left}, 2, 2_{\Right} \}$ where $2 = \target(a_{1,2})$.
Then $1^{\da}\backslash\{\target(a_{1,2})\} = \{2_{\Left}, 2_{\Right}\}$.
Notice that $2_{\Left}$ is $\clawnota_0$-relational ($\clawnota_0$ is the claw $\clawnota$ given in Example \ref{exp:proj resol})
and $2_{\Right}$ is not $\clawnota_0$-relational,
thus $\Omega_1(\M(a_{1,2}))$ is not projective by Lemma \ref{lemm:pdim dire str mod}.
In this case, the indecomposable direct summand
$\left(\begin{smallmatrix}
2_{\Right} \\ 3_{\Right} \\ 4_{\Right}
\end{smallmatrix}\right) \le_{\oplus} \Omega_1(\M(a_{1,2}))$
is projective, it follows that $\Omega_1\left(
\left(\begin{smallmatrix}
2_{\Right} \\ 3_{\Right} \\ 4_{\Right}
\end{smallmatrix}\right)
\right)$ is zero.
Therefore, $\Omega_2(\M(a_{1,2}))$ is projective. The claim is proved.
\end{example}

\subsection{Global dimension}

In this subsection, we describe the global dimension of almost gentle algebras.
To do this, we provide a description for the $n$-th syzygy of the simple module to be non-projective.

\begin{proposition} \label{prop:forb-Omega}
Keep the notations in Lemma \ref{lemm:1st-syzygy of simp}.
Then $\Omega_{n-1}(S(v))$ is not projective if and only if there is a forbidden path $p$ of length $n$ starting at $v$.
\end{proposition}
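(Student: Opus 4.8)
The plan is to prove both implications by induction on $n$, pairing each arrow of a forbidden path with one syzygy step. Write $S:=S(v)$, so $\Omega_{0}(S)=S$. The base case $n=1$ is immediate: a forbidden path of length $1$ from $v$ is exactly an arrow with source $v$, which exists if and only if $v$ is not a sink, if and only if $S\not\cong P(v)$, if and only if $\Omega_{0}(S)=S$ is non-projective.

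For the inductive step I use Lemma \ref{lemm:1st-syzygy} and Proposition \ref{prop:nth-syzygy} to normalize the summands. For $k\ge 1$, each indecomposable summand $D$ of $\Omega_{k}(S)$ is either a simple $S(w)$ (with $w$ a sink or a relational vertex) or a right maximal directed string module $\M(t)$, $t=a_{1}a_{2}\cdots a_{l}$ with $l\ge 1$; record its \emph{source} $w$ and its \emph{used direction} $d$, namely $d=a_{1}$ in the second case and $d=\bot$ in the first. Passing to $\Omega_{1}(D)$ amounts to forming $P(w)$, writing its claw as a union of arms (one right maximal directed string out of $w$ per outgoing arrow of $w$), deleting the arm that begins with $d$ when $d\ne\bot$, and then stripping the leading arrow off every surviving arm; thus $\Omega_{1}(D)\cong\bigoplus_{c}D_{c}$, the sum over the outgoing arrows $c\ne d$ of $w$, where $D_{c}$ is the directed string module with source $\target(c)$ and used direction the second arrow of the $c$-arm (or $\bot$ if that arm has length $1$). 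The key elementary point is that $D_{c}$ is non-projective exactly when $\target(c)$ carries an outgoing arrow $\alpha$ distinct from the used direction of $D_{c}$ — equivalently, using the almost gentle axiom, exactly when some arrow $\alpha$ has $\source(\alpha)=\target(c)$ and $c\alpha\in\I$ — since otherwise $D_{c}$ is either $P(\target(c))$ or a simple module at a sink.

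Granting this, the forward implication runs as follows. Given a forbidden path $\alpha_{1}\alpha_{2}\cdots\alpha_{n}$ from $v$ (so $\source(\alpha_{1})=v$ and $\alpha_{i}\alpha_{i+1}\in\I$), propagate for $1\le k\le n-1$ the summand $M_{k}\le_{\oplus}\Omega_{k}(S)$ with underlying string starting at $\target(\alpha_{k})$, obtained by applying the deletion above along $\alpha_{1}$, then $\alpha_{2}$, and so on through $\alpha_{k}$. The step $M_{k}\rightsquigarrow M_{k+1}$ is legal because $\alpha_{k+1}$ is an outgoing arrow of $\target(\alpha_{k})$ different from the used direction $d_{k}$ of $M_{k}$: indeed $\alpha_{k}d_{k}$ is a subpath of the directed string underlying the $\alpha_{k}$-arm, hence avoids $\I$, whereas $\alpha_{k}\alpha_{k+1}\in\I$. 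Since $M_{n-1}$ starts at $\source(\alpha_{n})$, which carries the outgoing arrow $\alpha_{n}\ne d_{n-1}$, the module $M_{n-1}$ is non-projective and therefore so is $\Omega_{n-1}(S)$. Conversely, assume $\Omega_{n-1}(S)$ is non-projective; a projective syzygy makes all later ones vanish, so each $\Omega_{k}(S)$ with $k\le n-1$ is non-projective, and using Krull--Schmidt together with $\Omega_{1}(M\oplus N)\cong\Omega_{1}(M)\oplus\Omega_{1}(N)$ one selects non-projective indecomposables $S=D_{0},D_{1},\ldots,D_{n-1}$ with $D_{k}\le_{\oplus}\Omega_{1}(D_{k-1})\le_{\oplus}\Omega_{k}(S)$. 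Reading off the arrows $c_{1},\ldots,c_{n-1}$ that realize the transitions $D_{k-1}\rightsquigarrow D_{k}$ — so $\source(c_{1})=v$, $\source(c_{k+1})=\target(c_{k})$, and $c_{k}$ is not the used direction of $D_{k-1}$ — the constraint that $c_{k+1}$ differs from the used direction $d_{k}$ of $D_{k}$, combined with the fact that $d_{k}$ is the unique continuation of $c_{k}$ avoiding $\I$ (none exists when $d_{k}=\bot$), forces $c_{k}c_{k+1}\in\I$; and non-projectivity of $D_{n-1}$ yields an arrow $\alpha_{n}$ out of the source of $D_{n-1}$ with $c_{n-1}\alpha_{n}\in\I$. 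Then $c_{1}c_{2}\cdots c_{n-1}\alpha_{n}$ is a forbidden path of length $n$ starting at $v$, completing the induction.

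The main obstacle will be making this bookkeeping airtight across the degenerate cases: when an arm of a claw has length one the used direction collapses to $\bot$ and the corresponding summand is a simple module (projective when its vertex is a sink, non-projective when it is a relational vertex) rather than a genuine directed string module, so one must verify separately that the deletion description of $\Omega_{1}$, the non-projectivity criterion, and the implication ``$c_{k+1}\ne d_{k}$ forces $c_{k}c_{k+1}\in\I$'' all persist there. Each of these is a careful unwinding of Lemma \ref{lemm:1st-syzygy}, Proposition \ref{prop:nth-syzygy}, and the structure of the indecomposable projective modules over $A$, but it is easy to mishandle without a systematic treatment.
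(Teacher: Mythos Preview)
Your argument is correct and follows essentially the same strategy as the paper: both proofs track indecomposable summands of successive syzygies along a forbidden path, using the structure of $\Omega_{1}$ of a right maximal directed string module (your ``used direction'' is exactly the datum the paper records via $\top(X_{t})\cong S(v_{t})$ together with the first arrow of $X_{t}$). The only organizational difference is in the converse: you construct a forbidden path directly from a chain of non-projective summands $D_{0},\ldots,D_{n-1}$, whereas the paper argues the contrapositive by bounding $\Omega_{L}(S(v))$ when all right maximal forbidden paths from $v$ have length at most $L$; both unwind to the same computation.
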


\begin{proof}
Let $p = b_1b_2\cdots b_n$ be a forbidden path on $(\Q,\I)$,
where $v=v_0$, $\source(b_i) = v_{i-1}$ ($1\leqslant i \leqslant n$) and $\target(b_n) = v_n$,
and let $\clawnota_0$ be the claw corresponding to the indecomposable projective module $P(v_0) = P_0$
given by the projective cover $P_0 \to M$ of $M$.
Then $v_1$ is a $\clawnota_0$-relational vertex. By Lemma \ref{lemm:1st-syzygy of simp},
we have that $\Omega_1(S(v))$ is not projective. To be more precisely,
by Proposition \ref{prop:nth-syzygy}, we have that $\Omega_1(S(v))$ has a non-projective indecomposable direct summand $X_1$
which is either a right maximal directed string module or a simple module such that $\top(X_1) \cong S(v_1)$.

Let $\clawnota_1$ be the claw corresponding to the indecomposable projective module $P(v_1)$.
Then $v_2 \in v_1^{\da}$ is a $\clawnota_1$-relational vertex, it follows that $\Omega_1(X_1)$
is not projective by using Lemma \ref{lemm:pdim dire str mod}.
In this case, $\Omega_2(S(v))$ is not projective since $\Omega_1(X_1) \le_{\oplus} \Omega_2(S(v))$,
and $b_1b_2$ is a forbidden path end with $v_2$ corresponding to it since $v_2$ is $\clawnota_1$-relational.
Then, we can fix a family of right maximal directed string modules $X_2$, $\ldots$, $X_n$ such that
$X_t \le_{\oplus} \Omega_t(S(v))$ holds for all $1\leqslant t\leqslant n$ and
$X_1$, $X_2$, $\ldots$, $X_{n-1}$ are not projective by induction.
Thus, $\Omega_{n-1}(S(v))$ is non-projective. Moreover, we obtain a correspondence
\begin{align} \label{formula:forb-Omega}
 \Omega_t(S(v)) \mapsto v_t
\end{align}
such that $\top(X_t)\cong S(v_t)$ holds for all $1\leqslant t\leqslant n$.

On the other hand, for the case for $\Omega_{n-1}(S(v))$ to be non-projective,
we suppose that the lengths of all forbidden paths starting at $v$ are less than or equal to $n-1$.
Consider all right maximal forbidden paths $p_1$, $\ldots$, $p_m$ start at $v$, and assume
\[ p_j = a_{j,1}a_{j,2}\cdots a_{j,l_j}
 = \ \  v_{j,0} \To{a_{j,1}}{} v_{j,1} \To{a_{j,2}}{} \cdots \To{a_{j,l_j}}{} v_{j,l_j} \]
\begin{center}
    ($a_{j,k}a_{j,k+1} \in \I$ holds for all $1\leqslant k < l_j \leqslant n-1$).
\end{center}
Here, a {\defines right maximal forbidden path} is a forbidden path $p=a_1\cdots a_l$
($a_1,\ldots, a_l$ are arrows, and $a_{k}a_{k+1} \in \I$)
such that $pb\notin \I$ holds for all arrows $b\in\Q_1$ with $\target(p)=\source(b)$.
In this case, if $t\leqslant l_j$, then one can check that there is a  module $X_{j,t} \le_{\oplus} \Omega_t(S(v))$
which is either a right maximal directed string or a simple module such that $\top(X_{j,t}) = v_{j,t}$ by Proposition \ref{prop:nth-syzygy}.
By the right maximality of $p_j$, all vertices in $(v_{j,l_j})^{\da}$ are not $\clawnota_{j,l_j}$-relational,
where $\clawnota_{j,l_j}$ is the claw corresponding to $P(v_{j,l_j})$.
Applying Lemma \ref{lemm:pdim dire str mod} to $\Omega_{1}(\Omega_{j-1}(S(v)))$,
all $X_{j,l_j}$ are projective since each indecomposable direct summand of $\Omega_{j-1}(S(v))$
is either a right maximal directed string module or a simple module by Proposition \ref{prop:nth-syzygy} again.
Thus, for $L=\max\{l_j \mid 1\leqslant j\leqslant m\}$,
we obtain that $\bigoplus_{l_j=L} X_{j,l_j}$ is projective which is isomorphic to $\Omega_L(S(v))$.
In the case for $L< n-1$, we have that $\Omega_n(S(v)) = 0$ is projective, a contradiction.
In the case for $L=n-1$, we have that $\Omega_{n-1}(S(v)) = \Omega_L(S(v)) \cong \bigoplus_{l_j=L} X_{j,l_j}$ is projective, a contradiction.
\end{proof}

\begin{notation}\label{nota:F(v)}
For any vertex $v$ of an almost gentle pair $(\Q,\I)$,
we use $\forb(v)$ to denote the set of all forbidden paths starting at $v\in\Q_0$.
\end{notation}

For a module $M$, we use $\pdim M$ to denote the projective dimension of $M$.
We are now in a position to prove the following result.

\begin{theorem}\label{thm:pdim simp}
For any simple module $S(v)$ over an almost gentle algebra $A$, we have
\begin{align}\label{formula:pdim simp}
  \pdim S(v) = \sup_{F \in \forb(v)} \ell(F),
\end{align}
where $\ell(F)$ is the length of $F$.
\end{theorem}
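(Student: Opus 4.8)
The statement $\pdim S(v) = \sup_{F\in\forb(v)}\ell(F)$ should follow almost immediately by combining Proposition \ref{prop:forb-Omega} with the standard characterization of projective dimension via syzygies. Recall that for any module $M$ over an Artin algebra, $\pdim M \leqslant d$ if and only if $\Omega_d(M)$ is projective, and $\pdim M = d$ (finite) exactly when $\Omega_d(M)$ is projective but $\Omega_{d-1}(M)$ is not (with the convention $\pdim M = 0$ iff $M$ is projective, i.e. $\Omega_0(M)=M$ is projective, and $\pdim M = \infty$ iff $\Omega_n(M)$ is never projective). So the entire content is to translate ``$\Omega_{n-1}(S(v))$ is not projective'' into the existence of a forbidden path of length $n$ starting at $v$, which is precisely Proposition \ref{prop:forb-Omega}.

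\textbf{Step 1.} First I would fix the vertex $v$ and set $N := \sup_{F\in\forb(v)}\ell(F)\in\NN\cup\{\infty\}$. By Notation \ref{nota:F(v)}, $\forb(v)$ consists of all forbidden paths (including those of length zero, i.e. $e_v$ when $v$ is a relational vertex or an appropriate sink/source) starting at $v$. Note $\forb(v)$ is never empty of length-$0$ elements in the relevant sense; I should be slightly careful about the degenerate case where $S(v)$ itself is projective, i.e. when the claw $\clawnota$ corresponding to $P(v)$ has a single directed string which is $e_v$ — in that case $\pdim S(v) = 0$ and there is no forbidden path of positive length starting at $v$, nor is $e_v$ forbidden, so $N = 0$ with the convention $\sup\emptyset = 0$, or one interprets the supremum over lengths appropriately. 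I would state this boundary case explicitly.

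\textbf{Step 2 (the case $N$ finite).} Suppose $N < \infty$. Then there is a forbidden path $F$ of length $N$ starting at $v$, and none of length $N+1$. By Proposition \ref{prop:forb-Omega} applied with $n = N$ (rewriting ``forbidden path of length $n$ starting at $v$'' as the hypothesis of the ``if'' direction), $\Omega_{N-1}(S(v))$ is not projective, so $\pdim S(v) \geqslant N$. Conversely, since there is no forbidden path of length $N+1$ starting at $v$, Proposition \ref{prop:forb-Omega} with $n = N+1$ gives that $\Omega_{N}(S(v))$ \emph{is} projective, hence $\pdim S(v) \leqslant N$. Combining, $\pdim S(v) = N$.

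\textbf{Step 3 (the case $N = \infty$).} If $\forb(v)$ contains forbidden paths of arbitrarily large length, then for every $n\geqslant 1$ there is a forbidden path of length $n+1$ starting at $v$ (paths of length $n+1$ exist for all $n$, or one truncates a longer one, being careful that a truncation of a forbidden path is a forbidden path since the defining condition $a_ia_{i+1}\in\I$ is inherited), so by Proposition \ref{prop:forb-Omega} every $\Omega_n(S(v))$ is non-projective, whence $\pdim S(v) = \infty = N$. This completes the proof; I would then remark that formula \eqref{formula:pdim simp} together with Proposition \ref{prop:nth-syzygy} sets up the passage to $\gldim A$ in Theorem \ref{thm:gldim}.

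\textbf{Main obstacle.} The proof is essentially a bookkeeping wrapper around Proposition \ref{prop:forb-Omega}, so there is no serious mathematical obstacle; the only delicate points are the off-by-one indexing (a forbidden path of \emph{length} $n$ corresponds to non-projectivity of the $(n-1)$-st syzygy) and the degenerate boundary cases — when $S(v)$ is already projective, and when a ``forbidden path'' has length zero (where one must check the length-$0$ forbidden path conventions interact correctly with $\Omega_0(S(v)) = S(v)$ being non-projective). I would make sure the stated supremum convention covers $\pdim S(v) = 0$ cleanly.
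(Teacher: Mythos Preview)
Your proposal is correct and follows essentially the same approach as the paper: both arguments are bookkeeping wrappers around Proposition \ref{prop:forb-Omega}, translating ``$\Omega_{n-1}(S(v))$ is non-projective'' into ``there exists a forbidden path of length $n$ in $\forb(v)$'' and reading off the projective dimension. The only cosmetic difference is that the paper argues from $\pdim S(v)=n$ to the supremum while you argue from the supremum to $\pdim S(v)$, and you treat the infinite and degenerate cases more explicitly than the paper (which simply declares the infinite case ``similar'').
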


%Before proving the above theorem, we provide a remark for it.
%
%\begin{remark} \rm
%In Theorem \ref{thm:pdim simp}, if $\pdim S(v) = \infty$, then $\forb(v)$ contains forbidden paths of any length,
%i.e., $\displaystyle \sup_{F\in\forb(v)} \ell(F) = \infty$.
%\end{remark}
%
%Now we prove Theorem \ref{thm:pdim simp}.

\begin{proof}
We only prove it in the case for $\pdim S(v) = n$. The proof of the case for $\pdim S(v) = \infty$ is similar.

By Proposition \ref{prop:forb-Omega}, there exists a forbidden path $F_0 \in \forb(v)$
such that $\ell(F_0) = n$ since $\Omega_{n-1}(S(v))$ is not projective.
Notice that if there is a forbidden path $F_0'$ in $\forb(v)$ that satisfies $\ell(F_0')\geqslant n+1$,
then $\Omega_n(S(v))$ is not projective by Proposition \ref{prop:forb-Omega} again. It contradicts that $\pdim S(v)=n$.
Thus, $F_0$ is the forbidden path in $\forb(v)$ such that
\[ \pdim S(v) = \ell(F_0) = \sup_{F\in\forb(v)} \ell(F), \]
as required.
\end{proof}

%Recall that the global dimension of an algebra $\mathit{\Lambda}$, say $\gldim\mathit{\Lambda}$, is defined as
%the supremum of all projective dimensions of modules.
%In the case for $\mathit{\Lambda}$ to be a finite-dimensional algebra,
%$\gldim \mathit{\Lambda}$ is the supremum of all projective dimensions of simple modules
%since each module has a composition series of finite length, i.e.,
%\[ \gldim \mathit{\Lambda} = \sup_{v\in\Q_0} \pdim S(v). \]
The following result provides a description of the global dimension of almost gentle algebras.

\begin{theorem}\label{thm:gldim}
Let $A$ be an almost gentle algebra with the bound quiver $(\Q,\I)$. Then
\[\gldim A = \sup_{F \in \forb} \ell(F), \]
where $\forb = \bigcup\limits_{v\in\Q_0}\forb(v)$ is the set of all forbidden paths.
\end{theorem}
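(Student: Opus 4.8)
The plan is to reduce the global dimension to a supremum over simple modules and then invoke Theorem~\ref{thm:pdim simp}. Recall the standard fact that for any Artin algebra $A$ one has $\gldim A = \sup_{v \in \Q_0} \pdim S(v)$; this holds because a module has finite projective dimension at most $n$ if and only if all its composition factors do, and the global dimension is the supremum of projective dimensions over all finitely generated modules, which in turn is attained (or approached) on the simple modules. I would state this as the first step, citing it as well known (e.g.\ it follows from the fact that $\Ext^{n+1}_A(-,-)$ vanishes on all pairs of modules iff it vanishes on all pairs of simples, together with dimension shifting).

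Next I would combine this with Theorem~\ref{thm:pdim simp}, which gives $\pdim S(v) = \sup_{F \in \forb(v)} \ell(F)$ for each vertex $v \in \Q_0$. Taking the supremum over $v$ yields
\[
\gldim A = \sup_{v \in \Q_0} \pdim S(v) = \sup_{v \in \Q_0}\ \sup_{F \in \forb(v)} \ell(F) = \sup_{F \in \bigcup_{v\in\Q_0}\forb(v)} \ell(F) = \sup_{F \in \forb} \ell(F),
\]
where the third equality is just the observation that a supremum of suprema over a partition of an index set equals the supremum over the whole set, and the last equality is the definition $\forb = \bigcup_{v\in\Q_0}\forb(v)$.

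The two cases (finite and infinite global dimension) can be handled uniformly with the conventions $\sup\varnothing = 0$ and $\sup = \infty$ when the set of lengths is unbounded, exactly as in the proof of Theorem~\ref{thm:pdim simp}. I do not expect a serious obstacle here: the only point that needs a word of care is the reduction $\gldim A = \sup_v \pdim S(v)$, which is standard for Artin algebras but should be cited or briefly justified; everything else is a formal manipulation of suprema over the indexing given by Notation~\ref{nota:F(v)} and the definition of $\forb$. One could also phrase the argument purely via syzygies: $\gldim A \ge n$ iff some $\Omega_{n-1}(S(v))$ is non-projective iff (by Proposition~\ref{prop:forb-Omega}) some forbidden path of length $n$ starts at some $v$, which is an alternative route avoiding the explicit citation but essentially the same content.
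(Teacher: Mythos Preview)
Your proposal is correct and follows essentially the same approach as the paper: the paper's proof is precisely the chain of equalities
\[
\gldim A = \sup_{v\in\Q_0} \pdim S(v) = \sup_{v\in\Q_0} \sup_{F \in \forb(v)} \ell(F) = \sup_{F \in \forb} \ell(F),
\]
invoking the standard reduction to simples and Theorem~\ref{thm:pdim simp}, exactly as you outline; your alternative phrasing via Proposition~\ref{prop:forb-Omega} also matches the paper's opening remark that a forbidden path of length $\ell(F)$ starting at $v$ forces $\Omega_{\ell(F)}(S(v))\ne 0$.
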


\begin{proof}
Notice a forbidden path $F$ starting at a vertex $v$ yields that $\Omega_{\ell(F)}(S(v))$ is non-zero, then we have
\[\gldim \mathit{\Lambda} = \sup_{v\in\Q_0} \pdim S(v)
 = \sup_{v\in\Q_0} \sup_{F \in \forb(v)} \ell(F)
 = \sup_{F \in \bigcup\limits_{v\in\Q_0}\forb(v)} \ell(F)
 = \sup_{F \in \forb} \ell(F),\]
as required.
\end{proof}

\begin{example} \rm
The global dimension of the almost gentle algebra $A=\kk\Q/\I$ given in Example \ref{exp:almost gent} is $4$ which is decided by the forbidden paths
\[ a_{1,2} a_{2,3'}a_{3',4}a_{4,5} \text{ and } a_{1,2}a_{2,3}a_{3,4}a_{4,5}.\]
To be more precisely, the above two forbidden paths show that the projective dimension $\pdim S(1)$ of the simple module $S(1)$ is $4$,
and one can check that $\pdim S(4) \geqslant \pdim S(v)$ holds for all $v\in\Q_0$.
\end{example}

\section{Self-injective dimension of almost gentle algebras}

%The left (resp., right) self-injective dimension of a ring $R$ is defined as the left
%(resp., right) injective dimension $\idim ({_RR})$ (resp., $\idim (R_R)$) of $R$.
%If $\idim ({_RR}) = \idim ({R_R})$, then we call that it is the {\defines self-injective dimension} of $R$ and written it as $\idim R$.
If $\mathit{\Lambda}=\kk\Q/\I$ is a finite-dimensional algebra, then
\[ \idim \mathit{\Lambda} = \sup_{i\in\Q_0}\pdim (D(\mathit{\Lambda} e_i)), \]
where $D(\mathit{\Lambda} e_i) := \Hom_{\kk}(\mathit{\Lambda} e_i, \kk)$ is the indecomposable injective
$\mathit{\Lambda}$-module, written as $E(i)$, corresponding to the vertex $i\in\Q_0$.
Thus, we can compute the self-injective dimension $\idim \mathit{\Lambda}$ of $\mathit{\Lambda}$ by
using minimal projective resolutions of injective modules.

\subsection{Syzygies of injective modules} \label{subsect:syzygies of E}

An indecomposable injective module of an algebra $A=\kk\Q/\I$ corresponding to $v\in\Q_0$
is isomorphic to $D(Ae_v)$ which describes all paths on the bound quiver $(\Q,\I)$ end at $v$.
Thus, each indecomposable injective module can be described by an anti-claw (see Definition \ref{def:claw}).
In this subsection, we compute the $1$-st syzygy of an indecomposable injective module over an almost gentle algebra.

\begin{lemma} \label{lemm:vert}
Let $(\Q, \I)$ be an almost gentle pair, and let $v\in\Q_0$ be a {\defines $(c^{\inner},d^{\out})$-type vertex},
i.e., the vertex $v$ satisfying $\target^{-1}(v)=c$ and $\source^{-1}(v)=d$.
Then, in this bound quiver $(\Q,\I)$, there is an integer $t$ with $0 \leqslant t \leqslant \min\{c,d\}$
such that the number of all non-zero paths crossing $v$ of length two is $t$.
\end{lemma}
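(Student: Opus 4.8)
The plan is to reduce the statement to an elementary counting fact and then apply condition (2) in the definition of an almost gentle pair. First I would fix notation for the arrows at $v$: let $a_1,\dots,a_c$ be the arrows with $\target(a_i)=v$ and $b_1,\dots,b_d$ the arrows with $\source(b_j)=v$. If $c=0$ or $d=0$, then no path of length two crosses $v$, so $t=0$ works; otherwise every length-two path crossing $v$ is of the form $a_ib_j$ for a \emph{unique} pair $(i,j)$ (a path of length two is determined by its first and last arrow), and $a_ib_j$ is non-zero exactly when $a_ib_j\notin\I$. Hence the quantity we must control is the cardinality of
\[ T := \{(i,j) \mid 1\leqslant i\leqslant c,\ 1\leqslant j\leqslant d,\ a_ib_j\notin\I\}. \]

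Next I would show that both coordinate projections out of $T$ are injective. By condition (2) of the definition of an almost gentle pair, for each arrow $a_i$ there is at most one arrow $b\in\Q_1$ with $a_ib\notin\I$; in particular at most one index $j$ has $(i,j)\in T$, so the projection $T\to\{1,\dots,c\}$ is injective and $|T|\leqslant c$. Symmetrically, for each arrow $b_j$ there is at most one arrow $a\in\Q_1$ with $ab_j\notin\I$, so the projection $T\to\{1,\dots,d\}$ is injective and $|T|\leqslant d$. Therefore $|T|\leqslant\min\{c,d\}$, and taking $t:=|T|\in\NN$ yields $0\leqslant t\leqslant\min\{c,d\}$ with $t$ equal to the number of non-zero length-two paths crossing $v$, as required.

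I do not expect a genuine obstacle here: the entire content is the bijection between length-two paths through $v$ and pairs of arrows incident to $v$, together with the observation that the almost gentle axiom forces the associated relation $T$ to be a partial matching between the incoming and outgoing arrows at $v$. The only minor point requiring care is the degenerate case in which $v$ is a source, a sink, or isolated, which is absorbed into the value $t=0$.
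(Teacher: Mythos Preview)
Your argument is correct and is essentially the paper's own proof, only packaged more cleanly: you observe directly that the almost gentle condition makes both coordinate projections from $T$ injective (i.e., $T$ is a partial matching), whereas the paper first reindexes so that the surviving pairs lie on the diagonal and then rules out any extra pair by contradiction. The content is identical.
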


(Here, we say a path $p =\ 1\To{a_1}{} 2 \To{}{} \cdots \To{}{} n \To{a_n}{} n+1 $
crosses a vertex $v$ if $v$ is a vertex lying in $\{2,3,\ldots,n\}$.)

\begin{proof}
Assume $v$ is shown as in \Pic \ref{fig:c-in d-out}.
\begin{figure}[htbp]
\centering
\begin{tikzpicture}
\draw (0,0) node{
\xymatrix{
  u_1 \ar@{->}[rrd]_{\alpha_1}
& u_2 \ar@{->}[rd]^{\alpha_2}
& \cdots
& u_{c-1} \ar@{->}[ld]_{\alpha_{c-1}}
& u_c \ar@{->}[lld]^{\alpha_c}
\\
  & &
v \ar@{->}[lld]_{\beta_1}
  \ar@{->}[ld]^{\beta_2}
  \ar@{->}[rd]_{\beta_{d-1}}
  \ar@{->}[rrd]^{\beta_d}
  & &
\\
w_1 & w_2 & \cdots & w_{d-1} & w_d
}
};
\end{tikzpicture}
\caption{$(c^{\inner},d^{\out})$-type vertex}
\label{fig:c-in d-out}
\end{figure}
Then, by the definition of almost gentle pairs, for any $\alpha_i$ ($1\leqslant i\leqslant c$),
there is at most one arrow $\beta_{j}$ ($1\leqslant j\leqslant d$) such that $\alpha_i\beta_j \notin \I$.
In this case, if $\beta_j$ exists, then for any $\imath \ne i$, we have $\alpha_{\imath}\beta_j \in \I$,
and for any $\jmath \ne j$, we have $\alpha_i\beta_{\jmath} \in \I$.
Thus, without loss of generality, we assume
\begin{align} \label{formula:c-in d-out}
\alpha_i\beta_j \
\begin{cases}
\notin \I & \text{ if } 1\leqslant i=j\leqslant t; \\
\in \I & \text{ if } t < i \leqslant c.
\end{cases}
\end{align}

Suppose that the number of all paths crossing $v$ of length at least two is $T$. We claim that $T=t$.
By the first condition of (\ref{formula:c-in d-out}), we have $t$ paths
$\alpha_1\beta_1$, $\ldots$, $\alpha_t\beta_t$ $\notin \I$ crossing $v$, then $T\geqslant t$.
In the following, we prove $T\leqslant t$.
If $T>t$, then there is a path $\alpha_k\beta_l \notin \I$ crossing $v$ such that
$\alpha_k\beta_l \ne \alpha_i\beta_i$ for any $1\leqslant i\leqslant t$.
In this case, by the second condition of (\ref{formula:c-in d-out}), we have $k\leqslant t$.
Then $\alpha_k\beta_j\notin \I$ for some $j$ with $j\ne k$ and $1\leqslant j\leqslant d$.
It follows that both $\beta_k$ and $\beta_j$ are arrows starting at $v$ such that
$\alpha_k\beta_k$ and $\alpha_k\beta_j$ are paths not lying in $\I$,
this contradicts the definition of almost gentle pairs. The claim is proved.
\end{proof}

If there are $t$ non-zero paths of length two on an almost gentle pair $(\Q, \I)$
crossing a $(c^{\inner},d^{\out})$-type vertex $v$,
then, using the notation from Lemma \ref{lemm:vert} and \Pic \ref{fig:c-in d-out},
the anti-claw corresponding to $E(v)$ is of the form
\[ s_1 \aclaw s_2 \aclaw \cdots \aclaw s_c, \]
where each $s_i$ is a left maximal directed string of the form $\cdots \alpha_i$,
and there are $d$ right maximal directed string $s_1'$, $\ldots$, $s_d'$ starting at $v$,
where each $s_j'$ is a path of the form $\beta_j\cdots$, see \Pic \ref{fig:c-in d-out II}.

\begin{figure}[htbp]
\centering
\begin{tikzpicture}
\fill [top color = white, middle color=blue!35]
  (-0.25,-0.15) to[out=180,in=180] (-4.20, 1.50)
  -- ( 4.20, 1.50) to[out=0,in=0] ( 0.25,-0.15);
\draw [blue]
  (-0.25,-0.15) to[out=180,in=180] (-4.20, 1.50)
  -- ( 4.20, 1.50) to[out=0,in=0] ( 0.25,-0.15) -- (-0.25,-0.15);
  [line width = 0.75pt];
\draw [blue] (0,1.75) node{$E(v)$};
\draw (0.1,0) node{
\xymatrix{
  x_1 \ar@{~>}[rrd]_{s_1}
& x_2 \ar@{~>}[rd]^{s_2}
& \cdots
& x_{c-1} \ar@{~>}[ld]_{s_{c-1}}
& x_c \ar@{~>}[lld]^{s_c}
\\
  & &
v \ar@{~>}[lld]_{s_1'}
  \ar@{~>}[ld]^{s_2'}
  \ar@{~>}[rd]_{s_{d-1}'}
  \ar@{~>}[rrd]^{s_d'}
  & &
\\
y_1 & y_2 & \cdots & y_{d-1} & y_d
}
};
\draw[blue] (0,-2) node{$s_i = \cdots \alpha_i$\ ($1\leqslant i\leqslant c$)};
\draw (0,-2.6) node{$s_j' = \beta_j\cdots $\ ($1\leqslant j\leqslant d$)};
\end{tikzpicture}
\caption{$(c^{\inner},d^{\out})$-type vertex $v$ on an almost gentle pair $(\Q,\I)$}
\begin{center}
  ($s_1',\ldots, s_d'$ are right maximal directed strings)
\end{center}
\label{fig:c-in d-out II}
\end{figure}

\begin{lemma} \label{lemm:1st-syzygy of E}
Let $v$ be a $(c^{\inner},d^{\out})$-type vertex on an almost gentle pair $(\Q, \I)$ is shown in \Pic \ref{fig:c-in d-out II}.
Then $\Omega_1(E(v))$ has a decomposition as follows:
\[ \Omega_1(E(v)) \cong \spds \oplus \bigoplus_{\jmath \in J} S_{\jmath} \oplus  \bigoplus_{\imath \in I} M_{\imath}, \]
where $I$ and $J$ are index sets, $\spds$ is either zero or a module whose top is a direct sum of some copies of $S(v)$,
all $S_{\jmath}$ are copies of $S(v)$, and all $M_{\imath}$ are either simple modules or right maximal directed string modules.
\end{lemma}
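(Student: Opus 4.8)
The plan is to write the projective cover of $E(v)$ down explicitly, dissect its kernel, and show that exactly one ``genuinely mixed'' summand (whose top is a sum of copies of $S(v)$) can survive, everything else being simple or right maximal directed. Realize $E(v)=D(Ae_v)$ with its standard basis $\{\,p^{*}\}$ indexed by the paths $p$ of $(\Q,\I)$ ending at $v$, an arrow $a$ acting by $p^{*}\cdot a=q^{*}$ when $p=aq$ and $p^{*}\cdot a=0$ otherwise. Then $\rad E(v)$ is spanned by the $q^{*}$ with $q$ not left maximal, so $\top E(v)\cong\bigoplus_{i=1}^{c}S(x_i)$, where $s_1,\dots,s_c$ are the legs of the anti-claw of $E(v)$ (the left maximal directed strings ending with the $c$ arrows $\alpha_1,\dots,\alpha_c$ into $v$) and $x_i=\source(s_i)$. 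Hence the projective cover is $\pi\colon P=\bigoplus_{i=1}^{c}P(x_i)\twoheadrightarrow E(v)$ with $e_{x_i}\mapsto s_i^{*}$, and $\Omega_1(E(v))=\Ker\pi\subseteq\rad P$. Using Lemma~\ref{lemm:vert} I relabel so that $\alpha_i\beta_j\notin\I$ exactly when $i=j\le t$, where $\beta_1,\dots,\beta_d$ are the arrows out of $v$.

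Next I would split $\rad P$. Using the claw $\clawnota^{(i)}$ of $P(x_i)$, whose legs are the right maximal directed strings leaving $x_i$, one per arrow out of $x_i$, let $Q_i\subseteq\rad P(x_i)$ be the uniserial submodule generated by the first arrow $\gamma_i$ of the leg of $\clawnota^{(i)}$ that has $s_i$ as an initial subpath, and let $R_i$ be the sum of the submodules generated by the other arrows out of $x_i$, so $\rad P(x_i)=Q_i\oplus R_i$. Put $Q=\bigoplus Q_i$, $R=\bigoplus R_i$. A one-line computation gives $\pi|_R=0$ (each such generator $a$ is not the first arrow of $s_i$, so $s_i^{*}\cdot a=0$), and since $\rad P=Q\oplus R$ this yields $\Omega_1(E(v))=\Ker(\pi|_Q)\oplus R$. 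Each indecomposable summand of $R$ is $\M(t\setminus a)$ for a right maximal directed string $t$ (where $t\setminus a$ is $t$ with first arrow deleted), hence a right maximal directed string module, or a simple module when $\ell(t)=1$; these give a first batch of the $M_\imath$'s.

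The core is the analysis of $\pi|_Q$. Each $Q_i$ is uniserial with top $S(x_i)$; $\pi|_{Q_i}$ has image the submodule of $\rad E(v)$ generated by $(s_i\setminus\gamma_i)^{*}$ and kernel $N_i$, which is $0$ precisely when $s_i$ is right maximal (equivalently $i>t$) and otherwise is the submodule of $Q_i$ generated by the path $s_i\beta_i$, with $N_i\cong\M(s_i'\setminus\beta_i)$ again a right maximal directed string module or a simple module, where $s_i'=\beta_i\cdots$ is the right maximal directed string out of $v$ with first arrow $\beta_i$. The key structural fact is that for $i\ne j$ the images $\pi(Q_i)$ and $\pi(Q_j)$ meet only in $\soc E(v)=S(v)$, since the legs $s_i$ have pairwise distinct last arrows $\alpha_i$. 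This lets me identify $\Ker(\pi|_Q)$ with $\Ker\bigl(\bigoplus_{i}\langle s_i\rangle\xrightarrow{\pi}S(v)\bigr)$, where $\langle s_i\rangle\subseteq Q_i$ is the uniserial submodule generated by the path $s_i$, so $\top\langle s_i\rangle=S(v)$ and $\rad\langle s_i\rangle=N_i$.

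For $c=1$ this kernel is simply $N_1$, a single $M_\imath$, and $\spds=0$. For $c\ge 2$, writing $W=\bigoplus\langle s_i\rangle$ one has $\rad W\subseteq\Ker(\pi|_Q)$, a dimension count gives $\Ker(\pi|_Q)/\rad W\cong S(v)^{c-1}$, and a short computation using $s_i\beta_i=(s_i-s_j)\beta_i$ for some $j\ne i$ (legitimate since $\alpha_j\beta_i\in\I$ when $j\ne i$) shows $\rad W\subseteq\rad\Ker(\pi|_Q)$, whence $\top\Ker(\pi|_Q)\cong S(v)^{c-1}$. Finally I decompose $\Ker(\pi|_Q)$ into indecomposables: since its top is a sum of copies of $S(v)$, so is the top of each summand; I route the summands $\cong S(v)$ to the $S_\jmath$, any right maximal directed string or simple summands to the $M_\imath$, and take $\spds$ to be the sum of whatever remains (zero if nothing remains). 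Together with $R$ this gives the claimed decomposition. The main obstacle is this analysis of $\pi|_Q$: making precise the ``images overlap only in the socle'' phenomenon, identifying $\Ker(\pi|_Q)$ with a map out of $\bigoplus\langle s_i\rangle$, and verifying that its top is $S(v)^{c-1}$; the rest is bookkeeping with claws, anti-claws, and the bijection between legs and adjacent arrows.
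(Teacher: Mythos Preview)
Your argument is correct and follows the same basic strategy as the paper: write down the projective cover $\bigoplus_i P(x_i)\to E(v)$ explicitly and dissect its kernel. The paper does this by drawing out the claw of each $P(x_i)$ and reading off the kernel from the pictures, obtaining $\Omega_1(E(v))\cong \spds\oplus S(v)^{\oplus(c-t)}\oplus\bigoplus M_{ij}$ with $\top\spds\cong S(v)^{\oplus(t-1)}$; your $\Ker(\pi|_Q)$ is precisely the paper's $\spds_0=\spds\oplus S(v)^{\oplus(c-t)}$, and your $R$ is their $\bigoplus M_{ij}$. Your organization via the splitting $\rad P=Q\oplus R$ and the identification $\Ker(\pi|_Q)=\Ker\bigl(\bigoplus_i\langle s_i\rangle\to S(v)\bigr)$ (using that the images $\pi(Q_i)$ meet only in the socle) is cleaner than the paper's figure-based computation and makes the conclusion $\top\Ker(\pi|_Q)\cong S(v)^{\oplus(c-1)}$ transparent, though the paper's version extracts the finer information $\top\spds\cong S(v)^{\oplus(t-1)}$ which is used later in Corollary~\ref{coro:1st-syzygy of E}. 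One small slip: $Q_i=\gamma_iA$ has top $S(\target(\gamma_i))$, not $S(x_i)$, but this is not used anywhere in your argument.
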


(Note that $\spds$ may be not indecomposable, and, in this case, $\spds$ may contain a direct summand which is indecomposable projective.)

\begin{proof}
By Lemma \ref{lemm:vert}, we may assume $\alpha_1\beta_1, \ldots, \alpha_t\beta_t \notin \I$.
Then we have $s_1s_1', \ldots, s_ts_t' \notin \I$ since $\I$ is an admissible ideal generated by some paths of length two,
and so, for any $1\leqslant i\leqslant c$, the claw corresponding to the indecomposable projective module $P(x_i)$ is of the form
\[ \clawnota = s_{i1} \claw s_{i2} \claw \cdots \claw s_{im_i}, \]
where
\begin{itemize}
  \item $s_{i1} =
  \begin{cases}
      s_is_i', & i\leqslant t; \\
      s_i, & i\geqslant t+1,
  \end{cases}$
  here, the path $s_i$ is of the form
  \begin{align} \label{formula:1st-syzygy of E I}
  v_{i,1,1} \To{a_{i,1,1}}{} v_{i,1,2} \To{a_{i,1,2}}{} \cdots \To{a_{i,1,\ell_{i1}}}{} v_{i,1,\ell_{i1}+1} \ \ (v_{i,1,1} = x_i, v_{i,1,\ell_{i1}}+1=v),
  \end{align}
  and the path $s_i'$ is of the form
  \begin{align} \label{formula:1st-syzygy of E II}
    v_{i,1,\ell_{i1}+1} \To{a_{i,1,\ell_{i1}+1}}{} v_{i,1,\ell_{i1}+2} \To{a_{i,1,l_{i1}+2}}{} \cdots \To{a_{i,1,l_{i1}}}{} v_{i,1,l_{i1}+1} \ \ (v_{i,1,\ell_{i1}} = v, v_{i,1,l_{i1}}=y_i);
  \end{align}
  \item $s_{ij}$ is a path
  \[ v_{i,j,1} \To{a_{i,j,1}}{} v_{i,j,2} \To{a_{i,j,2}}{} \cdots \To{a_{i,j,l_{ij}}}{} v_{i,j,l_{ij}+1} \ \ (x_i = v_{i,j,1})\]
  of length $l_{ij}$ ($1 < j \leqslant m_i$).
\end{itemize}
Then the projective cover of $E(v)$ is the homomorphism
\[ p_0: P_0 = \bigoplus_{i=1}^{c} P(x_i) \to E(v) \]
\begin{center}
 (the left picture of \Pic \ref{fig:c-in d-out III} shows the quiver representation of $E(v)$,

 and the right picture of \Pic \ref{fig:c-in d-out III} shows the anti-claw of $E(v)$),
\end{center}
\begin{figure}[htbp]
\centering
\begin{tikzpicture}[scale=1.2]
%\draw [blue!33][line width = 15pt][->] (0,3.8) -- (0,-0.8);
\draw [->] (-0.0, 0.8) -- (-0.0, 0.2);
\draw [->] (-0.0, 1.8) -- (-0.0, 1.2);
\draw [->] (-0.0, 2.8) -- (-0.0, 2.2);
%\draw      (-0.03,-0.2) -- (-0.03,-0.4);
%\draw      ( 0.03,-0.2) -- ( 0.03,-0.4);
%\draw      (-0.03, 3.4) -- (-0.03, 3.2);
%\draw      ( 0.03, 3.4) -- ( 0.03, 3.2);
\draw (0,2) node{$\kk_{i,1,2}^{\dag}$};
\draw (0,3) node{$\kk_{i,1,1}^{\dag}$};
\draw (0,1.8) -- (0,0.2) [dotted];
\draw (0,-0.1) node{$\kk_{i,1,\ell_{i1}+1}^{\dag}$};
%\draw (0,-0.6) node{$v$};
%\draw (0, 3.6) node{$x_i$};
%
\draw [rotate=45][->] (-0.0, 0.8) -- (-0.0, 0.2);
\draw [rotate=45][->] (-0.0, 1.8) -- (-0.0, 1.2);
\draw [rotate=45][->] (-0.0, 2.8) -- (-0.0, 2.2);
\draw [rotate=45] (0,2) node{$\kk_{1,1,2}^{\dag}$};
\draw [rotate=45] (0,3) node{$\kk_{1,1,1}^{\dag}$};
\draw [rotate=45] (0,1.8) -- (0,0.2) [dotted];
\draw [rotate=-45][->] (-0.0, 0.8) -- (-0.0, 0.2);
\draw [rotate=-45][->] (-0.0, 1.8) -- (-0.0, 1.2);
\draw [rotate=-45][->] (-0.0, 2.8) -- (-0.0, 2.2);
\draw [rotate=-45] (0,2) node{$\kk_{c,1,2}^{\dag}$};
\draw [rotate=-45] (0,3) node{$\kk_{c,1,1}^{\dag}$};
\draw [rotate=-45] (0,1.8) -- (0,0.2) [dotted];
\draw (-0.9,2) node{$\cdots$};
\draw ( 0.9,2) node{$\cdots$};
\draw (0,-1.1) node{Indecomposable injective module $E(v)$};
\draw (0,-1.5) node{(each $\kk_{i,j,k}^{\dag}$ is isomorphic to $\kk$)};
\end{tikzpicture}
\
\begin{tikzpicture}[scale=1.2]
%\draw [blue!33][line width = 15pt][->] (0,3.8) -- (0,-0.8);
\draw [->] (-0.0, 0.8) -- (-0.0, 0.2);
\draw [->] (-0.0, 1.8) -- (-0.0, 1.2);
\draw [->] (-0.0, 2.8) -- (-0.0, 2.2);
\draw      (-0.03,-0.2) -- (-0.03,-0.4);
\draw      ( 0.03,-0.2) -- ( 0.03,-0.4);
\draw      (-0.03, 3.4) -- (-0.03, 3.2);
\draw      ( 0.03, 3.4) -- ( 0.03, 3.2);
\draw (0,2) node{$v_{(i,1,2)}$};
\draw (0,3) node{$v_{(i,1,1)}$};
\draw (0,1.8) -- (0,0.2) [dotted];
\draw (0,0) node{$v_{(i,1,\ell_{i1}+1)}$};
\draw (0,-0.6) node{$v$};
\draw (0, 3.6) node{$x_i$};
\draw [rotate=45][->] (-0.0, 0.8) -- (-0.0, 0.2);
\draw [rotate=45][->] (-0.0, 1.8) -- (-0.0, 1.2);
\draw [rotate=45][->] (-0.0, 2.8) -- (-0.0, 2.2);
\draw [rotate=45] (0,2) node{$v_{(1,1,2)}$};
\draw [rotate=45] (0,3) node{$v_{(1,1,1)}$};
\draw [rotate=45] (0,1.8) -- (0,0.2) [dotted];
\draw [rotate=-45][->] (-0.0, 0.8) -- (-0.0, 0.2);
\draw [rotate=-45][->] (-0.0, 1.8) -- (-0.0, 1.2);
\draw [rotate=-45][->] (-0.0, 2.8) -- (-0.0, 2.2);
\draw [rotate=-45] (0,2) node{$v_{(c,1,2)}$};
\draw [rotate=-45] (0,3) node{$v_{(c,1,1)}$};
\draw [rotate=-45] (0,1.8) -- (0,0.2) [dotted];
\draw (-0.9,2) node{$\cdots$};
\draw ( 0.9,2) node{$\cdots$};
\draw (0,-1.5) node{The anti-claw of $E(v)$};
\end{tikzpicture}
\caption{The indecomposable injective module $E(v)$ and the anti-claw of it}
\label{fig:c-in d-out III}
\end{figure}
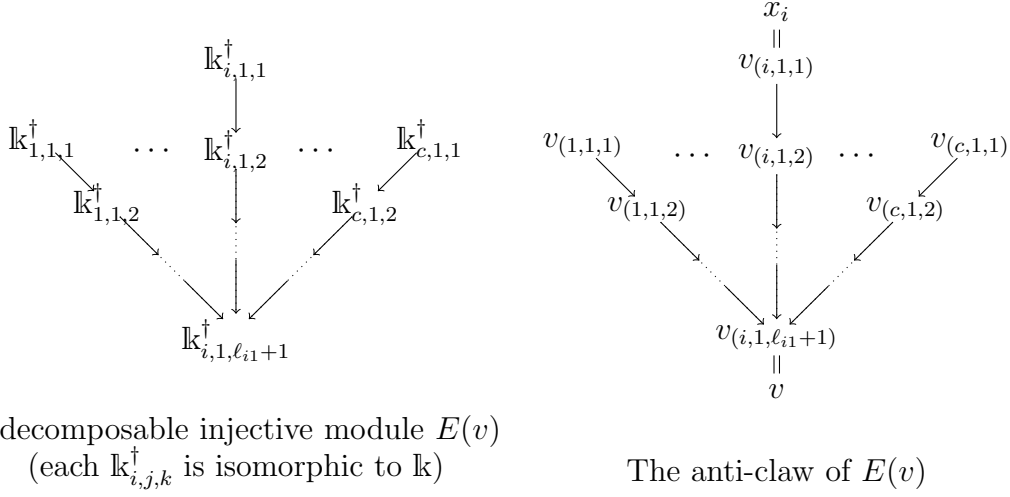
such that for each triple $(i,j,k)$ with $1\leqslant k \leqslant \ell_{ij}+1$, we have the following two facts (see \Pic \ref{fig:c-in d-out IV}, the right picture):
\begin{itemize}
  \item if $j=1$ and $1\leqslant k \leqslant \ell_{ij}+1$,
  that is, $v_{i,j,k} \in \{x_i=v_{i,1,1}, v_{i,1,2}, \ldots, v_{i,1,\ell_{ij}+1}\}$ is a vertex on the path $s_i$,
  then $p_0$ sends each one-dimensional $\kk$-linear space $\kk_{i,1,k}$,
  the direct summand of the $\kk$-linear space $P_0 e_{v_{i,1,k}}$ corresponding to the vertex $v_{i,1,k}$,
  to the one-dimensional direct summand, written as $\kk_{i,j,k}^{\dag}$, of the $\kk$-linear space $E(v)e_{v_{i,j,k}}$ corresponding to the vertex $v_{i,j,k}$ by using the identity
  \[(\mathrm{id}: x\mapsto x) \in \Hom_{\kk}(\kk,\kk) \cong \Hom_{\kk}(\kk_{i,1,k}, \kk_{i,j,k}^{\dag});\]

  \item if $(i,j,k)$ does not lie in the above two cases, then $p_0$ sends each $\kk_{i,j,k}$ to zero.
\end{itemize}

\begin{figure}[htbp]
\centering
\begin{tikzpicture}[xscale=1.2, yscale=1.45]
% shadow
\fill [top color = white, middle color=blue!35]
  (-3.25,3.5) -- (3.25,3.5) -- (0,-0.7) -- (-3.25,3.5);
\fill [top color = white, middle color=orange!35]
  (-5.3,3.5) -- (-8.8,0.5) -- (-5.3,-4.85) -- (-5.0,-4.85) --
  (-5.0,3.5) -- (-5.3,3.5);
\draw [red!33][line width = 10pt] (0,3.8) -- (0,-0.28);
\draw [rotate around = { 36:(0,-0.28)}] [red!33][line width = 10pt] (0,4.5) -- (0,-0.28);
\draw [rotate around = {-36:(0,-0.28)}] [red!33][line width = 10pt] (0,4.5) -- (0,-0.28);
\draw [red] (0,3.8) node[above]{$s_i$};
\draw [rotate around = { 36:(0,-0.28)}][red] (0,4.5) node[above]{$s_1$};
\draw [rotate around = {-36:(0,-0.28)}][red] (0,4.5) node[above]{$s_{c}$};
\draw [blue] (0,-0.65) node[below]{$E(v)$};
\draw [->] (-0.0, 0.8) -- (-0.0, 0.2);
\draw [->] (-0.0, 1.8) -- (-0.0, 1.2);
\draw [->] (-0.0, 2.8) -- (-0.0, 2.2);
\draw (0,2) node{$\kk_{i,1,1}^{\dag}$};
\draw (0,3) node{$\kk_{i,1,2}^{\dag}$};
\draw (0,1.8) -- (0,0.2) [dotted];
\draw (0,-0.1) node{$\kk_{i,1,\ell_{i1}}^{\dag}$};
\draw[rotate= 38] [->] (-0.0, 0.8) -- (-0.0, 0.2);
\draw[rotate= 38] (0,1.13) -- (0,0.2) [dotted];
\draw[rotate=-38] [->] (-0.0, 0.8) -- (-0.0, 0.2);
\draw[rotate=-38] (0,1.13) -- (0,0.2) [dotted];
\draw (-0.8,1) node{$\cdots$};
\draw ( 0.8,1) node{$\cdots$};
\draw (0.61,-0.1) node[right]{($1\leqslant i \leqslant c$)};
%
%
% proj cover
\draw[shift={(-5.3,0)}] [->] (-0.0,-3.2) -- (-0.0,-3.8);
\draw[shift={(-5.3,0)}] [->] (-0.0,-2.2) -- (-0.0,-2.8);
\draw[shift={(-5.3,0)}] [->] (-0.0,-1.2) -- (-0.0,-1.8);
\draw[shift={(-5.3,0)}] [->] (-0.0,-0.2) -- (-0.0,-0.8);
\draw[shift={(-5.3,0)}] [->] (-0.0, 0.8) -- (-0.0, 0.2);
\draw[shift={(-5.3,0)}] [->] (-0.0, 1.8) -- (-0.0, 1.2);
\draw[shift={(-5.3,0)}] [->] (-0.0, 2.8) -- (-0.0, 2.2);
\draw[shift={(-5.3,0)}] (0,2) node{$\kk_{i,1,2}$};
\draw[shift={(-5.3,0)}] (0,3) node{$\kk_{i,1,1}$};
\draw[shift={(-5.3,0)}] (0,1.8) -- (0,0.2) [dotted];
\draw[shift={(-5.3,0)}] (0,-0.1) node{$\kk_{i,1,\ell_{i1}}$};
\draw[shift={(-5.3,0)}] (0.3,-0.28) node[right]{$\oplus\bigoplus\limits_{2\leqslant i\leqslant c}\kk_{i,1,\ell_{i1}}$};
\draw[shift={(-5.3,0)}] (0,-1) node{$\kk_{i,1,\ell_{i1}+1}$};
\draw[shift={(-5.3,0)}] (0,-1.5) -- (0,-2.5) [dotted];
\draw[shift={(-5.3,0)}] (0,-3) node{$\kk_{i,1,l_{i1}}$};
\draw[shift={(-5.3,0)}] (0,-4) node{$\kk_{i,1,l_{i1}+1}$};
\draw[shift={(-5.3,0)}][rotate around={-45:(0,3)}] [->] (-0.0, 2.8) -- (-0.0, 2.2);
\draw[shift={(-5.3,0)}][rotate around={-45:(0,3)}] [->] (-0.0, 1.8) -- (-0.0, 1.2);
\draw[shift={(-5.3,0)}][rotate around={-45:(0,3)}] (0,2) node{$\kk_{i,j,1}$};
\draw[shift={(-5.3,0)}][rotate around={-45:(0,3)}] (0,1.8) -- (0,0.5) [dotted];
\draw[shift={(-5.3,0)}][rotate around={-40:(0,3)}] (0,0.5) node[below left]{$(1\leqslant j\leqslant m_i)$};
\draw[shift={(-5.3,0)}] (-0.8,1) node{$\cdots$};
\draw[shift={(-5.3,0)}] (0,2.5) node[right]{$1$};
\draw[shift={(-5.3,0)}] (0,1.5) node[left]{$1$};
\draw[shift={(-5.3,0)}] (0,0.5) node[left]{$
  \left(\begin{smallmatrix}
    1 \\ 0 \\ \vdots \\ 0
  \end{smallmatrix}
  \right)$};
\draw[shift={(-5.3,0)}] (0,-0.5) node[left]{\tiny$(1\ 0 \cdots \ 0 )$};
\draw[shift={(-5.3,0)}] (0,-1.5) node[left]{$1$};
\draw[shift={(-5.3,0)}] (0,-2.5) node[left]{$1$};
\draw[shift={(-5.3,0)}] (0,-3.5) node[left]{$1$};
\draw[orange] (-5.3,-4.85) node[below]{$P(x_i)$};
%
% morphism
%
\draw[cyan][shift={(0,0)}][->] (-4.8,0) -- (-0.8,0) [line width=1pt];
\draw[cyan][shift={(0,2)}][->] (-4.8,0) -- (-0.8,0) [line width=1pt];
\draw[cyan][shift={(0,3)}][->] (-4.8,0) -- (-0.8,0) [line width=1pt];
\draw[cyan] (-2.8, 0) node[above]{\tiny$\varphi_{i,1,\ell_{i1}}=(1 \ 1  \cdots \ 1)$};
\draw[cyan] (-2.8, 2) node[above left]{$\varphi_{i,1,2}= 1$};
\draw[cyan] (-2.8, 3) node[above left]{$\varphi_{i,1,1}= 1$};
\end{tikzpicture}
\caption{The direct summand $P(x_i)$ of the projective module $P_0 = \bigoplus\limits_{1\leqslant i\leqslant c}P(x_i)$ given by the projective cover of $E(v)$
 ({where $P_0e_v$, as a $\kk$-linear space, has a direct summand $\kk^{\oplus c} = \bigoplus\limits_{1\leqslant i\leqslant c} \kk_{i,1,\ell_{i1}}$})}
\label{fig:c-in d-out IV}
\end{figure}

Then the $1$-st syzygy $\Omega^{1}(E(v))$ is a direct sum of some modules of the following three classes:
\begin{itemize}
  \item the directed string module $M_{ij}$ corresponding to the string $\hat{s}_{ij}:= a_{i,j,2} \cdots a_{i,j,l_{ij}}$ which is obtained from $s_{ij}$ ($1 < j \leqslant m_i$); if $m_i=1$, then $\hat{s}_{ij} = e_{\target(a_{i,j,1})}$ is a simple string, that is, the above directed string module is simple;
  \item the simple module isomorphic to $S(v)$ which is given by the indecomposable projective module $P(x_i)$ with $i\geqslant t+1$
  because $s_is_{j}' = 0$ holds for all $1\leqslant j\leqslant d$;
  \item the module $\spds$ given by the $\kk$-linear maps $(\varphi_{i,1,1}, \varphi_{i,1,2}, \cdots, \varphi_{i,1,\ell_{i1}})_{1\leqslant i\leqslant t}$
  whose top is $S(v)^{\oplus (t-1)}$ (for $0 \leqslant t\leqslant 1$, we put $\spds=0$),
  and in this case, if $c=2$, then $\spds$ corresponds to a claw whose top is $S(v)$.
\end{itemize}
Therefore, we obtain
\begin{align} \label{formula:1st-syzygy of E III}
 \Omega_1(E(v)) \cong \spds \oplus S(v)^{\oplus (c-t)} \oplus
\bigg(
  \bigoplus_{(i,j)\in I} M_{ij}
\bigg),
\end{align}
where $I$ is some index set.
\end{proof}

\begin{corollary} \label{coro:1st-syzygy of E}
The top of the direct summand
\begin{center}
  $\displaystyle  \spds_0
  := \spds \oplus \bigoplus_{\jmath \in J} S_{\jmath}
  \ \le_{\oplus} \Omega_1(E(v))$
\end{center}
given in Lemma \ref{lemm:1st-syzygy of E} is isomorphic to $S(v)^{\oplus (c-1)}$.
\end{corollary}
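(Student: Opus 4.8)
The plan is to extract the statement directly from the explicit decomposition of $\Omega_1(E(v))$ obtained inside the proof of Lemma \ref{lemm:1st-syzygy of E}. Write $t$ (so $0\le t\le\min\{c,d\}$ by Lemma \ref{lemm:vert}) for the number of non-zero paths of length two crossing $v$. That proof produces, in formula (\ref{formula:1st-syzygy of E III}), the isomorphism
\[ \Omega_1(E(v)) \cong \spds \oplus S(v)^{\oplus(c-t)} \oplus \bigoplus_{(i,j)\in I} M_{ij}, \]
in which the middle summand $S(v)^{\oplus(c-t)}$ is precisely the sub-sum $\bigoplus_{\jmath\in J} S_\jmath$ of copies of $S(v)$ named in the statement of Lemma \ref{lemm:1st-syzygy of E}, and every $M_{ij}$ is a right maximal directed string module or a simple module not isomorphic to $S(v)$. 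Hence $\spds_0 = \spds \oplus S(v)^{\oplus(c-t)}$, and it remains only to control $\top(\spds)$.

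First I would invoke the description of $\spds$ recorded in that same proof: $\spds$ is the module determined by the families $(\varphi_{i,1,1},\ldots,\varphi_{i,1,\ell_{i1}})_{1\le i\le t}$ of $\kk$-linear maps coming from the $t$ arms $s_1,\ldots,s_t$ of the anti-claw of $E(v)$ that extend below $v$, and its top is $S(v)^{\oplus(t-1)}$ for $t\ge 1$ (with $\spds=0$ when $t\le 1$). Granting this, the remaining step is a one-line computation: the functor $\top=(-)/\rad(-)$ is additive and $\top(S(v))\cong S(v)$, so
\[ \top(\spds_0) = \top(\spds)\oplus\top(S(v)^{\oplus(c-t)}) \cong S(v)^{\oplus(t-1)}\oplus S(v)^{\oplus(c-t)} \cong S(v)^{\oplus(c-1)}, \]
because $(t-1)+(c-t)=c-1$. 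This settles every case with $t\ge 1$.

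The only point that needs a moment's care is the degenerate range $t\le 1$, where the exponent $t-1$ must be interpreted correctly. For $t=1$ one has $\spds=0$ and $S(v)^{\oplus(c-t)}=S(v)^{\oplus(c-1)}$, so $\spds_0=S(v)^{\oplus(c-1)}$ and there is nothing to prove. For $t=0$ I would not use the exponent $t-1$ at all, but look directly at the projective cover $p_0\colon\bigoplus_{i=1}^{c}P(x_i)\to E(v)$: at the vertex $v$ it restricts to a surjection onto the one-dimensional socle of $E(v)$, so its kernel is $(c-1)$-dimensional at $v$; and since for $t=0$ every arm $s_i$ of $E(v)$ terminates at $v$, no arrow of the kernel acts on that space, so it contributes exactly $S(v)^{\oplus(c-1)}=\spds_0$. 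I do not anticipate a genuine obstacle here: once Lemma \ref{lemm:1st-syzygy of E} and the shape of $\spds$ from its proof are in hand, the corollary is pure bookkeeping on tops of direct sums, the single conceptual point being that the $c$ copies of $S(v)$ one naively expects from the $c$ arms of the injective module $E(v)$ collapse to $c-1$, because those arms all meet in the one-dimensional socle of $E(v)$.
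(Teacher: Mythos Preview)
Your proposal is correct and follows essentially the same route as the paper, which simply says the corollary is immediate from formula (\ref{formula:1st-syzygy of E III}) and the description of $\spds$ given in the proof of Lemma \ref{lemm:1st-syzygy of E}. You supply more detail than the paper does, in particular by treating the boundary case $t=0$ directly via the $(c-1)$-dimensional kernel at $v$ rather than relying on the exponent $t-1$; this is a welcome clarification, since the paper's formula (\ref{formula:1st-syzygy of E III}) is slightly ambiguous there.
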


\begin{proof}
It is a direct corollary of the formula (\ref{formula:1st-syzygy of E III}) in the proof of Lemma \ref{lemm:1st-syzygy of E}.
\end{proof}

\begin{example} \label{exp:p.res inj} \rm
Let $A=\kk\Q/\I$ be the almost gentle algebra given by Example \ref{exp:almost gent}.

(1) Consider the indecomposable injective module $E(2_{\Right}) = \left({^1}{_{2_{\Right}}}{^1}\right)$ and its projective cover $P(1)^{\oplus 2} \to E(2_{\Right})$.
The anti-claw corresponding to $E(2_{\Right})$ is
\[  a_{1,2_{\Right}} \aclaw b_{1,2_{\Right}}. \]
Here, $v=2_{\Right}$, $s_1=a_{1,2_{\Right}}$, and $s_2=b_{1,2_{\Right}}$.
The claw corresponding to $P(1)$ is
\[a_{1,2_{\Left}} \claw a_{1,2} \claw a_{1,2_{\Right}} a_{2_{\Right},3_{\Right}} a_{3_{\Right},4_{\Right}} \claw b_{1,2_{\Right}}. \]
Here, $s_1' = a_{2_{\Right},3_{\Right}} a_{3_{\Right},4_{\Right}}$.
We have
\begin{align}\label{formula: in exp:p.res inj}
 \Omega_1(E(2_{\Right})) \cong
\left( \begin{smallmatrix}
  2_{\Right} \\ 3_{\Right} \\ 4_{\Right}
\end{smallmatrix} \right)
\oplus
\left(
(2_{\Left}) \oplus (2) \oplus (2_{\Right}) \oplus
\left( \begin{smallmatrix}
  2_{\Right} \\ 3_{\Right} \\ 4_{\Right}
\end{smallmatrix} \right)
\oplus (2_{\Left}) \oplus (2)
\right),
\end{align}
\begin{figure}[htbp]
\centering
\begin{tikzpicture}[scale=1.3]
% vertices
\draw ( 0  , 3  ) node{$0$}; % vertex 1
\draw ( 0  , 1.5) node{$\kk^{\oplus 2}$}; % vertex 2
\draw (-1.2, 0  ) node{$0$}; % vertex 3
\draw ( 1.2, 0  ) node{$0$}; % vertex 3'
\draw ( 0  ,-1.5) node{$0$}; % vertex 4
\draw ( 0  ,-3  ) node{$0$}; % vertex 5
\draw (-2.1, 2.1) node{$\kk^{\oplus 2}$};  % vertex L
\draw (-3.0, 0  ) node{$0$}; % vertex 3L
\draw (-2.1,-2.1) node{$0$}; % vertex 4L
\draw ( 2.1, 2.1) node{$\kk^{\oplus 3}$};  % vertex 2R
\draw ( 3.0, 0  ) node{$\kk^{\oplus 2}$};  % vertex 3R
\draw ( 2.1,-2.1) node{$\kk^{\oplus 2}$};  % vertex 4R
% arrows
\draw[line width = 1pt][->] ( 0  ,-1.7) -- (   0,-2.8); \draw ( 0  ,-2.2) node[right]{$0$}; % a45
\draw[line width = 1pt][->] ( 0  , 1.2) -- (   0,-1.2); \draw ( 0  , 0  ) node[ left]{$0$}; % a24
\draw[line width = 1pt][->] ( 0  , 2.8) -- (   0, 1.7); \draw ( 0  , 2.2) node[ left]{$0$}; % a12
\draw[line width = 1pt][->] (-0.2, 1.5) to[out= 180, in=  90] (-1.2, 0.2);
\draw[line width = 1pt][->] (-1.2,-0.2) to[out= -90, in= 180] (-0.2,-1.5);
\draw[line width = 1pt][->] ( 0.2, 1.5) to[out=   0, in=  90] ( 1.2, 0.2);
\draw[line width = 1pt][->] ( 1.2,-0.2) to[out= -90, in=   0] ( 0.2,-1.5);
\draw (-1. , 1.0) node[ left]{$0$}; % a23
\draw ( 1. , 1.0) node[right]{$0$}; % a23'
\draw (-1. ,-1.0) node[ left]{$0$}; % a34
\draw ( 1. ,-1.0) node[right]{$0$}; % a34'
\draw[line width = 1pt][->][rotate= 5+  0] (0,3) arc(90:125:3);
\draw[line width = 1pt][->][rotate= 5+ 45] (0,3) arc(90:125:3);
\draw[line width = 1pt][->][rotate= 5+ 90] (0,3) arc(90:125:3);
\draw[line width = 1pt][->][rotate= 5+135] (0,3) arc(90:125:3);
\draw[line width = 1pt][->][rotate=-5-  0] (0,3) arc(90:55:3);
\draw[line width = 1pt][->][rotate=-5- 45] (0,3) arc(90:55:3);
\draw[line width = 1pt][->][rotate=-5- 90] (0,3) arc(90:55:3);
\draw[line width = 1pt][->][rotate=-5-135] (0,3) arc(90:55:3);
\draw[line width = 1pt][->][rotate=-5+  0] (0,2.8) arc(90:55:2.8);
\draw[line width = 1pt][<-][rotate=-5+180] (0,2.8) arc(90:55:2.8);
%
%\draw[rotate=   0] ( 1.06, 3.21) node{$a_{1,2_{\Right}}$};
\draw[rotate=   0] ( 1.06, 3.21-0.09) node{$0$};
%\draw[rotate=   0] ( 0.96, 2.35) node{$b_{1,2_{\Right}}$};
\draw[rotate=   0] ( 0.96, 2.35) node{$0$};
%\draw[rotate=  45] ( 1.06, 3.21) node{$a_{1,2_{\Left}}$};
\draw[rotate=  45] ( 1.06, 3.21) node{$0$};
%\draw[rotate=  90] ( 1.06, 3.21) node{$a_{2_{\Left},3_{\Left}}$};
\draw[rotate=  90] ( 1.06, 3.21) node{$0$};
%\draw[rotate= 135] ( 1.06, 3.21) node{$a_{3_{\Left},4_{\Left}}$};
\draw[rotate= 135] ( 1.06, 3.21) node{$0$};
%\draw[rotate= 180] ( 1.06, 3.21) node{$a_{4_{\Left},5}$};
\draw[rotate= 180] ( 1.06, 3.21-0.09) node{$0$};
%\draw[rotate= -45] ( 1.06, 3.21) node{$a_{2_{\Right},3_{\Right}}$};
\draw[rotate= -45] ( 1.06, 3.21) node{$
  \left[\begin{smallmatrix}
    1 & 0 & 0 \\
    0 & 1 & 0
  \end{smallmatrix}\right]$};
%\draw[rotate= -90] ( 1.06, 3.21) node{$a_{3_{\Right},4_{\Right}}$};
\draw[rotate= -90] ( 1.06, 3.21) node{$
  \left[\begin{smallmatrix}
    1 & 0 \\
    0 & 1
  \end{smallmatrix}\right]$};
%\draw[rotate=-135] ( 1.06, 3.21) node{$a_{4_{\Right},5}$};
\draw[rotate=-135] ( 1.06, 3.21) node{$0$};
%\draw[rotate= 180] ( 0.96, 2.35) node{$b_{4_{\Left},5}$};
\draw[rotate= 180] ( 0.96, 2.35) node{$0$};
% relations
\draw[ red][rotate=   0] (-1.72, 2.45) arc(  45:-135:0.5) [line width = 1pt][dotted];
\draw[ red][rotate=   0] (-1.21, 0.50) arc(  90: 270:0.5) [line width = 1pt][dotted];
\draw[ red][rotate=   0] ( 0.00, 2.00) arc(  90: 185:0.5) [line width = 1pt][dotted];
\draw[blue][rotate=   0] ( 0.00, 2.20) arc(  90:  -5:0.7) [line width = 1pt][dotted];
\draw[blue][rotate= 180] (-1.72, 2.45) arc(  45:-135:0.5) [line width = 1pt][dotted];
\draw[blue][rotate= 180] (-1.21, 0.50) arc(  90: 270:0.5) [line width = 1pt][dotted];
\draw[blue][rotate= 180] ( 0.00, 2.00) arc(  90: 185:0.5) [line width = 1pt][dotted];
\draw[ red][rotate= 180] ( 0.00, 2.20) arc(  90:  -5:0.7) [line width = 1pt][dotted];
\draw[green][rotate=   0] (-1.62,-2.25) arc(-25: 135:0.5) [line width = 1pt][dotted];
\draw[green][rotate= 180] (-1.62,-2.25) arc(-25: 135:0.5) [line width = 1pt][dotted];
\draw[green] ( 0.00, 2.10) to[out=0,in=0] ( 0.00, 0.90) [line width = 1pt][dotted];
\end{tikzpicture}
\caption{The $1$st-syzygy $\Omega_1(E(2_{\Right}))$ of the injective module $E(2_{\Right})$}
\label{fig: in exp:p.res inj}
\end{figure}
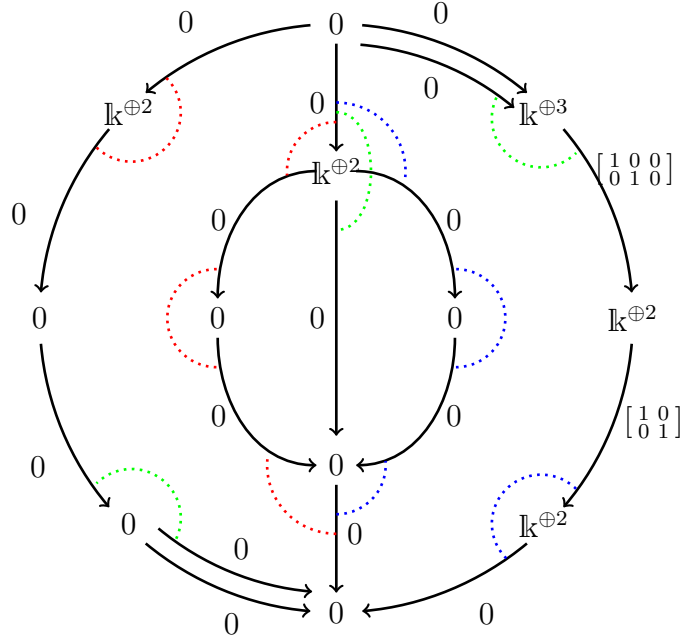
see \Pic \ref{fig: in exp:p.res inj}, where
$\spds \cong \left( \begin{smallmatrix}
  2_{\Right} \\ 3_{\Right} \\ 4_{\Right}
\end{smallmatrix} \right)$ is the first direct summand shown in (\ref{formula: in exp:p.res inj}),
$\bigoplus_{\jmath\in J} S_{\jmath} = 0$ (the index set $J$ is an empty set),
and $\bigoplus_{\imath\in I} M_{\imath} \cong (2_{\Left}) \oplus (2) \oplus (2_{\Right}) \oplus
\left( \begin{smallmatrix}
  2_{\Right} \\ 3_{\Right} \\ 4_{\Right}
\end{smallmatrix} \right)
\oplus (2_{\Left}) \oplus (2)$ which is a direct sum of some right maximal directed string modules.
Thus, we have
\[\spds_0 = \spds \cong \left( \begin{smallmatrix}
  2_{\Right} \\ 3_{\Right} \\ 4_{\Right}
\end{smallmatrix} \right). \]

(2) Consider the indecomposable injective module $E(4) = \left({^3}\ {^2_4}\ {^{3'}}\right)$ and its projective cover
$P(3)\oplus P(2) \oplus P(3') \to E(4)$.
The anti-claw corresponding to $E(4)$ is
\begin{center}
  $a_{3,4} \aclaw a_{2,4} \aclaw a_{3,4'}$ $=$
\begin{tikzpicture}[baseline=-0.25cm]
 \draw (0,0) node{
 $\xymatrix@R=1.24cm{
 3 \ar[rd]_{a_{3,4}} & 2 \ar[d]|{a_{2,4}} &  3' \ar[ld]^{a_{3,4'}} \\
 & 4  &
}$};
\end{tikzpicture},
\end{center}
and $P(3)$, $P(2)$ and $P(3)$ are indecomposable projective modules corresponding to the claws
\begin{center}
$a_{3,4}$,
\ \
$a_{2,3}\claw a_{2,4} \claw a_{2,3'} = $
\begin{tikzpicture}[baseline=-0.25cm]
 \draw (0,0) node{
 $\xymatrix@R=1.24cm{
 & 2 \ar[ld]_{a_{2,3}} \ar[d]|{a_{2,4}} \ar[rd]^{a_{2,3'}} & \\
 3 & 4\ar[d]|{a_{4,5}} & 3' \\
 & 5 &
}$};
\end{tikzpicture},
\ \
 and $a_{3',4}$,
\end{center}
respectively.
Then we have $c=3$, $d=1$, $s_1=a_{3,4}$, $s_2=a_{2,4}$, $s_3=a_{3,4'}$, and $s_1'=a_{2,4}a_{4,5}$. Thus,
\[ \Omega_1(E(4)) \cong ({^4_5}) \oplus (4) \oplus ((3) \oplus (3')), \]
where $\spds \cong ({^4_5})$, $\bigoplus_{\jmath\in J} S_{\jmath} \cong (4)$ (the index set $J$ contains only one element),
and $\bigoplus_{\imath\in I} M_{\imath} \cong (3) \oplus (3')$ (the index set $I$ is $\{1,2\}$, $M_1 \cong (3)$, and $M_2 \cong (3')$),
see \Pic \ref{fig: in exp:p.res inj 2}.
\begin{figure}[htbp]
\centering
\begin{tikzpicture}[scale=1.3]
% vertices
\draw ( 0  , 3  ) node{$0$}; % vertex 1
\draw ( 0  , 1.5) node{$0$}; % vertex 2
\draw (-1.2, 0  ) node{$\kk$}; % vertex 3
\draw ( 1.2, 0  ) node{$\kk$}; % vertex 3'
\draw ( 0  ,-1.5) node{$\kk\!{}^{\oplus 2}$}; % vertex 4
\draw ( 0  ,-3  ) node{$\kk$}; % vertex 5
\draw (-2.1, 2.1) node{$0$}; % vertex L
\draw (-3.0, 0  ) node{$0$}; % vertex 3L
\draw (-2.1,-2.1) node{$0$}; % vertex 4L
\draw ( 2.1, 2.1) node{$0$}; % vertex 2R
\draw ( 3.0, 0  ) node{$0$}; % vertex 3R
\draw ( 2.1,-2.1) node{$0$}; % vertex 4R
% arrows
\draw[line width = 1pt][->] ( 0  ,-1.7) -- (   0,-2.8); \draw ( 0  ,-2.2) node[right]{$[1\ 0]$}; % a45
\draw[line width = 1pt][->] ( 0  , 1.2) -- (   0,-1.2); \draw ( 0  , 0  ) node[ left]{$0$}; % a24
\draw[line width = 1pt][->] ( 0  , 2.8) -- (   0, 1.7); \draw ( 0  , 2.2) node[ left]{$0$}; % a12
\draw[line width = 1pt][->] (-0.2, 1.5) to[out= 180, in=  90] (-1.2, 0.2);
\draw[line width = 1pt][->] (-1.2,-0.2) to[out= -90, in= 180] (-0.2,-1.5);
\draw[line width = 1pt][->] ( 0.2, 1.5) to[out=   0, in=  90] ( 1.2, 0.2);
\draw[line width = 1pt][->] ( 1.2,-0.2) to[out= -90, in=   0] ( 0.2,-1.5);
\draw (-1. , 1.0) node[ left]{$0$}; % a23
\draw ( 1. , 1.0) node[right]{$0$}; % a23'
\draw (-1. ,-1.0) node[ left]{$0$}; % a34
\draw ( 1. ,-1.0) node[right]{$0$}; % a34'
\draw[line width = 1pt][->][rotate= 5+  0] (0,3) arc(90:125:3);
\draw[line width = 1pt][->][rotate= 5+ 45] (0,3) arc(90:125:3);
\draw[line width = 1pt][->][rotate= 5+ 90] (0,3) arc(90:125:3);
\draw[line width = 1pt][->][rotate= 5+135] (0,3) arc(90:125:3);
\draw[line width = 1pt][->][rotate=-5-  0] (0,3) arc(90:55:3);
\draw[line width = 1pt][->][rotate=-5- 45] (0,3) arc(90:55:3);
\draw[line width = 1pt][->][rotate=-5- 90] (0,3) arc(90:55:3);
\draw[line width = 1pt][->][rotate=-5-135] (0,3) arc(90:55:3);
\draw[line width = 1pt][->][rotate=-5+  0] (0,2.8) arc(90:55:2.8);
\draw[line width = 1pt][<-][rotate=-5+180] (0,2.8) arc(90:55:2.8);
%
%\draw[rotate=   0] ( 1.06, 3.21) node{$a_{1,2_{\Right}}$};
\draw[rotate=   0] ( 1.06, 3.21-0.09) node{$0$};
%\draw[rotate=   0] ( 0.96, 2.35) node{$b_{1,2_{\Right}}$};
\draw[rotate=   0] ( 0.96, 2.35) node{$0$};
%\draw[rotate=  45] ( 1.06, 3.21) node{$a_{1,2_{\Left}}$};
\draw[rotate=  45] ( 1.06, 3.21) node{$0$};
%\draw[rotate=  90] ( 1.06, 3.21) node{$a_{2_{\Left},3_{\Left}}$};
\draw[rotate=  90] ( 1.06, 3.21) node{$0$};
%\draw[rotate= 135] ( 1.06, 3.21) node{$a_{3_{\Left},4_{\Left}}$};
\draw[rotate= 135] ( 1.06, 3.21) node{$0$};
%\draw[rotate= 180] ( 1.06, 3.21) node{$a_{4_{\Left},5}$};
\draw[rotate= 180] ( 1.06, 3.21-0.09) node{$0$};
%\draw[rotate= -45] ( 1.06, 3.21) node{$a_{2_{\Right},3_{\Right}}$};
\draw[rotate= -45] ( 1.06, 3.21) node{$0$};
%\draw[rotate= -90] ( 1.06, 3.21) node{$a_{3_{\Right},4_{\Right}}$};
\draw[rotate= -90] ( 1.06, 3.21) node{$0$};
%\draw[rotate=-135] ( 1.06, 3.21) node{$a_{4_{\Right},5}$};
\draw[rotate=-135] ( 1.06, 3.21) node{$0$};
%\draw[rotate= 180] ( 0.96, 2.35) node{$b_{4_{\Left},5}$};
\draw[rotate= 180] ( 0.96, 2.35) node{$0$};
% relations
\draw[ red][rotate=   0] (-1.72, 2.45) arc(  45:-135:0.5) [line width = 1pt][dotted];
\draw[ red][rotate=   0] (-1.21, 0.50) arc(  90: 270:0.5) [line width = 1pt][dotted];
\draw[ red][rotate=   0] ( 0.00, 2.00) arc(  90: 185:0.5) [line width = 1pt][dotted];
\draw[blue][rotate=   0] ( 0.00, 2.20) arc(  90:  -5:0.7) [line width = 1pt][dotted];
\draw[blue][rotate= 180] (-1.72, 2.45) arc(  45:-135:0.5) [line width = 1pt][dotted];
\draw[blue][rotate= 180] (-1.21, 0.50) arc(  90: 270:0.5) [line width = 1pt][dotted];
\draw[blue][rotate= 180] ( 0.00, 2.00) arc(  90: 185:0.5) [line width = 1pt][dotted];
\draw[ red][rotate= 180] ( 0.00, 2.20) arc(  90:  -5:0.7) [line width = 1pt][dotted];
\draw[green][rotate=   0] (-1.62,-2.25) arc(-25: 135:0.5) [line width = 1pt][dotted];
\draw[green][rotate= 180] (-1.62,-2.25) arc(-25: 135:0.5) [line width = 1pt][dotted];
\draw[green] ( 0.00, 2.10) to[out=0,in=0] ( 0.00, 0.90) [line width = 1pt][dotted];
\end{tikzpicture}
\caption{The $1$st-syzygy $\Omega_1(E(4))$ of the injective module $E(4)$}
\label{fig: in exp:p.res inj 2}
\end{figure}
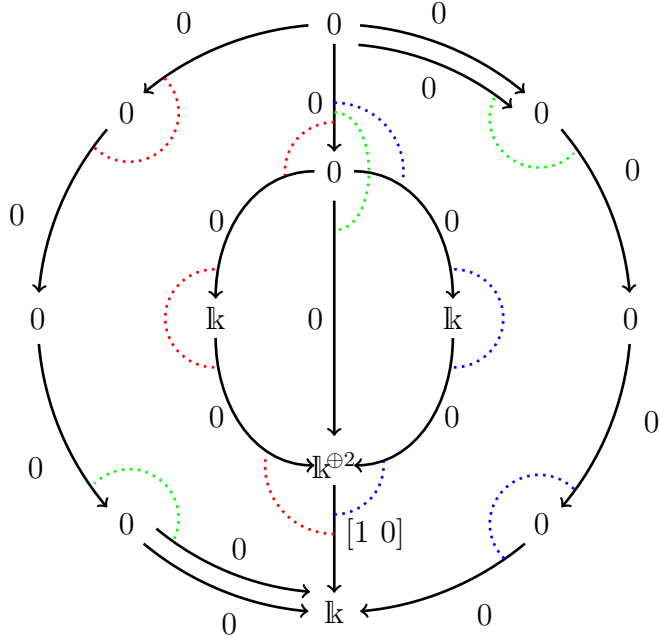
Thus, we have
\[ \spds_0 \cong ({^4_5}) \oplus (4). \]
\end{example}

\subsection{The projectivity of the direct summand $\spds_0$}

To compute $\pdim D(A)$, we need a method to describe the projectivity of $\spds_0$.
We say that a $(c^{\inner}, d^{\out})$-type vertex $v$ of $\Q$ is a {\defines gentle vertex} if the following conditions hold:
\begin{itemize}
  \item $c \leqslant 2$ and $d \leqslant 2$, in this case, we say that $v$ is a $((\leqslant 2)^{\inner}, (\leqslant 2)^{\out})$-type vertex for simplicity;
  \item the number of arrows ending at $v$ is less than or equal to $2$, and that of arrows starting at $v$ is less than or equal to $2$;
  \item there is at most one path of length two crossing $v$ such that it lies in $\I$,
  and there at most one path of length two crossing $v$ such that it does not lie in $\I$.
\end{itemize}
%To put it in a nutshell, gentle vertex, in a sense, reflects some properties of algebra that are close to that of gentle algebra locally.
A bound quiver $(\Q,\I)$ is a gentle pair if $\I$ is generated by some paths of length two and all vertices of $\Q$ are gentle.

\begin{lemma} \label{lemm:spds I}
Keep the notation in Lemma \ref{lemm:1st-syzygy of E} and let $c=2$.
If $v$ is a gentle vertex, then the direct summand $\spds_0 \le_{\oplus} \Omega_1(E(v))$ is projective.
\end{lemma}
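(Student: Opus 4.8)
The plan is to exploit Corollary~\ref{coro:1st-syzygy of E}: since $c=2$, the top of $\spds_0$ is $S(v)^{\oplus(c-1)}=S(v)$, so $\spds_0$ is a nonzero quotient of the indecomposable projective module $P(v)$, and the whole task reduces to proving $\spds_0\cong P(v)$. The first thing to do is to pin down $d$. The length-two paths crossing $v$ are exactly the $cd=2d$ compositions $\alpha_i\beta_j$ of the $c=2$ arrows $\alpha_i$ ending at $v$ with the $d$ arrows $\beta_j$ starting at $v$ (notation of Lemma~\ref{lemm:vert}); as $v$ is a gentle vertex, at most one of them lies in $\I$ and at most one does not, so $2d\leqslant 2$ and hence $d\leqslant 1$. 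This splits the argument into the cases $d=0$ and $d=1$.

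If $d=0$, then $v$ is a sink, so $P(v)=S(v)$ is simple; a nonzero quotient of a simple module equals that module, so $\spds_0\cong S(v)=P(v)$ is projective. If $d=1$, let $\beta_1$ be the unique arrow out of $v$ and $s_1'$ the unique right-maximal directed string starting at $v$ (it begins with $\beta_1$). Because $v$ has a single outgoing arrow, the claw describing $P(v)$ has the single leg $s_1'$, that is, $P(v)\cong\M(s_1')$. The gentle condition forbids both of $\alpha_1\beta_1,\alpha_2\beta_1$ from lying in $\I$ and also forbids both from avoiding $\I$, so exactly one lies in $\I$; after relabelling, $\alpha_1\beta_1\notin\I$ and $\alpha_2\beta_1\in\I$, so $t=1$. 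In the notation of the proof of Lemma~\ref{lemm:1st-syzygy of E}, the projective cover $p_0\colon P(x_1)\oplus P(x_2)\to E(v)$ then has the leg $s_{11}=s_1s_1'$ of $P(x_1)$ running through $v$ while the leg $s_{21}=s_2$ of $P(x_2)$ stops at $v$. Hence the summand $\spds\leqslant_{\oplus}\Omega_1(E(v))=\Ker(p_0)$ is generated by a suitable combination $a_0^{(1)}-a_0^{(2)}$ of the basis vectors of $P(x_1)$ and $P(x_2)$ lying over $v$ on the legs $s_{11}$ and $s_{21}$ (which lies in $\Ker(p_0)$); applying $\beta_1$ sends it to $\beta_1 a_0^{(1)}\neq 0$, the first basis vector of the $s_1'$-tail of $P(x_1)$ (while every other arrow out of $v$, of which there are none, kills it). Thus the submodule it generates is isomorphic to $\M(s_1')\cong P(v)$, which is projective with simple top $S(v)$; since $\top(\spds_0)\cong S(v)^{\oplus(c-1)}=S(v)$ leaves no room for any extra summand isomorphic to $S(v)$, we conclude $\spds_0\cong P(v)$, and we are done.

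The step I expect to be the main obstacle is this identification of $\spds$ with $\M(s_1')$ when $d=1$: it requires reopening the construction of the projective cover of $E(v)$ in the proof of Lemma~\ref{lemm:1st-syzygy of E} and tracking exactly which copies of the socle basis vector over $v$ are glued together by $p_0$, so as to exclude a spurious extra copy of $S(v)$ that the coarse decomposition (\ref{formula:1st-syzygy of E III}) (reading $S(v)^{\oplus(c-t)}=S(v)$) might appear to produce. An alternative that sidesteps this is a dimension count: using $\dim_{\kk}\spds_0=\dim_{\kk}\Omega_1(E(v))-\sum_{\imath}\dim_{\kk}M_{\imath}$ together with a comparison of the leg-lengths of $P(x_1)$ and $P(x_2)$ with the length of $s_1'$, one gets $\dim_{\kk}\spds_0=\dim_{\kk}P(v)$, so the nonzero quotient $\spds_0$ of $P(v)$ must be all of $P(v)$.
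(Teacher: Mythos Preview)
Your strategy—using Corollary~\ref{coro:1st-syzygy of E} to see that $\spds_0$ has simple top $S(v)$ and is therefore a quotient of $P(v)$, then identifying it with $P(v)$—is exactly the paper's, and your treatment of the cases $d=0$ and $d=1$ matches the paper's cases (1) and (2).

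The gap is your elimination of $d=2$. You deduce $2d\leqslant 2$ from the third bullet of the paper's definition of a gentle vertex (``at most one length-two path through $v$ lies in $\I$ and at most one does not''). Read literally this is correct, but it is inconsistent with how the paper itself uses the notion: the paper's own proof of this lemma treats $d=2$ as a genuine case (its case~(3)), and Proposition~\ref{prop:projectivity} explicitly separates $c=d=2$ into gentle and non-gentle subcases (see its case~(c)). The intended meaning of ``gentle vertex'' is the standard local gentle condition—for each incoming arrow at most one outgoing arrow gives a relation and at most one gives a non-relation, and dually—which permits the configuration $\alpha_1\beta_1,\alpha_2\beta_2\notin\I$ and $\alpha_1\beta_2,\alpha_2\beta_1\in\I$ with $c=d=2$. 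Under your literal reading this configuration would fall into neither Lemma~\ref{lemm:spds I} nor case~(c) of Proposition~\ref{prop:projectivity}, leaving a hole in the classification.

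In that $d=2$ configuration one has $t=2$; the kernel element $e_v^{(1)}-e_v^{(2)}$ over $v$ satisfies $(e_v^{(1)}-e_v^{(2)})\beta_1\neq 0$ and $(e_v^{(1)}-e_v^{(2)})\beta_2\neq 0$, and the submodule it generates is the string module $\M(s_1'(s_2')^{-1})\cong P(v)$ (the claw of $P(v)$ has exactly the two legs $s_1',s_2'$). Your dimension-count alternative works equally well: $\dim_{\kk}\spds_0=1+\ell(s_1')+\ell(s_2')=\dim_{\kk}P(v)$, so the surjection $P(v)\to\spds_0$ is an isomorphism. Either way, adding this short paragraph for $d=2$ completes your proof along the paper's lines.
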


(Note that if $c\leqslant 1$, then $E(v)$ is a directed string module whose projective dimension
can be computed by using Lemma \ref{lemm:pdim dire str mod}.)

\begin{proof}
Since $v$ is a $(c^{\inner},d^{\out})$-type vertex, if $v$ is a gentle vertex, then we have $c\leqslant 2$ and $d \leqslant 2$.
In the following, we show that $\spds_0$ is projective.
\begin{itemize}
  \item[(1)] In the case for $d=0$, we have that $v$ is a sink of $\Q$,
    and $E(v)$ corresponds to a string which is of the following form
    \begin{center}
      $v_{1,1,1} \To{a_{1,1,1}}{} v_{1,1,2} \To{a_{1,1,2}}{}  \cdots \To{a_{1,1,\ell_{11}}}{}
      v \oT{a_{2,1,\ell_{21}}}{} \cdots \oT{a_{2,1,2}}{} v_{2,1,2} \oT{a_{2,1,1}}{} v_{2,1,1}$

      $(v_{1,1,\ell_{11}+1} = v = v_{2,1,\ell_{21}+1})$,
    \end{center}
    then, by Lemma \ref{lemm:1st-syzygy of E}, we have
    \[\Omega_1(E(v)) \cong P(v) \oplus \bigoplus_{\imath\in I} M_{\imath}, \]
    where $I$ is some index set, all $M_{\imath}$ are directed string modules, and $\spds_0 \cong S(v) \cong P(v)$ is both simple and projective.
  \item[(2)] In the case for $d=1$, there is a unique arrow $\alpha$ such that $\source(\alpha)=v$.
    Since $v$ is a gentle vertex, then either $a_{1,1,\ell_{11}}\alpha \in \I$ or $a_{2,1,\ell_{21}}\alpha \in \I$ holds.
    Assume $a_{1,1,\ell_{11}}\alpha \in \I$ and let $s'$ be the right maximal directed string given by the path
    \[ v_{2,1,\ell_{21}+1} \To{a_{2,1,\ell_{21}+1}}{}
       v_{2,1,\ell_{21}+2} \To{v_{2,1,\ell_{21}+2}}{}
       \cdots \To{a_{2,1,l_{21}}}{} v_{2,1,l_{21}+1} \]
    on $(\Q,\I)$ satisfying $\alpha = a_{2,1,\ell_{21}+1}$.
    Then $a_{2,1,\ell_{21}}\alpha \notin \I$, and we have
    \[\Omega_{1}(E(v)) \cong \M(s') \oplus \bigoplus_{\imath\in I} M_{\imath}  \]
    by Lemma \ref{lemm:1st-syzygy of E}, where $I$ is some index set, all $M_{\imath}$ are directed string modules, and
    \[ \spds_0 \cong \M(s') \cong P(v) \]
    is both a directed string module and a projective module
    since $\I$ is generated by some paths of length two.
  \item[(3)] In the case for $d=2$, there are two arrow $\alpha$ and $\beta$ such that $\source(\alpha) = v = \source(\beta)$,
    and, without loss of generality, we have
    $a_{1,1,\ell_{11}}\alpha\in\I$, $a_{1,1,\ell_{11}}\beta\notin\I$,
    $a_{2,1,\ell_{21}}\alpha\notin\I$, $a_{2,1,\ell_{21}}\beta\in\I$.
    Then we have two right maximal directed strings $s_1'$ and $s_2'$ on $(\Q,\I)$ which are given by the paths
    \begin{center}
      $ v_{1,1,\ell_{11}+1} \To{a_{1,1,\ell_{11}+1}}{} v_{1,1,\ell_{11}+2} \To{a_{1,1,\ell_{11}+2}}{} \cdots \To{a_{1,1,l_{11}}}{} v_{1,1,l_{11}+1} $

    and $ v_{2,1,l_{21}+1} \oT{a_{2,1,l_{21}}}{} \cdots \oT{a_{2,1,\ell_{21}+2}}{} v_{2,1,\ell_{21}+2} \oT{a_{2,1,\ell_{21}+1}}{} v_{2,1,\ell_{21}+1}$,
    \end{center}
    respectively. Here, $a_{1,1,\ell_{11}+1}=\beta$, $a_{2,1,\ell_{21}+1}=\alpha$.
    Thus
    \[\Omega_{1}(E(v)) \cong \M(s_1' (s_2')^{-1}) \oplus \bigoplus_{\imath\in I} M_{\imath} \]
    by Lemma \ref{lemm:1st-syzygy of E}, where $I$ is some index set, all $M_{\imath}$ are directed string modules, and
    \[ \spds_0 \cong \M(s_1' (s_2')^{-1}) \cong P(v) \]
    is projective.
\end{itemize}
Therefore, we conclude that $\spds_0$ is projective.
\end{proof}

\begin{example} \rm
Consider the $1$-st syzygy $\Omega_1(E(2_{\Right}))$ of the indecomposable injective module $E(2_{\Right})$ given in Example \ref{exp:p.res inj},
it follows from Lemma \ref{lemm:spds I} that $\spds_0$ is projective since $2_{\Right}$ is a gentle vertex.
Of course, we can directly check that $\spds_0 \cong P(2_{\Right})$ is projective by using the definition of projection modules.
\end{example}

\begin{lemma} \label{lemm:spds II}
Keep the notation in Lemma \ref{lemm:1st-syzygy of E} and Corollary \ref{coro:1st-syzygy of E}, and let $c>2$.
Then the direct summand $\spds_0 \le_{\oplus} \Omega_1(E(v))$ is projective if and only if $d=0$
{\rm(}where $t$ is the integer with $0 \leqslant t \leqslant \min\{c,d\}$ given in the proof of Lemma \ref{lemm:1st-syzygy of E}{\rm)}.
\end{lemma}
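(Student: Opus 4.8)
The plan is to prove the two implications separately, the common thread being control of the top and the $\kk$-dimension of $\spds_0$, both of which are essentially recorded in Corollary~\ref{coro:1st-syzygy of E} and in the proof of Lemma~\ref{lemm:1st-syzygy of E}. I keep the notation from there: $v$ is the given $(c^{\inner},d^{\out})$-type vertex, $s_1,\dots,s_c$ are the left maximal directed strings ending at $v$ recorded by the anti-claw of $E(v)$, $s_1',\dots,s_d'$ are the right maximal directed strings starting at $v$, and $t$ (with $0\leqslant t\leqslant\min\{c,d\}$) is the number of length-two paths crossing $v$ not lying in $\I$, so that after reindexing $s_is_i'\notin\I$ exactly for $1\leqslant i\leqslant t$, and for such $i$ the first branch of the claw of $P(x_i)$ is $s_is_i'$.

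\emph{Suppose $d=0$.} Then $v$ is a sink, so $S(v)$ is a simple projective module and $P(v)=S(v)$. By Corollary~\ref{coro:1st-syzygy of E} the top of $\spds_0$ is $S(v)^{\oplus(c-1)}$, so $\spds_0$ is an epimorphic image of its projective cover $P(v)^{\oplus(c-1)}=S(v)^{\oplus(c-1)}$; being a quotient of a semisimple module it is semisimple, whence $\spds_0\cong S(v)^{\oplus(c-1)}$ is projective.

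\emph{Suppose $d\geqslant1$.} I show $\spds_0$ is not projective. Since $v$ carries $d\geqslant1$ arrows out, $P(v)\neq S(v)$, so $\dim_{\kk}P(v)\geqslant2$; more precisely $\dim_{\kk}P(v)=1+\sum_{\beta}\ell(s'_{(\beta)})$, the sum over the arrows $\beta$ starting at $v$, where $s'_{(\beta)}$ is the right maximal directed string out of $v$ beginning with $\beta$. From the description of the projective cover $p_0\colon\bigoplus_{i=1}^{c}P(x_i)\to E(v)$ in the proof of Lemma~\ref{lemm:1st-syzygy of E}, $\spds$ is the kernel of the restriction of $p_0$ to $\bigoplus_{i=1}^{t}\M(s_is_i')$, whose images in $E(v)$ pairwise intersect only in the socle $S(v)$; a dimension count (using that each $\M(s_is_i')$ is uniserial with $v$ in position $\ell(s_i)+1$, and that the $c-t$ further copies of $S(v)$ in $\spds_0$ are the only remaining contributions) gives
\[ \dim_{\kk}\spds_0 \;\leqslant\; (c-1)+\sum_{i=1}^{t}\ell(s_i'). \]
Since $s_1',\dots,s_t'$ are $t$ distinct right maximal directed strings out of $v$, they occur among the $s'_{(\beta)}$, so $\sum_{i=1}^{t}\ell(s_i')\leqslant\dim_{\kk}P(v)-1$ and hence $\dim_{\kk}\spds_0\leqslant(c-2)+\dim_{\kk}P(v)$. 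On the other hand, if $\spds_0$ were projective then, its top being $S(v)^{\oplus(c-1)}$ by Corollary~\ref{coro:1st-syzygy of E}, it would be isomorphic to $P(v)^{\oplus(c-1)}$, forcing $\dim_{\kk}\spds_0=(c-1)\dim_{\kk}P(v)$. Combining the two estimates yields $(c-2)\bigl(\dim_{\kk}P(v)-1\bigr)\leqslant0$, which is impossible since $c>2$ and $\dim_{\kk}P(v)\geqslant2$. Therefore $\spds_0$ is not projective, and the two cases together establish the equivalence.

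The step I expect to demand the most care is the displayed dimension bound: one must follow the proof of Lemma~\ref{lemm:1st-syzygy of E} closely to see precisely which indecomposable summands of $\Omega_1(E(v))$ are gathered into $\spds_0$ (as opposed to the summands $M_\imath$), keep the bookkeeping of the copies of $S(v)$ compatible with Corollary~\ref{coro:1st-syzygy of E} --- in particular in the degenerate cases $t\leqslant1$, where $\dim_{\kk}\spds_0=c-1$, and $t=c$, where $\spds_0=\spds$ --- and use that the submodules $\M(s_i)$ of $E(v)$ meet pairwise only in the socle. Once that inequality is in hand the remainder is the elementary estimate above. A dimension-free alternative for the second implication is to write the $c-1$ generators of $\spds_0$ as vectors in $\bigoplus_{i=1}^{c}\M(s_i)e_v\cong\kk^{c}$ and note that for each of them to generate a copy of $P(v)$ it must have a nonzero coordinate in the first branch attached to each of the $d$ arrows out of $v$; as only the $t\leqslant d$ first branches continue past $v$, along distinct arrows, this already forces $t=d$, after which a short comparison of dimensions (equivalently, directness of the sum of the generated submodules) forces every $\ell(s_i')$ to vanish, which cannot happen when $d\geqslant1$.
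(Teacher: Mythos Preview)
Your proposal is correct and follows essentially the same route as the paper: for $d=0$ you use that $v$ is a sink so $P(v)=S(v)$ and $\spds_0\cong S(v)^{\oplus(c-1)}$, and for $d\geqslant 1$ you compare $\dim_{\kk}\spds_0$ against $\dim_{\kk}P(v)^{\oplus(c-1)}$ to rule out projectivity. The paper in fact computes $\dim_{\kk}\spds_0$ exactly as $(c-1)+\sum_{i=1}^{t}\ell(s_i')$ (from $\top(\spds_0)\cong S(v)^{\oplus(c-1)}$ and $\rad(\spds_0)\cong\bigoplus_{i=1}^{t}\M(s_i'')$) and then writes the difference as $(c-2)\sum_{i=1}^{t}\ell(s_i')+(c-1)\sum_{i=t+1}^{d}\ell(s_i')>0$; your inequality version is a harmless weakening that reaches the same contradiction, so no extra care beyond what the paper already records is actually needed.
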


\begin{proof}
By Lemma \ref{lemm:1st-syzygy of E} and Corollary \ref{coro:1st-syzygy of E}, if $d>0$,
then $\spds_0 \le_{\oplus} \Omega_1(E(v))$ is a module such that the following conditions hold:
\begin{itemize}
  \item $\top(\spds_0) \cong S(v)^{\oplus (c-1)}$;
  \item $\displaystyle \rad(\spds_0) \cong \bigoplus_{1\leqslant i\leqslant t} \M(s_i'')$,
  where $s_i''$ is the path
  \[ v_{i,1,\ell_{i1}+2} \To{a_{i,1,l_{i1}+2}}{} \cdots \To{a_{i,1,l_{i1}}}{} v_{i,1,l_{i1}+1} \]
  satisfying $s' = a_{i,1,l_{i1}+1}s_i''$, see (\ref{formula:1st-syzygy of E II}).
\end{itemize}
Then the projective cover of $\spds_0$ is
\[ P(v)^{\oplus (c-1)} \to \spds_0 \]
satisfying
\begin{align*}
   & \dim_{\kk}(P(v)^{\oplus (c-1)}) - \dim_{\kk}\spds_0 \\
=\ & (c-1)\bigg(1+\sum_{i=1}^d \ell(s_i')\bigg)
   - \bigg((c-1)+\sum_{i=1}^t \ell(s_i') \bigg) \\
=\ & (c-2)\sum_{i=1}^t \ell(s_i') + (c-1)\sum_{i=t+1}^d \ell(s_i') > 0.
\end{align*}
Here, $t=0$ admits $\sum_{i=1}^t \ell(s_i') = 0$, and $d=t=1$ admits $\sum_{i=t+1}^d \ell(s_i') = 0$.
Thus, if $d>0$, then $\spds_0$ is non-projective.

If $d=0$, then $v$ is a sink which admits that $\spds$ is isomorphic to a direct sum of some copies of $S(v) \cong P(v)$.
\end{proof}

\begin{example} \rm
Consider the $1$-st syzygy $\Omega_1(E(4))$ of the indecomposable injective module $E(4)$ given in Example \ref{exp:p.res inj}.
We have that $\spds_0 \cong ({^4_5}) \oplus (4) $ is non-projective.
Here, we have $c=3>2$, $d=1\ne 0$, $\M(s_1')\cong P(4)$, $\ell(s_1')=1$, and
\[ \dim_{\kk} (P(4)^{\oplus 2}) - \dim_{\kk} \spds_0 = (3-1)(1+\ell(s_1')) - 3 = 1 > 0. \]
\end{example}

\begin{lemma} \label{lemm:spds III}
Keep the notation in Lemma \ref{lemm:1st-syzygy of E} and Corollary \ref{coro:1st-syzygy of E}, and let $d>2$.
Then, when $c\geqslant 1$, the direct summand $\spds_0 \le_{\oplus} \Omega_1(E(v))$ is projective
if and only if one of the following conditions holds.
\begin{itemize}
  \item[\rm(1)]
    $c=1$ {\rm(}in this case, the path $s_1$, see \Pic \ref{fig:c-in d-out II} or \Pic \ref{fig:c-in d-out IV}, is a unique path ending at $v${\rm)},
    and there is a unique right maximal path $s_j'$ $(1\leqslant j \leqslant d)$ such that
    $s_1s_j'\notin\I$, $\ell(s_j')=1$, and $\target(s_j')$ is a sink of the quiver $\Q$.
  \item[\rm(2)]
    $c=1$, and there is a unique right maximal path $s_j' = $
    \[ v=v_{1,1,\ell_{11}+1} \To{a_{1,1,\ell_{11}+1}}{}
         v_{1,1,\ell_{11}+2} \To{a_{1,1,\ell_{11}+2}}{}
         \cdots \To{a_{1,1,l_{11}}}{} v_{{1,1,l_{11}+1}}
         \ \ \text{{\rm(}cf. {\rm(}\ref{formula:1st-syzygy of E II}{\rm)}{\rm)}}
    \]
    $(1\leqslant j \leqslant d)$ such that $s_1s_j'\notin\I$, $\ell(s_j')\geqslant 2$,
    and the vertex $v_{1,1,\ell_{11}+2}$ $(\in v^{\da})$ on the path $s_j'$ is not an  $(a_{1,1,\ell_{11}+1},\bfda)$-relational vertex.
  \item[\rm(3)]
    $c=1$, and $s_1s_j'\in\I$ holds for all $1\leqslant j \leqslant d$.
\end{itemize}
\end{lemma}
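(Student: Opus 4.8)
The strategy is a case analysis on $c$, the number of arrows ending at $v$; since each of the conditions (1)--(3) forces $c=1$, the first task is to rule out $c\geqslant 2$, and the second is to analyse $c=1$.

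\emph{Ruling out $c\geqslant 2$.} If $c>2$, then Lemma \ref{lemm:spds II} applies directly: as $d>2$ we have $d\ne 0$, hence $\spds_0$ is not projective. If $c=2$, then $v$ fails to be a gentle vertex (its out-degree exceeds $2$), so Lemma \ref{lemm:spds I} is unavailable and one argues directly. By Corollary \ref{coro:1st-syzygy of E} we have $\top(\spds_0)\cong S(v)$, so if $\spds_0$ were projective it would be $\cong P(v)$. But the claw describing $P(v)$ has exactly $d$ branches, one per arrow starting at $v$, whereas the description of $\Omega_1(E(v))$ in the proof of Lemma \ref{lemm:1st-syzygy of E} shows that $\spds_0$ records only the $t\leqslant 2$ branches $s_j'$ along which a length-two path through $v$ is non-zero (when $t=2$), or equals a copy of the non-projective module $S(v)$ ($v$ is not a sink, since $d>0$) when $t\leqslant 1$. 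A $\kk$-dimension count as in the proof of Lemma \ref{lemm:spds II} — the $d-t\geqslant d-2\geqslant 1$ omitted branches $s_j'$ each have length $\geqslant 1$ — gives $\dim_\kk P(v)>\dim_\kk\spds_0$. Hence $\spds_0$ is not projective when $c=2$, and projectivity of $\spds_0$ therefore implies $c=1$.

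\emph{The case $c=1$.} By the remark following Lemma \ref{lemm:spds I}, $E(v)=\M(s_1)$ is the directed string module of the unique maximal directed string $s_1$ ending at $v$, with projective cover $P(\source(s_1))$; the claw of $P(\source(s_1))$ has a distinguished branch extending $s_1$, namely $s_1$ itself when $s_1 s_j'\in\I$ for every right maximal directed string $s_j'$ starting at $v$ (the case $t=0$), and $s_1 s_j'$ for the unique $j$ with $s_1 s_j'\notin\I$ otherwise (the case $t=1$), where in the latter case we write $s_j'=a_{1,1,\ell_{11}+1}a_{1,1,\ell_{11}+2}\cdots$. Peeling off this cover (equivalently, applying Lemma \ref{lemm:1st-syzygy of E}), the indecomposable summand of $\Omega_1(E(v))$ coming from this distinguished branch — the one controlling the projectivity question, i.e. the relevant $\spds_0$ — falls into one of three sub-cases: when $t=0$ it is zero, which is exactly condition (3) and makes $\spds_0$ trivially projective; when $t=1$ and $\ell(s_j')=1$ it is the simple module $S(\target(s_j'))$, which is projective precisely when $\target(s_j')$ is a sink of $\Q$, i.e. condition (1); and when $t=1$ and $\ell(s_j')\geqslant 2$ it is the right maximal directed string module $\M(a_{1,1,\ell_{11}+2}\cdots)$ with source $v_{1,1,\ell_{11}+2}\in v^{\da}$. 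In this last sub-case the module is projective if and only if the claw of $P(v_{1,1,\ell_{11}+2})$ consists of this single branch, i.e. iff $v_{1,1,\ell_{11}+2}$ has only one outgoing arrow; since $\ell(s_j')\geqslant 2$ there is already an outgoing arrow $a_{1,1,\ell_{11}+2}$ with $a_{1,1,\ell_{11}+1}a_{1,1,\ell_{11}+2}\notin\I$, so by the defining axiom of an almost gentle pair every other outgoing arrow $\eta$ satisfies $a_{1,1,\ell_{11}+1}\eta\in\I$, i.e. such an $\eta$ exists if and only if $v_{1,1,\ell_{11}+2}$ is an $(a_{1,1,\ell_{11}+1},\bfda)$-relational vertex. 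Hence projectivity here is equivalent to $v_{1,1,\ell_{11}+2}$ not being $(a_{1,1,\ell_{11}+1},\bfda)$-relational, which is condition (2).

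Putting the three sub-cases together yields the ``if'' direction; conversely, if $\spds_0$ is projective then $c=1$ by the first part, and the sub-case analysis places us in (1), (2) or (3), giving ``only if''. The step I expect to be the main obstacle is the bookkeeping in the $c=1$ analysis: singling out, among the indecomposable summands of $\Omega_1(E(v))$, the one that is the relevant $\spds_0$, and invoking the ``at most one non-zero continuation'' axiom precisely to convert ``$v_{1,1,\ell_{11}+2}$ has a unique outgoing arrow'' into the relational condition of (2); by contrast, the $c=2$ dimension comparison is routine once the branch count of $P(v)$ is set up as in Lemma \ref{lemm:spds II}.
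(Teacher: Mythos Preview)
Your proposal is correct and follows essentially the same route as the paper: dispose of $c>2$ via Lemma~\ref{lemm:spds II}, rule out $c=2$ by a dimension comparison between $\spds_0$ and $P(v)$ (the paper splits this explicitly into the sub-cases $t=1$ and $t=2$, while you give a unified count, but the substance is identical), and for $c=1$ carry out exactly the three sub-cases that appear in the paper as (c-1), (c-2), (d). One small wording point: for $c=2$, $t=1$ the paper's case~(a) identifies $\spds_0$ with the string module $\M(s_1')$ rather than a bare copy of $S(v)$, but your dimension count $d-t\geqslant d-2\geqslant 1$ is valid either way, so the argument is unaffected.
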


\begin{proof}
First of all, if $c>2$, then $\spds_0$ is non-projective by Lemma \ref{lemm:spds II}.
Thus, we only need to consider the case for $1 \leqslant c \leqslant 2$.

If $c=2$, we have two subcases as follows:

\begin{itemize}
\item[(a)]
$t=1$. In this subcase, cf. \Pic \ref{fig:c-in d-out II}, there is a unique path lying in $\{s_1', s_2', \ldots, s_d'\}$,
assuming $s_1'$ without loss of generality, such that $s_1s_1'\notin \I$ holds.
Then, by Lemma \ref{lemm:1st-syzygy of E}, $\spds_0$ is a string module corresponding to the string $s_1'$,
then the projective cover is of the following form
\[ P(v) \to \spds_0 \]
which admits
\begin{align*}
   & \dim_{\kk} P(v) -  \dim_{\kk} \spds_0 \\
=\ & \bigg(1+\sum_{i=1}^d \ell(s_i') \bigg)
   - \bigg(1+ \ell(s_1') \bigg) \\
=\ & \sum_{i=2}^d \ell(s_i') > 0.\ \ \text{(for $d>2$)}
\end{align*}
It follows that the kernel of $P(v) \to \spds_0$ is not zero. Thus, $\spds_0$ is non-projective as required.
\item[(b)]
$t=2$. In this subcase, cf. \Pic \ref{fig:c-in d-out II}, there are two paths lying in $\{s_1', s_2', \ldots, s_d'\}$,
assuming $s_1'$ and $s_2'$ without loss of generality, such that
$s_1s_1'\notin \I$ and $s_2s_2'\notin \I$ hold.
Then, by Lemma \ref{lemm:1st-syzygy of E}, $\spds_0$ is a string module corresponding to the string $(s_1')^{-1}s_2'$,
and we obtain that the projective cover is of the following form
\[ P(v) \to \spds_0 \]
which admits
\begin{align*}
   & \dim_{\kk} P(v) -  \dim_{\kk} \spds_0 \\
=\ & \bigg(1+\sum_{i=1}^d \ell(s_i') \bigg)
   - \bigg(1+ \ell(s_1') + \ell(s_2') \bigg) \\
=\ & \sum_{i=3}^d \ell(s_i') > 0.\ \ \text{(by using $d>2$)}
\end{align*}
It follows that $\spds_0$ is non-projective by an argument similar to that in (a).
\end{itemize}
Thus, $\spds_0$ is non-projective in the case of $c=2$.

If $c=1$, then $E(v)$ is both an injective module and a directed string module. We have the following two cases.
\begin{itemize}
  \item[(c)]  there is a unique path lying in $\{s_1', s_2', \ldots, s_d'\}$,
  assuming $s_1'$ without loss of generality, such that $s_1s_1'\notin \I$ holds.
  In this case, $\spds_0$ is a string module corresponding to the string $s_1''$ which is obtained by
  deleting the first arrow $a_{1,1,\ell_{11}+1}$ of $s'$
  where $\ell_{11}+1\geqslant 1$, $v_{1,1,\ell_{11}+1}=v$, see (\ref{formula:1st-syzygy of E II}).
  Then the following two subcases need to be considered.

  \begin{itemize}
    \item[(c-1)] $\ell(s_1')=1$. In this subcase, $s_1''$ is a path of length zero given
    by the vertex $v_{1,1,\ell_{11}+2}$. It follows that $\spds_0 = S(v_{1,1,\ell_{11}+2})$
    is projective if and only if $v_{1,1,\ell_{11}+2} = \target(s_1'') = \target(s_1')$ is a sink.  We obtain (1).

    \item[(c-2)] $\ell(s_1')\geqslant 2$. In this subcase,
    \[s_1'' =\ \  v_{1,1,\ell_{11}+2} \To{a_{1,1,\ell_{11}+2}}{} \cdots \To{a_{1,1,l_{11}}}{} v_{1,1,l_{11}+1} \]
    is a right maximal directed string of length $\geqslant 1$. We have $\spds_0 \cong \M(s_1'')$.
    Thus, $\spds_0$ is non-projective if and only if there is at least one arrow $\alpha$ starting at $v_{1,1,\ell_{i1+2}}$,
    and by the definition of almost gentle algebra, we have $a_{1,1,\ell_{11}+1}\alpha\in\I$.
    That is, $\spds_0$ is non-projective if and only if $v_{1,1,\ell_{i1+2}}$ is $(a_{1,1,\ell_{11}+1},\bfda)$-relational. We obtain (2).
  \end{itemize}

  \item[(d)] For all path $s_j' \in \{s_1', s_2', \ldots, s_d'\}$, we have $s_1s_j'\in \I$.
  In this case, $\spds_0 = 0$ is projective. We obtain (3).
\end{itemize}

Therefore, $\spds_0$ is projective if and only if the $(c^{\inner}, d^{\out})$-type vertex $v$ lies in one of the cases (c-1), (c-2) and (d).
\end{proof}

\begin{example} \rm
(1) Consider the almost gentle pair $(\Q,\I')$ given by the quiver $\Q$ provided in Example \ref{exp:almost gent}
and the admissible ideal $\I'=\I\backslash\{a_{1,2}a_{2,4}\}\cap\{a_{2,4}a_{2,5}\}$
% of $\kk\Q$ generated by the paths $a_{1,2_{\Left}}a_{2_{\Left},3_{\Left}}$, $a_{2,3}a_{3,4}$, $a_{1,2}a_{2,3}$, $a_{3,4}a_{4,5}$,
%$a_{2,3'}a_{3',4}$, $a_{2_{\Right}, 3_{\Right}}a_{3_{\Right}, 5}$, $a_{1,2}a_{2,3'}$, $a_{3',4}a_{4,5}$,
% $a_{3_{\Left},4_{\Left}}b_{4_{\Left},5}$, $b_{1,2_{\Right}}a_{2_{\Right},3_{\Right}}$,
see \Pic \ref{fig:almost gent II}.
\begin{figure}[htbp]
\centering
\begin{tikzpicture}[scale=1.3]
% vertices
\draw ( 0  , 3  ) node{$1$};
\draw ( 0  , 1.5) node{$2$};
\draw (-1.2, 0  ) node{$3$};
\draw ( 1.2, 0  ) node{$3'$};
\draw ( 0  ,-1.5) node{$4$};
\draw ( 0  ,-3  ) node{$5$};
\draw (-2.1, 2.1) node{$2_{\Left}$};
\draw (-3.0, 0  ) node{$3_{\Left}$};
\draw (-2.1,-2.1) node{$4_{\Left}$};
\draw ( 2.1, 2.1) node{$2_{\Right}$};
\draw ( 3.0, 0  ) node{$3_{\Right}$};
\draw ( 2.1,-2.1) node{$4_{\Right}$};
% arrows
\draw[line width = 1pt][->] ( 0  ,-1.7) -- (   0,-2.8); \draw ( 0  ,-2.2) node[right]{$a_{4,5}$};
\draw[line width = 1pt][->] ( 0  , 1.2) -- (   0,-1.2); \draw ( 0  , 0  ) node[ left]{$a_{2,4}$};
\draw[line width = 1pt][->] ( 0  , 2.8) -- (   0, 1.7); \draw ( 0  , 2.2) node[ left]{$a_{1,2}$};
\draw[line width = 1pt][->] (-0.2, 1.5) to[out= 180, in=  90] (-1.2, 0.2);
\draw[line width = 1pt][->] (-1.2,-0.2) to[out= -90, in= 180] (-0.2,-1.5);
\draw[line width = 1pt][->] ( 0.2, 1.5) to[out=   0, in=  90] ( 1.2, 0.2);
\draw[line width = 1pt][->] ( 1.2,-0.2) to[out= -90, in=   0] ( 0.2,-1.5);
\draw (-1. , 1.0) node[ left]{$a_{2,3}$};
\draw ( 1. , 1.0) node[right]{$a_{2,3'}$};
\draw (-1. ,-1.0) node[ left]{$a_{3,4}$};
\draw ( 1. ,-1.0) node[right]{$a_{3',4}$};
\draw[line width = 1pt][->][rotate= 5+  0] (0,3) arc(90:125:3);
\draw[line width = 1pt][->][rotate= 5+ 45] (0,3) arc(90:125:3);
\draw[line width = 1pt][->][rotate= 5+ 90] (0,3) arc(90:125:3);
\draw[line width = 1pt][->][rotate= 5+135] (0,3) arc(90:125:3);
\draw[line width = 1pt][->][rotate=-5-  0] (0,3) arc(90:55:3);
\draw[line width = 1pt][->][rotate=-5- 45] (0,3) arc(90:55:3);
\draw[line width = 1pt][->][rotate=-5- 90] (0,3) arc(90:55:3);
\draw[line width = 1pt][->][rotate=-5-135] (0,3) arc(90:55:3);
\draw[line width = 1pt][->][rotate=-5+  0] (0,2.8) arc(90:55:2.8);
\draw[line width = 1pt][<-][rotate=-5+180] (0,2.8) arc(90:55:2.8);
\draw[rotate=   0] ( 1.06, 3.21) node{$a_{1,2_{\Right}}$};
\draw[rotate=   0] ( 0.96, 2.35) node{$b_{1,2_{\Right}}$};
\draw[rotate=  45] ( 1.06, 3.21) node{$a_{1,2_{\Left}}$};
\draw[rotate=  90] ( 1.06, 3.21) node{$a_{2_{\Left},3_{\Left}}$};
\draw[rotate= 135] ( 1.06, 3.21) node{$a_{3_{\Left},4_{\Left}}$};
\draw[rotate= 180] ( 1.06, 3.21) node{$a_{4_{\Left},5}$};
\draw[rotate= -45] ( 1.06, 3.21) node{$a_{2_{\Right},3_{\Right}}$};
\draw[rotate= -90] ( 1.06, 3.21) node{$a_{3_{\Right},4_{\Right}}$};
\draw[rotate=-135] ( 1.06, 3.21) node{$a_{4_{\Right},5}$};
\draw[rotate= 180] ( 0.96, 2.35) node{$b_{4_{\Left},5}$};
% relations
\draw[ red][rotate=   0] (-1.72, 2.45) arc(  45:-135:0.5) [line width = 1pt][dotted];
\draw[ red][rotate=   0] (-1.21, 0.50) arc(  90: 270:0.5) [line width = 1pt][dotted];
\draw[ red][rotate=   0] ( 0.00, 2.00) arc(  90: 185:0.5) [line width = 1pt][dotted];
\draw[blue][rotate=   0] ( 0.00, 2.20) arc(  90:  -5:0.7) [line width = 1pt][dotted];
\draw[blue][rotate= 180] (-1.72, 2.45) arc(  45:-135:0.5) [line width = 1pt][dotted];
\draw[blue][rotate= 180] (-1.21, 0.50) arc(  90: 270:0.5) [line width = 1pt][dotted];
\draw[blue][rotate= 180] ( 0.00, 2.00) arc(  90: 185:0.5) [line width = 1pt][dotted];
\draw[ red][rotate= 180] ( 0.00, 2.20) arc(  90:  -5:0.7) [line width = 1pt][dotted];
\draw[green][rotate=   0] (-1.62,-2.25) arc(-25: 135:0.5) [line width = 1pt][dotted];
\draw[green][rotate= 180] (-1.62,-2.25) arc(-25: 135:0.5) [line width = 1pt][dotted];
\draw[green][shift={(0,-3)}] ( 0.00, 2.10) to[out=0,in=0] ( 0.00, 0.90) [line width = 1pt][dotted];
\end{tikzpicture}
\caption{An almost gentle pair $(\Q,\I')$ ($\I'=\I\backslash\{a_{1,2}a_{2,4}\}\cap\{a_{2,4}a_{2,5}\}$)}
\label{fig:almost gent II}
\end{figure}
Then the vertex $2$ is a $(1^{\inner},3^{\out})$-type vertex such that the conditions given in Lemma \ref{lemm:spds III}(1) are satisfied, except the condition $\target(s_j')$ to be a sink.
That is, $c=1$, $d=3 (> 2)$, $s_1=a_{1,2}$, $s_1'=a_{2,3}$, $s_2'=a_{2,4} (=s_j')$, $s_3'=a_{2,3'}$,
$\ell(s_2')=1$, and $s_1s_2'\notin\I$ hold.
In this instance, $\spds_0$ is non-projective.
Indeed, one can check that $\spds_0 \cong S(4)$ is non-projective simple.

(2) Consider the almost gentle pair $(\Q,\I'')$ given by the quiver $\Q$ provided in Example \ref{exp:almost gent}
and the admissible ideal $\I''=\I\backslash\{a_{1,2}a_{2,4}\}$,
Then the vertex $2$ is a $(1^{\inner},3^{\out})$-type vertex such that the conditions given in Lemma \ref{lemm:spds III}(2) are satisfied.
Thus, for $E(2) = (^1_2)$, we have that $\spds_0$ is projective. Indeed, one can check that $\spds_0 \cong ({^4_5}) \cong P(4)$
since $P(1)$ is the indecomposable projective module corresponding to the claw
\[ a_{1,2_{\Left}} \claw a_{1,2}a_{2,4}a_{4,5} \claw
a_{1,2_{\Right}}a_{2_{\Right},3_{\Right}}a_{3_{\Right},4_{\Right}}
\claw b_{1,2_{\Right}}. \]

(3) The indecomposable injective module $E(2)$ over the almost gentle algebra $A=\kk\Q/\I$ shown in Example \ref{exp:almost gent}
is the one corresponding to the $(1^{\inner}, 3^{\out})$-type vertex $2$.
We have $c=1$, $d=3$ $(\geqslant 2)$, $s_1=a_{1,2}$, $s_1'=a_{2,3}$, $s_2'=a_{2,4}a_{4,5}$, and $s_3'=a_{2,3'}$.
Obviously,  $4 \in 2^{\da}=\{3,4,3'\}$ is a vertex on the path $s_2'$, and the length of $s_2'$ is two.
Notice that $s_1s_j'\in\I$ holds for all $j\in\{1,2,3\}$, then $\spds_0$ is projective by Lemma \ref{lemm:spds III} (3).
In fact, we can check that
\begin{align*}
 \Omega_1(E(2)) \cong (2_{\Left})\oplus (2_{\Right}) \oplus
  \left(
  {
  \begin{smallmatrix}
    2_{\Right} \\ 3_{\Right} \\ 4_{\Right}
  \end{smallmatrix}
  }
  \right) = \bigoplus_{\imath\in I = \{1,2,3\}} M_{\imath}
\end{align*}
\begin{align*}
  \spds_0 = 0, \ \text{ and } \bigoplus_{\jmath\in J = \emptyset} S_{\jmath} = 0
\end{align*}
by using Lemma \ref{lemm:1st-syzygy of E}. Thus $\spds_0 = 0$ is projective.
\end{example}

\begin{definition} \label{def:noname} \rm
For a vertex $v$ of almost gentle pair $(\Q,\I)$, assume that $v$ is a $(c^{\inner},d^{\out})$-type vertex.
We call it an {\defines \noname vertex} if one of the following conditions holds.
\begin{itemize}
  \item[\rm(1)] $v$ is a gentle vertex with $c=2$; %(the case for $v$ to be a gentle vertex with $c<2$ is contained in (2), (3), (4), and (5));
  \item[\rm(2)] $c$ is arbitrary and $d=0$;
  \item[\rm(3)] $c=1$, there is a unique right maximal path $s_j'$ $(1\leqslant j \leqslant d)$ such that $s_1s_j'\notin\I$, $\ell(s_j')=1$, and $\target(s_j')$ is a sink of the quiver $\Q$;
  \item[\rm(4)] $c=1$, there is a unique right maximal path $s_j' = $
    \[ v=v_{1,1,\ell_{11}+1} \To{a_{1,1,\ell_{11}+1}}{}
         v_{1,1,\ell_{11}+2} \To{a_{1,1,\ell_{11}+2}}{}
         \cdots \To{a_{1,1,l_{11}}}{} v_{{1,1,l_{11}+1}}
    \]
    $(1\leqslant j \leqslant d)$ such that $s_1s_j'\notin\I$, $\ell(s_j')\geqslant 2$, and the vertex $v_{1,1,\ell_{11}+2}$ is not an  $(a_{1,1,\ell_{11}+1},\bfda)$-relational vertex;
  \item[\rm(5)] $c=1$ and $s_1s_j' \in \I$ for any $1\leqslant j \leqslant d$;
% \item[\rm(6)] $c=1$, $d=2$, $s_1s_1' \in \I$, and $s_1s_2' \in \I$. % belong to (5)
\end{itemize}
\end{definition}

\begin{proposition} \label{prop:projectivity}
Keep the notation in Lemma \ref{lemm:1st-syzygy of E} and Corollary \ref{coro:1st-syzygy of E}.
For the indecomposable injective module $E(v)$ corresponding to the $(c^{\inner}, d^{\out})$-type vertex $v$,
if $c\geqslant 1$, then the direct summand $\spds_0$ $(\le_{\oplus}\Omega_1(E(v)))$ is projective if and only if $v$ is \noname.
\end{proposition}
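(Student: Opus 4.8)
The plan is to prove both implications at once by a case analysis on the value of $c$ (and, in the borderline ranges, on $d$ and on the integer $t$ produced by Lemma \ref{lemm:vert}), reading off the projectivity of $\spds_0$ from Lemmas \ref{lemm:spds I}, \ref{lemm:spds II}, \ref{lemm:spds III} and comparing it with the five clauses of Definition \ref{def:noname}. I would first dispose of the case $c \geqslant 3$. By Lemma \ref{lemm:spds II}, $\spds_0$ is projective precisely when $d = 0$; on the other hand, when $c \geqslant 3$ the only clause of Definition \ref{def:noname} that can hold is clause~(2), since clauses (1), (3), (4), (5) all force $c \leqslant 2$, and clause~(2) holds exactly when $d = 0$. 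Hence for $c \geqslant 3$ we have ``$\spds_0$ is projective'' $\iff$ ``$d = 0$'' $\iff$ ``$v$ is \noname''.

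Next, the case $c = 2$. If $v$ is a gentle vertex, then Lemma \ref{lemm:spds I} (together with the trivial sub-case $d = 0$ treated inside its proof) gives that $\spds_0$ is projective, while clause~(1) of Definition \ref{def:noname} holds, so $v$ is \noname. Conversely, suppose $v$ is not a gentle vertex. Then $v$ is not \noname: clause~(1) fails by hypothesis, clause~(2) would require $d = 0$ but a $(2^{\inner},0^{\out})$-type vertex is automatically gentle, and clauses (3)--(5) force $c = 1$. Moreover $\spds_0$ is not projective: when $d > 2$ this is exactly the content of the $c = 2$ sub-cases in the proof of Lemma \ref{lemm:spds III}, and when $d \leqslant 2$ one checks directly --- using the almost gentle axiom, the explicit decomposition of $\Omega_1(E(v))$ from Lemma \ref{lemm:1st-syzygy of E}, and the dimension count of Lemma \ref{lemm:spds III} --- that in every remaining configuration the integer $t$ is at most $1$, so that $\spds_0 \cong S(v)$, which is non-projective because $v$ is not a sink.

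Finally, the case $c = 1$, where $E(v)$ is itself a directed string module. Exactly one of the following holds, the alternatives being mutually exclusive and, by the almost gentle axiom (which forces the uniqueness below), exhaustive: (i) $s_1 s_j' \in \I$ for every right maximal directed string $s_j'$ starting at $v$, in particular whenever $d = 0$; (ii) there is a unique $s_j'$ with $s_1 s_j' \notin \I$ and $\ell(s_j') = 1$; (iii) there is a unique $s_j'$ with $s_1 s_j' \notin \I$ and $\ell(s_j') \geqslant 2$. The $c = 1$ analysis carried out in the proof of Lemma \ref{lemm:spds III} --- which in that case never uses the standing hypothesis $d > 2$ --- shows that in case (i) $\spds_0 = 0$; in case (ii) $\spds_0 \cong S(\target(s_j'))$, projective iff $\target(s_j')$ is a sink; and in case (iii) $\spds_0 \cong \M(s_1'')$, where $s_1''$ is obtained from $s_j'$ by deleting its first arrow, and this is projective iff the vertex $v_{1,1,\ell_{11}+2} \in v^{\da}$ is not $(a_{1,1,\ell_{11}+1},\bfda)$-relational. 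Matching this with Definition \ref{def:noname}: case (i) corresponds to clause~(5), case (ii) with $\target(s_j')$ a sink to clause~(3), case (iii) with $v_{1,1,\ell_{11}+2}$ not relational to clause~(4); and in all the complementary sub-cases both ``$\spds_0$ is projective'' and ``$v$ is \noname'' fail. Assembling the three cases yields the claimed equivalence.

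The main obstacle is organisational rather than conceptual: one must make the case analysis genuinely exhaustive --- in particular covering the ranges $c = 2$, $d \leqslant 2$ and $c = 1$, $d \leqslant 2$, which are not literally covered by the hypotheses of Lemmas \ref{lemm:spds I}--\ref{lemm:spds III} --- and check in each resulting case that the projectivity of $\spds_0$, extracted from the dimension counts and from the decomposition of $\Omega_1(E(v))$ in Lemma \ref{lemm:1st-syzygy of E}, agrees with precisely one of the five clauses of Definition \ref{def:noname}. The most delicate step is the converse direction for $c = 2$: showing that a non-gentle $(2^{\inner},d^{\out})$-type vertex always produces a non-projective $\spds_0$.
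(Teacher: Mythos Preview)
Your approach is essentially the same as the paper's: both proceed by a case split on $c$ (and then on $d$ and on whether $v$ is gentle), invoking Lemmas~\ref{lemm:spds I}--\ref{lemm:spds III} where they apply and filling in the boundary ranges by hand. The organisation differs only cosmetically --- the paper first cites the three lemmas and then isolates the two residual cases ``$1\leqslant c\leqslant 2$, $d\leqslant 2$, $v$ not gentle'' and ``$c=1$, $d\leqslant 2$, $v$ gentle'', whereas you slice first by $c$.

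There is, however, one concrete error in your $c=2$ paragraph. You assert that when $c=2$, $d\leqslant 2$ and $v$ is not gentle, one always has $t\leqslant 1$ and hence $\spds_0\cong S(v)$. The bound $t\leqslant 1$ is correct, but the conclusion $\spds_0\cong S(v)$ is not: when $c=d=2$ and $t=1$ (say $s_2s_2'\notin\I$ while $s_1s_1',s_1s_2',s_2s_1'\in\I$), the summand $\spds_0$ is the directed string module $\M(s_2')$, which has top $S(v)$ but is not simple. Your argument ``non-projective because $v$ is not a sink'' therefore does not apply; one needs instead the dimension count
\[
\dim_{\kk}P(v)-\dim_{\kk}\spds_0=\bigl(1+\ell(s_1')+\ell(s_2')\bigr)-\bigl(1+\ell(s_2')\bigr)=\ell(s_1')>0,
\]
exactly as in the paper's case~(c). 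You do mention ``the dimension count of Lemma~\ref{lemm:spds III}'' in passing, so the right tool is at hand, but as written the step ``so that $\spds_0\cong S(v)$'' is false and the justification that follows it is inapplicable. Fixing this sub-case (and noting that for $t=0$ one similarly gets copies of $S(v)$, non-projective since $d\geqslant 1$) completes the argument.
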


\begin{proof}
In the case for $1\leqslant c\leqslant 2$ and $d=0$, we have that
$\spds_0 \cong S(v)^{\oplus(c-1)}$ is projective since $v$ is a sink. Thus, in the condition (2), we assume $c\geqslant 3$.

The conditions (1)--(5) given in Definition \ref{def:noname} are given by Lemmata \ref{lemm:spds I}, \ref{lemm:spds II}
and \ref{lemm:spds III}, respectively.
To be more precisely, if $c=2$, $d\leqslant 2$ and $v$ is a gentle vertex, then Lemma \ref{lemm:spds I} provides the condition (1);
if $c\geqslant 3$, then Lemma \ref{lemm:spds II} provides the condition (2);
if $c=1$ and $d\geqslant 3$, then $v$ satisfying the conditions given in Lemma \ref{lemm:spds III} is a vertex such that the conditions (3)--(5) hold.
Thus, we only need to consider the following two cases:
\begin{itemize}
  \item[(A)] $1\leqslant c \leqslant 2$, $d\leqslant 2$ and $v$ is not a gentle vertex;
  \item[(B)] $c=1$, $d\leqslant 2$, and $v$ is a gentle vertex.
\end{itemize}

For the case (A), by the definition of gentle vertices, we have that one of the following cases occurs:
\begin{itemize}
\item[(a)] $c=1$, $d=2$, $s_1s_1' \in \I$, and $s_1s_2' \in \I$;
\item[(b)] $c=2$, $d=1$, $s_1s_1' \in \I$, and $s_2s_1' \in \I$;
\item[(c)] $c=d=2$ such that $s_1s_2'$ and $s_1s_2'$ lie in $\I$, and at least one of $s_1s_1'$ and $s_2s_2'$ lies in $\I$.
\end{itemize}

In the case (a), we have $\spds_0=0$ which is a trivial projective module, we obtain the condition (5).

In the case (b), we have $\spds_0 \cong S(v)$ which is non-projective since $v$ does not be a sink of $\Q$.

In the case (c), if $s_1s_1'\in \I$ and $s_2s_2'\notin \I$,
then $\spds_0$ is a directed string module corresponding to the string $s_2'$.
In this case, the kernel of the projective cover of $\spds_0$, which is of the form $P_0\to \spds_0$, is non-zero
since the following formula
\[\dim_{\kk}P_0 -\dim_{\kk} \spds_0 = (\ell(s_1')+\ell(s_2')+1) - (\ell(s_2')+1) = \ell(s_1') >0\]
holds. Thus, $\spds_0$ is non-projective. Similarly,
We have that $\spds_0$ is non-projective in other cases.

For the case (B), if $d=0$ then it is trivial that $\spds_0 = 0$.
Thus, we have $1\leqslant d \leqslant 2$, and obtain the following two cases.
\begin{itemize}
\item[(d)] $c=1$ and $d=1$.
\item[(e)] $c=1$ and $d=2$.
\end{itemize}

In the case (d), we have that $v$ is a $(1^{\inner},1^{\out})$-type vertex and $s_1s_1'$ equals to
\[ \overbrace{v_{1,1,1} \To{a_{1,1,1}}{} \cdots \To{a_{1,1,\ell_{11}}}{}
    v}^{s_1}
    = \overbrace{v_{1,1,\ell_{11}+1} \To{a_{1,1,\ell_{11}+1}}{}
      \underbrace{v_{1,1,\ell_{11}+2} \To{a_{1,1,\ell_{11}+2}}{}
     \cdots \To{a_{1,1,l_{11}}}{} v_{{1,1,l_{11}+1}} }_{s_1''} }^{s_1'}. \]
Then $\spds_0 \cong \M(s'')$ is projective if and only if one of the following conditions holds:
\begin{itemize}
  \item[(d-1)] $s_1s_1'\notin\I$, $\ell(s_1')=1$, and $v_{1,1,\ell_{11}+2}=\target(s_1')$ is a sink,
  that is, $v_{1,1,\ell_{11}+2}$ is a vertex lying in $v^{\da}$ satisfying the condition (3);
  \item[(d-2)] $s_1s_1'\notin\I$, $\ell(s_1')\geqslant 2$, and $v_{1,1,\ell_{11}+2}$ is not a $(a_{1,1,\ell_{11}+2},\bfda)$-relational vertex,
  that is, the condition (4) holds;
  \item[(d-3)] $s_1s_1'\in\I$. In this case, the vertex $v$ satisfies the condition (5).
\end{itemize}

If $v$ is a $(1^{\inner},2^{\out})$-type vertex, then at least one of $s_1s_1'$ and $s_1s_2'$ lies in $\I$.
If $s_1s_1'\notin \I$, then $s_1s_2'\in \I$. Thus, similar to the case (d),
$\spds_0$ is projective if and only if one of the following conditions holds:
\begin{itemize}
  \item[(e-1)] $s_1s_1'\notin\I$, $\ell(s_1')=1$, and $v_{1,1,\ell_{11}+2}=\target(s_1')$ is a sink, this satisfies the condition (3);
  \item[(e-2)] $s_1s_1'\notin\I$, $\ell(s_1')\geqslant 2$, and $v_{1,1,\ell_{11}+2}$ is not a $(a_{1,1,\ell_{11}+2},\bfda)$-relational vertex, this satisfies the condition (4);
\end{itemize}
The cases for $s_1s_1'\in\I$ and $s_1s_2'\notin \I$ are dual.
\begin{itemize}
  \item[(e-3)] $s_1s_1'$ and $s_1s_2'$ lie in $\I$. Then we have $\spds_0=0$ is projective, this satisfies the condition (5).
\end{itemize}
\end{proof}

\subsection{Self-injective dimension}

We divide all vertices of an almost gentle pair $(\Q,\I)$ to three parts:
sources; $(1^{\inner},d^{\out})$-vertices, and $((\geqslant 2)^{\inner},d^{\out})$-vertices.
If a vertex $v$ is a source, then $E(v)$ is a simple module corresponding to $v$,
and in this situation we have computed the projective dimension $\pdim E(v)$ in Proposition \ref{prop:forb-Omega}, i.e.,
\[ \pdim E(v) = \pdim S(v) = \ell(F_v), \]
where $F_v$ is the right maximal forbidden path starting at the sink $v$.

Next, we compute $\pdim E(v)$ in the case for $v$ being a vertex lies in the second and third situations.
By Lemma \ref{lemm:1st-syzygy of E}, we have that $\Omega_1(E(v))$ is a direct sum of some right maximal directed string modules and the module $\spds_0$.
The projective resolution of any right maximal directed string module can be computed by using Lemma \ref{lemm:nth-syzygy}.
Therefore, we need to construct the projective resolution of $\spds_0$.

\begin{lemma} \label{lemm:n-th syzygy of E}
For any $n\geqslant 1$, the $n$-th syzygy $\Omega_n(\spds_0)$ of $\spds_0$ is a direct sum of some right maximal directed string modules and some simple module.
\end{lemma}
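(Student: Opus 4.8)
The plan is to reduce the statement to the case $n=1$ and then to compute the first syzygy of $\spds_0$ directly from a projective cover of it. For the reduction, suppose it has been shown that $\Omega_1(\spds_0)$ is a direct sum of right maximal directed string modules and simple modules; since both are directed string modules, $\Omega_1(\spds_0)$ is a direct sum of directed string modules, and so by Lemma \ref{lemm:nth-syzygy} the module $\Omega_n(\spds_0)=\Omega_{n-1}\bigl(\Omega_1(\spds_0)\bigr)$ is a direct sum of directed string modules for every $n\geqslant 2$. Applying Proposition \ref{prop:nth-syzygy} with index $n-1\geqslant 1$ to each directed string summand of $\Omega_1(\spds_0)$ then shows that every indecomposable direct summand of $\Omega_n(\spds_0)$ is a right maximal directed string module or a simple module attached to a relational vertex or a sink, which is exactly what is wanted. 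So only $n=1$ needs to be treated.

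For $n=1$ I would first clear away the easy cases. If $v$ is a $(1^{\inner},d^{\out})$-type vertex, then by Lemma \ref{lemm:spds III} and the proof of Proposition \ref{prop:projectivity} the module $\spds_0$ is itself zero, simple, or a right maximal directed string module; and whenever $\spds_0$ is projective we simply have $\Omega_1(\spds_0)=0$. In all these subcases $\Omega_1(\spds_0)$ is a direct sum of directed string modules, so Proposition \ref{prop:nth-syzygy} applies and we are done. It therefore remains to handle the situation where $c\geqslant 2$ and $\spds_0$ is non-projective. In this case Corollary \ref{coro:1st-syzygy of E}, together with the descriptions of $\spds_0$ in Lemmata \ref{lemm:spds I}, \ref{lemm:spds II} and \ref{lemm:spds III} (and the proof of Proposition \ref{prop:projectivity}), gives $\top(\spds_0)\cong S(v)^{\oplus(c-1)}$ and $\rad(\spds_0)\cong\bigoplus_{i=1}^{t}\M(\hat{s}_i')$, where, in the notation of \Pic \ref{fig:c-in d-out II}, $s_1',\dots,s_t'$ are the right maximal directed strings starting at $v$ with $s_is_i'\notin\I$, $\hat{s}_i'$ is $s_i'$ with its initial arrow deleted, and $0\leqslant t\leqslant d$; the $s_i'$ are pairwise distinct, since they have pairwise distinct initial arrows by the almost gentle condition.

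The computation would then run as follows. As $\top(\spds_0)\cong S(v)^{\oplus(c-1)}$, a projective cover is $p\colon P(v)^{\oplus(c-1)}\to\spds_0$, and $P(v)$ is the indecomposable projective described by the claw $s_1'\claw s_2'\claw\cdots\claw s_d'$ of all right maximal directed strings starting at $v$. Since $A$ is monomial, $\rad(P(v))$ has $\kk$-basis the nonzero paths of positive length starting at $v$, which breaks up according to the initial arrow, so $\rad(P(v))\cong\bigoplus_{j=1}^{d}\M(\hat{s}_j')$; moreover each $\M(\hat{s}_j')$ is a right maximal directed string module if $\ell(s_j')\geqslant 2$, and is the simple module $S(\target(s_j'))$, attached to a sink or a relational vertex because $s_j'$ is right maximal, if $\ell(s_j')=1$. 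As $p$ is an isomorphism on tops, $\Omega_1(\spds_0)=\Ker p\subseteq\rad\bigl(P(v)^{\oplus(c-1)}\bigr)=\bigoplus_{j=1}^{d}\M(\hat{s}_j')^{\oplus(c-1)}$; reading $p$ on radicals, the block over a leg $s_j'$ with $j\leqslant t$ is a surjection $\M(\hat{s}_j')^{\oplus(c-1)}\twoheadrightarrow\M(\hat{s}_j')$ given by a nonzero row of scalars, with kernel $\M(\hat{s}_j')^{\oplus(c-2)}$, while the block over a leg with $j>t$ is zero. Hence
\[ \Omega_1(\spds_0)\;\cong\;\bigoplus_{i=1}^{t}\M(\hat{s}_i')^{\oplus(c-2)}\;\oplus\;\bigoplus_{j=t+1}^{d}\M(\hat{s}_j')^{\oplus(c-1)}, \]
which is a direct sum of right maximal directed string modules and simple modules; when $c=2$ the first sum is empty, and when $v$ is a sink one has $d=0$ and $\spds_0$ is projective, so there is nothing to prove.

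I expect the main obstacle to be the last step, namely verifying that $p$ restricted to radicals is block-diagonal with respect to the decomposition $\bigoplus_j\M(\hat{s}_j')$: one must exclude a nonzero homomorphism from a single leg $\M(\hat{s}_i')$ into the remaining legs of $\spds_0$, and check that on a surviving leg $p$ is a scalar multiple of the obvious surjection. This relies on the facts that each $\M(\hat{s}_j')$ is uniserial with endomorphism ring $\kk$, that $\I$ is generated in degree two (so that right maximality descends from $s_j'$ to $\hat{s}_j'$), and that $A$ is monomial special multiserial, which constrains how the outgoing legs at $v$ can overlap; a clean way to pin the decomposition down is to match $\kk$-dimensions using Lemma \ref{lemm:1st-syzygy of E} and the dimension formula in the proof of Lemma \ref{lemm:spds II}.
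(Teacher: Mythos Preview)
Your proposal is correct and follows essentially the same route as the paper: describe the projective cover $P(v)^{\oplus(c-1)}\to\spds_0$ (the paper instead splits $\spds_0=\spds\oplus S(v)^{\oplus(c-t)}$ and uses $P(v)^{\oplus(t-1)}\to\spds$, then treats the simple summands separately), identify $\Omega_1$ as a direct sum of the truncated legs $\M(\hat{s}_j')$, and then invoke Proposition~\ref{prop:nth-syzygy} by induction. Your explicit multiplicity formula for $\Omega_1(\spds_0)$ and your justification of block-diagonality from the almost gentle condition are in fact more precise than the paper's somewhat terse statement ``$\Omega_1(\spds)\cong\bigoplus_{j=1}^{d}\M(s_j'')$'', but the underlying argument is the same.
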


\begin{proof}
Let $v$ be a $(c^{\inner},d^{\out})$-type vertex shown in \Pic \ref{fig:c-in d-out II},
and assume that $s_is_j'\notin\I$ if and only if $0\leqslant i=j \leqslant t$ $(<\min\{c,d\})$.
Recall that any $A$-module $M$ over the almost gentle algebra $A=\kk\Q/\I$ can be described by the bound quiver $(\Q,\I)$ as the quiver representation $(V_w, \varphi_{a})_{w\in\Q_0, a\in\Q_1}$,
where $V_w$ is the $\kk$-linear space which is $\kk$-linear isomorphic to $Me_w$
and $\varphi_{a}$ is a $\kk$-linear space $V_{\source(a)} \to V_{\target(a)}$
such that $\varphi_{a}\varphi_{b} = 0$ ($\target(a)=\source(b)$) if and only if $ab\in\I$.

Computing the kernel of the projective cover of $E(v)$, we obtain that the quiver representation of $\spds$ is of the form
\[ (\spds e_w, \varphi_{a})_{w\in\Q_0,a\in\Q_1}, \]
where for any vertex $w$ on the path $s_j''$ except $v$ ($s_j''$ a string obtained by deleting the first arrow of $s_j'$), and
\[ \spds e_w \cong \M(s_j'')e_w, \]
to be precise, we have $\rad(\spds) \cong \bigoplus_{j=1}^d \M(s_j'')$.
\Pic \ref{fig:spds repr} provides an example for the above, and for simplicity we assume that
each path $s_j'$ has no self-injection in this figure.
\begin{figure}[htbp]
\centering
\begin{tikzpicture}
\draw[top color = white, middle color = blue!50]
  (-3.8,-1.7) -- (3.8,-1.7) to[out=  0,in=  0] (3.8,-0.8) -- (-3.5,-0.8)
  -- (-4.0,-0.2) -- (-3.8,-0.8) to[out=180,in=180] (-3.8,-1.7);
\draw (0.1,0) node{
\xymatrix@R=2cm{
  & &
\kk^{\oplus (t-1)} \ar@{~>}[lld]_{s_1'}
  \ar@{~>}[ld]^{s_2'}
  \ar@{~>}[rd]_{s_{d-1}'}
  \ar@{~>}[rrd]^{s_d'}
  & &
\\
\kk_1 & \kk_2 & \cdots & \kk_{d-1} & \kk_d
}
};
\draw[blue] (-4.3,0) node{$\kk_j\cong \kk$, $1\leqslant j\leqslant d$};
\end{tikzpicture}
\caption{The quiver representation of $\spds$}
\begin{center}
  (all $\kk$-linear spaces corresponding to vertices on each $s_j'$ is one-dimensional,

  except the $\kk$-linear space $\kk^{\oplus (t-1)}$ corresponding to the vertex $v$)
\end{center}
\label{fig:spds repr}
\end{figure}
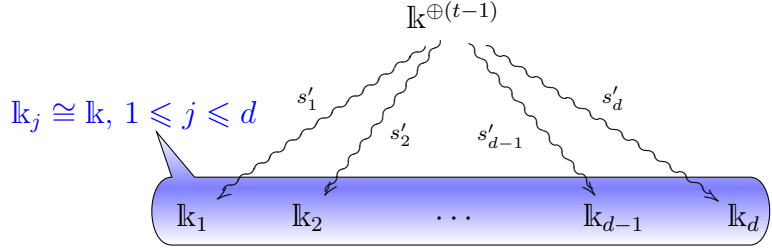
Furthermore, $\top(\spds) \cong S(v)^{\oplus (t-1)}$, and then the projective cover of $\spds$ is of the form $P(v)^{\oplus (t-1)} \to \spds$.
Now, we have $\Omega_1(\spds) \cong \bigoplus_{j=1}^d \M(s_j'')$,
then it is clear that each direct summand of $\Omega_1(\spds)$ is either a right maximal directed string module or a simple module,
and so is $\Omega_{n}(\spds_1)$ by using Proposition \ref{prop:nth-syzygy} and induction.

On the other hand, by Lemma \ref{lemm:1st-syzygy of E}, we have
\[ \spds_0 = \spds \oplus \bigoplus_{\jmath\in J}S_{\jmath}, \]
it follows that $\displaystyle \Omega_n(\spds_0) = \Omega_n(\spds) \oplus \bigoplus_{\jmath\in J}\Omega_n(S_{\jmath})$.
Notice that $S_{\jmath}$ is a direct string module corresponding to a simple string,
so each direct summand of $\Omega_n(S_{\jmath})$ is either a right maximal directed string module
or a simple module by using Proposition \ref{prop:nth-syzygy} and induction.
\end{proof}

For a directed string $\ds=a_1a_2\cdots a_l$, a forbidden path $F=b_1b_2\cdots b_l$ is said to
be a {\defines $\ds$-forbidden} if one of the following conditions holds:
\begin{itemize}
  \item $\source(F)=\target(\ds)$ and $\ds b_1\notin\I$;
  \item $\source(F)=\source(\ds)$ and $a_1\ne b_1$.
\end{itemize}

\begin{example} \rm
The indecomposable injective module $E(2) = ({^1_2})$ over the almost gentle algebra $A=\kk\Q/\I$ given in Example \ref{exp:almost gent} is a directed string module corresponding to the directed string $a_{1,2}$.
Then the forbidden paths $b_{1,2_{\Right}}a_{2_{\Right},3_{\Right}}$, $a_{1,2_{\Right}}$ and $b_{1,2_{\Left}}a_{2_{\Left},3_{\Left}}$ are $a_{1,2}$-forbidden paths.
But $a_{2,4}$ is not an $a_{1,2}$-forbidden path because $a_{1,2}a_{2,4}\in\I$.
\end{example}

\begin{notation} \label{nota:F(ds)}
\rm
Let $\forb(\ds)$ be the set of all $\ds$-forbidden paths.
If $\ell(\ds)=0$, then $\forb(\ds)=\forb(v)$, where $v=\source(\ds)=\target(\ds)$.
\end{notation}

Now we show the following proposition.

\begin{proposition} \label{prop:forb-Omega:ds case}
Let $\ds$ be either a right maximal directed string or a simple string. Then $\Omega_{n-1}(\M(\ds))$
is non-projective if and only if there is a forbidden path $F\in\forb(\ds)$ whose length is $n$.
\end{proposition}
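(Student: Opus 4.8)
The plan is an induction on $n$. When $\ds=e_v$ is a simple string one has $\forb(\ds)=\forb(v)$ and $\M(\ds)=S(v)$ (Notation \ref{nota:F(ds)}), so the assertion is exactly Proposition \ref{prop:forb-Omega}; hence it suffices to treat the case in which $\ds=a_1a_2\cdots a_l$ is a right maximal directed string, and we put $v_0:=\source(\ds)$. Let $\clawnota=s_1\claw s_2\claw\cdots\claw s_m$ be the claw describing $P(v_0)$, so the $s_i=a_{i,1}a_{i,2}\cdots a_{i,m_i}$ are exactly the right maximal directed strings issuing from $v_0$, one per arrow starting at $v_0$; in particular $a_{1,1},\ldots,a_{m,1}$ are precisely the arrows with source $v_0$. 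Since $\ds$ is the right maximal directed string from $v_0$ beginning with $a_1$, it coincides with the $s_j$ for which $a_{j,1}=a_1$, and Lemma \ref{lemm:1st-syzygy} gives
\[ \Omega_1(\M(\ds))\ \cong\ \bigoplus_{t\ne j}\M(\hat{s}_t),\qquad \hat{s}_t:=a_{t,2}a_{t,3}\cdots a_{t,m_t}, \]
with the convention $\M(\hat{s}_t)=S(\target(a_{t,1}))$ when $m_t=1$; each $\hat{s}_t$ is again a right maximal directed string or a simple string (cf.\ Proposition \ref{prop:nth-syzygy}).

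The combinatorial heart of the argument is a local statement about each summand. Fix $t\ne j$ and set $w_t:=\target(a_{t,1})$. Since $A$ is almost gentle, when $m_t\ge 2$ the arrow $a_{t,2}$ is the \emph{only} arrow $\alpha$ with $a_{t,1}\alpha\notin\I$, and $s_t$ is right maximal; combining this with the description of indecomposable projectives by claws one gets: $\M(\hat{s}_t)$ is projective if and only if $w_t$ is not a relational vertex, and in the non-projective case concatenation with $a_{t,1}$ is a bijection
\[ \{\,G\in\forb(\hat{s}_t):\ell(G)\ge 1\,\}\ \longrightarrow\ \{\,F\in\forb(\ds):\ell(F)\ge 2,\ F\ \text{begins with}\ a_{t,1}\,\},\qquad G\mapsto a_{t,1}G,\ \ \ell(a_{t,1}G)=\ell(G)+1, \]
while the single arrow $a_{t,1}$ always belongs to $\forb(\ds)$. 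The delicate point is the two-clause definition of a $\sigma$-forbidden path: because $\ds$ and every $\hat{s}_t$ are right maximal (or simple), the clause ``$\source=\target(\sigma)$ and $\sigma\cdot(\text{first arrow})\notin\I$'' can never hold, so a $\ds$-forbidden (resp.\ $\hat{s}_t$-forbidden) path necessarily starts at $v_0$ (resp.\ $w_t$) with a first arrow different from $a_1$ (resp.\ from $a_{t,2}$). This is precisely what makes $a_{t,1}G$ a $\ds$-forbidden path and, conversely, what makes the deletion of the first arrow of a $\ds$-forbidden path land in $\forb(\hat{s}_t)$; one also uses here that $a_{t,1}G_1\in\I$ automatically, again by right maximality of $s_t$ together with the almost gentle condition.

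Granting these facts, the induction runs as follows. For $n=1$: $\M(\ds)$ is indecomposable with top $S(v_0)$, hence projective iff $\M(\ds)\cong P(v_0)$, iff $m=1$, iff $v_0$ has no arrow out besides $a_1$, iff $\forb(\ds)$ contains no path of length $1$. For the step $n\ge 2$: minimal syzygies commute with finite direct sums, so $\Omega_{n-1}(\M(\ds))\cong\bigoplus_{t\ne j}\Omega_{n-2}(\M(\hat{s}_t))$. If this module is non-projective, then some $\Omega_{n-2}(\M(\hat{s}_t))$ is non-projective; in particular $\M(\hat{s}_t)$ is non-projective, so $w_t$ is relational, and the induction hypothesis applied to $\hat{s}_t$ yields $G\in\forb(\hat{s}_t)$ with $\ell(G)\ge n-1$, whence $a_{t,1}G\in\forb(\ds)$ with $\ell(a_{t,1}G)\ge n$. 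Conversely, given $F\in\forb(\ds)$ with $\ell(F)=n\ge 2$, $F$ begins with some $a_{t,1}$ ($t\ne j$) and $F=a_{t,1}G$ with $G\in\forb(\hat{s}_t)$, $\ell(G)=n-1$; then $w_t$ is relational, $\M(\hat{s}_t)$ is non-projective, the induction hypothesis gives that $\Omega_{n-2}(\M(\hat{s}_t))$ is non-projective, and hence so is $\Omega_{n-1}(\M(\ds))$. (We use throughout that the lengths occurring in $\forb(\sigma)$ form an initial segment, since a prefix of a $\sigma$-forbidden path is again $\sigma$-forbidden, so ``length exactly $n$'' and ``length $\ge n$'' are interchangeable.)

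The main obstacle is the middle step: correctly matching the two-clause combinatorics of $\ds$-forbidden paths with the indecomposable summands $\M(\hat{s}_t)$ of $\Omega_1(\M(\ds))$, and verifying that right maximality of $\ds$ and of each $\hat{s}_t$ collapses the definition of $\ds$-forbidden to the single clause ``starts at the source with a forbidden first step''. Once that is in place, concatenation with $a_{t,1}$ is a genuine length-shifting bijection, the projectivity dichotomy for $\M(\hat{s}_t)$ holds exactly, and the rest is the routine induction above, built on Lemma \ref{lemm:1st-syzygy}, Lemma \ref{lemm:pdim dire str mod}, Proposition \ref{prop:nth-syzygy} and Proposition \ref{prop:forb-Omega}.
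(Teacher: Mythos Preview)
Your argument is correct and follows essentially the same route as the paper: both decompose $\Omega_1(\M(\ds))$ via Lemma \ref{lemm:1st-syzygy} into the summands $\M(\hat{s}_t)$ (each again right maximal or simple) and then track non-projectivity of successive syzygies against the growth of forbidden paths one arrow at a time. The paper does this as an explicit step-by-step construction (producing $a_{i,2}^{(1)},a_{i,3}^{(2)},\ldots$ and the companion modules $M_1,M_2,\ldots$), whereas you package the same mechanism as a clean induction on $n$ via the length-shifting bijection $G\mapsto a_{t,1}G$ between $\forb(\hat{s}_t)$ and $\{F\in\forb(\ds):F\text{ begins with }a_{t,1}\}$; your observation that right maximality collapses the two-clause definition of $\ds$-forbidden to the single source clause is exactly what makes that bijection go through, and it is the implicit content of the paper's tracking argument. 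One cosmetic point: your phrase ``projective iff $w_t$ is not a relational vertex'' is a bit loose (for $m_t=1$ the correct dichotomy is ``sink versus $(a_{t,1},\bfda)$-relational''), but you do not actually use that formulation in the induction, so it does not affect the validity.
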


\begin{proof}
The case for $\ds$ to be simple is shown in Proposition \ref{prop:forb-Omega}.
Now, we assume that $\ds$ is the right maximal directed string
\[ v_{1,1} \To{a_{1,1}}{} v_{1,2} \To{a_{1,2}}{} \cdots \To{a_{1,l_1}}{} v_{1,l_1+1} \]
with length $l_1\geqslant 1$ and $v_{1,1}$ a $(c^{\inner},d^{\out})$-type vertex ($d\geqslant 1$),
and the claw corresponding to $P(v_{1,1})$ is
\[ \clawnota = \ds_1 \claw \ds_2 \claw \cdots \claw \ds_d, \]
where $\ds=\ds_1$, $\ds_i = \ v_{i,1} \To{a_{i,1}}{} v_{i,2} \To{a_{i,2}}{} \cdots \To{a_{i,l_i}}{} v_{i,l_i+1}$
($l_i\geqslant 1$, $1\leqslant i\leqslant d$) is right maximal,
and $v_{1,1}=v_{2,1}=\cdots=v_{d,1}$.
Then we have
\[ \Omega_1(\M(\ds)) \cong \bigoplus_{i=1}^{d} \M(\ds_i^{*}), \]
where
\[ \ds_i^{*} =
   \begin{cases}
    0, &  \text{ if } i=1; \\
     v_{i,2} \To{a_{i,2}}{} \cdots \To{a_{i,l_i}}{} v_{i,l_i+1}, & \text{ if } i\geqslant 2.
   \end{cases}
\]

In the case for $d=1$, $\M(\ds)$ is projective, and so $\Omega_1(\M(\ds)) = 0$ is projective.

In the case for $d\geqslant 2$, for any $i\geqslant 2$, we have $\M(\ds_i^{*})\ne 0$.
Furthermore, $\M(\ds_i^{*})$ is non-projective if and only if $v_{i,2}$ is $(a_{i,1},\bfda)$-relational.
It follows that
\begin{itemize}
  \item
    there is an arrow $a_{i,2}^{(1)}$ such that $a_{i,1}a_{i,2}^{(1)}$ is a forbidden path lying in $\forb(\ds)$;
  \item
    the $1$-st syzygy $\Omega_1(\M(\delta))$ is non-projective;
  \item
    there is a directed string $\delta_i^{(1)}$ which is of the form $a_{i,2}^{(1)}\cdots$
    such that
    \[ \M(\delta_i^{(1) *}) \le_{\oplus} \Omega_1(\M(\delta_i^{*})) \le_{\oplus} \Omega_2(\M(\ds)),\]
    where $\delta_i^{(1) *}$ is the string obtained by deleting the first arrow $a_{i,2}^{(1)}$ of $\delta_i^{(1)}$,
    see \Pic \ref{fig:1st syzygy non-proj}.
\end{itemize}
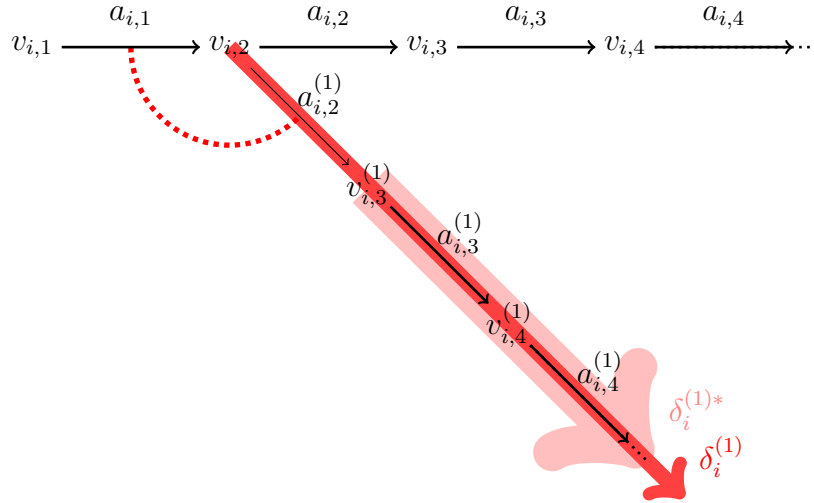
\begin{figure}[htbp]
\centering
\begin{tikzpicture}[scale=1.3]
\draw[rotate around={-45:(0,0)}][line width =18pt][red!25][->] (2,0) -- (6,0);
\draw[rotate around={-38:(0,0)}][line width = 6pt][red!50]  (6,0) node{$\ds_i^{(1)*}$};
\draw[rotate around={-45:(0,0)}][line width = 6pt][red!75][->] (0,0) -- (6.5,0);
\draw[rotate around={-40:(0,0)}][line width = 6pt][red]  (6.5,0) node{$\ds_i^{(1)}$};
\draw[line width = 2pt][red][dotted] (-1,0) arc (180:315:1);
\draw[shift={(-2, 0)}] (0,0) node{$v_{i,1}$};
\draw[shift={( 0, 0)}] (0,0) node{$v_{i,2}$};
\draw[shift={( 2, 0)}] (0,0) node{$v_{i,3}$};
\draw[shift={( 4, 0)}] (0,0) node{$v_{i,4}$};
\draw[shift={(-2, 0)}] (1,0.3) node{$a_{i,1}$};
\draw[shift={( 0, 0)}] (1,0.3) node{$a_{i,2}$};
\draw[shift={( 2, 0)}] (1,0.3) node{$a_{i,3}$};
\draw[shift={( 4, 0)}] (1,0.3) node{$a_{i,4}$};
\draw[shift={(-2, 0)}][->] (0.3,0) -- (1.7,0) [line width = 1pt];
\draw[shift={( 0, 0)}][->] (0.3,0) -- (1.7,0) [line width = 1pt];
\draw[shift={( 2, 0)}][->] (0.3,0) -- (1.7,0) [line width = 1pt];
\draw[shift={( 4, 0)}][->] (0.3,0) -- (1.7,0) [line width = 1pt];
\draw[shift={( 4, 0)}][dotted] (0.3,0) -- (2,0) [line width = 1pt];
\draw[rotate around={-45:(0,0)}][shift={(0,0)}][->] (0.3,0) -- (1.7,0);
\draw[rotate around={-45:(0,0)}][shift={(0,0)}] (1,0.3) node{$a_{i,2}^{(1)}$};
\draw[rotate around={-45:(0,0)}][shift={(2,0)}] (0,0) node{$v_{i,3}^{(1)}$};
\draw[rotate around={-45:(0,0)}][shift={( 4, 0)}] (0,0) node{$v_{i,4}^{(1)}$};
\draw[rotate around={-45:(0,0)}][shift={( 2, 0)}] (1,0.3) node{$a_{i,3}^{(1)}$};
\draw[rotate around={-45:(0,0)}][shift={( 4, 0)}] (1,0.3) node{$a_{i,4}^{(1)}$};
\draw[rotate around={-45:(0,0)}][shift={( 2, 0)}][->] (0.3,0) -- (1.7,0) [line width = 1pt];
\draw[rotate around={-45:(0,0)}][shift={( 4, 0)}][->] (0.3,0) -- (1.7,0) [line width = 1pt];
\draw[rotate around={-45:(0,0)}][shift={( 4, 0)}][dotted] (0.3,0) -- (2,0) [line width = 1pt];
\end{tikzpicture}
\caption{$\Omega_1(\M(\ds))$ is non-projective}
\label{fig:1st syzygy non-proj}
\end{figure}
Therefore, if $\M(\ds_i)$ is non-projective, then so is $\Omega_1(\M(\ds))$,
and in this case, we get a forbidden path $a_{i,1}a_{i,2}^{(1)}$ lying in $\forb(\ds)$;
if $\M(\ds_i^{(1)*})$ is non-projective, then so is $\Omega_2(\M(\ds))$,
and in this case, by the method similar to above, there is a directed string $\ds_i^{(2)}$
which is of the form $a_{i,3}^{(2)}\cdots $ such that $a_{i,2}^{(1)}a_{i,3}^{(2)}$ is a forbidden path,
then we get a forbidden path $a_{i,1}a_{i,2}^{(1)}a_{i,3}^{(2)}$ lying in $\forb(\ds)$,
see \Pic \ref{fig:2nd syzygy non-proj}.
\begin{figure}[htbp]
\centering
\begin{tikzpicture}[scale=1.3]
\draw[rotate around={-45:(1.41,-1.41)}]
     [rotate around={-45:(0,0)}][line width =18pt][blue!25][->] (4,0) -- (6,0);
\draw[rotate around={-45:(1.41,-1.41)}]
     [rotate around={-38:(0,0)}][blue!50]  (5.5,0.3) node{$\ds_i^{(2)*}$};
\draw[rotate around={-45:(1.41,-1.41)}]
     [rotate around={-45:(0,0)}][line width = 6pt][blue!50][->] (2,0) -- (6.2,0);
\draw[rotate around={-45:(1.41,-1.41)}]
     [rotate around={-40:(0,0)}][blue]  (6.0,0) node{$\ds_i^{(2)}$};
\draw[rotate around={-45:(0,0)}][line width =18pt][red!25][->] (2,0) -- (6,0);
\draw[rotate around={-38:(0,0)}][red!50]  (5.5,0.3) node{$\ds_i^{(1)*}$};
\draw[rotate around={-45:(0,0)}][line width = 6pt][red!75][->] (0,0) -- (6.2,0);
\draw[rotate around={-40:(0,0)}][red]  (6.0,0) node{$\ds_i^{(1)}$};
\draw[line width = 2pt][ red][dotted] (-1,0) arc (180:315:1);
\draw[line width = 2pt][blue][dotted] (1.41*0.5,-1.41*0.5) arc (135:270:1);
\draw[shift={(-2, 0)}] (0,0) node{$v_{i,1}$};
\draw[shift={( 0, 0)}] (0,0) node{$v_{i,2}$};
\draw[shift={( 2, 0)}] (0,0) node{$v_{i,3}$};
\draw[shift={( 4, 0)}] (0,0) node{$v_{i,4}$};
\draw[shift={(-2, 0)}] (1,0.3) node{$a_{i,1}$};
\draw[shift={( 0, 0)}] (1,0.3) node{$a_{i,2}$};
\draw[shift={( 2, 0)}] (1,0.3) node{$a_{i,3}$};
\draw[shift={( 4, 0)}] (1,0.3) node{$a_{i,4}$};
\draw[shift={(-2, 0)}][->] (0.3,0) -- (1.7,0) [line width = 1pt];
\draw[shift={( 0, 0)}][->] (0.3,0) -- (1.7,0) [line width = 1pt];
\draw[shift={( 2, 0)}][->] (0.3,0) -- (1.7,0) [line width = 1pt];
\draw[shift={( 4, 0)}][->] (0.3,0) -- (1.7,0) [line width = 1pt];
\draw[shift={( 4, 0)}][dotted] (0.3,0) -- (2,0) [line width = 1pt];
\draw[rotate around={-45:(0,0)}][shift={(0,0)}][->] (0.3,0) -- (1.7,0) [line width = 1pt];
\draw[rotate around={-45:(0,0)}][shift={(0,0)}] (1,0.3) node{$a_{i,2}^{(1)}$};
\draw[rotate around={-45:(0,0)}][shift={(2,0)}] (0,0) node{$v_{i,3}^{(1)}$};
\draw[rotate around={-45:(0,0)}][shift={( 4, 0)}] (0,0) node{$v_{i,4}^{(1)}$};
\draw[rotate around={-45:(0,0)}][shift={( 2, 0)}] (1,0.3) node{$a_{i,3}^{(1)}$};
\draw[rotate around={-45:(0,0)}][shift={( 4, 0)}] (1,0.3) node{$a_{i,4}^{(1)}$};
\draw[rotate around={-45:(0,0)}][shift={( 2, 0)}][->] (0.3,0) -- (1.7,0) [line width = 1pt];
\draw[rotate around={-45:(0,0)}][shift={( 4, 0)}][->] (0.3,0) -- (1.7,0) [line width = 1pt];
\draw[rotate around={-45:(0,0)}][shift={( 4, 0)}][dotted] (0.3,0) -- (2,0) [line width = 1pt];
\draw[rotate around={-45:(1.41,-1.41)}]
     [rotate around={-45:(0,0)}][shift={( 2, 0)}] (1,0.3) node{$a_{i,3}^{(2)}$};
\draw[rotate around={-45:(1.41,-1.41)}]
     [rotate around={-45:(0,0)}][shift={( 4, 0)}] (0,0) node{$v_{i,4}^{(2)}$};
\draw[rotate around={-45:(1.41,-1.41)}]
     [rotate around={-45:(0,0)}][shift={( 4, 0)}] (1,0.3) node{$a_{i,4}^{(2)}$};
\draw[rotate around={-45:(1.41,-1.41)}]
     [rotate around={-45:(0,0)}][shift={( 2, 0)}][->]
     (0.3,0) -- (1.7,0) [line width = 1pt];
\draw[rotate around={-45:(1.41,-1.41)}]
     [rotate around={-45:(0,0)}][shift={( 4, 0)}][->]
     (0.3,0) -- (1.7,0) [line width = 1pt];
\draw[rotate around={-45:(1.41,-1.41)}]
     [rotate around={-45:(0,0)}][shift={( 4, 0)}][dotted]
     (0.3,0) -- (2,0) [line width = 1pt];
\end{tikzpicture}
\caption{$\Omega_2(\M(\ds))$ is non-projective (note that $a_{i,3}^{(1)}$ and $a_{i,3}^{(2)}$ may be the same)}
\label{fig:2nd syzygy non-proj}
\end{figure}
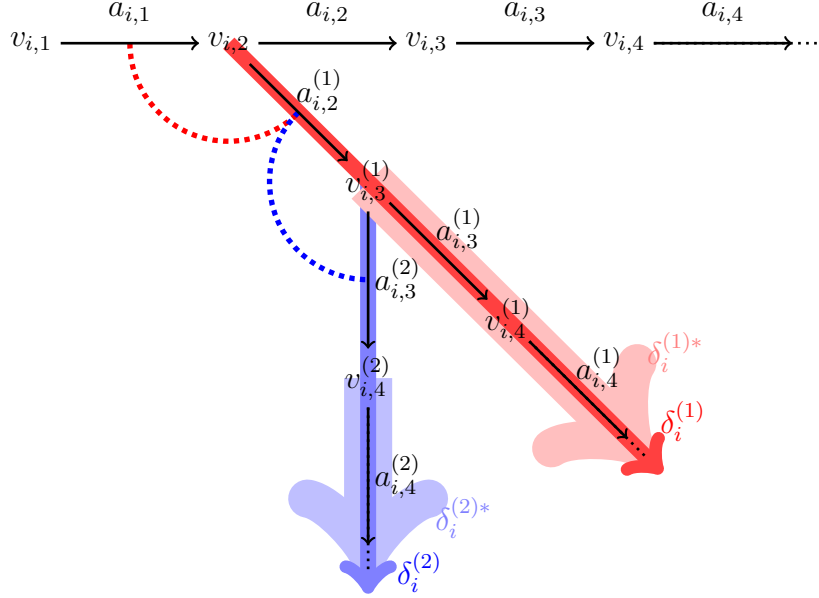
Repeating the above step, we can find a forbidden path of length $n$ which lies in $\forb(\ds)$ if $\Omega_{n-1}(\M(\ds))$ is non-projective.

Conversely, if $\ds$ is a simple string corresponding to $v_1$, and by assumption there is a forbidden path of length $n$
lying in $\forb(\ds)=\forb(v_1)$. Then $\Omega_{n-1}(\M(\ds))$ is non-projective by Proposition \ref{prop:forb-Omega}.
Next, consider the case for $\ds$ to be a right maximal directed string
\[ v_1 \To{a_1}{} v_2 \To{a_2}{} \cdots \To{a_l}{} v_{l+1} \]
with length $l \geqslant 1$.
Since $\forb(\ds)$ contains a forbidden path of length $n$ and $\ds$ is a right maximal directed string, all forbidden paths
lying in $\forb(\ds)$ share the same source $v_{1,1}$ by the definition of $\forb(\ds)$.
Suppose that there is a forbidden path $F=b_1b_2\cdots b_n$ which is of the form
    \begin{align*}
       w_1  \To{b_1}{} w_2 \To{b_2}{} \cdots \To{b_n}{} w_{n+1}
    \end{align*}
    such that $w_1 = v_1$ and $a_1\neq b_1$.
The $1$-st syzygy $\Omega_1(\M(\ds))$ contains a direct summand $M_1$
which is a directed string module (see Lemma \ref{lemm:1st-syzygy}) such that
$M_1b_2=0$ and $\top(M_1) \cong S(w_2)$ by computing the kernel of the projective cover of $\M(\ds)$.
Note that $M_1$ is non-projective. Otherwise, $M_1\cong e_{w_2}A$.
It follows that $M_1b_2 = e_{w_2}Ab_2 \ne 0$ since $e_{w_2}Ab_2$ contains at least one element $b_2 \ne 0$. This is a contradiction.
Then $\Omega_1(\M(\ds))$ is non-projective.

Since $w_2$ is $(b_1,\bfda)$-relational, the projective cover of $M_1$ is $P(w_2) \to M_1$,
then $\Omega_1(M_1)$ has a direct summand $M_2$ which is a directed string module (see Lemma \ref{lemm:1st-syzygy})
such that $M_2b_3=0$ and $\top(M_2) \cong S(w_3)$ by computing the kernel of the projective cover of $M_1$.
Similarly, we have $M_2\le_{\oplus}\Omega_1(M_1)$ ($\le_{\oplus} \Omega_2(\M(\ds))$) is non-projective,
and so $\Omega_2(\M(\ds))$ is non-projective.

Therefore $\Omega_{n-1}(\M(\ds))$ is non-projective by induction.
\end{proof}

\begin{example} \rm
Recall that the minimal projective resolution of the indecomposable injective module $E(2)$
over the almost gentle algebra $A=\kk\Q/\I$ given in Example \ref{exp:almost gent} is
\begin{align*}
 0 & \To{}{} P_2 = P(3_{\Left}) \oplus P(3_{\Right})
     \xymatrix@C=2cm{ \ar[r]^{
     \left(
       \begin{smallmatrix}
        a_{2_{\Left},3_{\Left}} & 0 \\
        0 & a_{2_{\Right},3_{\Right}} \\
        0 & 0
       \end{smallmatrix}
     \right)
     }_{p_2} &}
     P_1 = P(2_{\Left})\oplus P(2_{\Right}) \oplus P(2_{\Right}) \\
   & \xymatrix@C=2cm{ \ar[r]^{(a_{1,2_{\Left}}\ b_{1,2_{\Right}}\ a_{1,2_{\Right}})}_{p_1}  &}
     P_0=P(1) \To{}{p_0} \M(a_{1,2}) \cong E(2) \To{}{} 0,
\end{align*}
see Example \ref{exp:proj resol}.
Then we have
\begin{center}
  $\Omega_1(E(2)) \cong (2_{\Left}) \oplus (2_{\Right}) \oplus
  \left(\begin{smallmatrix}
    2_{\Right} \\ 3_{\Right} \\ 4_{\Right}
  \end{smallmatrix}\right) = S(2_{\Left}) \oplus S(2_{\Right}) \oplus P(2_{\Right})$
\end{center}
is non-projective since $S(2_{\Left})$ and $S(2_{\Right})$ are non-projective.
Here, $E(2)$ is also a right maximal directed string module, and the directed string corresponding to it is $a_{1,2}$.
We have three $a_{1,2}$-forbidden paths of length $\geqslant 1$ as follows:
\begin{center}
 $a_{1,2_{\Left}}a_{2_{\Left},3_{\Left}}$, $b_{1,2_{\Right}}a_{2_{\Right},3_{\Right}}$, and $a_{1,2_{\Right}}$.
\end{center}
We have that $S(2_{\Left})$ and $S(2_{\Right})$ correspond to the forbidden paths $a_{1,2_{\Left}}a_{2_{\Left},3_{\Left}}$ and $b_{1,2_{\Right}}a_{2_{\Right},3_{\Right}}$ of length two, respectively.
The fact that $P(2_{\Right}) \le_{\oplus} \Omega_1(E(2))$ yields $\Omega_1(P(2_{\Right})) = 0$.
Proposition \ref{prop:forb-Omega:ds case} shows that the forbidden paths $a_{1,2_{\Left}}a_{2_{\Left},3_{\Left}}$ and $b_{1,2_{\Right}}a_{2_{\Right},3_{\Right}}$, as two elements in $\forb(a_{1,2})$, exist.
\end{example}

In Lemma \ref{lemm:1st-syzygy of E}, we show that the $1$-st syzygy of an indecomposable injective $E(v)$ ($v$ is a $(c^{\inner},d^{\out})$)-type vertex module over an almost gentle algebra can be described by three parts:
\begin{itemize}
  \item a module $\spds$ whose top is a direct sum of some copies of $S(v)$;
  \item a semi-simple module which is of the form $S(v)^{\oplus (c-t)}$, where $t$ is an integer satisfying $0 \leqslant t \leqslant c$;
  \item a direct sum whose direct summand is either a right maximal directed module or a simple module;
\end{itemize}
Lemma \ref{lemm:n-th syzygy of E} shows that any direct summand of
the $n$-th ($n\geqslant 1$) syzygy $\Omega_n(\spds_0)$
of $\spds_0 = \spds \oplus S(v)^{\oplus (c-d)}$ is either a right maximal directed string module or a simple module.
Propositions \ref{prop:forb-Omega} and \ref{prop:forb-Omega:ds case} provide
a correspondence from any right maximal directed string module or simple module
to a forbidden path on the bound quiver of an almost gentle algebra.
Thus, for any $n\geqslant 1$, each non-projective direct summand of $\Omega_{n}(E(v))$
can be described by using a forbidden path of length $n+1$,
except the direct summand $\spds_0$ of $\Omega_1(E(v))$.
The projectivity of $\spds_0$ is described in Proposition \ref{prop:projectivity}, thus we can compute the projective resolution of any indecomposable injective module over almost gentle algebra.

\begin{definition} \label{def:F(aclawnota)} \rm
For an indecomposable injective module corresponding a $(c^{\inner},d^{\out})$-type vertex $v$,
assume that the anti-claw of $E(v)$, written as $\aclawnota$ in this section, is of the form shown in \Pic \ref{fig:c-in d-out II}.
A forbidden path $F$ is said to be a {\defines $\aclawnota$-forbidden path} if it satisfies one of the following conditions:
\begin{itemize}
  \item[(1)] there is an integer $1\leqslant i\leqslant c$ such that $F \in \forb(\source(s_i))$;
  \item[(2)] $F$ is a forbidden path starting at $v$ such that $v$ is not an \noname vertex.
\end{itemize}
The set of all $\aclawnota$-forbidden paths is written as $\forb(\aclawnota)$.
In particular, if $c=0$, then $E(v)=S(v)$ is simple, and we have $\forb(\aclawnota) = \forb(v)$.
\end{definition}

\begin{theorem} \label{thm:pdim inj}
Let $A=(\Q,\I)$ be an almost gentle algebra.
Then for any $(c^{\inner},d^{\out})$-type vertex $v$ {\rm(}keep the notation in
Lemma \ref{lemm:1st-syzygy of E} and \Pic \ref{fig:c-in d-out II}{\rm)},
we have
\[\pdim E(v) = \sup\limits_{F\in\forb(\aclawnota)} \ell(F), \]
%the projective dimension of $E(v)$ can be computed by the following method.
%\begin{itemize}
%  \item[\rm(1)] If $c=0$, then $E(v)$ is simple, and $\pdim E(v) = \pdim S(v) = \sup\limits_{F\in\forb(v)} \ell(F)$.
%  \item[\rm(2)] If $c=1$, then $\pdim E(v) = \sup\limits_{F\in\forb(\aclawnota)} \ell(F)$.
%\end{itemize}
where $\aclawnota$ is the anti-claw corresponding to $E(v)$.
\end{theorem}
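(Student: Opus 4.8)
The plan is to read off $\pdim E(v)$ from a minimal projective resolution, tracking the non-projective indecomposable summands of each syzygy against forbidden paths. First dispose of the degenerate case $c=0$: then $v$ is a source, $E(v)=S(v)$, and by Definition \ref{def:F(aclawnota)} we have $\forb(\aclawnota)=\forb(v)$, so the statement is exactly Theorem \ref{thm:pdim simp}. From now on assume $c\geqslant 1$. If $E(v)$ happens to be projective, then $\Omega_1(E(v))=0$; by Lemma \ref{lemm:1st-syzygy of E} and Corollary \ref{coro:1st-syzygy of E} this forces $c=1$ with $E(v)$ a (necessarily right maximal) directed string module equal to its own projective cover, and one checks directly that then $\forb(\aclawnota)$ contains only forbidden paths of length $0$, so both sides vanish. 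Otherwise $\pdim E(v)=1+\pdim\Omega_1(E(v))$, and by Lemma \ref{lemm:1st-syzygy of E}
\[ \Omega_1(E(v)) \;\cong\; \spds_0 \;\oplus\; \bigoplus_{\imath\in I} M_\imath, \]
where $\top(\spds_0)\cong S(v)^{\oplus(c-1)}$ by Corollary \ref{coro:1st-syzygy of E} and each $M_\imath$ is a simple module or a right maximal directed string module. By Proposition \ref{prop:nth-syzygy} and Lemma \ref{lemm:n-th syzygy of E}, for every $n\geqslant 1$ the syzygy $\Omega_n(E(v))=\Omega_{n-1}(\spds_0)\oplus\bigoplus_{\imath}\Omega_{n-1}(M_\imath)$ is again a direct sum of simple modules and right maximal directed string modules, and hence
\[ \pdim E(v) \;=\; 1 + \max\Bigl\{ \pdim\spds_0,\ \sup_{\imath\in I}\pdim M_\imath \Bigr\}. \]

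Now I treat the two terms. For the summands $M_\imath$ I will use the directed-string analogue of Theorem \ref{thm:pdim simp}, namely $\pdim\M(\ds)=\sup_{G\in\forb(\ds)}\ell(G)$ for $\ds$ a right maximal directed string or simple string; this is obtained from Proposition \ref{prop:forb-Omega:ds case} exactly as Theorem \ref{thm:pdim simp} is obtained from Proposition \ref{prop:forb-Omega}. Recalling from the proof of Lemma \ref{lemm:1st-syzygy of E} that the $M_\imath$ are precisely the modules $\M(\hat s)$, where $\hat s$ runs over the claw branches of the projective covers $P(x_1),\dots,P(x_c)$ other than the "main" branch, each with its first arrow deleted, I will show that prepending the deleted arrow $a$ gives a length-additive bijection between $\bigcup_\imath \forb(\ds_\imath)$ and the forbidden paths that Definition \ref{def:F(aclawnota)}(1) attributes to the branches $s_i$. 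The key local input is the almost gentle condition: $a$ has a unique continuation $a'\notin\I$, which is exactly the first arrow of $\hat s$, so every forbidden path out of $\target(a)$ whose first arrow differs from $a'$ composes with $a$ to land in $\I$; this makes the prepending well defined. For $\spds_0$: by Proposition \ref{prop:projectivity} it is projective if and only if $v$ is \noname, so when $v$ is \noname it contributes nothing; when $v$ is not \noname it is non-projective, $\Omega_1(\spds_0)$ is a sum of right maximal directed string and simple modules (Lemma \ref{lemm:n-th syzygy of E}) whose projective dimensions are again suprema of forbidden-path lengths, and chasing the description of $\rad(\spds)$ from the proof of Lemma \ref{lemm:n-th syzygy of E} (the strings $s_j'$ with first arrows deleted) together with the copies $S_\jmath\cong S(v)$ yields $1+\pdim\spds_0=\sup_{F}\ell(F)$ over the forbidden paths starting at $v$, which by Definition \ref{def:F(aclawnota)}(2) belong to $\forb(\aclawnota)$ precisely when $v$ is not \noname. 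Combining, the displayed formula for $\pdim E(v)$ becomes $\sup_{F\in\forb(\aclawnota)}\ell(F)$.

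The main obstacle is this bijective bookkeeping: for each $n$, matching the non-projective indecomposable summands of $\Omega_n(E(v))$ with the forbidden paths of length $n$ in $\forb(\aclawnota)$, compatibly with the assignment $\Omega_t\mapsto v_t$ of (\ref{formula:forb-Omega}). Three points require care. (i) The forbidden paths carried by the anti-claw branches $s_i$ are distributed among $\spds_0$, the copies $S_\jmath\cong S(v)$, and the directed-string summands $M_\imath$ according to whether the length-two paths $s_is_j'$ lie in $\I$, i.e.\ according to the integer $t$ of Lemma \ref{lemm:1st-syzygy of E}; these sub-cases must be handled uniformly. (ii) Low-length situations (forbidden paths of length $0$ or $1$, and $c\leqslant 1$, where $E(v)$ is itself a directed string module and one applies Lemmata \ref{lemm:1st-syzygy} and \ref{lemm:pdim dire str mod} directly) need separate verification. (iii) One must confirm that $v$ being \noname is exactly the condition under which the would-be contribution of $\spds_0$ to $\pdim E(v)$ disappears, so that clause (2) of Definition \ref{def:F(aclawnota)} is correctly activated only in the remaining case — which is where Lemmata \ref{lemm:spds I}, \ref{lemm:spds II}, and \ref{lemm:spds III} (equivalently Proposition \ref{prop:projectivity}) do the work.
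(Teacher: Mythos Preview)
Your approach is essentially the same as the paper's: you dispose of $c=0$ via Theorem \ref{thm:pdim simp}, then for $c\geqslant 1$ invoke the decomposition of $\Omega_1(E(v))$ from Lemma \ref{lemm:1st-syzygy of E}, handle the directed-string summands $M_\imath$ through Proposition \ref{prop:forb-Omega:ds case} (your ``directed-string analogue of Theorem \ref{thm:pdim simp}''), and handle $\spds_0$ by the \noname dichotomy of Proposition \ref{prop:projectivity}, exactly as the paper does when it splits into cases (1) and (2) and then subcases (2.1)--(2.3). The paper's write-up is organised as a case analysis on whether $v$ is \noname and on the value of $c$, rather than through your single displayed formula $\pdim E(v)=1+\max\{\pdim\spds_0,\sup_\imath\pdim M_\imath\}$, but the underlying lemmas and the matching of syzygy summands to forbidden paths in $\forb(\aclawnota)$ are identical.
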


  {\rm(}Note that if $\forb(\aclawnota)=\emptyset$,
  then take $\sup\limits_{F\in\forb(\aclawnota)} \ell(F)=0$.{\rm)}

\begin{proof}
If $c=0$, then $E(v)=S(v)$. Thus, we have
\[\pdim E(v) = \pdim S(v) = \sup\limits_{F\in\forb(v)} \ell(F).\]
When $c\geqslant 1$, we have two cases as follows:
\begin{itemize}
  \item[(1)] $v$ is \noname\!\!;
  \item[(2)] $v$ is not \noname\!\!.
\end{itemize}

Assume $\aclawnota = s_1\aclaw s_2 \aclaw \cdots \aclaw s_c$.
In the case (1), $\forb(\aclawnota)$ contains $c$ ($\geqslant 1$) right maximal forbidden paths
$F_1, \ldots, F_c$ which respectively correspond to $s_1,\ldots, s_c$ such that
$s_ib_{i1} \notin\I$, where $b_{i1}$ is the first arrow of $F_i = b_{i1}b_{i2}\cdots b_{il_i}$ ($l_i\geqslant 1$),
see Definition \ref{def:F(aclawnota)}(1).
By Proposition \ref{prop:forb-Omega:ds case}, we have that $\Omega_t(E(v))$ is non-projective
if and only if there is an integer $i$ with $1\leqslant i \leqslant c$ such that $\ell(F_i)=t+1$.
Without loss of generality, suppose
\begin{align} \label{formula:ell(F)}
  \ell(F_1) = \sup\limits_{1\leqslant i\leqslant c} \ell(F_i).
\end{align}
Since $\Omega_{\ell(F_1)-1}(E(v))$ is non-projective, we have $\pdim E(v) \geqslant \ell(F_1)$.
If $\pdim E(v) > \ell(F_1)$, then $\Omega_{\ell(F_1)}(E(v))$ is non-projective.
In this case,  by Proposition \ref{prop:forb-Omega:ds case}, there exists a forbidden path
$F_j\in \{F_1,\ldots, F_c\}$ such that $\ell(F_j) \geqslant \ell(F_1)+1$.
It contradicts the maximality of $\ell(F_1)$. Thus, we have
\begin{align} \label{formula:pdim(E) ell(F)}
  \pdim E(v) = \ell(F_1),
\end{align}
%Otherwise, we have $\pdim E(v) \geqslant \ell(F_1) +1$.
%It follows that $\Omega_{\ell(F_1)}(E(v))$ is non-projective, and then we can find
%a right maximal forbidden path $F\in\forb(\aclawnota)$ such that $\ell(F)>\ell(F_1)$
%by Proposition \ref{prop:forb-Omega:ds case}. It contracts (\ref{formula:ell(F)}).
and therefore %, (\ref{formula:pdim(E) ell(F)}) admits the following formula
\begin{align} \label{formula:pdim(E) I}
  \pdim E(v) = \sup\limits_{1\leqslant i\leqslant c} \ell(F_i),
\end{align}
as required.

In the case (2), we obtain that $\forb(\aclawnota)$ contains $c$ ($\geqslant 1$) right maximal forbidden paths $F_1, \ldots, F_c$ and $F_{c+1}$,
where $F_1, \ldots, F_c$ respectively correspond to $s_1,\ldots, s_c$ such that $s_ib_{i1}\notin \I$ ($b_{i1}$ is the first arrow of $F_i$);
and $F_{c+1}$ corresponds to $v$, that is, $F_{c+1}$ is a forbidden path starting at $v$ (see Definition \ref{def:F(aclawnota)}(2)).

Next, we compute $\pdim\spds_0$. By Proposition \ref{prop:projectivity},
$v$ is not a sink since $d=0$ contradicts Definition \ref{def:noname}(2).
We have the following subcases:
\begin{itemize}
  \item[(2.1)] $c=1$ and $d\geqslant 1$;
  \item[(2.2)] $c=2$, then $v$ is not a gentle vertex;
  \item[(2.3)] $c\geqslant 3$, then $\spds_0$ is non-projective by Lemma \ref{lemm:spds II}.
\end{itemize}

We only prove the case (2.1). The proofs of (2.2) and (2.3) are similar.

In the subcase (2.1), there is a unique $s_j'$ ($1\leqslant j\leqslant d$) such that $s_1s_j'\notin \I$ by Definition \ref{def:noname}(5).
Without loss of generality, let $s_1= $
\[  v_{1,1,1} \To{a_{1,1,1}}{} v_{1,1,2} \To{a_{1,1,2}}{}
      \cdots \To{a_{1,1,\ell_{11}}}{} v_{1,1,\ell_{11}+1}=v \]
and assume $j=1$ and $s_j' = s_1' = $
\[
   v=v_{1,1,\ell_{11}+1} \To{a_{1,1,\ell_{11}+1}}{}
   v_{1,1,\ell_{11}+2} \To{a_{1,1,\ell_{11}+2}}{}
   \cdots \To{a_{1,1,l_{11}}}{} v_{{1,1,l_{11}+1}}.
\]
By Definition \ref{def:noname}(3)(4), we have that $v_{1,1,\ell_{11}+2}$ is $(a_{1,1,\ell_{11}+1}, \bfda)$-relational.
Otherwise,
\begin{itemize}
  \item[(2.1.1)] $v_{1,1,\ell_{11}+2}$ is a sink, and so $\ell(s_1')=1$, it contradicts Definition \ref{def:noname}(3);
  \item[(2.1.2)] $v_{1,1,\ell_{11}+2}$ is not a sink, and so $\ell(s_1')\geqslant 1$, it contradicts Definition \ref{def:noname}(2).
\end{itemize}
Thus, there is an arrow $\alpha_1$ such that $\alpha_1\ne a_{1,1,\ell_{11}+1}$,  $\source(\alpha_1) = v$, and $a_{1,1,\ell_{11}}\alpha_1\in \I$ is a forbidden path lying in $\forb(\aclawnota)$.
Thus, if $\target(\alpha_1)$ is $(\alpha_1,\bfda)$-relational,
that is, there is an arrow $\alpha_2$ such that $\target(\alpha_1) = \source(\alpha_2)$ and $\alpha_1\alpha_2\in \I$.
then $\Omega_1(\spds)$ ($\le_{\oplus} \Omega_2(E(v)$) has a direct summand $M_1$
such that $M_1\alpha_2=0$ and $\top(M_1) \cong S(\target(\alpha_1))$,
The direct summand $M_1$ is non-projective. Otherwise, $M_1\cong e_{\target(\alpha_1)}A$,
and so $M_1\alpha_2 \cong e_{\target(\alpha_1)}A\alpha_2 \ne 0$.
We find a forbidden path $a_{1,1,\ell_{11}}\alpha_1\alpha_2 \in \forb(\aclawnota)$
whose length equals to $3$.
Next, any direct summand of $\Omega_{\geqslant 2}(\spds)$ is right maximal directed string module. We get
\begin{align} \label{formula:pdim(E) II}
  \pdim E(v) = \sup\limits_{1\leqslant i\leqslant c+1} \ell(F_i)
\end{align}
by using an argument similar to the proof of (\ref{formula:pdim(E) I}).
\end{proof}

Now, we can prove the following result.

\begin{theorem} \label{thm:selfdim}
Let $A=\kk\Q/\I$ be an almost gentle algebra. Then
\[ \idim A=\pdim D(A) = \sup_{F\in \forb} \ell(F), \]
where $\forb := \bigcup_{v\in\Q_0} \forb(\aclawnota_v)$ and $\aclawnota_v$ is the anti-claw corresponding
to the indecomposable injective module $E(v)$.
\end{theorem}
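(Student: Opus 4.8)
The plan is to deduce the formula directly from Theorem \ref{thm:pdim inj} together with the reduction of the self-injective dimension to the projective dimensions of the indecomposable injective modules. First I would recall, as noted at the beginning of Section \ref{subsect:syzygies of E}, that for a finite-dimensional algebra $A=\kk\Q/\I$ one has
\[ \idim A = \pdim D(A) = \sup_{v\in\Q_0} \pdim D(Ae_v) = \sup_{v\in\Q_0} \pdim E(v); \]
the first equality comes from applying the duality $D=\Hom_{\kk}(-,\kk)$ to a minimal injective coresolution of ${}_AA$, which turns it into a minimal projective resolution of $D({}_AA)=\bigoplus_{v\in\Q_0}E(v)$, and the remaining equalities simply split off the indecomposable injective summands $E(v)=D(Ae_v)$. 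That the left and right self-injective dimensions agree for an almost gentle algebra was already observed in the introduction via \cite[Proposition 6.10]{AR1991Nakayama}, so the symbol $\idim A$ is unambiguous.

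Next I would invoke Theorem \ref{thm:pdim inj}, which computes each term of the last supremum: writing $\aclawnota_v$ for the anti-claw corresponding to $E(v)$, one has $\pdim E(v) = \sup_{F\in\forb(\aclawnota_v)}\ell(F)$, with the convention that $\sup_{F\in\forb(\aclawnota_v)}\ell(F)=0$ whenever $\forb(\aclawnota_v)=\emptyset$, so that a vertex with $E(v)$ projective contributes nothing. Substituting this into the previous display gives
\[ \idim A = \sup_{v\in\Q_0}\ \sup_{F\in\forb(\aclawnota_v)}\ell(F). \]

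Finally I would interchange the two suprema, exactly as in the proof of Theorem \ref{thm:gldim}: an iterated supremum, first over $v\in\Q_0$ and then over $F\in\forb(\aclawnota_v)$, coincides with the single supremum over the union $\forb=\bigcup_{v\in\Q_0}\forb(\aclawnota_v)$, because each $F$ occurring on the right is an $\aclawnota_v$-forbidden path for some $v$, and conversely every element of $\forb$ lies in some $\forb(\aclawnota_v)$. Hence $\idim A = \sup_{F\in\forb}\ell(F)$, which is the assertion.

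Since every step is an immediate appeal to a result already established, there is no serious obstacle. The only point that needs a moment's care is the case $\idim A=\infty$: one must note that a forbidden path of infinite length (say one supported on a forbidden cycle) appears in some $\forb(\aclawnota_v)$ precisely when $\pdim E(v)=\infty$ for the corresponding $v$; but this is already built into Theorem \ref{thm:pdim inj}, which is formulated so as to cover the infinite case as well, so the interchange of suprema remains valid verbatim.
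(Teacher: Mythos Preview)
Your proposal is correct and follows essentially the same approach as the paper: apply Theorem~\ref{thm:pdim inj} to each $E(v)$ and then take the supremum over all $v\in\Q_0$, collapsing the iterated supremum into a single one over $\forb=\bigcup_{v\in\Q_0}\forb(\aclawnota_v)$. The paper's proof is terser but the logical content is identical.
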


\begin{proof}
By Theorem \ref{thm:pdim inj}, we have
\[ \pdim E(v) = \sup\limits_{F\in\forb(\aclawnota_v)} \ell(F) \]
for each $v$. Thus,
\[ \idim A = \pdim D(A) = \pdim \bigoplus_{v\in\Q_0} E(v) = \sup_{F\in \forb} \ell(F). \]
\end{proof}

\section{Auslander--Reiten Conjecture} \label{Sect:AGC}

In this section, we prove that the Auslander--Reiten conjecture ({\bf ARC} for short),
mentioned in Introduction, holds true for almost gentle algebras.

\subsection{The second description of self-injective dimension}

Recall that an oriented cycle $\C=a_1a_2\cdots a_n$ is said to be a {\defines forbidden cycle} if
$a_1a_2, \ldots, a_{n-1}a_n$ and $a_na_1$ are generators of $\I$.
For simplicity, assume that a forbidden cycle $\C$ of length $\ell$ is of the following form:
\[ \xymatrix@C=1.3cm{
 & 1 \ar[r]^{a_1} & 2 \ar[rd]^{a_2} & \\
0 \ar[ru]^{a_0} & & & \  \ar@{.}[ld] \\
 & \ell-1 \ar[lu]^{a_{\ell-1}} &\ar[l] &
} \]

%and use $\sharp S$ to represent the number of all elements of the set $S$.
%
%
%\begin{lemma} \label{lemm:cycle-vert I}
%For a vertex $v$ {\rm(}$0 \leqslant v \leqslant \ell-1${\rm)} of a forbidden cycle $\C$,
%if $\sharp\target^{-1}(v) > 2$, % i.e., the number of arrows ending with $t$ is great than two
%then $\pdim E(v) = \infty$.
%\end{lemma}
%
%\begin{proof}
%Assume $v$ is a $(c^{\inner}, d^{\out})$-type vertex. Since $d = \sharp\target^{-1}(t) > 2$, we obtain that $v$ is not \noname by Definition \ref{def:noname},
%and so, we obtain
%\[  \dim_{\kk} \spds_0 e_v = \dim_{\kk} \top(\spds_0) = c-1 > 1  \]
%by Corollary \ref{coro:1st-syzygy of E},
%and $\spds_0$ is non-projective by Proposition \ref{prop:projectivity}.
%Now, for $\spds_0 = \spds \oplus \bigoplus_{\jmath\in J} S_{\jmath}$, we have two case as follows:
%\begin{itemize}
%  \item[\rm(1)] $J\ne\emptyset$;
%  \item[\rm(2)] $J=\emptyset$.
%\end{itemize}
%
%In Case (1), there exists an element $\jmath\in J$ such that $S_{\jmath} \cong S(v)$ is a direct summand of $\Omega_1(E(v))$.
%It is clear that $\pdim S(v) = \infty$ since $v$ is a vertex on the forbidden cycle $\C$.
%Thus, we obtain $\pdim E(v)=\infty$ in this case as required.
%
%In Case (2),
%\end{proof}
%
%\begin{lemma} \label{lemm:cycle-vert II}
%For a vertex $v$ {\rm(}$0 \leqslant v \leqslant \ell-1${\rm)} of a forbidden cycle $\C$,
%if $\sharp\target^{-1}(v) \leqslant 2$ and $v$ is not an \noname vertex, then $\pdim E(v) = \infty$.
%\end{lemma}
%
%\begin{proof}
%
%\end{proof}

\begin{theorem} \label{thm:forb cycle}
For an almost gentle algebra $A=\kk\Q/\I$ whose bound quiver $(\Q,\I)$ has at least one forbidden cycle,
then $\idim A = \infty$ if and only if there is a vertex $v$ on some forbidden cycle such that $v$ is not \noname\!\!.
\end{theorem}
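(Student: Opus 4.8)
The plan is to prove the two implications separately, using Theorems~\ref{thm:pdim inj} and~\ref{thm:selfdim} together with Proposition~\ref{prop:projectivity} (equivalently Definition~\ref{def:noname}).

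For ``$\Leftarrow$'', suppose $v$ lies on a forbidden cycle $\C=a_0a_1\cdots a_{\ell-1}$, relabelled so that $\source(a_0)=v$, and that $v$ is not invalid. Since $a_{\bar\imath}a_{\overline{\imath+1}}\in\I$ for all $\imath$, every truncation of the infinite word $a_0a_1\cdots a_{\ell-1}a_0a_1\cdots$ is a forbidden path starting at $v$, so forbidden paths starting at $v$ have unbounded length. As $v$ is not invalid, Definition~\ref{def:F(aclawnota)}(2) places all of these into $\forb(\aclawnota_v)$ (with $\aclawnota_v$ the anti-claw of $E(v)$); hence $\sup_{F\in\forb(\aclawnota_v)}\ell(F)=\infty$, so $\pdim E(v)=\infty$ by Theorem~\ref{thm:pdim inj}, and therefore $\idim A=\pdim D(A)=\infty$ by Theorem~\ref{thm:selfdim}.

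For ``$\Rightarrow$'', assume $\idim A=\infty$. By Theorem~\ref{thm:selfdim}, $\sup_{F\in\forb}\ell(F)=\infty$ with $\forb=\bigcup_{v}\forb(\aclawnota_v)$, and $\Q_0$ is finite, so some $\forb(\aclawnota_v)$ contains forbidden paths of unbounded length. By Definition~\ref{def:F(aclawnota)} and Notations~\ref{nota:F(v)} and~\ref{nota:F(ds)}, this set is a finite union of sets of the form ``all forbidden paths issuing from a fixed vertex'', namely the sources of the directed strings composing $\aclawnota_v$ and, when $v$ is not invalid, $v$ itself. Fix such a vertex $w$ admitting forbidden paths of unbounded length. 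The rooted tree of forbidden paths starting at $w$ is finitely branching (each vertex has finitely many out-arrows), so by K\"{o}nig's lemma there is an infinite forbidden path $F_\infty=b_1b_2b_3\cdots$ with $\source(b_1)=w$. As $\Q_1$ is finite, some arrow recurs; taking $p<q$ with $b_p=b_q$ and $q$ minimal, the arrows $b_p,\dots,b_{q-1}$ are distinct, their consecutive products lie in $\I$, and $b_{q-1}b_p=b_{q-1}b_q\in\I$, so $\C_0:=b_pb_{p+1}\cdots b_{q-1}$ is a forbidden cycle. What remains is to locate a non-invalid vertex on \emph{some} forbidden cycle, and I would establish this by the contrapositive: if every vertex of every forbidden cycle were invalid, then $\forb$ would contain only forbidden paths of bounded length, contradicting the above.

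The contrapositive is the heart of the argument and is a finite case analysis driven by Definition~\ref{def:noname}. A vertex $u$ on a forbidden cycle has in-degree $c_u\geq 1$ and out-degree $d_u\geq 1$, so condition~(2) cannot make it invalid; if $c_u\geq 3$ then $u$ is not invalid at all, a contradiction, so $c_u\leq 2$ on forbidden cycles. If $c_u=2$, invalidity forces $u$ to be gentle, so at most one length-two path through $u$ lies in $\I$ — necessarily one of the two cycle relations — which rigidly restricts how forbidden paths may branch at $u$. If $c_u=1$, invalidity via (3)--(5) pins down the unique non-zero continuation of the incoming cycle-arrow at $u$. Propagating these local restrictions around each forbidden cycle, and using that left-maximal directed strings admit a unique left extension (by the almost gentle axioms), so that the vertices $\source(s_i)$ and the admissible branch points are completely determined, one shows that no $\aclawnota_{v'}$-forbidden path — for any $v'$ — can reach a forbidden cycle and then continue indefinitely. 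The delicate point, which I expect to be the real obstacle, is controlling the ways a forbidden path might ``re-enter'' $\forb$ after touching the subquiver spanned by forbidden cycles — either through some $\source(s_i)$, or through the $\spds$-summand of the first syzygy of an injective — and it is exactly here that Proposition~\ref{prop:projectivity} and a precise account of which vertices are invalid enter.
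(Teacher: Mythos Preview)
Your ``$\Leftarrow$'' is correct and in fact cleaner than what the paper writes: invoking Definition~\ref{def:F(aclawnota)}(2) directly places the arbitrarily long cycle-truncations into $\forb(\aclawnota_v)$, and Theorems~\ref{thm:pdim inj} and~\ref{thm:selfdim} finish it. The paper's proof of Theorem~\ref{thm:forb cycle} actually spends both of its paragraphs on ``$\Rightarrow$'' and leaves ``$\Leftarrow$'' implicit, so your short argument is a genuine addition.

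For ``$\Rightarrow$'' there is a real gap. You correctly produce, via K\"onig, an infinite forbidden path $F_\infty=b_1b_2\cdots$ and a forbidden cycle $\C_0=b_p\cdots b_{q-1}$, but the contrapositive case analysis you sketch afterwards is not carried out, and the ``delicate point'' you flag is exactly the crux. The paper avoids the contrapositive entirely and argues directly at the entry vertex $u=\source(b_p)$: when $p\geqslant 2$, the arrows $b_{p-1}$ and $b_{q-1}$ are distinct (both lie among $b_1,\dots,b_{q-1}$, which are pairwise distinct by minimality of $q$), both end at $u$, and both satisfy $b_{p-1}b_p\in\I$, $b_{q-1}b_p=b_{q-1}b_q\in\I$. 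Hence $u$ has $c\geqslant 2$, $d\geqslant 1$, and two length-two paths through it in $\I$, so $u$ is not gentle and therefore not invalid by Definition~\ref{def:noname}. This two-line check replaces your entire contrapositive.

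The boundary case $p=1$ (the infinite path starts on the cycle, so there is no external $b_{p-1}$) is where your decomposition into sets $\forb(w)$ loses too much information: knowing only the starting vertex $w$ does not manufacture the second incoming arrow. The paper handles this by passing through the module theory rather than through $\forb(\aclawnota_{v'})$: from $\idim A=\infty$ one finds, via Lemmata~\ref{lemm:1st-syzygy of E} and~\ref{lemm:n-th syzygy of E}, a right maximal directed string $\ds$ (or simple string) with $\pdim\M(\ds)=\infty$, and then works with $\forb(\ds)$ instead. For $\ell(\ds)\geqslant 1$ every $F\in\forb(\ds)$ has first arrow $\ne a_1$, so the infinite forbidden path automatically carries an extra arrow $\beta$ at its source with $\beta\cdot(\text{first cycle arrow})\in\I$; this $\beta$ supplies the missing second incoming arrow even when the path is on the cycle from the outset, reducing to the $c\geqslant 2$, not-gentle argument above. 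Your framework can be repaired along the same lines, but not by the local ``propagate restrictions around the cycle'' analysis you outline.
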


\begin{proof}
If $\idim A = \infty$, then, by Theorem \ref{thm:selfdim}, there is an anti-claw $\aclawnota$ such that $\forb(\aclawnota)$
contains a forbidden path $F$ with $\ell(F)=\infty$ and $F$ being of the form:
\[ F = (\alpha_1\cdots \alpha_{n-1})
       (\overbrace{\alpha_{n}\alpha_{n+1}\cdots\alpha_{n+\ell-1}}
         ^{\C=a_0a_1\cdots a_{\ell-1}})
       (\overbrace{\alpha_{n}\alpha_{n+1}\cdots\alpha_{n+\ell-1}}
         ^{\C=a_0a_1\cdots a_{\ell-1}})\cdots, (n\geqslant 2). \]
We get $\target(\alpha_{n-1})=\target(\alpha_{n+\ell-1}) = \source(\alpha_n)$
($=\source(a_v)=v$ for some $0\leqslant v\leqslant \ell-1$).
%We show that $v$ is not \noname in the next proof.
Assume that $v$ is a $(c^{\inner},d^{\out})$-type vertex.
Obviously, $c \geqslant 2$ and $d\geqslant 1$.
\begin{itemize}
  \item[(a)] When $c>2$,
$v$ is \noname if and only if $d=0$, see Definition \ref{def:noname}.
Thus, $v$ is not \noname\!\!, a contradiction.
  \item[(b)] When $c=2$, we have
    \[ \top(\Omega_1(E(v))) \ge_{\oplus} \top(\spds_0) \cong S(v)^{\oplus c-1} = S(v). \]
    Since $v$ is a vertex on a forbidden cycle, one can check that $\pdim (\top(\spds_0)) = \infty$.
    It follows that $\spds_0$ is not projective,
    then $v$ is not \noname by Proposition \ref{prop:projectivity}.
\end{itemize}

Conversely, assume that every vertex on forbidden cycle is \noname\!\!.
Then for any vertex $v$ on each forbidden cycle,
the direct summand $\spds_0 = \spds \oplus \bigoplus_{\jmath\in J}S_{\jmath}$
of $\Omega_1(E(v))$ is projective by Lemma \ref{lemm:1st-syzygy of E}  and Proposition \ref{prop:projectivity}.
In this case, if $\idim A = \infty = \pdim D(A)$, then there is a right maximal directed string module $M \cong \M(\ds)$
such that $\pdim M = \infty$,
where $\ds=b_1b_2\cdots b_m$ is a right maximal directed string.
It follows that $\forb(\ds)$ contains a forbidden path $F$ such that $\ell(F)=\infty$.

If $\ell(\ds)\geqslant 1$, then, by the definition of $\forb(\ds)$, we have that
$\source(F)$ is a vertex on the claw
\begin{center}
$\clawnota = r_1\claw r_2\claw \cdots \claw r_n$

    (where $ r_j = \ w_{j,1} \To{\beta_{j,1}}{} w_{j,2} \To{\beta_{j,2}}{} \cdots \To{\beta_{j,l_j}}{} w_{j,l_j+1}$, $1\leqslant j\leqslant n$,

    $r_1 = \ds$, and $\source(b_1) = w_{1,1} = w_{2,1} = \ldots = w_{n,1}$)
\end{center}
corresponding to $P(\source(\ds))$,
the indecomposable projective module given by the projective cover of $\M(\ds)$,
such that $\source(F)$ is a $\clawnota$-relational vertex lying in
$\source(\ds)^{\da}\backslash\{\target(b_1)\} = \{ w_{j,2} \mid 2\leqslant j\leqslant n \}$.
Without loss of generality, assume $\source(F) = w_{2,2}$. Then $l_2=1$ (i.e., $r_2=\beta_{2,1}$)
and $F$ is of the following form:
\begin{align}\label{formula:forb cycle}
  F = (\alpha_1\cdots \alpha_{n-1})
       (\overbrace{\alpha_{n}\alpha_{n+1}\cdots\alpha_{n+\ell-1}}
         ^{\C=a_0a_1\cdots a_{\ell-1}})
       (\overbrace{\alpha_{n}\alpha_{n+1}\cdots\alpha_{n+\ell-1}}
         ^{\C=a_0a_1\cdots a_{\ell-1}})\cdots
\end{align}
\begin{center}
    (we take $\alpha_1\cdots \alpha_{n-1}=e_{\source(\alpha_n)}$ in the case for $n=0$)
\end{center}
such that $\beta_{2,1}\alpha_1 \in \I$ (we take $\beta_{2,1}\alpha_0 \in \I$ in the case for $n=0$).
By using an argument similar to the proof of the necessity, we get that
\begin{itemize}
  \item[(A)] If $n=0$, then $\beta_{2,1}F$ is a forbidden path which admits the vertex
  $\source(\alpha_0)=\target(\beta_{2,1}) = \target(\alpha_{\ell-1})$ on the forbidden cycle $\C$ which is not \noname
  \item[(B)] If $n\geqslant 1$, then $\source(\alpha_n) = \target(\alpha_{n-1}) = \target(\alpha_{n+\ell-1})$
  is a vertex on the forbidden cycle $\C$ which is not \noname
\end{itemize}
%That is, both cases contradict that the assumption that every vertex on forbidden cycle is invalid.

If $\ell(\ds)=0$, then $\ds=e_v$ is a path of length zero corresponding to some vertex $v$,
and $\forb(\ds)$ contains a forbidden path $F$ (which is of the form as shown in (\ref{formula:forb cycle})) with $\ell(F)=\infty$ such that
there is an arrow $\alpha$ starting at $v$ and ending at $\source(F)$ such that
$\alpha F$ is a forbidden path.
It follows the vertex $\source(\alpha_n)$ is not \noname by using an argument similar to the proof of the necessity.
\end{proof}

\begin{corollary} \label{coro:selfdim II}
Let $A=\kk\Q/\I$ be an almost gentle algebra. If $\idim A = \infty$, then the following statements hold.
\begin{itemize}
  \item[\rm(1)] The bound quiver $(\Q,\I)$ has at least one forbidden cycle.
  \item[\rm(2)] There is a vertex $v$ on some forbidden cycle $\C=a_0a_1\cdots a_{\ell-1}$ such that one of the following statements holds:
  \begin{itemize}
    \item[\rm(A)] there is an arrow $\alpha$ ending at $v$ satisfying $\alpha a_v \in \I$;
    \item[\rm(B)] there is an arrow $\beta$ starting at $v$ satisfying $a_{\overline{v-1}}\beta \in \I$,
    where $\overline{v-1}$ is $v-1$ modulo $\ell$.
  \end{itemize}
\end{itemize}
\end{corollary}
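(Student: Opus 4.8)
The plan is to read Corollary~\ref{coro:selfdim II} off from Theorems~\ref{thm:selfdim} and~\ref{thm:forb cycle}; throughout assume $\idim A=\infty$. For part~(1), Theorem~\ref{thm:selfdim} gives $\idim A=\sup_{F\in\forb}\ell(F)=\infty$, so $\forb$ contains forbidden paths of arbitrarily large length. Because $\Q_0$ and $\Q_1$ are finite, the set of ordered pairs $(a,b)\in\Q_1\times\Q_1$ with $\target(a)=\source(b)$ is finite, so any forbidden path $F=\alpha_1\alpha_2\cdots\alpha_N$ whose length exceeds the number of such pairs must have $(\alpha_i,\alpha_{i+1})=(\alpha_j,\alpha_{j+1})$ for some $i<j$. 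Then $\alpha_i\alpha_{i+1}\cdots\alpha_{j-1}$ is an oriented cycle, returning to the vertex $\source(\alpha_i)=\source(\alpha_j)=\target(\alpha_{j-1})$, and every pair of consecutive arrows along it lies in $\I$, the wrap-around pair being $\alpha_{j-1}\alpha_j=\alpha_{j-1}\alpha_i$. Hence $(\Q,\I)$ contains a forbidden cycle, which is part~(1).

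For part~(2), part~(1) verifies the hypothesis of Theorem~\ref{thm:forb cycle}, which therefore yields a vertex $v$ on some forbidden cycle $\C=a_0a_1\cdots a_{\ell-1}$ such that $v$ is not a \noname vertex. View $v$ as a $(c^{\inner},d^{\out})$-type vertex: the cycle contributes an arrow $a_{\overline{v-1}}$ ending at $v$ and an arrow $a_v$ starting at $v$ with $a_{\overline{v-1}}a_v\in\I$, so $c,d\geqslant1$. It then suffices to show that the failure of $v$ to be \noname forces statement~(A) or statement~(B). I would argue by contraposition through Definition~\ref{def:noname}: if neither (A) nor (B) held, then every arrow $\alpha\neq a_{\overline{v-1}}$ into $v$ would satisfy $\alpha a_v\notin\I$ and every arrow $\beta\neq a_v$ out of $v$ would satisfy $a_{\overline{v-1}}\beta\notin\I$; the second almost gentle axiom (at most one $\alpha$ with $\alpha a_v\notin\I$, at most one $\beta$ with $a_{\overline{v-1}}\beta\notin\I$) then forces $c\leqslant2$ and $d\leqslant2$. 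A finite inspection of the surviving types $(c,d)$, using that $a_{\overline{v-1}}$ and $a_v$ are cycle arrows with $a_{\overline{v-1}}a_v\in\I$, then places $v$ in one of the cases (1)--(5) of Definition~\ref{def:noname} (for $(2,2)$ or $(2,1)$ the hypotheses make $v$ a gentle vertex; for $(1,1)$ one gets $s_1s_1'\in\I$; for $(1,2)$ one uses the cycle relation to fall into case~(3) or~(5)), contradicting that $v$ is not \noname. So (A) or (B) holds, and part~(2) follows. (If one does not insist on separating $\alpha$ from the cycle arrow, part~(2) is in fact immediate from part~(1), since $\alpha=a_{\overline{v-1}}$ already witnesses (A).)

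I expect the last translation step to be the main obstacle: ``not \noname'' breaks into several mutually exclusive subcases, and for the low-valency type $(1^{\inner},2^{\out})$ the conditions (3)--(4) of Definition~\ref{def:noname} concern the branch of the claw of $P(v)$ that \emph{leaves} the forbidden cycle, so the relation $a_{\overline{v-1}}a_v\in\I$ must be invoked to pin that branch down. A cleaner alternative that bypasses Definition~\ref{def:noname} is to inspect the infinite forbidden path constructed in the proof of Theorem~\ref{thm:forb cycle}: it has the shape $F=(\alpha_1\cdots\alpha_{n-1})\,\C\,\C\cdots$, and its ``seam'' arrow $\alpha_{n-1}$ ends at $v$ with $\alpha_{n-1}a_v\in\I$ --- precisely statement~(A) at $v$ on $\C$ --- while if $\alpha_{n-1}=a_{\overline{v-1}}$ one rotates the labelling of $\C$ by one step and repeats, a process that terminates since $\alpha_1\cdots\alpha_{n-1}$ has finite length.
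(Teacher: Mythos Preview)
Your argument is correct for the corollary as literally stated, and in one respect cleaner than the paper's. For part~(1) you and the paper do the same thing in substance: the paper cites Theorem~\ref{thm:forb cycle}, whose proof (not statement --- the statement already assumes a forbidden cycle) extracts a forbidden cycle from an infinite forbidden path in $\forb(\aclawnota)$; your pigeonhole on pairs $(\alpha_i,\alpha_{i+1})$ is exactly that extraction made explicit and self-contained.

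For part~(2) your parenthetical remark is decisive: as written here, neither~(A) nor~(B) excludes the cycle arrows, so $\alpha=a_{\overline{v-1}}$ already witnesses~(A) at every vertex of~$\C$, and part~(2) follows from part~(1) with no further work. The paper instead runs a case analysis (choose a non-\noname vertex via Theorem~\ref{thm:forb cycle}, suppose~(B) fails, force $d=1$, then split on~$c$); this is really aimed at the stronger formulation in the Introduction where one does require $\alpha\ne a_{\overline{v-1}}$ and $\beta\ne a_v$. Your contrapositive sketch toward that stronger version goes the same route as the paper --- reduce to small $(c,d)$ and inspect Definition~\ref{def:noname} --- but organised differently (you bound both $c$ and $d$ first, whereas the paper forces $d=1$ and then treats~$c$).

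Your caution about the $(1^{\inner},2^{\out})$ case is justified: with the non-cycle outgoing arrow $\beta$ satisfying $a_{\overline{v-1}}\beta\notin\I$, falling into Definition~\ref{def:noname}(3) or~(4) depends on what happens \emph{after} $\beta$, which neither~(A) nor~(B) constrains. So for the strengthened statement your case analysis, as written, is incomplete at that point --- and the paper's brief treatment does not visibly close this gap either. Your seam-arrow alternative is a reasonable way around this, but to make the termination step rigorous you must trace where the finite prefix $\alpha_1\cdots\alpha_{n-1}$ comes from in Definition~\ref{def:F(aclawnota)} and rule out that it consists entirely of cycle arrows.
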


\begin{proof}
The statement (1) holds by Theorem \ref{thm:forb cycle}. Now we prove (2).

By Theorem \ref{thm:forb cycle}, we can find a vertex $v$ on some forbidden cycle $\C=a_0a_1\cdots a_{\ell-1}$ such that $v$ is not \noname\!\!.
Assume that $v$ is a $(c^{\inner},d^{\out})$-type vertex ($c\geqslant 1$, $d \geqslant 1$),
and for all arrows $\beta_1,\ldots, \beta_d$ starting at $v$ (see \Pic \ref{fig:c-in d-out}),
we have $a_{\overline{v-1}}\beta_j \notin \I$ holds for any $1\leqslant j\leqslant d$.
Then, by the definition of almost gentle algebras, we have $d=1$.
\begin{itemize}
  \item[(a)] When $c\geqslant 2$,
if there are two integers $i_1$ and $i_2$ with $1\leqslant i_1, i_2 \leqslant c$ such that $\alpha_{i_1}a_v$
and $\alpha_{i_2}a_v$ are paths of length two lying in $\I$,
  then it contradicts the definition of almost gentle pairs.
  \item[(b)] When $c=1$, there is no arrow $\alpha$ starting at $v$ such that $\alpha a_v$ lies in $\I$.
  Then $v$ is \noname by Definition \ref{def:noname}(5), a contradiction.
\end{itemize}

(Notice that if all vertices on forbidden cycle satisfy the case (b), then $(\Q,\I)$ is a forbidden cycle
in the case for $\Q$ to be connected. Furthermore, $A$ is self-injective in this case.)
\end{proof}

\begin{lemma} \label{lemm:selfdim II}
Let $A=\kk\Q/\I$ be an almost gentle algebra such that the bound quiver $(\Q,\I)$ contains at least one forbidden cycle.
Assume that there is a vertex $v$ on some forbidden cycle $\C=a_0a_1\cdots a_{\ell-1}$ such that one of the following conditions holds:
  \begin{itemize}
    \item[\rm(A)] there is an arrow $\alpha$ ending at $v$ satisfying $\alpha a_v \in \I$;
    \item[\rm(B)] there is an arrow $\beta$ starting at $v$ satisfying $a_{\overline{v-1}}\beta \in \I$.
  \end{itemize}
Then $\idim A = \infty$.
\end{lemma}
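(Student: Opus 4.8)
The plan is to show that under hypothesis (A) or (B) there is an indecomposable injective module whose projective resolution never terminates, which by Theorem \ref{thm:selfdim} forces $\idim A = \infty$. The key observation is that a forbidden cycle $\C = a_0a_1\cdots a_{\ell-1}$ with an ``extra relation'' attached at a vertex $v$ produces an infinite forbidden path by spiralling around $\C$ infinitely often, and one needs to exhibit an anti-claw $\aclawnota$ such that this infinite forbidden path lies in $\forb(\aclawnota)$.

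\medskip\noindent\textbf{Step 1: Construct the infinite forbidden path.} Suppose first that (B) holds: there is an arrow $\beta \neq a_v$ starting at $v$ with $a_{\overline{v-1}}\beta \in \I$. Since $\C$ is a forbidden cycle we have $a_{\overline{i}}a_{\overline{i+1}} \in \I$ for all $i$; therefore, starting from $v$ and following $a_v a_{\overline{v+1}} a_{\overline{v+2}}\cdots$ we obtain an infinite path all of whose length-two subpaths lie in $\I$, i.e. an infinite forbidden path $F_{\mathrm{inf}}$ starting at $v$. (In case (A) the same argument gives an infinite forbidden path ending at the tail and one works with the arrow $\alpha$ with $\alpha a_v \in \I$ to anchor it; the roles of ``incoming'' and ``outgoing'' are dual, so I will describe (B) and remark that (A) is symmetric.)

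\medskip\noindent\textbf{Step 2: Find an injective whose anti-claw sees $F_{\mathrm{inf}}$.} Here I use Definition \ref{def:F(aclawnota)}. The relation $a_{\overline{v-1}}\beta \in \I$ means $\beta$ is \emph{not} a right-maximal continuation of a directed string ending in $a_{\overline{v-1}}$, so a left maximal directed string $s = \cdots a_{\overline{v-1}}$ ending at $v$ exists and is part of the anti-claw of $E(v)$; since $\beta$ starts at $v$ and $a_{\overline{v-1}}\beta\in\I$, the forbidden path $\beta\,(\text{continuation around }\C)$... in fact the cleaner route is: $v$ is a $(c^{\inner},d^{\out})$-type vertex with $c\geqslant 2$ (there is $a_{\overline{v-1}}$ incoming and, because $a_v$ continues the cycle non-trivially, at least one more incoming arrow along the cycle structure) and $d\geqslant 1$. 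I would check that $v$ is \emph{not} \noname: case (B) gives $c=1$ impossible to reconcile with Definition \ref{def:noname}(5) once an extra outgoing arrow with the opposite relation pattern is present, and for $c\geqslant 2$ one invokes $\pdim(\top(\spds_0)) = \infty$ exactly as in the proof of Theorem \ref{thm:forb cycle}(b). Once $v$ is not \noname, Definition \ref{def:F(aclawnota)}(2) puts every forbidden path starting at $v$ — in particular $F_{\mathrm{inf}}$ — into $\forb(\aclawnota_v)$ where $\aclawnota_v$ is the anti-claw of $E(v)$.

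\medskip\noindent\textbf{Step 3: Conclude.} With $F_{\mathrm{inf}} \in \forb(\aclawnota_v)$ and $\ell(F_{\mathrm{inf}}) = \infty$, Theorem \ref{thm:pdim inj} gives $\pdim E(v) = \sup_{F\in\forb(\aclawnota_v)}\ell(F) = \infty$, hence by Theorem \ref{thm:selfdim}, $\idim A = \pdim D(A) = \sup_{F\in\forb}\ell(F) = \infty$. The main obstacle I expect is Step 2: verifying carefully that hypothesis (A) or (B) forces $v$ to be non-\noname — this requires matching the combinatorial conditions (A)/(B) against all five clauses of Definition \ref{def:noname}, and in the $c\geqslant 2$ subcase borrowing the ``$\pdim$ of a simple supported on a forbidden cycle is infinite'' argument. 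The construction of $F_{\mathrm{inf}}$ (Step 1) and the final invocation of the two main theorems (Step 3) are routine; essentially this lemma is the converse direction of Theorem \ref{thm:forb cycle} repackaged, so I would also point out that it can alternatively be deduced directly from Theorem \ref{thm:forb cycle} by contraposition, but giving the self-contained argument above makes the exposition cleaner.
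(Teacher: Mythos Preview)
Your overall strategy—produce an infinite forbidden path winding around $\C$ and place it in some $\forb(\aclawnota)$, then invoke Theorems \ref{thm:pdim inj} and \ref{thm:selfdim}—is exactly the paper's approach for Case (A) and for Case (B) when $c\geqslant 2$. In those situations your Step 2 is correct: the two relations $\alpha a_v\in\I$ and $a_{\overline{v-1}}a_v\in\I$ (respectively $a_{\overline{v-1}}a_v\in\I$ and $a_{\overline{v-1}}\beta\in\I$) prevent $v$ from being gentle when $c=2$, and Definition \ref{def:noname}(2) rules out $c\geqslant 3$ since $d\geqslant 1$.

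There is, however, a genuine gap in Case (B) with $c=1$. Here the only arrow ending at $v$ is $a_{\overline{v-1}}$, and hypothesis (B) together with the forbidden-cycle relation gives $a_{\overline{v-1}}a_v\in\I$ \emph{and} $a_{\overline{v-1}}\beta\in\I$. If these are the only outgoing arrows (or more generally if every outgoing arrow is killed by $a_{\overline{v-1}}$), then $s_1s_j'\in\I$ for all $j$, which is precisely Definition \ref{def:noname}(5): $v$ \emph{is} \noname. Your claim that ``$c=1$ is impossible to reconcile with Definition \ref{def:noname}(5)'' has the relation pattern backwards—hypothesis (B) supplies an extra arrow with $a_{\overline{v-1}}\beta\in\I$, not $\notin\I$, so it reinforces (5) rather than breaking it. Consequently the infinite forbidden path starting at $v$ does not lie in $\forb(\aclawnota_v)$ via clause (2) of Definition \ref{def:F(aclawnota)}, and clause (1) does not help either since the path does not start at $\source(s_1)$. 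Cases (A) and (B) are therefore not symmetric in the way you suggest.

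The paper repairs this by abandoning $E(v)$ in the $c=1$ subcase of (B) and looking instead at $E(w)$ with $w=\target(\beta)$. Since $S(v)\leqslant_{\oplus}\top(E(w))$ and the claw at $v$ has $a_v$ as one of its branches (with $\beta a_v\in\I$ forced by almost-gentleness), one finds a right maximal directed string summand $M\leqslant_{\oplus}\Omega_1(E(w))$ with $\top(M)\cong S(\overline{v+1})$; as $\overline{v+1}$ lies on $\C$, Proposition \ref{prop:forb-Omega:ds case} gives $\pdim M=\infty$, hence $\pdim E(w)=\infty$. Your proposed shortcut via contraposition of Theorem \ref{thm:forb cycle} runs into the same obstacle: you would still need to exhibit \emph{some} vertex on a forbidden cycle that is not \noname, and in this subcase it is not $v$.
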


\begin{proof}
Assume that $v$ is a $(c^{\inner},d^{\out})$-type vertex as shown in \Pic \ref{fig:c-in d-out II}.
In Case (A), we have $c\geqslant 2$ and $d\geqslant 1$,  see \Pic \ref{fig:enter-label}.
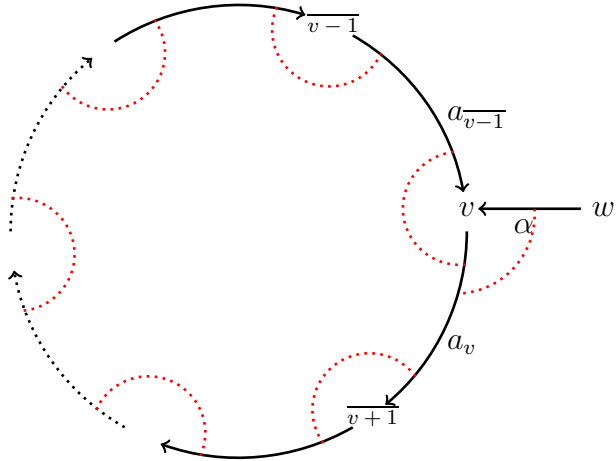
\begin{figure}[htbp]
\centering
\begin{tikzpicture}[scale=1.5]
\draw[rotate=   0][->] (2,0) arc(0:-50:2) [line width=1pt];
\draw[rotate= -60][->] (2,0) arc(0:-50:2) [line width=1pt];
\draw[rotate=-120][->] (2,0) arc(0:-50:2) [line width=1pt][dotted];
\draw[rotate=-180][->] (2,0) arc(0:-50:2) [line width=1pt][dotted];
\draw[rotate=-237][->] (2,0) arc(0:-50:2) [line width=1pt];
\draw[rotate=-300][->] (2,0) arc(0:-50:2) [line width=1pt];
\draw[rotate=  60]     (2,0.2) node{\tiny$\overline{v-1}$};
\draw[rotate=   0]     (2,0.2) node{$v$};
\draw[rotate= -60]     (2,0.2) node{\tiny$\overline{v+1}$};
\draw[rotate=   0] (1.73,1) node[right]{$a_{\overline{v-1}}$};
\draw[rotate= -60] (1.73,1) node[right]{$a_{v}$};
\draw[<-] (2.1, 0.2) -- (3,0.2) [line width=1pt];
\draw ( 2.5, 0.2) node[below]{$\alpha$};
\draw ( 3  , 0.2) node[right]{$w$};
% relations
\draw[red][rotate=6-  0] (1.90,-0.75) arc(-90:  -2:0.7) [line width=1pt][dotted];
\draw[red][rotate=6-  0] (1.95,-0.5 ) arc(-90:-270:0.5) [line width=1pt][dotted];
\draw[red][rotate=6- 60] (1.95,-0.5 ) arc(-90:-270:0.5) [line width=1pt][dotted];
\draw[red][rotate=6-120] (1.95,-0.5 ) arc(-90:-270:0.5) [line width=1pt][dotted];
\draw[red][rotate=6-180] (1.95,-0.5 ) arc(-90:-270:0.5) [line width=1pt][dotted];
\draw[red][rotate=6-240] (1.95,-0.5 ) arc(-90:-270:0.5) [line width=1pt][dotted];
\draw[red][rotate=6-300] (1.95,-0.5 ) arc(-90:-270:0.5) [line width=1pt][dotted];
\end{tikzpicture}
\caption{There exists an arrow $\alpha$ starting with $v$ such that $\alpha a_{v}\in \I$}
\label{fig:enter-label}
\end{figure}
If $c>2$, then $v$ is \noname if and only if $d=0$, see Definition \ref{def:noname}(2).
If $c=2$, then $v$ is \noname if and only if $v$ is gentle, see Definition \ref{def:noname}(1).
Thus, $v$ is not \noname in Case (A).
Let $\aclawnota$ be the anti-claw corresponding to $E(v)$. Then $\forb(\aclawnota)$ contains the forbidden path
\[ F = (a_va_{\overline{v+1}}\cdots a_{\ell-1})
   (\overbrace{a_0a_1\cdots a_{\ell-1}}^{\C})
   (\overbrace{a_0a_1\cdots a_{\ell-1}}^{\C})
   \cdots\]
whose length is infinite, and thus $\pdim E(v) = \infty$ by Theorem \ref{thm:pdim inj}.

In Case (B), we have $c\geqslant 1$ and $d\geqslant 2$.
If $c\geqslant 2$, then one can check that $v$ is not \noname by Definition \ref{def:noname}(1)(2).
In this case, we obtain $\pdim E(v)$ is infinite by an argument similar to the proof of Case (A),
and so is $\pdim D(A)$ ($=\idim A$), as required.
Now, consider the case for $c=1$ in Case (B).
Assume $\target(\beta)=w$. Then we have $S(v)\le_{\oplus}\top(E(w))$.
It follows that $\Omega_1(E(w))$ has a direct summand, written as $M$, such that
\begin{itemize}
\item $M$ is a right maximal directed string module;
\item $\top M = S(\overline{v+1})$.
\end{itemize}
Then $\pdim M=\infty$ since $\overline{v+1}$ is a vertex on $\C$, and so is $\pdim E(w)$.
Thus $\pdim D(A) = \infty$, as required.
\end{proof}

The following result provides a description of the infiniteness of $\idim A$.

\begin{theorem} \label{thm:selfdim II}
Let $A=\kk\Q/\I$ be an almost gentle algebra such that the bound quiver $(\Q,\I)$ contains at least one forbidden cycle.
$\C=a_0a_1\cdots a_{\ell-1}$. Then $\idim A = \infty$ if and only if there is a vertex $v$ on $\C$ such that
one of the conditions (A) and (B) given in Lemma \ref{lemm:selfdim II} holds.
\end{theorem}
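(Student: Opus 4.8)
The plan is to obtain the statement by assembling the two auxiliary results already proved in this subsection, namely Corollary \ref{coro:selfdim II} and Lemma \ref{lemm:selfdim II}, together with the characterization of $\idim A = \infty$ via non-\noname vertices in Theorem \ref{thm:forb cycle}; no new computation should be needed.

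For the ``only if'' direction I would assume $\idim A = \infty$. Then Corollary \ref{coro:selfdim II} applies verbatim: part (1) guarantees that $(\Q,\I)$ does contain a forbidden cycle (so the standing hypothesis of the theorem is in fact automatic in this direction), and part (2) produces a forbidden cycle $\C = a_0a_1\cdots a_{\ell-1}$ together with a vertex $v$ on $\C$ for which either (A) there is an arrow $\alpha$ ending at $v$ with $\alpha a_v \in \I$, or (B) there is an arrow $\beta$ starting at $v$ with $a_{\overline{v-1}}\beta \in \I$. This is precisely the asserted conclusion. The one point I would make explicit is the matching of quantifiers: the forbidden cycle $\C$ named in the theorem is taken to be the one supplied by the corollary, which by Theorem \ref{thm:forb cycle} is exactly a forbidden cycle carrying a vertex that fails to be \noname.

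For the ``if'' direction I would simply invoke Lemma \ref{lemm:selfdim II}: given a forbidden cycle $\C = a_0a_1\cdots a_{\ell-1}$ and a vertex $v$ on it satisfying (A) or (B), that lemma yields $\idim A = \infty$ directly (its proof, via Theorem \ref{thm:pdim inj}, exhibits an indecomposable injective module $E(v)$ or $E(w)$ whose projective resolution does not terminate, because the associated $\aclawnota$-forbidden path wraps around $\C$ infinitely often).

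Since both implications are contained in previously established statements, there is no genuine obstacle here; the theorem is really a repackaging of Corollary \ref{coro:selfdim II} and Lemma \ref{lemm:selfdim II} into a single clean equivalence, and the only care required is the bookkeeping of which forbidden cycle $\C$ one works with and the reminder (from Theorem \ref{thm:forb cycle} and Definition \ref{def:noname}) that conditions (A)/(B), on a vertex lying on a forbidden cycle, are equivalent to that vertex not being \noname, which is why the two descriptions of $\idim A = \infty$ coincide.
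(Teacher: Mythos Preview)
Your proposal is correct and matches the paper's own proof, which simply reads ``It follows from Lemma \ref{lemm:selfdim II} and Corollary \ref{coro:selfdim II}.'' Your additional remarks invoking Theorem \ref{thm:forb cycle} and Definition \ref{def:noname} are accurate context but not required for the argument; the only bookkeeping, which you handle correctly, is that the forbidden cycle $\C$ in the statement is to be read existentially (the one produced by Corollary \ref{coro:selfdim II}) rather than as a fixed cycle given in advance.
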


\begin{proof}
It follows from Lemma \ref{lemm:selfdim II} and Corollary \ref{coro:selfdim II}.
\end{proof}

\subsection{Auslander--Reiten Conjecture}
%All almost gentle algebras can be divided to two classes as follows:
%\begin{itemize}
%  \item[(\textbf{1})] the class of almost gentle algebras with self-injective dimension to be finite;
%  \item[(\textbf{2})] the class of almost gentle algebras with self-injective dimension to be infinite.
%\end{itemize}
%If an almost gentle algebra $A$ lies in the class (\textbf{1}),
%then we have $\fidim A = \idim A < \infty$,
%where $\fidim A$ is the finitistic dimension of $A$ which is defined as
%\[ \fidim A := \sup \{\pdim M \mid M \in \ind(\modcat A) \text{ satisfies } \pdim M < \infty \}. \]
%In this case, $A$ is Gorenstein already.
%Thus, we just need to consider the AGC in the case for $A$ lying in (\textbf{2}).

\begin{lemma} \label{lemm:injenv(P)}
Let $A=\kk\Q/\I$ be an almost gentle algebra whose bound quiver $(\Q,\I)$ has at least one forbidden cycle.
If $\idim A = \infty$, then the injective envelope $E^0$ of $A$ has an infinite projective dimension.
\end{lemma}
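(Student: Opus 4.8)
The plan is to exhibit an indecomposable injective module $E(u)$ with $\pdim_A E(u)=\infty$ that occurs as a direct summand of $E^0$; then $\pdim_A E^0\geq\pdim_A E(u)=\infty$ finishes the proof. First I would make $E^0$ explicit. Since $A=\kk\Q/\I$ is monomial, $\soc(A_A)=\soc\big(\bigoplus_{w\in\Q_0}P(w)\big)$ and, for each $w$, $\soc(P(w))=\bigoplus_{p}S(\target p)$, the sum running over the right maximal paths $p$ starting at $w$ (paths $p\notin\I$ such that either $\target p$ is a sink or $pb\in\I$ for every arrow $b$). Hence $E^0\cong\bigoplus_{w}\bigoplus_{p}E(\target p)$, so $\pdim E^0=\sup\pdim E(\target p)$ over all such $p$; in particular $E(u)\leq_{\oplus}E^0$ whenever $u$ is a sink or some arrow ending at $u$ is right maximal.

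Next, since $(\Q,\I)$ has a forbidden cycle and $\idim A=\infty$, Theorem \ref{thm:selfdim II} together with Lemma \ref{lemm:selfdim II} produces a forbidden cycle $\C=a_0a_1\cdots a_{\ell-1}$ and a vertex $v$ on $\C$ satisfying condition (A) or (B). The proof of Lemma \ref{lemm:selfdim II} exhibits in each case an infinite forbidden path lying in $\forb(\aclawnota_v)$ — namely $(a_va_{\overline{v+1}}\cdots a_{\ell-1})\,\C\,\C\cdots$ when the in-valency $c$ of $v$ is at least $2$ — or, when $c=1$ in case (B), an infinite forbidden path in $\forb(\aclawnota_w)$ with $w=\target(\beta)$, arising from the right maximal directed string summand $M\leq_{\oplus}\Omega_1(E(w))$ with $\top M\cong S(\overline{v+1})$. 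By Theorem \ref{thm:pdim inj} this gives a ``bad'' vertex $u_0\in\{v,w\}$ with $\pdim E(u_0)=\infty$.

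It then remains to place a bad injective inside $E^0$. When $u_0=v$ with $c\geq 2$, I would use the defining condition of an almost gentle pair: in case (A) the arrow $a_v$ is killed on the left by the two distinct arrows $a_{\overline{v-1}}$ and $\alpha$ ending at $v$, and in case (B) the arrow $a_{\overline{v-1}}$ kills both out-arrows $a_v$ and $\beta$ of $v$; consequently the injective partial matching ``in-arrow of $v\mapsto$ its unique surviving out-arrow'' cannot saturate all in-arrows of $v$ once $v$ has in-valency at least its out-valency, so some in-arrow of $v$ is right maximal and $E(v)\leq_{\oplus}E^0$. In the remaining configuration I would follow a surviving out-arrow and repeat; because $\Q$ is finite and the certifying infinite forbidden path only ever traverses $\C$, this terminates at a vertex $u$ which is the target of a right maximal path and for which $\forb(\aclawnota_u)\supseteq\forb(u)$ still contains the infinite forbidden path around $\C$, so $\pdim E(u)=\infty$. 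When $u_0=w$, one argues similarly that the arrow $\beta$ (or a right maximal extension of a path through it) yields a right maximal path ending at a vertex carrying an infinite forbidden path. In all cases we obtain $u$ with $E(u)\leq_{\oplus}E^0$ and $\pdim E(u)=\infty$, hence $\pdim E^0=\infty$.

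The step I expect to be the main obstacle is precisely the last one: certifying that a bad indecomposable injective appears in the \emph{socle} injective envelope $E^0$, and not merely somewhere in the minimal injective coresolution of $A$. This requires the delicate bookkeeping, via the almost gentle condition, of which arrows at the vertices of the forbidden cycle $\C$ are right maximal, and of how the infinite forbidden path can be transported to a vertex that is reachable by a right maximal path.
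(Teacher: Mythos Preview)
Your overall strategy coincides with the paper's: describe $E^0$ via the targets of right maximal directed paths, invoke Theorem~\ref{thm:selfdim II} to obtain a vertex $v$ on a forbidden cycle $\C$ satisfying condition (A) or (B), and then hunt for a right maximal path whose target $u$ has $\pdim E(u)=\infty$. Where you diverge is in the mechanism for the last step. Your pigeonhole argument (``the injective partial matching in-arrow $\mapsto$ surviving out-arrow cannot saturate all in-arrows once $c\geq d$'') is not the right criterion: in case (A) with $c=2$ and $d=3$ one can have $\alpha\beta\notin\I$ and $a_{\overline{v-1}}\gamma\notin\I$ for distinct $\beta,\gamma\neq a_v$, so neither in-arrow is right maximal even though both kill $a_v$. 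Your fallback ``follow a surviving out-arrow and repeat'' then has no clear termination witness.

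The paper's move is more direct and avoids any counting at $v$. In case (A) it simply asks whether the distinguished arrow $\alpha$ itself is right maximal. If yes, then $S(v)\le_\oplus\soc P(\source\alpha)$, so $E(v)\le_\oplus E^0$, and $\pdim E(v)=\infty$ by the argument you already cited. If not, let $\beta$ be the unique out-arrow with $\alpha\beta\notin\I$ and extend $\beta$ to the right maximal directed string $p=\beta_1\cdots\beta_l$; then $p$ is one of the branches of the claw describing $P(v)$, so $S(\target p)\le_\oplus\soc P(v)$ and $E(\target p)\le_\oplus E^0$. Since $\alpha\beta\notin\I$ forces $a_{\overline{v-1}}\beta\in\I$ by the almost gentle condition, the anti-claw at $\target p$ has a branch passing through $v$, and the forbidden path $(a_{\overline{v+1}}\cdots a_{\ell-1})\,\C\,\C\cdots$ lies in $\forb(\aclawnota_{\target p})$, giving $\pdim E(\target p)=\infty$ via Theorem~\ref{thm:pdim inj}. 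So the content you were missing is precisely this one-step dichotomy on $\alpha$; no iteration is needed.
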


\begin{proof}
By Theorem \ref{thm:selfdim II}, there exists a vertex $v$ on some forbidden cycle $\C=a_0a_1\cdots a_{\ell-1}$
such that one of the conditions (A) and (B) given in Lemma \ref{lemm:selfdim II} holds.
If $A$ satisfies the condition (A), that is, there is an arrow $\alpha$ ending at $v$ satisfying $\alpha a_v \in \I$,
then there are two cases as follows:
\begin{itemize}
  \item[\rm(1)] For any arrow $\beta$ starting at $v$, we have $\alpha\beta\in\I$.
  \item[\rm(2)] There exists a unique arrow $\beta$ with $\source(\beta)=v$, we have $\alpha\beta\notin \I$.
\end{itemize}

In Case (1), assume $\source(\alpha)=w$  (cf. \Pic \ref{fig:enter-label}).
Then the injective envelope $E^0_{P(w)}$ of $P(w)$ has a direct summand which is isomorphic to $E(v)$.
We have that the projective dimension $\pdim E(v)$ of $E(v)$ is infinite, see the proof of Lemma \ref{lemm:selfdim II}(A),
thus $\pdim E^0_{P(w)} = \infty$, it follows that $\pdim E^0 = \infty$.

In Case (2), there is a right maximal directed string $p = \beta_1\beta_2\cdots \beta_l$ ($\beta=\beta_1$) such that $\alpha p\notin\I$, cf. \Pic \ref{fig:enter-label II}.
\begin{figure}[htbp]
\centering
\begin{tikzpicture}[scale=1.5]
\draw[rotate=   0][->] (2,0) arc(0:-50:2) [line width=1pt];
\draw[rotate= -60][->] (2,0) arc(0:-50:2) [line width=1pt];
\draw[rotate=-120][->] (2,0) arc(0:-50:2) [line width=1pt][dotted];
\draw[rotate=-180][->] (2,0) arc(0:-50:2) [line width=1pt][dotted];
\draw[rotate=-237][->] (2,0) arc(0:-50:2) [line width=1pt];
\draw[rotate=-300][->] (2,0) arc(0:-50:2) [line width=1pt];
\draw[rotate=  60]     (2,0.2) node{\tiny$\overline{v-1}$};
\draw[rotate=   0]     (2,0.2) node{$v$};
\draw[rotate= -60]     (2,0.2) node{\tiny$\overline{v+1}$};
\draw[rotate=   0] (1.73,1) node[right]{$a_{\overline{v-1}}$};
\draw[rotate= -60] (1.73,1) node[right]{$a_{v}$};
\draw[<-] (2.1, 0.2) -- (3,0.2) [line width=1pt];
\draw (2.75, 0.2) node[above]{$\alpha$};
\draw ( 3  , 0.2) node[right]{$w$};
\draw[->][line width=1pt] (2.1,0.1) -- (2.7,-0.6);
\draw[->][shift={(0.8,-0.9)}][line width=1pt] (2.1,0.1) -- (2.7,-0.6)[dotted];
\draw[->][shift={(0.8*2,-0.9*2)}][line width=1pt] (2.1,0.1) -- (2.7,-0.6);
\draw (2.45,-0.3) node[right]{$\beta_1=\beta$};
\draw[shift={(0.8*2,-0.9*2)}] (2.45,-0.3) node[right]{$\beta_l$};
\draw (2.8,-0.7) node{$u_1$};
\draw[shift={(0.8,-0.9)}] (2.85,-0.75) node{$u_{l-1}$};
\draw[shift={(0.8*2,-0.9*2)}] (2.8,-0.7) node{$u_1$};
% relations
\draw[red][rotate=6-  0] (1.90,-0.75) arc(-90:  -2:0.7) [line width=1pt][dotted];
\draw[red][rotate=6-  0] (1.95,-0.5 ) arc(-90:-270:0.5) [line width=1pt][dotted];
\draw[red][rotate=6- 60] (1.95,-0.5 ) arc(-90:-270:0.5) [line width=1pt][dotted];
\draw[red][rotate=6-120] (1.95,-0.5 ) arc(-90:-270:0.5) [line width=1pt][dotted];
\draw[red][rotate=6-180] (1.95,-0.5 ) arc(-90:-270:0.5) [line width=1pt][dotted];
\draw[red][rotate=6-240] (1.95,-0.5 ) arc(-90:-270:0.5) [line width=1pt][dotted];
\draw[red][rotate=6-300] (1.95,-0.5 ) arc(-90:-270:0.5) [line width=1pt][dotted];
\draw[blue][shift={(-0.12,0.1)}][rotate=6][rotate around={47:(2,0)}]
     (1.90,-0.75) arc(-90:40:0.6) [line width=1.5pt][dotted];
\end{tikzpicture}
\caption{There exists an path $p=\beta_1\cdots\beta_l$ starting with $v$ such that $\alpha p\notin \I$ (i.e., such that $\alpha\beta_1\notin\I$)}
\label{fig:enter-label II}
\end{figure}
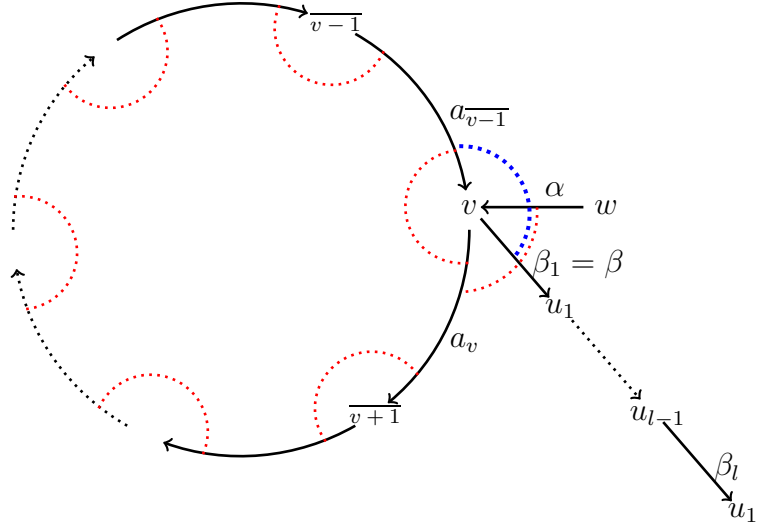
In this case, the injective envelope $E^0_{P(v)}$ of $P(v)$ has a direct summand $E(\target(\beta))$
(i.e., $E(u_1)$, where $u_1$ is shown in \Pic \ref{fig:enter-label II}).
By the definition of almost gentle algebras, we have $a_{\overline{v-1}}\beta \in \I$,
and then the anti-claw $\aclawnota$ corresponding to $E(u_1)$ is of the form
\[ \aclawnota = \ r_1\aclaw r_2\aclaw \cdots \aclaw r_m \]
such that $r_1 = p$.
%The indecomposable projective module $P(v)$ is a direct summand of the projective module $P_0$ given by the projective cover $P_0 \to E(u_1)$ of $E(u_1)$.
%Thus, $S(\overline{v+1}) \le_{\oplus} \top(\Omega_1(E(u_1)))$.
Then
\[ F = (a_{\overline{v+1}}\cdots a_{\ell-1})
       (\overbrace{a_0a_1\cdots a_{\ell-1}}^{\C})
       (\overbrace{a_0a_1\cdots a_{\ell-1}}^{\C})\cdots \]
is a forbidden path lying in $\forb(\aclawnota)$ whose length is infinite.
By Theorem \ref{thm:pdim inj}, we have $\pdim E(u_1)=\infty$,
and so $\pdim E^0 = \infty$.
\end{proof}

\begin{corollary} \label{coro:injenv(P)}
Let $A$ be an almost gentle algebra with $\idim A = \infty$,
then $\pdim E^0 =\infty$, where $E^0$ is the injective envelope of $A_A$.
\end{corollary}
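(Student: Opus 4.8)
The plan is to derive this statement immediately from Lemma~\ref{lemm:injenv(P)}, once we know that the bound quiver of $A$ contains a forbidden cycle. The only gap between the two statements is precisely that extra hypothesis, and $\idim A = \infty$ supplies it.

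First I would apply Corollary~\ref{coro:selfdim II}(1): since $\idim A = \infty$, the bound quiver $(\Q,\I)$ of $A$ has at least one forbidden cycle. (Heuristically, if there were no forbidden cycle then every forbidden path would have bounded length, and Theorem~\ref{thm:selfdim} would force $\idim A < \infty$; the precise statement is Corollary~\ref{coro:selfdim II}(1).) With this in hand, I would invoke Lemma~\ref{lemm:injenv(P)} verbatim: for an almost gentle algebra $A = \kk\Q/\I$ whose bound quiver has a forbidden cycle and which satisfies $\idim A = \infty$, the injective envelope $E^0$ of $A_A$ has $\pdim E^0 = \infty$. Since the symbol $E^0$ in Lemma~\ref{lemm:injenv(P)} and in the present statement both denote the injective envelope of the regular right module $A_A$, these two steps combine to give $\pdim E^0 = \infty$, as required.

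I do not expect a genuine obstacle at this point: the real work already resides in Lemma~\ref{lemm:injenv(P)}, where one uses the forbidden-cycle dichotomy of Theorem~\ref{thm:selfdim II} to locate a vertex (the $v$ or $w$ in its proof) for which some indecomposable injective direct summand $E(v)$ of $E^0$ admits an infinite $\aclawnota$-forbidden path, so that $\pdim E(v) = \infty$ by Theorem~\ref{thm:pdim inj}. The corollary is then just the packaging of this with Corollary~\ref{coro:selfdim II}(1), and the argument is complete.
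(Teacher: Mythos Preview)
Your proposal is correct and follows essentially the same approach as the paper: both arguments first show that $\idim A = \infty$ forces the existence of a forbidden cycle and then invoke Lemma~\ref{lemm:injenv(P)}. The only cosmetic difference is that you cite Corollary~\ref{coro:selfdim II}(1) for the forbidden cycle, whereas the paper extracts it directly from Theorem~\ref{thm:pdim inj} (an infinite forbidden path on a finite quiver must enter a forbidden cycle).
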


\begin{proof}
Since $\idim A = \infty$, there is an indecomposable injective module $E(v)$ such that $\pdim E(v) = \infty$.
Then by Theorem \ref{thm:pdim inj}, the set $\forb(\aclawnota)$ given by
the anti-claw $\aclawnota$ corresponding to $E(v)$ contains a forbidden path $F$ whose length is infinite.
Thus $F$ provides a forbidden cycle $\C$, that is,
$(\Q,\I)$ contains a forbidden cycle. Then $\pdim E^0 =\infty$ by Lemma \ref{lemm:injenv(P)},
\end{proof}

As a consequence, we obtain the following result.

\begin{theorem} \label{thm:AGC}
{\bf ARC} holds true for almost gentle algebras.
\end{theorem}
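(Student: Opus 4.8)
The plan is to argue by contradiction and deduce \textbf{ARC} for an almost gentle algebra $A=\kk\Q/\I$ essentially at once from Corollary \ref{coro:injenv(P)}. Suppose $A$ satisfies the Auslander condition but is not Gorenstein. Since $A$ is monomial, its left and right self-injective dimensions coincide (as recorded in the introduction via \cite[Proposition 6.10]{AR1991Nakayama}), so, $A$ being Noetherian, ``$A$ not Gorenstein'' forces this common value $\idim A$ to be $\infty$. The aim is then to contradict $\idim A=\infty$.

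Next I would translate the Auslander condition into a statement about the injective envelope $E^0$ of $A_A$. The condition is left--right symmetric \cite[Theorem 3.7]{FGR75}, so $A$ satisfies it if and only if $A^{\op}$ does; unwinding this through the isomorphism between a minimal injective coresolution of $A^{\op}$ as a left $A^{\op}$-module and a minimal injective coresolution $0\to A_A\to E^0\to E^1\to\cdots$ of $A$ as a \emph{right} module, it becomes the requirement that the flat dimension of the $i$-th term of the latter be at most $i-1$. Applied to the first term, which is the injective envelope $E^0$ of $A_A$, this gives flat dimension of $E^0$ at most $0$; as $A$ is finite-dimensional, flat dimension agrees with projective dimension on finitely generated modules, so $\pdim_A E^0\leqslant 0<\infty$. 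On the other hand, $\idim A=\infty$ together with Corollary \ref{coro:injenv(P)} yields $\pdim_A E^0=\infty$. This contradiction shows $\idim A<\infty$, i.e.\ $A$ is Gorenstein, which is precisely \textbf{ARC} for almost gentle algebras.

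In this argument almost nothing is left to prove: the real content is already packaged into Corollary \ref{coro:injenv(P)}, and behind it into Theorem \ref{thm:selfdim II} and Lemma \ref{lemm:injenv(P)}, where one locates a vertex on a forbidden cycle, extracts from it an indecomposable injective direct summand of $E^0$, and uses Theorem \ref{thm:pdim inj} to see that the corresponding anti-claw carries a forbidden path of infinite length, forcing $\pdim_A E^0=\infty$. The only point that requires care — the ``main obstacle'', modest though it is — is the bookkeeping in the middle step: correctly matching ``the $i$-th term'' of the minimal injective coresolution in the definition of the Auslander condition with the injective envelope $E^0$ appearing in Corollary \ref{coro:injenv(P)}, and invoking the left--right symmetry of the condition to pass from its left-module formulation to the right module $A_A$.
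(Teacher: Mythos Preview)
Your proposal is correct and follows essentially the same route as the paper: the paper's own proof is the two-line contrapositive ``if $\idim A<\infty$ then $A$ is Gorenstein; if $\idim A=\infty$ then Corollary~\ref{coro:injenv(P)} gives $\pdim E^0=\infty$, so the Auslander condition fails.'' Your write-up simply makes explicit the two points the paper leaves tacit---the left--right symmetry needed to pass from the left-module formulation of the Auslander condition to the right module $A_A$, and the identification of the ``first term'' with $E^0$ so that the condition forces $\pdim E^0\leqslant 0$---so there is nothing to correct.
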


\begin{proof}
Let $A$ be an almost gentle algebra. If $\idim A <\infty$, then $A$ is Gorenstein.
If $\idim A = \infty$, then $A$ does not satisfy the Auslander condition by Corollary \ref{coro:injenv(P)}. The proof is finished.
\end{proof}

\section*{Acknowledgements}

This paper is supported by the National Natural Science Foundation of China (Grant Nos. 12371038, 12171207, and 12401042),
Guizhou Provincial Basic Research Program (Natural Science) (Grant Nos. ZK[2025]085 and ZK[2024]YiBan066)
and Scientific Research Foundation of Guizhou University (Grant Nos. [2023]16, [2022]53, and [2022]65).

%=========================================================

%\bibliographystyle{amsplain}
%\bibliography{class-tilt}

% 渐变色 [top color=purple, bottom color=orange]

% \bibliographystyle{abbrv} % 参考文献引用-abbrv格式
%  \bibliographystyle{unsrt} % 参考文献引用-unsrt格式
%  \bibliographystyle{alpha} % 参考文献引用-alpha格式

% \bibliography{referLiu20240812}

% \listofchanges

\def\cprime{$'$}

\end{document}